\newcommand{\bxi}{\boldsymbol{\xi}}
\algnewcommand{\Initialize}[1]{%
	\State \textbf{Initialization:}
	\Statex {\raggedright #1}
}
\newtheorem{assumption}{Assumption}
\newtheorem{theorem}{Theorem}
\newtheorem{lemma}{Lemma}
\newtheorem{proposition}{Proposition}
\newtheorem{definition}{Definition}
\newtheorem{corollary}{Corollary}
\theoremstyle{plain}
\newtheorem{remark}{Remark}
\newcommand{\mee}[1]{{\color{black}#1}}
\newcommand{\fy}[1]{{\color{black}#1}}
\newcommand{\us}[1]{{\color{black}#1}}
\newcommand{\uvs}[1]{{\color{black}#1}}
\newcommand{\mje}[1]{{\color{black}#1}}
\newcommand{\mj}[1]{{\color{black}#1}}
\newcommand{\fyy}[1]{{\color{black}#1}}
\newcommand{\Rme}[1]{{\color{black}#1}}
\title{\LARGE \bf
 On the Resolution of Stochastic MPECs over Networks: Distributed Implicit Zeroth-Order Gradient Tracking Methods
}
\author{Mohammadjavad Ebrahimi$^{1}$, Uday V. Shanbhag$^{2}$, and Farzad Yousefian$^{1}$
\thanks{$^{1}$Ebrahimi and Yousefian are with the Department of Industrial and Systems Engineering, Rutgers University, United States. {\tt\small \{mohammadjavad.ebrahimi,farzad.yousefian\}@rutgers.edu}. $^{2}$Shanbhag is with the Department of Industrial and Operations Engineering, University of Michigan, United States. {\tt\small udaybag@umich.edu}.\\
We acknowledge the funding support from the U.S. Department of Energy under grants \#DE-SC0023303 and \#DE-SC0025570, the U.S. Office of Naval Research under grants \#N00014-22-1-2757 and \#N00014-22-1-2589, and AFOSR Grant FA9550-24-1-0259.\\
A very preliminary version of this work \cite{ebrahimi2023distributed} has appeared in the Proceedings of the 2024 American Control Conference.}%
}
\begin{document}
\sloppy
\maketitle
\thispagestyle{empty}
\pagestyle{plain}

\maketitle
 \begin{abstract}
The mathematical program with equilibrium
constraints (MPEC) is a powerful yet challenging class
of constrained optimization problems, where the
constraints are characterized by a parametrized
variational inequality (VI) \Rme{problem}. While efficient algorithms for addressing MPECs and
their stochastic variants (SMPECs) have been recently presented, distributed SMPECs
over networks \us{pose significant challenge\mee{s}}. This work aims to
develop fully \uvs{distributed} iterative methods with complexity
guarantees for resolving distributed SMPECs in two problem settings: (1) distributed
single-stage SMPECs and (2) distributed two-stage
SMPECs. In both cases, the global objective function
is distributed among a network of agents that
communicate cooperatively. In (1), each agent is
constrained by a local VI problem with an
expectation-valued mapping, while in (2), each agent’s
constraint is associated with a second-stage VI problem, where the
mapping depends on a first-stage random variable and a random second-stage parameter. \us{Under the assumption \fyy{that} the parametrized VI is uniquely solvable, the resulting implicit problem in upper-level decisions is generally neither convex nor smooth.}  
Under some
standard assumptions, including the uniqueness of the
solution to the VI problems and the Lipschitz
continuity of the implicit global objective function,
we propose single-stage and two-stage zeroth-order
distributed gradient tracking optimization methods
where the gradient of a smoothed implicit objective
function is approximated using two (possibly inexact)
evaluations of the lower-level VI \Rme{solution map}. \mee{In} the
exact setting of both the single-stage and two-stage
problems, we achieve the best-known complexity bound
for centralized nonsmooth nonconvex stochastic
optimization, that is
\Rme{$\mathcal{O}\left(\eta^{-1} n^{3/2}\epsilon^{-2}\right)$} where
$n$ denotes the upper-level decision dimension, \Rme{$\eta$ is the smoothing parameter,}
$\epsilon>0$ is an arbitrary scalar such that
$\mathbb{E}[\mbox{dist}^2(0,\partial_{\eta}
f(\bullet)) ] \leq \epsilon$, $f$ denotes the
implicit objective function, \us{and} $\partial_{\eta} f\Rme{(x)}$
denotes the $\eta$-Clarke generalized gradient of $f$ \Rme{at $x$}.
\Rme{The complexity bound of $\mathcal{O}\left( n^{3/2}\epsilon^{-2}\right)$ is} achieved (for the first
time) for our method in addressing the inexact setting
of the distributed two-stage SMPEC. \Rme{In addressing the
inexact setting of the single-stage problem, we derive
the overall complexity of
$\mathcal{O}\left(n^{7/2}\epsilon^{-4}\right)$.}
Preliminary numerical experiments demonstrate the
efficiency of our methods across various network
topologies compared to centralized zeroth-order
schemes.  

\end{abstract}

\section{Introduction}
The mathematical program with equilibrium constraints
(MPEC) is an immensely powerful yet challenging class
of constrained optimization problems in that the
constraints are captured by a parametrized variational
inequality problem. Consider the canonical MPEC, given
by $$ \min_{x,z} \, \left\{ \, f(x,z) \, \mid \,  z \in
\mbox{SOL}(\mathcal{Z}(x),F(x,\bullet)), \ x \in
\mathcal{X} \, \right\} ,$$ where $f:\mathbb{R}^n\times
\mathbb{R}^p \to \mathbb{R}$ is a real-valued
function, $F:\mathcal{X}\times 	\mathbb{R}^p \to
\mathbb{R}^p$ is a single-valued mapping,
$\mathcal{Z}:\mathcal{X} \rightrightarrows
\mathbb{R}^p$ is a set-valued mapping, and
$\mathcal{X} \subseteq \mathbb{R}^n$ is a closed and
convex set. Recall that given a closed and convex set
$\mathcal{Y} \subseteq \mathbb{R}^p$ and mapping
$G:\mathcal{Y}\to\mathbb{R}^p $, vector $y^* \in
\mathcal{Y}$ solves the variational inequality (VI)
problem, denoted by $\mbox{VI}(\mathcal{Y},G)$ if
$G(y^*)^\top(y-y^*)\geq 0$ for all $y \in
\mathcal{Y}$. We let $\mbox{SOL}(\mathcal{Y},G)$
denote the set of all such solutions. \Rme{MPECs model the class of bilevel programs in which the lower-level problem is a smooth convex optimization problem and can capture the equilibria of Stackelberg games with single or multiple followers when the follower-level game can be captured by an equivalent VI problem. MPECs also find} applicability in structural design, power markets~\cite{hobbs2000strategic},
\Rme{transportation~\cite{lawphongpanich2004mpec},} and finance, among other
disciplines~\cite{sherali1983stackelberg,luo1996mathematical}.
See for instance \cite{hobbs2000strategic} and
\cite{lawphongpanich2004mpec} for applications of
MPECs in electric power systems and transportation
networks, respectively. In the past few decades,
significant progress has been made in addressing some
theoretical challenges of resolving MPECs, including
necessary and sufficient optimality
conditions~\cite{fukushima1998some,outrata2000generalized,jane2005necessary},
constraint
qualifications~\cite{flegel2005abadie,kanzow2010mathematical,guo2015solving}
facilitating the development of schemes including
exact penalization~\cite{scholtes1999exact},
sequential quadratic programming
schemes~\cite{fletcher2006local}, nonlinear
programming approaches~\cite{dirkse2002mathematical},
smoothing methods for implicit
formulations~\cite{facchinei1999smoothing}, and
constraint relaxation
schemes~\cite{steffensen2010new}, among others. \Rme{A comprehensive description of the solution properties, algorithms, and applications of MPECs is provided in the following two seminal monographs~\cite{luo1996mathematical,outrata1998nonsmooth}}. 

\smallskip 

\noindent  {\bf Stochastic MPECs.} Motivated by
hierarchical decision-making under uncertainty and
large-scale bilevel programming, stochastic
generalizations of MPECs (SMPECs) were introduced
in~\cite{patriksson1999stochastic} where the
directional differentiability of the implicit
upper-level objective function is studied. SMPECs \us{can capture a breadth} of applications in the presence
of hierarchy and uncertainty, e.g., generation
capacity expansion in electricity
markets~\cite{wogrin2011generation} and optimal grid
tariff design for power
systems~\cite{askeland2023stochastic}. In resolving
SMPECs, the expected residual minimization
formulations are
considered~\cite{lin2007new} and sample average
approximation (SAA) schemes are
developed~\cite{shapiro2008stochastic} (see
\cite{lin2010stochastic} for a survey on SMPECs).
In~\cite{lin2009solving} a method integrating
smoothing implicit programming and a penalty approach
is developed for addressing SMPECs with linear
complementarity constraints. First-order optimality
conditions for two-stage SMPECs whose second-stage
problem has multiple equilibria/solutions is studied
in~\cite{xu2010necessary}. Motivated by the absence of
solution methods equipped with non-asymptotic rate and overall complexity guarantees
for addressing SMPECs, a class of randomized smoothing
and inexact zeroth-order schemes is developed~\cite{cui2023complexity} for resolving both
single-stage and two-stage SMPECs, \Rme{possibly representing} the first instance of efficient schemes
with complexity and rate guarantees. Extensions
to decentralized settings and federated learning
regimes are studied in~\cite{qiu2023j}. Further,
schemes for computing equilibria for a class of
stochastic multi-leader multi-follower games
where players solve parametrized stochastic MPECS
are considered~\cite{cui2023computation},
where guarantees are derived under
either a potentiality or a monotonicity
requirement on a suitably defined implicit
leader-level objective.  

\smallskip 

\begin{tcolorbox}
 {\bf Gaps \us{and goals}.} Despite recent progress in addressing MPECs in large-scale stochastic settings, distributed variants of SMPECs over networks remain \us{challenging generalizations}. Our goal lies in developing computational methods with complexity guarantees for resolving \us{distributed SMPECs}, where the information of the upper-level (global) objective function is distributed among multiple agents, who cooperatively seek to find a solution to the SMPEC by \us{communicating} over a network. 
\end{tcolorbox}

To address \fyy{these gaps}, we consider two variants of distributed SMPECs described next. 
\fy{

\noindent {\bf (i)  Distributed single-stage SMPECs.} We consider a distributed single-stage SMPEC among $m$ agents \us{that takes} the following form.
\begin{tcolorbox}[colback=blue!10,colframe=blue!75!black,title=]
\vspace{-.2in}
 \begin{align}
&\hbox{minimize}_x\quad f(x)\ \triangleq \frac{1}{m}\sum_{i=1}^m  \mathbb{E}_{\bxi_i(\omega)}\left[\tilde{h}_i(x,z_i(x),\bxi_i(\omega))\right] \label{eqn:prob-1stage}\tag{\footnotesize{\bf dist-SMPEC}$_\texttt{1s}$}\\
& \hbox{subject to} \quad  z_i(x)  \, = \, \mbox{SOL}\left(\mathcal{Z}_i(x),  \mathbb{E}_{\bxi_i(\omega)}\left[\, \tilde{F}_i(x,\bullet,\bxi_i(\omega))\, \right]\, \right) \quad \hbox{for all } i \, \in \, \{1,\ldots,m\}. \notag
\end{align}
\end{tcolorbox}
Here, agent $i$ is associated with a local
stochastic objective $\tilde{h}_i:
\mathbb{R}^n \times \mathbb{R}^{p} \times
\mathbb{R}^d \to \mathbb{R}$, an independent
random vector $\us{\bxi}_i(\omega):\Omega\rightarrow
\mathbb{R}^d$ associated with the probability
space $(\Omega,\mathcal{F},\mathbb{P}_i)$, a
single-valued mapping $\tilde F_i: \mathbb{R}^n
\times \mathbb{R}^{p} \to \mathbb{R}^{p}$, and a
parametrized set-valued mapping $\mathcal{Z}_i\us{(\bullet)}$.
Here, we assume that for any $i$, the mapping
$z_i:\mathbb{R}^n  \to \mathbb{R}^{p}$ returns
the unique solution to \us{a VI, parametrized} for any $x$.

\noindent {\bf (ii)  Distributed two-stage SMPECs.} We consider a distributed two-stage SMPEC among $m$ agents, \us{formulated as follows.} 
 \begin{tcolorbox}[colback=blue!10,colframe=blue!75!black,title=] 
 \vspace{-.2in}
\begin{align}
&\hbox{minimize}_x\quad f(x)\ \triangleq \frac{1}{m}\sum_{i=1}^m  \mathbb{E}_{\bxi_i(\omega)}\left[\, \tilde{h}_i(x,z_i(x,\bxi_i(\omega)),\bxi_i(\omega))\, \right] \label{eqn:prob-2stage}\tag{\footnotesize{\bf dist-SMPEC}$_\texttt{2s}$}\\
& \hbox{subject to} \quad  z_i(x,\xi_i(\omega)) \,=\, \mbox{SOL}\left(\, \mathcal{Z}_i(x,\xi_i(\omega)),\tilde F_i(x,\bullet,\xi_i(\omega))\, \right) \quad \text{for a.e. } \omega \text{ and } i \in \{\, 1,\ldots,m\, \}. \notag
\end{align} 
\end{tcolorbox}
 Here, we \Rme{adopt} the notation used for
\eqref{eqn:prob-1stage} \Rme{but}
emphasize \Rme{the following} distinctions:
(i) The mapping
$z_i(\bullet,\xi_i(\omega))$ and
set $\mathcal{Z}_i$ are afflicted by
randomness, as captured by the
dependence on  the random sample
$\omega$; and (ii) Each of the agents is
associated with a lower-level VI problem, \us{parametrized by $x$},
characterized by a random mapping.
We assume that the function
$\tilde{h}_i$, the mapping
$\tilde{F}_i$, and random samples from
$\xi_i(\omega)$ are locally available to
the agent $i$, for all $i$\Rme{, and we denote a realization of $\bxi_i(\omega)$ by $\xi_i(\omega)$}. 
}
\smallskip


\noindent {\bf Main contributions.} We
design and analyze two iterative methods
for the resolution of distributed
single-stage SMPECs in
\eqref{eqn:prob-1stage} and two-stage
SMPECs in \eqref{eqn:prob-2stage} over
networks, where we assume that the
implicit function is possibly nonconvex.
To address each of these problems, we
propose a zeroth-order distributed
gradient tracking optimization scheme
where the gradient of a smoothed
implicit objective function is
approximated through modified and
possibly inexact evaluations of
$z_i(\bullet)$ for the single-stage
setting and $z_i(\bullet,\xi_i(\omega))$
for the two-stage setting. Notably, the
proposed methods \mj{extend} existing
distributed gradient tracking methods
for smooth nonconvex distributed
stochastic optimization
in~\cite{xin2021improved} to address
much broader hierarchical optimization
problem classes in this work.  We
conduct a rigorous theoretical analysis
of the proposed methods, establishing
new complexity guarantees for computing
approximate a stationary point in both
single-stage and two-stage SMPECs. The
main results are summarized in
\fyy{Table~\ref{table:tbl-combined-smpec-complexity-1s and 2s}}\Rme{, where $L_0$ is specified in Assumption~\ref{assum:main1}. Additionally, $\kappa_F:=\tfrac{L_F}{\mu_F}$, where $L_F$ and $\mu_F$ are defined in Assumption~\ref{assum:main2}.}
Importantly, our complexity guarantees
for both the upper-level and lower-level
problems not only align with those
established for centralized stochastic
MPECs in~\cite{cui2023complexity}, but
they also demonstrate improved scaling
with respect to $n$. This is
accomplished by leveraging the use of
central difference zeroth-order gradient
approximation
schemes~\cite{lin2022gradient}. For the
exact setting of both the single-stage
and two-stage cases, we achieve the
best-known complexity bound for
centralized nonsmooth nonconvex
stochastic
optimization~\cite{lin2022gradient},
that is
\Rme{$\mathcal{O}\left(\eta^{-1}n^{3/2}\epsilon^{-2}\right)$}
where $\epsilon>0$ is an arbitrary
scalar such that
$\mathbb{E}[\mbox{dist}^2(0,\partial_{\eta}
f(\bullet)) ] \leq \epsilon$ \us{and} $f$
denotes the implicit objective function. \fyy{Here, $\eta>0$ denotes a randomized smoothing parameter employed in the zeroth-order method and 
$\partial_\eta f$ denotes
$\eta$-Clarke generalized gradient of
$f$ (see Section~\ref{sec:prelim} for details).}
Importantly, this complexity bound is
achieved for our method in addressing
the inexact setting of the distributed
two-stage SMPEC \eqref{eqn:prob-2stage}.
This appears to be the first \us{such 
guarantee}  for this class of
problems over networks. \Rme{In addressing
the inexact setting of the single-stage
problem \eqref{eqn:prob-1stage}, the
overall complexity of
{$\mathcal{O}\left(n^{7/2}\epsilon^{-4}\right)$}
is obtained.} This result is novel and
appears to be new for the distributed
single-stage SMPECs over networks. Next, we present the detailed contributions for each setting in the following.

\begin{table}[]\centering
\fyy{
 \footnotesize{
\caption{Complexity guarantees for solving~\eqref{eqn:prob-1stage} and~\eqref{eqn:prob-2stage}}
\label{table:tbl-combined-smpec-complexity-1s and 2s}
\begin{tabular}{@{}l@{\hspace{2em}}c@{\hspace{10em}}c@{}}
\toprule
\multicolumn{1}{l}{{\bf{Setting}}} & \multicolumn{2}{c}{{\bf{Iteration complexity bound}}} \\ \cmidrule(l){2-3}
& Distributed single-stage SMPEC & Distributed two-stage SMPEC \\ \midrule

{\bf{\underline{Inexact setting}}} & & \\[0.5ex]

\begin{tabular}[c]{@{}l@{}}Upper level\\ problem\end{tabular} & 
\Rme{$\mathcal{O}\left(\tfrac{L_0^3n^{3/2}\epsilon^{-2}}{\eta^{4/3}}\right)$ }& 
\Rme{$\mathcal{O}\left(\tfrac{L_0^3n^{3/2}\epsilon^{-2}}{\eta^{4/3}}\right)$ }\\[2ex]

\begin{tabular}[c]{@{}l@{}}Lower level\\ problem\end{tabular} & 
$\mathcal{O}\left(\kappa_F\epsilon^{-1}\right)$ & 
$\mathcal{O}\left(\ln({\epsilon})/\ln(1- \kappa_F^{-2})\right)$ \\[2ex]

\begin{tabular}[c]{@{}l@{}}Overall\\ complexity\end{tabular} & 
\Rme{$\mathcal{O}\left(\tfrac{L_0^6n^{7/2}\epsilon^{-4}}{\eta^{10/3}}\right)$} & 
\Rme{$\tilde{\mathcal{O}}\left(\tfrac{L_0^3n^{3/2}\epsilon^{-2}}{\eta^{4/3}}\right)$} \\[3ex] \midrule

\textbf{Exact setting} & 
\Rme{$\mathcal{O}\left(\tfrac{L_0^3n^{3/2}\epsilon^{-2}}{\eta}\right)$} & 
\Rme{$\mathcal{O}\left(\tfrac{L_0^3n^{3/2}\epsilon^{-2}}{\eta}\right)$} \\
\bottomrule
\end{tabular}}}
\vspace{0.5ex}

\Rme{\parbox{0.95\linewidth}{\footnotesize
\emph{Note:} The third row reports the overall lower-level sample complexity required to obtain an $\epsilon$-stationary solution.}}
\end{table}

\noindent {\bf{(1)}} We address the distributed single-stage problem \eqref{eqn:prob-1stage} in Section~\ref{sec:1s}. 

\noindent  {\bf{(1-\noindent i)}} {Inexact setting.} We propose (DiZS-GT$^{\text{1s}}$), outlined in Algorithm~\ref{alg:DZGT}, which utilizes a distributed zeroth-order method to minimize the implicit function. In its inexact form, the method addresses the stochastic VI at the lower level \uvs{via} a stochastic approximation approach, presented in Algorithm~\ref{alg:lowerlevel-1stage}. In Theorem~\ref{Theorem:thm 1}, we establish non-asymptotic convergence rates and derive the upper-level and lower-level iteration complexity bounds.  

\noindent{\bf{(1-ii)}} {Exact setting.} The convergence analysis for the exact case of (DiZS-GT$^{\text{1s}}$) is detailed in Corollary~\ref{corollary:corollary 1}.

 \noindent {\bf{(2)}} We examine the distributed two-stage problem \eqref{eqn:prob-2stage} in Section~\ref{sec:2s}. 
 
\noindent  {\bf{(2-i)}} {Inexact setting.} We propose (DiZS-GT$^{\text{2s}}$), outlined in Algorithm~\ref{alg:DZGT-2stage}, to minimize the implicit objective function. In the inexact variant of (DiZS-GT$^{\text{2s}}$), no additional sampling is required for solving the lower-level problem, that is in contrast with the single-stage case. Instead, Algorithm~\ref{alg:lowerlevel-2stage} is focused on solving a parameterized deterministic VI problem. In Theorem~\ref{Theorem:thm 2}, we derive non-asymptotic convergence rates and provide the iteration complexity for both the upper-level and lower-level problems. While the upper-level iteration complexity is comparable to that of the single-stage model, both the lower-level and overall iteration complexities are significantly lower. This improvement is due to the deterministic nature of the lower-level problem, \uvs{eliminating} the need for extra sampling and \mee{leading} to a more efficient computation.
 
\noindent {\bf{(2-ii)}} {Exact setting.} The convergence analysis for the exact case of (DiZS-GT$^{\text{1s}}$), similar to that of the single-stage counterpart, is provided in Corollary~\ref{corollary:corollary 1}.
 


%
\noindent  \us{{\bf Outline.} \Rme{After recapping the
notation, in Section~\ref{sec:prelim},
{we provide some remarks on the problems of interest, comment on the associated approach and challenges, and finally review} some preliminaries on
nonsmooth calculus and 
randomized smoothing. Sections~\ref{sec:1s}--\ref{sec:2s} (as discussed next) contain our main results on the single and two-stage problem, respectively, while   
preliminary numerics and some
concluding remarks are presented in
Section~\ref{sec:num} and
Section~\ref{sec:conc}, respectively}. }

\noindent {\bf Notation.} 
Given a vector $x \in \mathbb{R}^{n\times 1}$, we let $\|x\|$ denote the Euclidean norm of $x$ and we let $x^\top$ denote its transpose.  Given a matrix $\mathbf{A} \in \mathbb{R}^{m\times n}$, we let $\|\mathbf{A}\|$ denote the Frobenius norm of $\mathbf{A}$. The spectral \Rme{norm} of $\mathbf{A}$ (also referred to as $\ell_2$-norm of $\mathbf{A}$~\cite{boyd2004convex}) is denoted by $\|\mathbf{A}\|_2$ and is defined as $\|\mathbf{A}\|_2 \triangleq \sqrt{\lambda_{\max}(\mathbf{A}^\top\mathbf{A})}$ where $\lambda_{\max}(\bullet)$ is the largest eigenvalue of a square matrix. We use $[m]$ to denote $\{1,\ldots,m\}$. Given matrices $\mathbf{u},\mathbf{v} \in \mathbb{R}^{m\times n}$, we let $\langle \mathbf{u},\mathbf{v} \rangle $ denote their Frobenius inner product defined as $\langle \mathbf{u},\mathbf{v} \rangle =\mbox{Trace}(\mathbf{u}^\top\mathbf{v})$ where $\mbox{Trace}(\bullet)$ denotes the trace operator. Throughout, for each $i \in [m]$, we denote the implicit stochastic local function by $\tilde{f}_i(x,\xi_i)$ and its expectation-valued variant by ${f}_i(x) \triangleq \mathbb{E}_{\uvs{\bxi}_i}[\tilde{f}_i(x,\uvs{\bxi}_i)]$, \uvs{where $\bxi_i: \Omega \to \mathbb{R}^d$ denotes a $d$-dimensional random variable, taking realizations denoted by $\xi_i(\omega) \in \mathbb{R}^d$. We use $\xi_i$ to abbreviate $\xi_i(\omega)$ for any $i$.} \uvs{In} the single-stage setting, we define $\tilde{f}_i(x,\xi_i) \triangleq \tilde{h}_i(x,z_i(x),\xi_i)$, while in the two-stage setting, we define $\tilde{f}_i(x,\xi_i) \triangleq \tilde{h}_i(x,z_i(x,\xi_i),\xi_i)$. We further define 
\begin{align}\label{eqn:notations}
&\mathbf{x} \triangleq  [x_1,x_2,\ldots,x_m]^{\top}, \quad   \mathbf{y}  \triangleq  [y_1,y_2,\ldots,y_m]^{\top} \in \mathbb{R}^{m \times n}, \notag
\bar{x} \triangleq  \tfrac{1}{m}\mathbf{1}^{\top} \mathbf{x}   \in \mathbb{R}^{1 \times n}, \quad  \bar{y} \triangleq  \tfrac{1}{m}\mathbf{1}^{\top} \mathbf{y}    \in \mathbb{R}^{1 \times n},\notag\\
& f(x) \triangleq  \tfrac{1}{m}\textstyle\sum_{i=1}^{m} f_i(x), \quad  \mathbf{f}(\mathbf{x}) \triangleq  \tfrac{1}{m}\textstyle\sum_{i=1}^{m} f_i(x_i),\notag
   \mathbf{f}^\eta(\mathbf{x}) \triangleq  \tfrac{1}{m}\textstyle\sum_{i=1}^{m} f_i^\eta(x_i),   \\
& \uvs{\xi \triangleq [\xi_1,\xi_2,\ldots,\xi_m]^{\top}\in \mathbb{R}^{m \times d}\notag ,}\ \quad    \nabla \mathbf{f}^\eta(\mathbf{x}) \triangleq [\nabla f_1^\eta(x_{1}), \ldots,  \nabla f_m^\eta(x_{m})]^{\top} \in \mathbb{R}^{m \times n},
\\   
&\mee{\overline{ \nabla {f}^\eta}(\mathbf{x}) \triangleq\tfrac{1}{m}\mathbf{1}^{\top} \nabla \mathbf{f}^\eta(\mathbf{x}) \in \mathbb{R}^{1 \times n},} \quad  \mathbf{v} \triangleq [v_1,v_2,\ldots,v_m]^{\top}\in \mathbb{R}^{m \times n}\mee{,}
\end{align}
where given $\eta>0$, $ f_i^\eta$ is
$\eta$-smoothed variant of $f_i$ that
will be introduced formally in
section~\ref{sec:prelim}. We let
$\mathbf{1} \in \mathbb{R}^{m \times 1}$
denote the vector whose elements are all
one. We use $\mathbb{E}[\bullet]$ to
denote the expectation. We also denote
the Euclidean projection of a vector $x$
on a set $\mathcal{X}$ by
$\Pi_{\mathcal{X}}(x)$, i.e.
$\|x-\Pi_{\mathcal{X}}(x)\|=\min_{\bar{x}\in
\mathcal{X}}\|x-\bar{x}\| $. We let
``${\it conv}(\bullet)$'' denote the
convex hull of a set.  We let
$\mathbb{B}$ denote the
\us{$n$-}dimensional unit ball given as
$\mathbb{B}\triangleq \{u\in
\mathbb{R}^n\mid \|u\| \le 1\}$.  We let
$\mathbb{S}$ denote the surface of the
ball $\mathbb{B}$, i.e.,
$\mathbb{S}\triangleq \{v\in
\mathbb{R}^n\mid \|v\| = 1\}$.
Throughout, $ {\mathcal{O}}(\cdot)$ denotes
the big-O notation. We also use
$\tilde{\mathcal{O}}(\cdot)$ in some
settings where we ignore logarithmic
factors.  $\hfill \Box$



\section{Preliminaries \us{and background}}\label{sec:prelim}

\us{In this section, we \Rme{provide motivating examples} in Section ~\ref{sec:2.1},
comment on the challenges in addressing such
problems in Section~\ref{sec:2.2}, and conclude with a background on nonsmooth analysis and
randomized smoothing in Section~\ref{sec:2.3}.}

\subsection{\Rme{Motivating examples}}\label{sec:2.1}

\Rme{\noindent {\bf {(i)} Personalized distributed learning in heterogeneous systems.}
A standard formulation in distributed learning is
$\min_{x \in \mathbb{R}^n}\frac{1}{m}\sum_{i=1}^m \mathbb{E}_{\bxi_i(\omega)}[\tilde{L}_i(x,\bxi_i(\omega))]$,
where $m$ agents collaboratively train a shared model $x$ using their local data.
This formulation enforces a common solution (consensus) across all agents. However, in heterogeneous settings, agents may have data drawn from different distributions,
{motivating the need for personalized solutions~\Rme{\cite{t2020personalized,li2021ditto}}}. 
To capture this, {one may} consider a personalized {distributed} framework {given as}
\begin{align}
&\hbox{minimize}_x\quad f(x)\ \triangleq \frac{1}{m}\sum_{i=1}^m
\mathbb{E}_{\bxi_i(\omega)}\!\left[\tilde{L}_i(z_i(x),\bxi_i(\omega))\right] \label{eqn:prob-1stage_hetero}\\
& \hbox{subject to} \quad z_i(x)\ \hbox{ solves }\ 
\min_{z_i}\Big\{\mathbb{E}_{\bxi_i(\omega)}[\tilde{L}_i(z_i,\bxi_i(\omega))]
+\frac{\rho_i}{2}\|z_i-x\|^2\Big\}, \quad \hbox{for all } i \in \{1,\ldots,m\}. \notag
\end{align}
Here, $x$ is a \emph{shared reference} {model} and $z_i(x)$ denotes the personalized {model} of agent $i$.
In the lower-level problem, each agent computes $z_i(x)$ by minimizing its expected local loss while
penalizing deviation from the reference {model} $x$ through the proximal term
$\frac{\rho_i}{2}\|z_i-x\|^2$. This coupling discourages excessive deviation from $x$ while still allowing
agents to adapt to heterogeneity. In the {upper level}, the reference {model} $x$ is selected so as
to minimize the average \emph{post-personalization} expected loss attained by the agents. In particular,
large values of $\rho_i$ yield $z_i(x)\approx x$ (near-consensus), whereas smaller values {encourage} personalization. Under suitable convexity assumptions, \eqref{eqn:prob-1stage_hetero} is an instance of \eqref{eqn:prob-1stage}, where the lower-level VI solution sets capture the optimality conditions
of the local optimization problems in \eqref{eqn:prob-1stage_hetero}.
\vspace{5pt}
}

\Rme{\noindent {\bf {(ii)} Resource allocation under heterogeneous demand.} 
A standard formulation in stochastic programming is $\min_{x \in X} \; c^\top x + \mathbb{E}_{\bxi(\omega)}[Q(x,\bxi(\omega))],$
where $x$ is a first-stage (here-and-now) decision made prior to observing uncertainty, and $Q(x,\bxi(\omega))$ denotes the optimal value of a second-stage recourse problem after the realization of $\bxi(\omega)$ \cite{shapiro2021lectures,birge1997introduction,ghazanfariharandi2025value}. In many applications, such as capacity planning and supply-chain management, uncertain demand motivates the use of such two-stage models \cite{shapiro2021lectures,birge1997introduction,santoso2005stochastic}. To capture this in  heterogeneous {uncertain environments, consider a distributed extension of the form}
\begin{align}
&\hbox{minimize}_x\quad f(x)\ \triangleq \frac{1}{m}\sum_{i=1}^m
\mathbb{E}_{\bxi_i(\omega)}\!\left[\tilde{q}_i(x,z_i(x,\bxi_i(\omega)),\bxi_i(\omega))\right] \label{eqn:prob-2stage_hetero2}\\
& \hbox{subject to} \quad z_i(x,\xi_i(\omega))\ \hbox{ solves }\ 
\min_{u_i,v_i,w_i \ge 0}
\left\{
\tfrac12 y_i^\top H_i y_i
+
c_i^\top u_i + r_i^\top v_i + p_i^\top w_i
\right\} \notag\\
& \hspace{2.6cm}\hbox{subject to} \quad 
u_i + v_i + w_i \ge \xi_i(\omega), \quad 
u_i \le B_i x, \quad \hbox{for all } i \in \{1,\ldots,m\}, \notag
\end{align}
where $y_i := (u_i,v_i,w_i)$ and $H_i$ is a symmetric positive definite matrix. Here, $x$ is a \emph{shared first-stage decision} representing, {for instance,} capacity allocation, procurement commitments, or inventory planning, while $z_i(x,\xi_i(\omega)) := (u_i,v_i,w_i)$ denotes the second-stage decision of agent $i$, computed after observing the realization of demand $\xi_i(\omega)$. In the lower-level problem, $u_i$ represents regular supply using the allocated capacity, $v_i$ denotes emergency procurement, and $w_i$ captures unmet demand or backorders. The cost parameters $c_i$, $r_i$, and $p_i$ correspond to regular, emergency, and shortage costs, respectively, while the matrix $H_i$ models convex quadratic recourse costs. Such quadratic costs capture increasing marginal costs associated with high utilization, emergency procurement, and severe shortages/backorders, which are common features in supply-chain and resource allocation systems~\cite{mehrotra2009implementation,liu2020two}. The constraint $u_i \le B_i x$ couples the recourse decisions to the shared first-stage allocation. This formulation allows for stochastic and heterogeneous demand across agents through the distributions of $\xi_i(\omega)$, a standard modeling feature in stochastic programming and supply-chain applications~\cite{shapiro2021lectures,birge1997introduction,santoso2005stochastic}. Since $H_i$ is positive definite, the lower-level problem is strongly convex over a polyhedral feasible set and therefore admits a unique solution for each realization $\xi_i(\omega)$. The upper-level problem selects $x$ to minimize the average \emph{post-recourse} expected cost across all agents. 
\eqref{eqn:prob-2stage_hetero2} constitutes an instance of \eqref{eqn:prob-2stage}, where the second-stage recourse decisions are given by the solutions of scenario-dependent VIs.
}

\subsection{Challenges \us{and their potential resolution}} \label{sec:2.2}
\us{In this subsection, we comment on the challenges arising in the resolution of the single and two-stage problems of interest.} Recall that our approach  is reliant on
addressing the implicit variants (in
terms of variable $x$) of the
distributed problems
\eqref{eqn:prob-1stage} and
\eqref{eqn:prob-2stage}. To elaborate,
consider the single-stage problem
\eqref{eqn:prob-1stage}. Let us \us{consider} 
the \us{random} local implicit objectives \us{$\tilde{f}_i(\bullet,\xi_i)$, defined as } 
$\tilde{f}_i(x,\xi_i)\triangleq
\tilde{h}_i(x,z_i(x),\xi_i) $ for all
$i$. Then, \eqref{eqn:prob-1stage} can
be  written succinctly as $\min_x \
\frac{1}{m}\sum_{i=1}^m
\mathbb{E}_{\bxi_i}
[\tilde{f}_i(x,\bxi_i)]$. In solving this
problem, \us{we discuss the challenges \mee{that} may arise and the potential resolution that our approach \Rme{provides.}}

 \smallskip

\noindent {\bf (i)} \underline{\it
Nondifferentiability and nonconvexity of
implicit objectives}. The local implicit
objectives \us{$f_i$, defined as}  $f_i(x) \triangleq
\mathbb{E}_{{\bxi_i}}
[\tilde{f}_i(x,{\bxi_i})]$ are possibly
nonsmooth and nonconvex. This could be
the case, even when
$\tilde{h}_i(\bullet,z_i,\xi_i)$ is
smooth and convex for all $i$. For
instance, consider the MPEC given by
$$\min_{x\in \mathbb{R}}\ (x+1-z(x))^2, \mbox{ where } z(x) = \hbox{arg}\min_{z \geq 0} \
0.5|z-x|^2.$$ Then, the implicit function
is given by $f(x) =
(x+1-\max\{0,x\})^2$, which is neither
smooth nor convex.  The minimization of
nonsmooth nonconvex functions is
computationally challenging. Recall
that a point $\bar{x}$ is called
$\epsilon$-stationary \us{with respect to} \Rme{the} problem
$\min_{x \in \mathbb{R}^n} \, h(x)$ if
$\mbox{dist}(0_n,\partial h) \leq
\epsilon$ where $\partial h$ denotes the
Clarke generalized gradient of
$h$~\cite{clarke2008nonsmooth}. It is
shown in recent
work~\cite{zhang2020complexity} that
computing an $\epsilon$-stationary point
for the minimization of a suitable class
of nonsmooth functions is impossible in
finite time. To \Rme{address} this \Rme{shortcoming}, we
leverage the notion of {\it approximate
stationarity} studied
in~\cite{zhang2020complexity} as a
weakening of stationarity. A point $\bar
x$ is called $(\delta,\epsilon)$
stationary to the problem $\min_{x \in
\mathbb{R}^n} \ h(x)$ if
$\mbox{dist}(0_n,\partial_\delta h(\bar
x)) \leq \epsilon$ where
$\partial_\delta h$ denotes
$\delta$-Clarke generalized gradient of
$h$ \cite{goldstein1977optimization}.
While even computing an approximate
stationary point remains challenging in
general nonsmooth nonconvex
settings~\cite{kornowski2021oracle},
when $h$ is a locally Lipschitz
continuous function, for any $\eta>0$,
if a vector $x$ is a stationary point of
$\min_{x \in \mathbb{R}^n} \ h^\eta(x)$,
then $0  \in \partial_{\eta}
h(x)$~\cite[Thm. 3.1]{lin2022gradient}.
Here, $h^\eta$ is a suitably defined
randomized smoothed approximation of
$h$~\cite{nesterov2017random,cui2023complexity}.
Utilizing this result, our goal in this
work lies in the development of
distributed methods with complexity
guarantees for computing an
$\epsilon$-stationary point \us{corresponding to} the
minimization of \Rme{the $\eta$-}smoothed implicit
problem\Rme{s} in the SMPECs
\eqref{eqn:prob-1stage} and
\eqref{eqn:prob-2stage}.

 \smallskip

\noindent {\bf (ii)} \underline{\it Absence of
first-order and zeroth-order information
of local implicit objectives}. Often a
closed-form expression for $z_i(x)$ is
unavailable; in particular, in the
single-stage setting
\eqref{eqn:prob-1stage} where the
lower-level VIs are characterized by
expectation-valued mappings with respect
to random variables \Rme{$\bxi_i$} with
possibly unknown probability
distributions. Consequently, agents may
have access to neither subgradients nor
function evaluations of their local
implicit objectives $f_i$. As such,
existing distributed first-order (e.g.,
\cite{pu2021distributed}) and
zeroth-order (e.g.,
\cite{marrinan2026zeroth,tang2020distributed,hajinezhad2019zone})
optimization methods cannot be employed
for addressing the proposed formulations
in this work. To resolve this issue, we
employ an inexact zeroth-order scheme in
that $z_i(x)$ is approximated inexactly
at each iteration with a prescribed
inexactness. However, some computational
challenges may arise, as explained next. 

 \smallskip

\noindent {\bf (iii)} \underline{\it Bias and its
propagation caused by inexact stochastic
schemes}. In resolving (ii), we note
that a naive implementation of inexact
zeroth-order schemes may be problematic.
To elaborate, let us consider the
deterministic single-stage setting. Let
$z_i^{\varepsilon}(x)$ denote a
$\varepsilon$-inexact approximation of
$z_i(x)$, such that
$\|z_i^{\varepsilon}(x) - z_i(x) \|^2
\leq \varepsilon$ for any $x$ and  all
$i$.  Consider the  Gaussian smoothing
zeroth-order scheme
in~\cite{nesterov2017random} being
applied to the inexact local implicit
function $f_i^{\varepsilon}(x)\triangleq
h_i(x,z_i^\varepsilon(x))$. Let the
inexact zeroth-order gradient mapping be
given by $g_i^{\eta,\varepsilon}(x)
\triangleq
\frac{f_i^{\varepsilon}(x+\eta u)
-f_i^{\varepsilon}(x\mee{-\eta u})}{\eta}\, B\,u $,
where $u$ is a Gaussian random variable
associated with a correlation operator
$B^{-1}$~\cite[Sec.
3]{nesterov2017random}. One major issue
arising from this approach is that
$g_i^{\eta,\varepsilon}(x)$ is not
necessarily an unbiased estimator of
$\nabla f_i^{\eta}(x)$ \Rme{({since $f_i^\varepsilon$ is used instead of $f_i$})}, where
$f_i^{\eta}$ denotes the $\eta$-smoothed
local function. This issue brings forth a
challenge in employing existing
zeroth-order schemes in addressing the
distributed SMPECs considered in this
work. In particular, given that $x$ is
being updated iteratively in the
zeroth-order scheme, the gradient bias
may undesirably propagate throughout the
implementation, possibly causing the
divergence of the generated sequence of
iterates by the underlying inexact
zeroth-order scheme. Importantly, this
becomes even more challenging when
considering the presence of
expectation-valued objectives and
mappings in the distributed SMPECs
studied in this work. We aim to resolve
such issues arising from bias and
inexactness through a detailed and
rigorous convergence analysis equipped
with iteration and sample complexity
bounds.

 \smallskip

\noindent {\bf (iv)} \mee{\underline{\it The need for
distributed implementation over a
network}}. In the distributed SMPECs
considered in this work, agents only
have partial information about the
upper-level global objective function
and lower-level VIs.  This lack of {\it
perfect} information would only
exacerbate {the difficulties in
contending with} (i), (ii), and (iii).
To \Rme{address} this \Rme{complication}, we employ a
distributed gradient tracking scheme.
Recently, gradient tracking (GT) methods
have been developed for solving standard
distributed (and stochastic)
optimization problems in
convex~\cite{pu2020push,pu2021distributed,sun2022distributed}
and nonconvex
cases~\cite{xin2021improved,ghiasvand2024communication,ghiasvand2025robust}.
In particular, GT schemes bridge the gap
between centralized and distributed
optimization by being equipped with the
same speed of convergence as their
centralized counterparts~
\cite{xin2018linear,pu2021distributed,liu2024distributed,saadatniaki2020decentralized}.
\uvs{While there}
has been some recent progress in
addressing nonsmooth nonconvex
distributed optimization problems using
zeroth-order GT
methods~\cite{mhanna2023single}, \uvs{
existing GT methods} mainly
address unconstrained or
simple-to-project constrained
optimization problems. It appears that
no GT methods exist that can accommodate
the MPECs over networks\mee{,} even in
deterministic settings.  
 
\begin{remark}\em
\Rme{{As noted, GT methods are} well studied {for addressing distributed (single-level) optimization problems. However, in this work,} the {local} objectives are defined implicitly through lower-level equilibrium solution mappings, {and thus,} agents generally lack direct access to {both their local gradients (even when they exist) and function values of}  their local implicit objectives. Moreover, in the single-stage formulation, the lower-level problems are expectation-valued, requiring stochastic approximation and introducing further inexactness in the upper-level gradient surrogates. We {aim to resolve these challenges through designing randomized zeroth-order inexact GT schemes} for these distributed stochastic MPECs, together with a convergence analysis that explicitly accounts for the resulting {biased} inexact-gradient errors in the tracking recursion.} \mee{$\hfill \Box$}
\end{remark}
\subsection{Nonsmooth analysis and randomized smoothing}\label{sec:2.3}
As mentioned earlier, the implicit
function {$f$}, defined in
\eqref{eqn:prob-1stage} and
\eqref{eqn:prob-2stage}, is possibly
nondifferentiable and nonconvex. Recall
that the notion of {\it subgradient} was
formally introduced
in~\cite{rockafellar1963convex,moreau1963proprietes}
for resolving nonsmoothness in convex
functions. In addressing nonsmoothness
in the absence of convexity, the {\it
Clarke generalized
gradient}~\cite{clarke2008nonsmooth}
serves as a key tool, subsuming several
properties of the subgradient. In the
following, we review some key
definitions and properties.
\begin{definition}[{Clarke generalized
gradient~\cite[Ch.
2]{clarke2008nonsmooth}}]\em 
Let $\mathbb{E}$ be a real Banach space and $f:\mathbb{E} \to \mathbb{R} $ be a given function. The {\it Clarke generalized gradient} of $f$ at $x$ is given as
\begin{align*}
\partial f(x) \triangleq \left\{\zeta \in \mathbb{E}\ |\ f^{\circ}(x,v) \ge \langle \zeta , v\rangle, \quad \forall v \in \mathbb{E}\right\},
\end{align*}
where $f^{\circ}(x,v)$ denotes the {\it generalized directional derivative} of $f$ at $x$ in the direction $v$, defined as 
\begin{align*}
f^{\circ}(x,v)\triangleq \limsup_{y \rightarrow x,\,  y \in \mathbb{E}, \,t \downarrow 0}  \, \frac{f(y+tv)-f(y)}{t} .
\end{align*}\mee{$\hfill \Box$}
\end{definition}
\begin{remark}\em 
Note that if $f$ is continuously differentiable at $x$, then the Clarke generalized gradient \uvs{reduces} to the standard gradient, i.e. $\partial f(x)=\{\nabla_xf(x)\}$. Given $x,v \in \mathbb{E}$, $f^{\circ}(x,v)$ is not necessarily finite. However, if $f$ is locally Lipschitz near $x$, some key properties hold, including the following: (i) $f^{\circ}(x,v)$ is finite; (ii) $\partial f(x)$ is a nonempty and convex set; (iii) $f^{\circ}(x,v)=\max\{\langle \zeta ,v\rangle \mid \zeta \in \partial f \}$. \mee{$\hfill \Box$}
\end{remark}
In the following result, we turn our attention to the Euclidean space where several useful properties {hold} and  Rademacher's theorem {can be applied}. 
\begin{proposition}[{\cite[Ch. 2 and 3]{clarke2008nonsmooth}}] \em Let $x \in \mathbb{R}^n$ and let $f:\mathbb{R}^n \to \mathbb{R}$ be  Lipschitz continuous on $\mathbb{R}^n$. Then, the following statements hold.

\noindent (i) $\partial f(x)$ is a nonempty, convex, and compact set and $\|g\| \le L$ for any $g \in \partial f(x)$, where $L>0$ denotes the Lipschitz parameter of $f$.

\noindent (ii) $f$ is differentiable almost everywhere.

\noindent (iii) Let $\Rme{\Omega_f \subseteq \mathbb{R}^n}$ be the measure-zero set of points in $\mathbb{R}^n$ at which $f$ fails to be differentiable. Then, $
\partial f(x) = conv\left\{g\, |\, g=\lim_{k\to \infty}\nabla_xf(x_k), \ \Omega_f \not\ni x_k\to x\right\},$ i.e., $\partial f$ is ``blind to sets of measure zero.''
\end{proposition}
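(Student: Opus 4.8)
The plan is to prove (i) directly from the definitions, to obtain (ii) as an instance of Rademacher's theorem, and then to derive (iii) from (ii) together with elementary properties of $\partial f$. For (i): since $f$ is globally $L$-Lipschitz, $|f(y+tv)-f(y)| \le t L\|v\|$ for all $y,v$ and $t>0$, so $f^{\circ}(x,v) \le L\|v\|$ for every $v$; in particular $f^{\circ}(x,\cdot)$ is finite-valued, and from the $\limsup$ definition one checks that $v \mapsto f^{\circ}(x,v)$ is positively homogeneous and subadditive, hence sublinear. Consequently $\partial f(x)$ is exactly the set of linear functionals dominated by this sublinear map, which is nonempty by Hahn--Banach and convex; it is closed as an intersection of the half-spaces $\{\zeta : \langle \zeta,v\rangle \le f^{\circ}(x,v)\}$ over $v \in \mathbb{R}^n$, and bounded since $\langle \zeta,v\rangle \le L\|v\|$ for all $v$ forces $\|\zeta\| \le L$; closed and bounded in $\mathbb{R}^n$ gives compactness.

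For (ii): this is precisely Rademacher's theorem --- a locally Lipschitz function on an open subset of $\mathbb{R}^n$ is Lebesgue-a.e.\ differentiable. I would either invoke it directly or sketch the standard argument: a.e.\ differentiability along each line (one-dimensional Lebesgue differentiation), an $L^1$/$L^2$ limiting argument over a countable dense set of directions, and Fubini to pass from lines to $\mathbb{R}^n$. Let $\Omega_f$ be the resulting null set.

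For (iii): write $C(x) := \operatorname{conv}\{g \mid g = \lim_k \nabla_x f(x_k),\ x_k \to x,\ x_k \notin \Omega_f\}$, which is well defined because $\|\nabla_x f(x_k)\| \le L$ confines the sequences to a compact ball, and is a nonempty compact convex set. The inclusion $C(x) \subseteq \partial f(x)$ follows from the closed-graph (upper semicontinuity) property of the set-valued map $\partial f$ --- immediate from the $\limsup$ in the definition of $f^{\circ}$ --- together with $\nabla_x f(x_k) \in \partial f(x_k)$ at differentiability points and the convexity of $\partial f(x)$ from (i). For the reverse inclusion, since both sets are compact and convex it suffices to show that their support functions agree, i.e.\ $f^{\circ}(x,v) = \max_{g \in C(x)}\langle g,v\rangle$ for every $v$. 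The inequality ``$\ge$'' is easy: at any $y \notin \Omega_f$ near $x$ one has $\langle \nabla_x f(y),v\rangle = \lim_{t\downarrow 0}\tfrac{f(y+tv)-f(y)}{t} \le f^{\circ}(x,v)$, and one passes to the limit along a sequence realizing the max. The inequality ``$\le$'' is the substantive step: for $y$ near $x$ the map $s \mapsto f(y+sv)$ is Lipschitz, hence absolutely continuous, so $\tfrac{f(y+tv)-f(y)}{t} = \tfrac1t\int_0^t \langle \nabla_x f(y+sv),v\rangle\, ds$ whenever the segment meets $\Omega_f$ in a null set of parameters $s$, which by Fubini in the direction $v$ holds for a.e.\ base point $y$; bounding the integrand and letting $(y,t) \to (x,0^+)$ --- using density of such $y$ and the Lipschitz continuity of $f$ to absorb the exceptional $y$ --- gives $f^{\circ}(x,v) \le \limsup_{y \to x,\ y \notin \Omega_f}\langle \nabla_x f(y),v\rangle = \max_{g \in C(x)}\langle g,v\rangle$. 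Combining the two inclusions yields $\partial f(x) = C(x)$.

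The main obstacle is the identity $f^{\circ}(x,v) = \max_{g \in C(x)}\langle g,v\rangle$, and within it the ``$\le$'' direction: one must justify representing the difference quotient as an average of directional derivatives and then exclude the measure-zero set of ``bad'' base points uniformly as $(y,t)\to(x,0^+)$. Everything else --- sublinearity of $f^{\circ}(x,\cdot)$, Hahn--Banach, the norm bound, and the closed graph of $\partial f$ --- is routine once Rademacher's theorem is in hand.
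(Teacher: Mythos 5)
The paper does not prove this proposition at all; it is quoted verbatim from Clarke's monograph, and your outline reproduces exactly the standard arguments given there: sublinearity of $f^{\circ}(x,\cdot)$ plus Hahn--Banach and the bound $f^{\circ}(x,v)\le L\|v\|$ for (i), Rademacher's theorem for (ii), and the support-function identity $f^{\circ}(x,v)=\max_{g\in C(x)}\langle g,v\rangle$ via the Fubini/fundamental-theorem representation of difference quotients for (iii). Your sketch is correct, and the one delicate step you flag --- discarding the null set of bad base points uniformly as $(y,t)\to(x,0^{+})$ by perturbing $y$ and absorbing the error with Lipschitz continuity --- is precisely how Clarke's Theorem 2.5.1 handles it, so nothing is missing.
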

To contend with challenges in the computation of a Clarke-stationary point~\cite{zhang2020complexity}, we will utilize the notion of $\delta$-Clarke generalized gradient defined next. 
\begin{definition}[{Approximate Clarke generalized gradient~\cite{goldstein1977optimization}}]\label{def:apprpx_Clarle}\em 
Let $f:\mathbb{R}^n \to \mathbb{R} $ be a function.  Given $\delta>0$, the {\it $\delta$-Clarke generalized gradient} of $f$ at $x$ is given as
\begin{align*}
\partial_\delta f(x) \triangleq  conv\left( \bigcup_{y \in x+\delta \mathbb{B} } \partial f(y) \right).
\end{align*}
Noting that $\partial f(x)  \subset \partial_\delta   f(x)$,  the set $\partial_\delta f$ is an expansion of the Clarke subgradient  $\partial f$. \mee{$\hfill \Box$}
\end{definition}

To contend with the nondifferentiability of the implicit function,  we employ a randomized smoothing technique that {can be traced} back to a class of {\it averaged functions}~\cite{steklov1907expressions} in the early 1900s.  This technique has been utilized in recent decades, in both convex~\cite{bertsekas1972stochastic,lakshmanan2008decentralized,yousefian2012stochastic} and nonconvex optimization~\cite{nesterov2017random,lin2022gradient,cui2023complexity,qiu2023zeroth,marrinan2026zeroth}.\\ 

\noindent {\bf Randomized smoothing scheme.} Given a continuous function $h:\mathbb{R}^n \to \mathbb{R}$ and a smoothing parameter $\eta>0$,  we define the smoothed function $h^{\eta}(x)\triangleq \mathbb{E}_{\mee{\bf u}\in \mathbb{B}}\left[h(x+\eta \mee{\bf u})\right]$,  where $u$ is a random vector in the unit ball $\mathbb{B}$. In the following, we review some of the key properties of this smoothing. \fyy{The proof of the following lemma is presented in the Appendix.}
\begin{lemma}\label{lem:smoothing_props}  \em
Let $h:\mathbb{R}^n \to \mathbb{R}$ be given\Rme{. Suppose that $h$ is Lipschitz continuous with parameter $L_0>0$ and define} $h^{\eta}(x)\triangleq \mathbb{E}_{{\bf u}\in \mathbb{B}}\left[h(x+\eta {\bf u})\right]$ for some $\eta>0$. Then, the following results hold.   

\noindent (i) The smoothed function $h^{\eta}$ is continuously differentiable and for all $x \in \mathbb{R}^n$ we have 
\begin{align}\label{eqn:ZO}
\nabla_x h^{\eta}(x) =\left(\tfrac{n}{\eta}\right)\mathbb{E}_{{\bf v}\in \mathbb{S}}\left[\left(h(x+\eta {\bf v})\right){\bf v}\right]=\left(\tfrac{n}{2\eta}\right)\mathbb{E}_{{\bf v}\in \mathbb{S}}\left[\left(h(x+\eta {\bf v})-h(x-\eta {\bf v})\right){\bf v}\right].
\end{align}
\noindent (ii) $|h^{\eta}(x)-h(x)|\le L_0\eta$.

\noindent (iii) \Rme{T}he mapping $\nabla h^{\eta}$ is Lipschitz continuous, i.e.,  $\|\nabla h^{\eta}(x)-\nabla h^{\eta}(y)\|\le \frac{L_0\mj{\sqrt{n}}}{\eta}\|x-y\|$ for any $x , y \in \mathbb{R}^n$.

\end{lemma}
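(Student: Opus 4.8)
The plan is to establish the three claims of Lemma~\ref{lem:smoothing_props} in order, relying on the representation of $h^\eta$ as an integral over the unit ball and standard divergence-theorem arguments.

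\textbf{Part (i): differentiability and the gradient formula.} First I would write $h^\eta(x) = \frac{1}{\mathrm{vol}(\eta\mathbb{B})}\int_{x+\eta\mathbb{B}} h(y)\,dy$, obtained from $h^\eta(x)=\mathbb{E}_{\mathbf{u}\in\mathbb{B}}[h(x+\eta\mathbf{u})]$ by the change of variables $y = x+\eta\mathbf{u}$. Since $h$ is continuous, this is a convolution of an $L^1_{\mathrm{loc}}$ function with the (Lipschitz, compactly supported) normalized indicator of $\eta\mathbb{B}$, so $h^\eta\in C^1$; to get the explicit formula I would differentiate under the integral sign after moving the $x$-dependence into the domain, and apply the divergence theorem to convert the volume integral of the gradient into a surface integral over the sphere $x+\eta\mathbb{S}$, which yields $\nabla h^\eta(x) = \frac{n}{\eta}\,\mathbb{E}_{\mathbf{v}\in\mathbb{S}}[h(x+\eta\mathbf{v})\mathbf{v}]$ after accounting for the ratio of the sphere's surface area to the ball's volume. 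The second equality in \eqref{eqn:ZO} follows because $\mathbb{E}_{\mathbf{v}\in\mathbb{S}}[\mathbf{v}]=0$ by symmetry (the uniform distribution on $\mathbb{S}$ is invariant under $\mathbf{v}\mapsto-\mathbf{v}$), so one may subtract $\frac{n}{2\eta}\mathbb{E}_{\mathbf{v}}[h(x-\eta\mathbf{v})\mathbf{v}]$, which equals $\frac{n}{2\eta}\mathbb{E}_{\mathbf{v}}[h(x+\eta\mathbf{v})\mathbf{v}]$ under the same sign flip, from the first expression.

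\textbf{Part (ii): approximation error.} This is immediate from Lipschitz continuity: $|h^\eta(x)-h(x)| = |\mathbb{E}_{\mathbf{u}\in\mathbb{B}}[h(x+\eta\mathbf{u})-h(x)]| \le \mathbb{E}_{\mathbf{u}\in\mathbb{B}}[L_0\eta\|\mathbf{u}\|] \le L_0\eta$ since $\|\mathbf{u}\|\le 1$ on $\mathbb{B}$.

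\textbf{Part (iii): Lipschitz continuity of $\nabla h^\eta$.} Using the surface-integral form, I would write $\nabla h^\eta(x)-\nabla h^\eta(y) = \frac{n}{\eta}\big(\mathbb{E}_{\mathbf{v}\in\mathbb{S}}[h(x+\eta\mathbf{v})\mathbf{v}] - \mathbb{E}_{\mathbf{v}\in\mathbb{S}}[h(y+\eta\mathbf{v})\mathbf{v}]\big)$; subtracting the constant $\mathbb{E}_{\mathbf{v}\in\mathbb{S}}[h(x)\mathbf{v}]=0$ inside the first expectation (and similarly $h(y)$ in the second) and bounding $|h(x+\eta\mathbf{v})-h(x) - h(y+\eta\mathbf{v})+h(y)|$ does not directly give the claimed rate. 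The cleaner route is to re-derive the gradient as a surface integral with the \emph{ball} normalization and use $\|x-y\|$ directly: represent $\nabla h^\eta$ via the second (central-difference) formula and split $h(x+\eta\mathbf{v})-h(x-\eta\mathbf{v}) - (h(y+\eta\mathbf{v})-h(y-\eta\mathbf{v}))$, but the key observation is that one gains a factor $\sqrt n$ rather than $n$ by exploiting that $\mathbb{E}_{\mathbf{v}\in\mathbb{S}}[\mathbf{v}\mathbf{v}^\top] = \frac{1}{n}I$, so that the ``$n$'' in front is partially cancelled when the difference quotient direction is averaged against $\mathbf{v}$. Concretely, I would bound $\|\nabla h^\eta(x)-\nabla h^\eta(y)\| = \sup_{\|w\|=1} w^\top(\nabla h^\eta(x)-\nabla h^\eta(y))$, push $w$ inside the expectation to get $\frac{n}{2\eta}\mathbb{E}_{\mathbf{v}}[(\Delta_x-\Delta_y)(w^\top\mathbf{v})]$ where $\Delta_x = h(x+\eta\mathbf{v})-h(x-\eta\mathbf{v})$, apply Cauchy--Schwarz over the randomness in $\mathbf{v}$: $\le \frac{n}{2\eta}\sqrt{\mathbb{E}[(\Delta_x-\Delta_y)^2]}\sqrt{\mathbb{E}[(w^\top\mathbf{v})^2]}$, use $|\Delta_x-\Delta_y|\le 2L_0\|x-y\|$ for the first factor and $\mathbb{E}[(w^\top\mathbf{v})^2] = 1/n$ for the second, giving $\frac{n}{2\eta}\cdot 2L_0\|x-y\|\cdot\frac{1}{\sqrt n} = \frac{L_0\sqrt n}{\eta}\|x-y\|$.

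The main obstacle is Part (iii): a naive triangle-inequality bound on the surface-integral form produces an $\mathcal{O}(n/\eta)$ Lipschitz constant, and obtaining the sharper $\mathcal{O}(\sqrt n/\eta)$ requires the variance-type computation $\mathbb{E}_{\mathbf{v}\in\mathbb{S}}[(w^\top\mathbf{v})^2]=1/n$ together with the Cauchy--Schwarz splitting described above; Parts (i) and (ii) are routine once the convolution/divergence-theorem identity for the gradient is in place, and I would likely cite \cite{nesterov2017random} or \cite{lin2022gradient} for the standard parts while giving the short argument for the sharp constant.
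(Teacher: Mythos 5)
Your proof is correct, and parts (i) and (ii) follow essentially the same (standard) route that the paper handles by deferring to \cite{cui2023complexity}: the divergence-theorem surface-integral identity for the gradient, the $\mathbf{v}\mapsto-\mathbf{v}$ symmetry for the central-difference form, and the one-line Lipschitz bound for (ii). Part (iii) is where you genuinely diverge from the paper. The paper invokes \cite[Prop.~2.2]{lin2022gradient} to get the Lipschitz constant $\tfrac{2c_{n-1}}{c_n}\tfrac{L_0}{\eta}$, where $\tfrac{2c_{n-1}}{c_n}$ is a ratio of double factorials (with a $\tfrac{2}{\pi}$ correction for even $n$), and then spends the bulk of its proof on a two-case induction showing $\tfrac{2c_{n-1}}{c_n}\le\sqrt{n}$. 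You instead obtain the $\sqrt{n}$ directly and self-containedly: dualize the norm, push the test vector $w$ into the expectation, and apply Cauchy--Schwarz with $|\Delta_x-\Delta_y|\le 2L_0\|x-y\|$ and $\mathbb{E}_{\mathbf{v}\in\mathbb{S}}[(w^\top\mathbf{v})^2]=\tfrac{1}{n}$, which gives $\tfrac{n}{2\eta}\cdot 2L_0\|x-y\|\cdot n^{-1/2}=\tfrac{L_0\sqrt{n}}{\eta}\|x-y\|$ exactly. Your argument avoids any external citation and any case analysis on the parity of $n$; the paper's route buys a marginally sharper intermediate constant (asymptotically $\sqrt{2n/\pi}$) that it then deliberately relaxes to $\sqrt{n}$ anyway, so nothing is lost by your approach. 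One small inaccuracy in your write-up of part (i): the normalized indicator of $\eta\mathbb{B}$ is not Lipschitz (it is discontinuous on the sphere); the $C^1$ regularity of $h^\eta$ does not come from smoothness of the kernel but from the fact that the difference quotient of the moving-domain integral converges to the continuous surface integral $\tfrac{n}{\eta}\mathbb{E}_{\mathbf{v}\in\mathbb{S}}[h(x+\eta\mathbf{v})\mathbf{v}]$ when $h$ is continuous --- your divergence-theorem derivation already supplies this, so the conclusion stands.
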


\begin{remark}\em  The equation \eqref{eqn:ZO} provides a randomized zeroth-order gradient for the smoothed function $h^\eta$.  Indeed,  it implies that $\left(\tfrac{n}{\mj{2}\eta}\right)\left(h(x+\eta v)-h(x {-\eta v})\right) v$ is an unbiased stochastic zeroth-order gradient of $\nabla_x h^{\eta}$ at $x$\Rme{, where $v$ denotes a realization of the random vector $\mathbf{v}$}. This will be utilized in the design of zeroth-order gradient tracking methods in the subsequent sections. {$\hfill \Box$} 
\end{remark}
Consider \uvs{minimizing} a Lipschitz continuous function $h$.  To alleviate the challenges arising from nonsmoothness,  one may consider \uvs{minimizing} the approximate smoothed function $h^\eta$.  In the following, we provide a result that builds a connection between a stationary point of the smoothed problem and an approximate Clarke stationary point of the original nonsmooth optimization problem. 
\begin{proposition}\label{Prop:2eta}\em
Let $h:\mathbb{R}^n \to \mathbb{R}$ be given and $h^{\eta}(x)\triangleq \mathbb{E}_{{\bf u}\in \mathbb{B}}\left[h(x+\eta {\bf u})\right]$ for some arbitrary $\eta>0$.  Then, the following results hold. 

\noindent (i) For any $x \in \mathbb{R}^n$,  $\nabla h^{\eta}(x)\in \partial_{\eta} h(x)$.

\noindent (ii) For  any $x \in \mathbb{R}^n$,  $\mbox{dist}(0,\partial_{\eta} h(x)) \leq \|\nabla h^\eta(x)\|$.

\end{proposition}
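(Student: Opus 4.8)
The goal is to prove Proposition~\ref{Prop:2eta}, which asserts two facts: (i) $\nabla h^\eta(x) \in \partial_\eta h(x)$ for every $x$, and (ii) $\mathrm{dist}(0,\partial_\eta h(x)) \le \|\nabla h^\eta(x)\|$. Statement (ii) is an immediate consequence of (i): since $\nabla h^\eta(x)$ is a member of the set $\partial_\eta h(x)$, the distance from $0$ to that set is at most $\|\nabla h^\eta(x) - 0\| = \|\nabla h^\eta(x)\|$. So the entire content lies in proving (i), and I will organize the plan around that.

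For (i), the plan is to use the integral representation of the gradient together with the ``blind to sets of measure zero'' characterization of the Clarke generalized gradient from Proposition~(iii) above and the definition of $\partial_\eta h$ as the convex hull of the union of $\partial h(y)$ over $y \in x + \eta\mathbb{B}$ (Definition~\ref{def:apprpx_Clarle}). First I would write $\nabla h^\eta(x) = \mathbb{E}_{\mathbf{u}\in\mathbb{B}}[\nabla h(x+\eta\mathbf{u})]$, which follows by differentiating under the integral sign in the definition $h^\eta(x) = \mathbb{E}_{\mathbf{u}\in\mathbb{B}}[h(x+\eta\mathbf{u})]$; this interchange is justified because $h$ is locally Lipschitz hence the difference quotients are uniformly bounded, and $h$ is differentiable almost everywhere (Rademacher, Proposition~(ii)), so $\nabla h(x+\eta\mathbf{u})$ is defined for a.e.\ $\mathbf{u}$. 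Then the key observation is that for every $\mathbf{u}$ in the (full-measure) set where $h$ is differentiable at $x+\eta\mathbf{u}$, we have $\nabla h(x+\eta\mathbf{u}) \in \partial h(x+\eta\mathbf{u}) \subseteq \bigcup_{y\in x+\eta\mathbb{B}}\partial h(y) \subseteq \partial_\eta h(x)$, because $x+\eta\mathbf{u} \in x+\eta\mathbb{B}$. Hence $\nabla h^\eta(x)$ is the average (a particular convex combination, in the integral sense) of points all lying in the convex, compact set $\partial_\eta h(x)$, and therefore $\nabla h^\eta(x) \in \partial_\eta h(x)$.

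The one technical point that needs care — and the main (mild) obstacle — is the step concluding that an integral average of points in $\partial_\eta h(x)$ stays in $\partial_\eta h(x)$. This is a Jensen-type / Aumann-integral argument: $\partial_\eta h(x)$ is convex and compact (it is the convex hull of a bounded set, and one can argue it is closed since $h$ is globally Lipschitz so all the gradients are bounded by $L_0$ and the union over the compact ball $x+\eta\mathbb{B}$ of the closed sets $\partial h(y)$ has compact closure), so the mean of any probability measure supported on it lies in it. I would state this as: since $\nabla h(x+\eta\mathbf{u}) \in \partial_\eta h(x)$ for a.e.\ $\mathbf{u}\in\mathbb{B}$ and $\partial_\eta h(x)$ is closed and convex, $\mathbb{E}_{\mathbf{u}\in\mathbb{B}}[\nabla h(x+\eta\mathbf{u})] \in \partial_\eta h(x)$. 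If one wanted to avoid invoking Aumann's theorem, one can alternatively argue via support functions: for every direction $v$, $\langle \nabla h^\eta(x), v\rangle = \mathbb{E}_{\mathbf{u}}[\langle \nabla h(x+\eta\mathbf{u}), v\rangle] \le \sup_{\zeta \in \partial_\eta h(x)} \langle \zeta, v\rangle$, and since $\partial_\eta h(x)$ is closed and convex, membership follows from the supporting-hyperplane characterization. I would present the support-function version since it is self-contained given the stated properties.

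Finally, having established (i), I would conclude (ii) in one line: $0$ and $\nabla h^\eta(x)$ are both points of $\mathbb{R}^n$, $\nabla h^\eta(x) \in \partial_\eta h(x)$, so $\mathrm{dist}(0,\partial_\eta h(x)) = \inf_{\zeta\in\partial_\eta h(x)}\|\zeta\| \le \|\nabla h^\eta(x)\|$. This completes the proof. The whole argument is short; the only thing worth spelling out carefully is the differentiation-under-the-integral step (citing local Lipschitzness and Rademacher, both available from the Proposition preceding the statement) and the closedness/convexity of $\partial_\eta h(x)$ needed for the averaging step.
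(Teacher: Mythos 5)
Your proposal is correct. For part (ii) your argument is identical to the paper's: since $\nabla h^\eta(x)$ is an element of $\partial_\eta h(x)$, the infimum defining the distance is at most $\|\nabla h^\eta(x)\|$. The difference is in part (i): the paper does not prove it at all — it simply cites an external result (\cite[Lemma 4]{kornowski2024algorithm}) — whereas you supply a self-contained proof. Your argument is the standard one and it goes through: (a) $\nabla h^\eta(x)=\mathbb{E}_{\mathbf{u}\in\mathbb{B}}[\nabla h(x+\eta\mathbf{u})]$ by dominated convergence (the difference quotients are bounded by the Lipschitz constant, and Rademacher plus the fact that $\mathbf{u}\mapsto x+\eta\mathbf{u}$ is an affine bijection gives differentiability at $x+\eta\mathbf{u}$ for a.e.\ $\mathbf{u}$); (b) each such gradient lies in $\partial h(x+\eta\mathbf{u})\subseteq\partial_\eta h(x)$; (c) $\partial_\eta h(x)$ is compact and convex (convex hull of a bounded union with compact closure, by upper semicontinuity of $\partial h$ and Carath\'eodory), so the integral average stays inside — and your support-function formulation of step (c) is a clean way to avoid invoking Aumann's theorem. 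Two small caveats worth flagging if you write this up: the proposition as stated only says ``let $h$ be given,'' so you should state explicitly that you are assuming $h$ Lipschitz (as the surrounding context and the cited lemma both do, and as step (a) genuinely requires); and step (a) quietly reproves Lemma~\ref{lem:smoothing_props}(i) in a different form, so you could instead lean on that lemma if you prefer. Net effect: your route makes the paper self-contained at the cost of a few extra lines; the paper's route outsources the measure-theoretic work to a reference.
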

\begin{proof}
(i)  See~\cite[Lemma 4]{kornowski2024algorithm}.

\noindent (ii) 
By the definition of the distance function, we have $\mbox{dist}(0, \partial_\eta h(x)) = \inf_{y \in \partial_\eta h(x)} \|0-y\|= \inf_{y \in \partial_\eta h(x)} \|y\|.$ 
From part (i), we know that \(\nabla h^\eta(x) \in \partial_\eta h(x)\). Therefore, it follows that $\inf_{y \in \partial_\eta h(x)} \|y\| \le \|\nabla h^\eta(x)\|$. The preceding two relations imply the result. 
\end{proof}
{We use} the following result to construct a bound on the second moment of zeroth-order gradients. 
\begin{lemma}[L\'{e}vy concentration on $\mathbb{S}$ {\cite[Proposition 3.11 and Example 3.12]{wainwright2019high}}]\em \label{Lemma:Levy concentration} Assume that ${c}:\mathbb{R}^n\to \mathbb{R}$ is $L$-Lipschitz, and let ${\bf v}$ be a random vector uniformly distributed on the unit sphere $\mathbb{S}$. Then, we have
\Rme{$\mathbb{P} [\lvert{c}({\bf v})-\mathbb{E}[{c}({\bf v})]\rvert\ge \alpha]\le 2\sqrt{2\pi}e^{-\frac{n\alpha^2 }{8L^2}}.$}
\end{lemma}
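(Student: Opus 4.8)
The statement is the standard concentration-of-measure inequality for Lipschitz functions on the Euclidean sphere, and it is quoted verbatim from~\cite[Proposition 3.11 and Example 3.12]{wainwright2019high}; for the paper it therefore suffices to cite that reference, but below I sketch the self-contained derivation one would reproduce. The plan is to exploit that $c(\mathbf v)$ is an $L$-Lipschitz image of the uniform measure on $\mathbb S$ (the $(n-1)$-dimensional unit sphere), for which a sub-Gaussian deviation bound follows either from a Gaussian comparison or from the spherical isoperimetric inequality.

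\emph{Route A (Gaussian comparison).} First I would use the representation $\mathbf v \stackrel{d}{=} g/\|g\|$ with $g\sim\mathcal N(0,I_n)$ a standard Gaussian vector, and set $\psi(g)\triangleq c(g/\|g\|)$. A chain-rule computation gives $\|\nabla\psi(x)\|\le L/\|x\|$, so $\psi$ restricted to $\{\|x\|\ge r\}$ is Lipschitz with constant of order $L/r$; extending this restriction to a globally $O(L/r)$-Lipschitz function on $\mathbb R^n$ (Kirszbraun's theorem) and invoking the Gaussian Lipschitz concentration inequality yields $\mathbb P\!\left(|\psi(g)-\mathbb E[\psi(g)]|\ge t\right)\le 2e^{-c r^2 t^2/L^2}$ up to a small exceptional event. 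I would then bound $\mathbb P(\|g\|<r)$ by a $\chi$-tail estimate, take $r$ of order $\sqrt n$, and convert the mean of the extended function back to $\mathbb E[c(\mathbf v)]$ (the median--mean gap being $O(L/\sqrt n)$), arranging the numerical constants so as to land on the prefactor $2\sqrt{2\pi}$ and the exponent $n\alpha^2/(8L^2)$.

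\emph{Route B (isoperimetry).} Alternatively, I would invoke L\'evy's spherical isoperimetric inequality---among Borel subsets of $\mathbb S$ of a given measure, spherical caps minimize the measure of their $\varepsilon$-enlargement---applied to the sub- and super-level sets $\{c\le M\}$ and $\{c\ge M\}$, where $M$ is a median of $c(\mathbf v)$. Since the Euclidean chord distance on $\mathbb S$ is dominated by the geodesic distance, $c$ is also $L$-Lipschitz with respect to the geodesic metric, so the cap estimate controls $\mathbb P(c(\mathbf v)\ge M+\alpha)$ by a term of the form $\sqrt{\pi/2}\,e^{-(n-1)\alpha^2/(2L^2)}$ and, symmetrically, the lower tail; a median-to-mean conversion then produces the stated two-sided bound after collecting constants.

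The main obstacle is bookkeeping rather than conceptual: one must carry out the truncation/extension step of Route A (or the geodesic-versus-chord comparison of Route B) together with the median-to-mean conversion carefully enough that the final exponent is exactly $n\alpha^2/(8L^2)$ and the prefactor exactly $2\sqrt{2\pi}$, rather than $c\,n\alpha^2/L^2$ with an unspecified absolute constant. In the paper this can simply be deferred to~\cite{wainwright2019high}.
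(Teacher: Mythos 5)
Your proposal matches the paper exactly: the paper offers no proof of this lemma and simply cites \cite[Proposition 3.11 and Example 3.12]{wainwright2019high}, which is precisely what you conclude is sufficient. Your two sketched routes (Gaussian comparison via $\mathbf{v} \stackrel{d}{=} g/\|g\|$, and L\'evy's spherical isoperimetric inequality with a median-to-mean conversion) are both standard, correct derivations of this concentration bound, so nothing is missing.
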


\Rme{\begin{remark}\em In addressing the single-stage problem, we derive the overall complexity $\mathcal{O}\!\left(n^{7/2}\epsilon^{-4}\right)$, improving the dependence on $n$ compared to the existing results for the centralized SMPECs~\cite{cui2023complexity}. This improvement is \emph{not} due to decentralization or graph topology. In our terminology, the centralized setting is the special case $m=1$, and for $m=1$ our formulation reduces to the centralized SMPEC setting in~\cite{cui2023complexity}. The better $n$-dependence comes from using a central-difference smoothing estimator (instead of the forward-difference estimator in~\cite{cui2023complexity}), and we show that the gradient of the smoothed function is Lipschitz continuous with constant $\frac{L_0\sqrt{n}}{\eta}$ (Lemma~\ref{lem:smoothing_props}(iii)), which improves upon the corresponding bound $\frac{L_0 n}{\eta}$ in~\cite[Lemma~1(iv)]{cui2023complexity}. Hence, the improved $n$-dependence persists even for $m=1$ and should be attributed to the estimator choice rather than decentralization.
\end{remark}}
 
\section{Distributed single-stage stochastic MPECs}\label{sec:1s}
{In this section, we consider the
single-stage SMPEC \Rme{defined}
in~\eqref{eqn:prob-1stage}, where $m$
agents aim to cooperatively minimize
the global implicit objective
function. We assume that agents can
communicate their updated iterates
with their neighboring agents over an
undirected and connected network. We \us{begin \Rme{with} a formal statement of} the network assumptions.
\begin{assumption}\label{assum:mixxx} \em
(i) We assume that the agents
communicate {over} an undirected and
connected graph $\mathcal{G}
=(\mathcal{V},\mathcal{E})$, where
$\mathcal{V}=\{1,\ldots,m\}$ {denotes} the
set of \us{nodes} and $\mathcal{E}
\subseteq \mathcal{V}\times
\mathcal{V}$ is the set of connecting
edges. (ii) We let $\mathbf{W} \in
\mathbb{R}^{m \times m}$ denote the
mixing matrix of agents and assume
that $\mathbf{W}$ is doubly stochastic
and $w_{i,i}>0$ for some $i \in
\mathcal{V}$. 
\end{assumption}
Under Assumption~\ref{assum:mixxx}, we
have $\lambda_{\mathbf{W}}\triangleq
\|\mathbf{W}-\tfrac{1}{m}\mathbf{1}\mathbf{1}^{\top}\|_2<1$
(cf.~\cite{qu2017harnessing,pu2021distributed}).
This is formally presented in the
following result that will be utilized
throughout the analysis. Note that
$\us{(1-\lambda_{\mathbf{W}})} \in (0,1]$ can
be viewed as a measure of the
network's connectivity.  
\begin{lemma}[{\cite[Lemma 1]{pu2021distributed}}]\label{lem:lambda_w}\em
Let Assumption~\ref{assum:mixxx} hold and define $\lambda_{\mathbf{W}}\triangleq \|\mathbf{W}-\tfrac{1}{m}\mathbf{1}\mathbf{1}^{\top}\|$. Then, $0 \leq \lambda_{\mathbf{W}}<1$ and for all $\mathbf{x} \in \mathbb{R}^{m\times n}$, we have $\|\mathbf{W}\mathbf{x} - \mathbf{1}\bar{x}\| \leq \lambda_{\mathbf{W}}\|\mathbf{x}-\mathbf{1}\bar{x}\|$, where $\bar{x} \triangleq \tfrac{1}{m}\mathbf{1}^\top \mathbf{x}$.
\end{lemma}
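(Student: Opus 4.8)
The plan is to isolate the one substantive fact, namely $\lambda_{\mathbf{W}}<1$, and then get the contraction estimate for free by elementary linear algebra with the averaging projector. Throughout I would write $\mathbf{M}\triangleq\tfrac{1}{m}\mathbf{1}\mathbf{1}^{\top}$, which is the orthogonal projection of $\mathbb{R}^{m}$ onto $\mathrm{span}(\mathbf{1})$, so that $\mathbf{M}^{2}=\mathbf{M}=\mathbf{M}^{\top}$ and $\mathbf{1}\bar{x}=\mathbf{M}\mathbf{x}$ for every $\mathbf{x}\in\mathbb{R}^{m\times n}$; and I would use repeatedly that double stochasticity gives $\mathbf{W}\mathbf{1}=\mathbf{1}$ and $\mathbf{1}^{\top}\mathbf{W}=\mathbf{1}^{\top}$, hence $\mathbf{W}\mathbf{M}=\mathbf{M}\mathbf{W}=\mathbf{M}$ and therefore $(\mathbf{W}-\mathbf{M})\mathbf{M}=\mathbf{M}(\mathbf{W}-\mathbf{M})=\mathbf{0}$.

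First I would prove $0\le\lambda_{\mathbf{W}}<1$, where $\lambda_{\mathbf{W}}$ denotes the spectral norm of $\mathbf{W}-\mathbf{M}$. Nonnegativity is trivial. For strictness, I would note that $\mathbf{W}-\mathbf{M}$ annihilates $\mathbf{1}$ and, since $\mathbf{1}^{\top}\mathbf{W}=\mathbf{1}^{\top}$, maps all of $\mathbb{R}^{m}$ into $\mathbf{1}^{\perp}$; on the $\mathbf{W}$-invariant subspace $\mathbf{1}^{\perp}$ it coincides with $\mathbf{W}$ (because $\mathbf{M}\mathbf{z}=0$ there), so $\lambda_{\mathbf{W}}=\max\{\|\mathbf{W}\mathbf{z}\|:\mathbf{z}\in\mathbf{1}^{\perp},\ \|\mathbf{z}\|=1\}$. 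By the Birkhoff--von Neumann theorem $\mathbf{W}$ is a convex combination of permutation matrices, each an isometry, so this maximum is at most $1$; to rule out equality I would invoke Perron--Frobenius theory: Assumption~\ref{assum:mixxx} makes $\mathbf{W}$ the transition matrix of an irreducible chain (the graph is connected) that is aperiodic (the positive diagonal entry is a self-loop, and a single self-loop makes an irreducible chain aperiodic), so $1$ is a simple eigenvalue of $\mathbf{W}$ with every other eigenvalue of modulus strictly below $1$; matching this with the strict convexity of $\|\cdot\|$ in the Birkhoff sum forces every permuted copy $P_{k}\mathbf{z}$ to agree, which on a connected support means $\mathbf{z}$ has all entries equal, contradicting $\mathbf{z}\perp\mathbf{1},\ \mathbf{z}\ne 0$. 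In the symmetric case --- the standard choice on an undirected graph --- this last step is shorter still: $\mathbf{W}-\mathbf{M}$ is then symmetric, so $\lambda_{\mathbf{W}}$ equals its spectral radius, which is $\max_{i\ge 2}|\lambda_{i}(\mathbf{W})|<1$. (This is exactly \cite[Lemma~1]{pu2021distributed}.)

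The contraction bound then follows in two lines. Using $\mathbf{1}\bar{x}=\mathbf{M}\mathbf{x}$ and splitting $\mathbf{x}=\mathbf{M}\mathbf{x}+(\mathbf{I}-\mathbf{M})\mathbf{x}$,
\[
\mathbf{W}\mathbf{x}-\mathbf{1}\bar{x}=(\mathbf{W}-\mathbf{M})\mathbf{x}=(\mathbf{W}-\mathbf{M})\mathbf{M}\mathbf{x}+(\mathbf{W}-\mathbf{M})(\mathbf{I}-\mathbf{M})\mathbf{x}=(\mathbf{W}-\mathbf{M})(\mathbf{x}-\mathbf{1}\bar{x}),
\]
because $(\mathbf{W}-\mathbf{M})\mathbf{M}=\mathbf{0}$; applying the columnwise submultiplicativity bound $\|\mathbf{A}\mathbf{B}\|_{F}\le\|\mathbf{A}\|_{2}\|\mathbf{B}\|_{F}$ with $\mathbf{A}=\mathbf{W}-\mathbf{M}$ yields $\|\mathbf{W}\mathbf{x}-\mathbf{1}\bar{x}\|\le\lambda_{\mathbf{W}}\|\mathbf{x}-\mathbf{1}\bar{x}\|$.

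The main obstacle is entirely in the first step, the strict inequality $\lambda_{\mathbf{W}}<1$: it is precisely here that connectivity of $\mathcal{G}$ and the self-loop hypothesis $w_{i,i}>0$ are used, and it is the only place where anything beyond the projector identities enters. Once it is in hand, the stated contraction is immediate and, notably, independent of the column dimension $n$.
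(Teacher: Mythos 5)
The paper offers no proof of this lemma at all---it is imported verbatim from \cite[Lemma~1]{pu2021distributed}---so what you have written is a complete, self-contained argument where the paper has only a citation, and it is correct. The projector algebra is exactly right: $(\mathbf{W}-\mathbf{M})\mathbf{M}=\mathbf{M}(\mathbf{W}-\mathbf{M})=\mathbf{0}$ gives $\mathbf{W}\mathbf{x}-\mathbf{1}\bar{x}=(\mathbf{W}-\mathbf{M})(\mathbf{x}-\mathbf{1}\bar{x})$, and the columnwise bound $\|\mathbf{A}\mathbf{B}\|\le\|\mathbf{A}\|_2\|\mathbf{B}\|$ finishes the contraction, as does the identification $\lambda_{\mathbf{W}}=\max\{\|\mathbf{W}\mathbf{z}\|:\mathbf{z}\in\mathbf{1}^{\perp},\|\mathbf{z}\|=1\}$. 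Two remarks on the one substantive step, $\lambda_{\mathbf{W}}<1$. First, in the general (possibly non-symmetric) doubly stochastic case the Perron--Frobenius facts you cite control the \emph{eigenvalues} of $\mathbf{W}$, not its operator norm on $\mathbf{1}^{\perp}$; for a non-normal matrix the norm can strictly exceed the spectral radius, so eigenvalue simplicity alone would not close the argument. What actually does the work is the Birkhoff equality-case argument you append: if $\|\mathbf{W}\mathbf{z}\|=\|\mathbf{z}\|$ then all the copies $P_k\mathbf{z}$ must coincide, hence $\mathbf{z}$ is constant on each neighborhood $N(i)=\{j:w_{ij}>0\}$, and the self-loop at some $i_0$ together with connectivity and the symmetric support of an undirected graph propagates constancy to all of $\mathcal{V}$, forcing $\mathbf{z}\parallel\mathbf{1}$. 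I would spell out that propagation (it is precisely where $w_{i_0,i_0}>0$ enters; the two-node swap matrix with zero diagonal shows the hypothesis cannot be dropped) and either delete the Perron--Frobenius detour or restrict it to the symmetric case, where the spectral argument is complete on its own. Second, the lemma statement writes $\lambda_{\mathbf{W}}=\|\mathbf{W}-\tfrac{1}{m}\mathbf{1}\mathbf{1}^{\top}\|$, which under the paper's declared conventions is the Frobenius norm, whereas the sentence preceding the lemma (and your proof) use the spectral norm $\|\cdot\|_2$; your reading is the correct one, since with the Frobenius norm the claim $\lambda_{\mathbf{W}}<1$ can fail even for well-connected graphs.
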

\begin{assumption}\label{assump:opt_f_bounded_below}\em
Consider the implicit function \us{$f$} given by \eqref{eqn:notations}. Suppose $\min_{x \in \mathbb{R}^n}\,f(x) > -\infty$.
\end{assumption}
\begin{remark}\em
Notably, under the above assumption, $\min_{x \in \mathbb{R}^n}\,f^{\eta}(x) > -\infty$, \us{since $f(x)-L_0\eta \le f^{\eta}(x)$ for all $x \in \mathbb{R}^n$, a consequence of }  Lemma~\ref{lem:smoothing_props}. $\hfill \Box$
\end{remark}
\begin{assumption}\label{assum:main1}\em
Consider~\eqref{eqn:prob-1stage}. Let
the following conditions hold. (i) For
any agent $i \in [m]$, $\tilde
h_i(x,\bullet,\xi_i)$ is $\tilde
L_0(\xi_i)$-\us{Lipschitz} for
any $\xi_i$, \us{where} $\tilde L_0
\triangleq  \max_{i\in
[m]}\sqrt{\mathbb{E}[\tilde
L_0^2(\bxi_i)]}$ is finite. (ii)
$\tilde
h_i(\bullet,z_i(\bullet),{\xi}_i)$ is $
L_0({\xi}_i)$-\us{Lipschitz} for
any $\xi_i$, and $ L_0 \triangleq
\max_{i\in [m]}\sqrt{\mathbb{E}[
L_0^2(\bxi_i)]}$ is finite.
\end{assumption}
\begin{remark}\em
Notably, we do not assume that the implicit objective is
differentiable. The Lipschitz continuity of the implicit
function in Assumption~\ref{assum:main1}, studied in
\cite{patriksson1999stochastic}, holds under some
conditions.~$\hfill \Box$
\end{remark}
\begin{remark}\label{rem:smoothness_of_f_i}\em
\us{By}
\Rme{Assumption~\ref{assum:main1}(ii)} and
the definition of the local implicit
functions $f_i$, $f_i$ is
$\mathbb{E}[L_0(\bxi_i)]$-Lipschitz.
Notably, invoking Jensen's
inequality, we have
$\mathbb{E}[L_0(\bxi_i)] \leq
\sqrt{\mathbb{E}[L_0^2(\bxi_i)]} \leq
\max_{i\in [m]}\sqrt{\mathbb{E}[
L_0^2(\bxi_i)]} = L_0$. Thus, $f_i$ is
$L_0$-Lipschitz for all $i\in [m]$.
Invoking
\Rme{Lemma~\ref{lem:smoothing_props}(ii)},
$\nabla f^{\eta}_i$ is $\frac{L_0
\sqrt{n}}{\eta}$-Lipschitz for all
$i\in [m]$ \us{and} the smoothed
global implicit function $f^\eta$ is
$\frac{L_0 \sqrt{n}}{\eta}$-smooth. $\hfill \Box$
\end{remark}
\begin{assumption}\label{assum:main2}\em
Consider problem
(\ref{eqn:prob-1stage}). Let
$F_i(x,\bullet) \triangleq
\mathbb{E}[\tilde{F}_i(x,\bullet,\bxi_i)]$
be a $\mu_F$-strongly monotone and
$L_F$-Lipschitz continuous mapping
uniformly in $x$. For each $i \in [m]$
and any $x \in \mathbb{R}^n$, assume
that the set
$\mathcal{Z}_i(x)\subseteq
\mathbb{R}^p$ is nonempty, closed, and
convex. Further,
$\sup_{x\in\mathbb{R}^n}\sup_{z_1,z_2
\in \mathcal{Z}_i(x)}\|z_1-z_2\|^2
\leq D_i$ for some $D_i>0$, for all $i
\in [m]$. 
\end{assumption}
To address \eqref{eqn:prob-1stage}, we
propose DiZS-GT$^{\text{1s}}$, outlined in
Algorithm~\ref{alg:DZGT}, which is a distributed implicit
zeroth-order gradient tracking method for addressing
problem~\eqref{eqn:prob-1stage}. Each agent $i$ is associated
with $x_{i,k}$, which \us{represents} a local copy of the
decision variable $x$ \us{at epoch $k$} while $y_{i,k}$ tracks
the gradient of the smoothed implicit objective function.
We use color-coding in Algorithm~\ref{alg:DZGT} to capture
both the inexact and exact setting\mee{s}. In the exact setting, at
iteration $k$, agent $i$ evaluates the optimal solution to its
local lower-level problem twice, once at ${x}_{i,{k}}+\eta
v_{i,{k}}$ and once at ${x}_{i,{k}}-\eta v_{i,{k}}$, denoted
by $z_{i}({x}_{i,{k}}+\eta
v_{i,{k}})$ and
$z_{i}({x}_{i,{k}}-\eta v_{i,{k}})$
for the exact setting, while 
$z_{i,\varepsilon_{k}}({x}_{i,{k}}+\eta
v_{i,{k}})$ and
$z_{i,\varepsilon_{k}}({x}_{i,{k}}-\eta
v_{i,{k}})$ are used \us{in} the inexact setting. Then\mee{,} employing the randomized
smoothing scheme and utilizing
\Rme{Lemma~\ref{lem:smoothing_props}(i)},
the gradient of the $\eta$-smoothed
implicit local objective function
$\tilde{f}_i^\eta(\bullet,\xi_{i,k})$,
denoted by $g_{i,{k}}^{\eta}$ is
approximated by \begin{align}
 & g_{i,{k}}^{\eta}\triangleq \left(\tfrac{n}{\mj{2}\eta}\right)(\tilde h_i({x}_{i,{k}}+\eta v_{i,{k}},z_{i}({x}_{i,{k}}+\eta v_{i,{k}}),\xi_{i,{k}})-\tilde h_i({x}_{i,{k}}\mj{-\eta v_{i,{k}}},z_{i}({x}_{i,{k}}\mj{-\eta v_{i,{k}}}),\xi_{i,{k}}))\, v_{i,{k}}.\label{eqn:g_eta}
\end{align} In the inexact setting, where $z_i(\bullet)$ is not available, agent $i$ employs a standard stochastic approximation scheme, outlined in Algorithm~\ref{alg:lowerlevel-1stage}, to approximate this term. To this end, we let $z_{i,\varepsilon_{k}}({x}_{i,{k}}+\eta v_{i,{k}})$ and $z_{i,\varepsilon_{k}}({x}_{i,{k}}-\eta v_{i,{k}})$ denote the $\varepsilon_{k}$-inexact approximation of $z_{i }({x}_{i,{k}}+\eta v_{i,{k}})$ and $z_{i }({x}_{i,{k}}-\eta v_{i,{k}})$, respectively\Rme{, obtained from Algorithm~\ref{alg:lowerlevel-1stage}}, such that $\mathbb{E}[\|z_{i,\varepsilon_k}(x) -z_i(x)\|^2 \mid x]\leq \varepsilon_k$ hold for any random variable $x \in \mathbb{R}^n$ almost surely and $\varepsilon_k$ is a deterministic scalar (independent from $x$). \Rme{This} leads to an inexact zeroth-order stochastic local gradient estimator given by   
\begin{align}
&  g_{i,{k}}^{\eta,\varepsilon_{k}}\triangleq \left(\tfrac{n}{\mj{2}\eta}\right)(\tilde h_i({x}_{i,{k}}+\eta v_{i,{k}},z_{i,\varepsilon_{k}}({x}_{i,{k}}+\eta v_{i,{k}}),\xi_{i,{k}})-\tilde h_i({x}_{i,{k}}\mj{-\eta v_{i,{k}}},z_{i,\varepsilon_{k}}({x}_{i,{k}}\mj{-\eta v_{i,{k}}}),\xi_{i,{k}}))\, v_{i,{k}}.\label{eqn:g_eta_eps}
\end{align}
In the inexact setting, a key research question lies in deriving a prescribed termination criterion for Algorithm~\ref{alg:lowerlevel-1stage} such that the convergence of the iterates generated by DiZS-GT$^{\text{1s}}$ can be guaranteed. 
\begin{assumption}\label{assum:alg2}\em
Consider
Algorithm~\ref{alg:lowerlevel-1stage}.
For any fixed \Rme{$i,k,$} and $\ell \in
\{1,2\}$, let the samples
$\xi_{i,t}^{k,\ell}$, for $t\geq 0$,
be iid such that for a  given $x,z$,
we have (i)
$\mathbb{E}[\tilde{F}_i(x,z,\bxi_{i,t}^{k,\ell})\mid
\{x,z\}] = {F}_i(x,z)$, \Rme{where ${F}_i(x,z)$ is defined in Assumption~\ref{assum:main2},} and (ii)
$\mathbb{E}[\|\tilde{F}_i(x,z,\bxi_{i,t}^{k,\ell})-{F}_i(x,z)\|^2\mid
\{x,z\}] \leq \nu_F^2$, for some
$\nu_F>0$.
\end{assumption}
\begin{remark}\em
Note that $\xi_{i,k}$ and $\xi_{i,t}^{k,\ell}$ refer to samples used in Algorithm~\ref{alg:DZGT} and Algorithm~\ref{alg:lowerlevel-1stage}, respectively. Both represent instances of the random variable $\bxi_i$, drawn from the same distribution. Despite the overload of notation, the distinction will be clear from context.$\hfill \Box$
\end{remark}
}
\begin{algorithm}
\caption{\fy{DiZS-GT$^{\text{1s}}$} (by agent $i$)} \label{alg:DZGT}
{\begin{algorithmic}[1]
\State {\bf input}  weights $w_{ij}$ for all $j\in[m]$, stepsize $\gamma$ and smoothing parameter $\eta$, local random initial vector $x_{i,0} \in \mathbb{R}^n$, $y_{i,0}:=0_{n}$, and $g_{i,-1}^{\eta,\varepsilon_{-1}}=g_{i,-1}^{\eta}:=0_n$. (\colorbox{blue!10}{Inexact} and \colorbox{yellow!22}{Exact} schemes are highlighted)
%

 \FOR {$k = 0,1,2, \ldots$} 

\State Generate random samples $\xi_{i,k}$ and $v_{i,k} \in   \mathbb{S}$

 \State \colorbox{blue!10}{Call Alg.~\ref{alg:lowerlevel-1stage} twice to \us{generate} $z_{i,\varepsilon_{k}}({x}_{i,{k}}-\eta v_{i,{k}})$ and $z_{i,\varepsilon_{k}}({x}_{i,{k}}+\eta v_{i,{k}})$}  
 
 \hspace{-0.19 in} \colorbox{yellow!22}{Evaluate $z_i({x}_{i,{k}}-\eta v_{i,{k}})$ and $z_i({x}_{i,{k}}+\eta v_{i,{k}})$}
 
\State \colorbox{blue!10}{$g_{i,{k}}^{\eta,\varepsilon_{k}}:=  \tfrac{n}{\mj{2}\eta} (\tilde h_i({x}_{i,{k}}+\eta v_{i,{k}},z_{i,\varepsilon_{k}}({x}_{i,{k}}+\eta v_{i,{k}}),\xi_{i,{k}})-\tilde h_i({x}_{i,{k}}-\eta v_{i,{k}},z_{i,\varepsilon_{k}}({x}_{i,{k}}-\eta v_{i,{k}}),\xi_{i,{k}}))\, v_{i,{k}}$}

\hspace{-0.19 in} \colorbox{yellow!22}{$g_{i,{k}}^{\eta}:=  \tfrac{n}{\mj{2}\eta} (\tilde h_i({x}_{i,{k}}+\eta v_{i,{k}},z_{i}({x}_{i,{k}}+\eta v_{i,{k}}),\xi_{i,{k}})-\tilde h_i({x}_{i,{k}}-\eta v_{i,{k}},z_{i}({x}_{i,{k}}-\eta v_{i,{k}}),\xi_{i,{k}}))\, v_{i,{k}}$}

\State  \colorbox{blue!10}{$y_{i,k+1}:=\sum_{j=1}^mw_{ij}\left(y_{i,k}+g_{j,k}^{\eta,\varepsilon_{k}}-g_{j,k-1}^{\eta,\varepsilon_{k-1}}\right)$}  \colorbox{yellow!22}{$y_{i,k+1}:=\sum_{j=1}^mw_{ij}\left(y_{i,k}+g_{j,k}^{\eta}-g_{j,k-1}^{\eta}\right)$}
 
\State $
{x}_{i,k+1}:=\sum_{j=1}^mw_{ij}\left(x_{j,k}-\gamma y_{j,k+1} \right)$
\ENDFOR
\end{algorithmic}}
\end{algorithm}

\noindent \fy{\textbf{History of the method.} In the inexact setting, we define the history of Algorithm~\ref{alg:DZGT} as $\mathcal{F}_k \triangleq \cup_{i=1}^m \mathcal{F}_{i,k} $ for $k\geq 0$, where 
$$\mathcal{F}_{i,k}\, \triangleq \,  \mathcal{F}_{i,k-1}\cup \{\xi_{i,k-1},v_{i,k-1}\} \cup \left(\cup_{\ell=1}^2 \left(\left(\cup_{t=0}^{t_{k-1}-1}\{\xi_{i,t}^{k-1,\ell}\}\right)\cup \{z_{i,0}^{k-1,\ell}\} \right)\right) $$ for any $k\geq 1$, and $\mathcal{F}_{i,0}\triangleq  \{x_{i,0}\}$. Further, we let $\tilde{\mathcal{F}}_{i,t_k}^{k,1}$ and $\tilde{\mathcal{F}}_{i,t_k}^{k,2}$ denote the history associated with Algorithm~\ref{alg:lowerlevel-1stage} to evaluate $z_{i,\varepsilon_{k}}({x}_{i,{k}}-\eta v_{i,{k}})$ and $z_{i,\varepsilon_{k}}({x}_{i,{k}}+\eta v_{i,{k}})$, respectively. We define $\tilde{\mathcal{F}}_{i,j}^{k,\ell}\triangleq \{\xi_{i,j-1}^{k,\ell}\} \cup\tilde{\mathcal{F}}_{i,j-1}^{k,\ell} $ for any $k \geq 1$ for all $j=1,\ldots,t_k$ where $\ell \in \{1,2\}$, and $\tilde{\mathcal{F}}_{i,0}^{k,\ell}\triangleq  \{z_{i,0}^{k,\ell}\}\cup \mathcal{F}_{k} $ for any $k \geq 0$. Further, we define $\tilde{\mathcal{F}}_{j}^{k,\ell}\triangleq \cup_{i=1}^m\tilde{\mathcal{F}}_{i,j}^{k,\ell}$ for any $k\geq 0$,  $j=0,\ldots,t_k$, and $\ell \in \{1,2\}$. In the exact setting, we define the history of the method as follows. $\mathcal{F}_k \triangleq \cup_{i=1}^m \mathcal{F}_{i,k} $ for $k\geq 0$, where $\mathcal{F}_{i,k}\triangleq\left( \cup_{j=0}^{k-1}\{\xi_{i,j},v_{i,j}\} \right)\cup\{x_{i,0}\}$ for any $k\geq 1$, and $\mathcal{F}_{i,0}\triangleq  \{x_{i,0}\}$.
\begin{remark}\em
From the above definitions, in both inexact and exact settings we have that for any $i \in [m]$, $x_{i,k}$ is $\mathcal{F}_k$—measurable for all $k \geq 0$, {\us{where \fyy{$x_{i,k}$} is } used in Algorithm~\ref{alg:DZGT}}. Further, in the inexact setting, for any $i \in [m]$ and $k\geq 0$, $z_{i,t}^{k,\ell}$ is $\tilde{\mathcal{F}}_{t}^{k,\ell}$—measurable for all $t=0,\ldots,t_k$, {\us{where \fyy{$y_{i,k}$} is} employed in Algorithm~\ref{alg:lowerlevel-1stage}}. $\hfill \Box$
\end{remark}}
\Rme{\begin{remark}\em
Algorithm~\ref{alg:lowerlevel-1stage} is a stochastic approximation scheme for {approximating a} solution of the lower-level stochastic VI problem associated with agent $i$~\cite{jiang2008stochastic}. In particular, at each iteration of Algorithm~\ref{alg:DZGT}, agent $i$ calls Algorithm~\ref{alg:lowerlevel-1stage} to compute the $\varepsilon_k$-inexact approximations of $z_i(x_{i,k}+\eta v_{i,k})$ and $z_i(x_{i,k}-\eta v_{i,k})$, denoted by $z_{i,\varepsilon_k}(x_{i,k}+\eta v_{i,k})$ and $z_{i,\varepsilon_k}(x_{i,k}-\eta v_{i,k})$, respectively. \mee{$\hfill \Box$}
\end{remark}}
\begin{algorithm}
\caption{{Stochastic approximation method (by agent $i$)}}
\label{alg:lowerlevel-1stage}
{\begin{algorithmic}[1]
\State {\bf input} upper-level iteration index $k$, input vector $\hat{x}_{i,k}^{\ell}$, set $\ell:=1$ if  $\hat{x}_{i,k}^{\ell}=x_{i,k}-\eta v_{i,k}$  and $\ell:=2$ if  $\hat{x}_{i,k}^{\ell}=x_{i,k}+\eta v_{i,k}$, a random $z_{i,0}^{k,\ell} \in \mathcal{Z}_i(\hat x_{i,k}^{\ell})$, scalars $\hat{\gamma} > \frac{1}{\mu_F} $ and $\hat{\Gamma} >\frac{\hat{\gamma}L_F^2}{\mu_F}$, and $t_k=\left\lceil \left(n^{1/2}(k+\hat\Gamma)^a\right)/\left(\eta^{2/3}\right)\right\rceil$ for some $a>0.5$
 \FOR {$t = 0,1, \ldots, t_k-1 $} 
\State  $\hat{\gamma}_{t}:=\frac{\hat\gamma}{t+\hat\Gamma}$ and $z_{i,t+1}^{k,\ell}:= \Pi_{\mathcal{Z}_i(\hat x_{i,k}^{\ell})}\left[z_{i,t}^{k,\ell}-\hat{\gamma}_t\tilde{F}_i(\hat{x}_{i,k}^{\ell},z_{i,t}^{k,\ell},\xi_{i,t}^{k,\ell})\right]$
\ENDFOR
\State \Rme{ {\bf output} $z_{i,t_k}^{k,\ell}$ for $\ell \in \{1,2\}$}
\end{algorithmic}}
\end{algorithm}
{\begin{assumption}\em\label{assum:samples} For each agent $i \in [m]$, random variables ${\bxi}_{i,k}$ are iid, random variables ${\bf v}_{i,k} \in   \mathbb{S}$ are iid, and  $\bxi_{i,k}$ and ${\bf v}_{i,k}$ are independent from each other, for all $k\geq 0$. 
\end{assumption}
\Rme{\begin{remark}[GT error decomposition]\em
Since the gradient-tracking update employs an inexact zeroth-order estimator, we decompose the estimation error as
\begin{equation}\label{eq:decomp_ref_response}
{g_{i,{k}}^{\eta,\varepsilon_{k}} 
=
\underbrace{g_{i,{k}}^{\eta,\varepsilon_{k}}-g_{i,{k}}^{\eta}}_{\mathrm{{inexactness\ } bias}}
\!+\!
\underbrace{g_{i,{k}}^{\eta}-\nabla f_i^\eta(x_{i,k})}_{\mathrm{Monte\text{-}Carlo\ sampling\ error}}
\!+\!
\underbrace{\nabla f_i^\eta(x_{i,k})-\tilde{\nabla}  f_i(x_{i,k})}_{\mathrm{smoothing\ error}}
\ +\ \tilde{\nabla}  f_i(x_{i,k})} ,\notag
\end{equation}
where {$\tilde{\nabla}  f_i(x)\in \partial  f_i(x)$ denotes a Clarke generalized gradient of $f_i$ at $x$}. In the analysis, {all these components are addressed} when establishing the tracking and stationarity guarantees. \mee{$\hfill \Box$}
\end{remark}}
\us{Next}, we provide a compact
representation of the method. For
\us{convenience}, let us define
$g_{i,-1}^{\eta,\varepsilon_{-1}}=g_{i,-1}^{\eta}=\nabla
f_i^\eta(x_{i,-1})=\delta_{i,-1}^\eta={e}_{i,-1}^{\eta,\varepsilon_{-1}}=y_{i,0}=0_n$
for all $i \in [m]$. Consider the
error terms
$\boldsymbol{\delta}_{k}^\eta\triangleq
[\delta_{1,k}^\eta,\ldots,\delta_{m,k}^\eta]^\top$
and
$\mathbf{e}_{k}^{\eta,\varepsilon_k}\triangleq[{e}_{1,k}^{\eta,\varepsilon_k},\ldots,{e}_{m,k}^{\eta,\varepsilon_k}]^\top$,
where for $i \in [m]$ and $k\ge -1$, \Rme{we define the zeroth-order approximation error
$\delta_{i,k}^\eta\triangleq
g_{i,k}^\eta-\nabla f_i^\eta(x_{i,k})$
and the bias
${e}_{i,k}^{\eta,\varepsilon_k}\triangleq{g}_{i,k}^{\eta,\varepsilon_k}-{g}_{i,k}^\eta.$}
We define the averaged terms \us{as} $\bar
\delta_{k}^\eta=\tfrac{1}{m}\mathbf{1}^\top
\boldsymbol{\delta}_{k}^\eta$ and
$\bar{e}_{k}^{\eta,\varepsilon_k}=\tfrac{1}{m}\mathbf{1}^\top\mathbf{e}_{k}^{\eta,\varepsilon_k}.$
Then for $k \geq 0$,
{DiZS-GT$^{\text{1s}}$} can be
\us{represented by an update rule, compactly represented as }  \begin{align}
\mathbf{y}_{k+1} & =\mathbf{W}(\mathbf{y}_{k}+ \nabla \mathbf{f}^\eta(\mathbf{x}_k) - \nabla \mathbf{f}^\eta(\mathbf{x}_{k-1})+\boldsymbol{\delta}_{k}^\eta-\boldsymbol{\delta}_{k-1}^\eta+\mathbf{e}_{k}^{\eta,\varepsilon_k}-\mathbf{e}_{k-1}^{\eta,\varepsilon_{k-1}})\tag{R1}\label{eqn:R1}\\
\mbox{ and } 
\mathbf{x}_{k+1} & =\mathbf{W}(\mathbf{x}_{k}-\gamma_k\mathbf{y}_{k+1})\tag{R2}\label{eqn:R2}.
\end{align}
Next, we derive some properties of the first and second moments of the bias and the inexact error.  
\begin{lemma}\em\label{lemma:g_ik_eta_props}
Suppose Assumptions~\ref{assum:mixxx},
\ref{assump:opt_f_bounded_below}, \ref{assum:main1}, and
\ref{assum:main2} hold. Then, the following statements hold
for any $i\in [m]$ and all $k\geq 0$ almost surely. (i)
$\mathbb{E}\left[\delta_{i,k}^\eta\mid \mathcal{F}_k\right] =
0.$    (ii)  $ \mathbb{E}\left[\|\delta_{i,k}^\eta\|^2\mid
\mathcal{F}_k\right] \leq
\mathbb{E}\left[\|g_{i,k}^\eta\|^2\mid \mathcal{F}_k\right]
\leq \mj{16\sqrt{2\pi}L_0^2n}.$  (iii)
$\mathbb{E}\left[(\delta_{i,k}^\eta)^\top(\delta_{j,k}^\eta)\mid
\mathcal{F}_k\right]=0$, for all $i,j\in \mathcal{V}$, such
that $i\neq j$. (iv) Let $\mathbb{E}[\|z_{i,\varepsilon_k}(x)
-z_i(x)\|^2 \mid x]\leq \varepsilon_k$ hold for \us{a realization $x$} of any random
variable $\us{\bf x} \in \mathbb{R}^n$ such that $\varepsilon_k$ is a
deterministic scalar (independent from $\us{\bf x}$). Then,
$\mathbb{E}[\|e_{i,k}^{\eta,\varepsilon_k}\|^2
|\mathcal{F}_k]\le \mj{\left(\tfrac{\tilde
L_0^2n^2\varepsilon_k}{\eta^2}\right)}.$
\end{lemma}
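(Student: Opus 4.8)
The plan is to establish the four items in the listed order, since (ii) and (iii) both invoke the unbiasedness of (i); everything rests on the single structural observation that, conditioned on $\mathcal{F}_k$, the iterate $x_{i,k}$ is fixed and the only fresh randomness entering $g_{i,k}^\eta$ and $g_{i,k}^{\eta,\varepsilon_k}$ is the pair $(v_{i,k},\xi_{i,k})$, which by Assumption~\ref{assum:samples} is independent of $\mathcal{F}_k$ and, across agents, independent. For (i), I would take the conditional expectation by towering: first fix $(\mathcal{F}_k,v_{i,k})$ and integrate out $\xi_{i,k}$, using $\tilde f_i(x,\xi_i)=\tilde h_i(x,z_i(x),\xi_i)$ and $f_i(x)=\mathbb{E}[\tilde f_i(x,\bxi_i)]$, to get $\mathbb{E}[g_{i,k}^\eta\mid\mathcal{F}_k,v_{i,k}]=\tfrac{n}{2\eta}(f_i(x_{i,k}+\eta v_{i,k})-f_i(x_{i,k}-\eta v_{i,k}))v_{i,k}$; then integrate over $v_{i,k}$ uniform on $\mathbb{S}$ and apply Lemma~\ref{lem:smoothing_props}(i) with $h=f_i$ to conclude $\mathbb{E}[g_{i,k}^\eta\mid\mathcal{F}_k]=\nabla f_i^\eta(x_{i,k})$, i.e. $\mathbb{E}[\delta_{i,k}^\eta\mid\mathcal{F}_k]=0$.

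For (ii), the first inequality is just ``variance $\le$ second moment'': by (i), $\nabla f_i^\eta(x_{i,k})=\mathbb{E}[g_{i,k}^\eta\mid\mathcal{F}_k]$, hence $\mathbb{E}[\|\delta_{i,k}^\eta\|^2\mid\mathcal{F}_k]=\mathbb{E}[\|g_{i,k}^\eta\|^2\mid\mathcal{F}_k]-\|\nabla f_i^\eta(x_{i,k})\|^2\le\mathbb{E}[\|g_{i,k}^\eta\|^2\mid\mathcal{F}_k]$. The real work — and what I expect to be the main obstacle of the lemma — is the second inequality with a linear (not quadratic) dependence on $n$: the crude Lipschitz estimate $|\tilde f_i(x_{i,k}+\eta v,\xi_{i,k})-\tilde f_i(x_{i,k}-\eta v,\xi_{i,k})|\le 2\eta L_0(\xi_{i,k})$ only gives $\|g_{i,k}^\eta\|\le nL_0(\xi_{i,k})$, hence $n^2L_0^2$. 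To recover the factor $n$, I would condition on $(\mathcal{F}_k,\xi_{i,k})$, set $c(v)\triangleq\tilde f_i(x_{i,k}+\eta v,\xi_{i,k})$ — an $\eta L_0(\xi_{i,k})$-Lipschitz function of $v$ on $\mathbb{R}^n$ by Assumption~\ref{assum:main1}(ii) — and use the symmetry of the uniform law on $\mathbb{S}$ to write $g_{i,k}^\eta=\tfrac{n}{2\eta}\big((c(v_{i,k})-\mathbb{E}[c(v_{i,k})])-(c(-v_{i,k})-\mathbb{E}[c(-v_{i,k})])\big)v_{i,k}$. Applying Lemma~\ref{Lemma:Levy concentration} to $c$ and to $v\mapsto c(-v)$ gives sub-Gaussian tails with proxy variance $8\eta^2L_0^2(\xi_{i,k})/n$, so for each centered term $X$ one has $\mathbb{E}[X^2]=\int_0^\infty\mathbb{P}[X\ge\sqrt t]\,dt\le 2\sqrt{2\pi}\cdot 8\eta^2L_0^2(\xi_{i,k})/n$; combining with $(a+b)^2\le 2a^2+2b^2$ and $\|v_{i,k}\|=1$ yields $\mathbb{E}[\|g_{i,k}^\eta\|^2\mid\mathcal{F}_k,\xi_{i,k}]\le 16\sqrt{2\pi}\,nL_0^2(\xi_{i,k})$, and taking $\mathbb{E}[\cdot\mid\mathcal{F}_k]$ over $\xi_{i,k}$ with $\mathbb{E}[L_0^2(\bxi_i)]\le L_0^2$ closes the bound.

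Item (iii) is then immediate: conditioned on $\mathcal{F}_k$, $\delta_{i,k}^\eta$ and $\delta_{j,k}^\eta$ ($i\ne j$) are functions of the disjoint, independent pairs $(v_{i,k},\xi_{i,k})$ and $(v_{j,k},\xi_{j,k})$, hence conditionally independent, so $\mathbb{E}[(\delta_{i,k}^\eta)^\top\delta_{j,k}^\eta\mid\mathcal{F}_k]=\mathbb{E}[\delta_{i,k}^\eta\mid\mathcal{F}_k]^\top\mathbb{E}[\delta_{j,k}^\eta\mid\mathcal{F}_k]=0$ by (i). For (iv), I would expand $e_{i,k}^{\eta,\varepsilon_k}=\tfrac{n}{2\eta}(\Delta^+-\Delta^-)v_{i,k}$, where $\Delta^\pm\triangleq\tilde h_i(x_{i,k}\pm\eta v_{i,k},z_{i,\varepsilon_k}(x_{i,k}\pm\eta v_{i,k}),\xi_{i,k})-\tilde h_i(x_{i,k}\pm\eta v_{i,k},z_i(x_{i,k}\pm\eta v_{i,k}),\xi_{i,k})$; bound $|\Delta^\pm|\le\tilde L_0(\xi_{i,k})\|z_{i,\varepsilon_k}(x_{i,k}\pm\eta v_{i,k})-z_i(x_{i,k}\pm\eta v_{i,k})\|$ via Assumption~\ref{assum:main1}(i) and $\|v_{i,k}\|=1$; apply $(a+b)^2\le 2a^2+2b^2$; then condition on $(\mathcal{F}_k,v_{i,k},\xi_{i,k})$ so that $x_{i,k}\pm\eta v_{i,k}$ are fixed realizations and invoke the stated inexactness hypothesis $\mathbb{E}[\|z_{i,\varepsilon_k}(x)-z_i(x)\|^2\mid x]\le\varepsilon_k$ for each of the two terms; finally integrate out $\xi_{i,k}$ with $\mathbb{E}[\tilde L_0^2(\bxi_i)]\le\tilde L_0^2$ to obtain $\mathbb{E}[\|e_{i,k}^{\eta,\varepsilon_k}\|^2\mid\mathcal{F}_k]\le\tilde L_0^2n^2\varepsilon_k/\eta^2$. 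Apart from the concentration step in (ii), the only other delicate point is the measurability bookkeeping — that $x_{i,k}$ is $\mathcal{F}_k$-measurable while the samples forming $z_{i,\varepsilon_k}$ in Algorithm~\ref{alg:lowerlevel-1stage} are drawn afresh — which is precisely what legitimizes applying the inexactness bound conditionally on the (then deterministic) argument $x_{i,k}\pm\eta v_{i,k}$.
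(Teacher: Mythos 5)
Your proposal is correct and follows essentially the same route as the paper's proof: unbiasedness via towering over $\xi_{i,k}$ then $v_{i,k}$ with Lemma~\ref{lem:smoothing_props}(i); the second-moment bound via centering $\tilde f_i(x_{i,k}\pm\eta v,\xi_{i,k})$ around its mean, exploiting the symmetry of the uniform law on $\mathbb{S}$, and integrating the L\'evy tail from Lemma~\ref{Lemma:Levy concentration} to obtain the $O(n)$ (rather than $O(n^2)$) constant $16\sqrt{2\pi}L_0^2 n$; cross-agent orthogonality from independence of the sample pairs; and the inexactness bound via the $\tilde L_0(\xi_{i,k})$-Lipschitz property in $z$ together with conditioning so that the argument of $z_{i,\varepsilon_k}(\cdot)$ is fixed. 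The constants and the conditioning/measurability bookkeeping all match the paper's argument.
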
 }
\begin{proof}
\noindent (i) \Rme{The proof follows directly from~\cite[Lemma E.1]{lin2022gradient}.}

\noindent (ii) We have
\begin{align*}
\mathbb{E}\left[\|\delta_{i,k}^\eta\|^2\mid \mathcal{F}_k\right]&=\mathbb{E}\left[\|g_{i,k}^\eta- \nabla f_i^\eta(x_k)\|^2\mid \mathcal{F}_k\right]  = \mathbb{E}\left[\|g_{i,k}^\eta\|^2 +\|\nabla f_i^\eta(x_k)\|^2 -2 {g_{i,k}^\eta}^{\top}\nabla f_i^\eta(x_k)\mid \mathcal{F}_k\right] \\
& \overset{\scriptsize\mbox{(i)}}{=} \mathbb{E}\left[\|g_{i,k}^\eta\|^2\mid \mathcal{F}_k\right]-\|\nabla f_i^\eta(x_k)\|^2\leq \mathbb{E}\left[\|g_{i,k}^\eta\|^2\mid \mathcal{F}_k\right].
\end{align*}
It suffices to show that $\mathbb{E}\left[\|g_{i,k}^\eta\|^2\mid \mathcal{F}_k\right] \leq {16\sqrt{2\pi}L_0^2n}$. \Rme{This follows from~\cite[Lemma E.1]{lin2022gradient}.} 

 \noindent (iii) By recalling Assumption~\ref{assum:samples}, this result follows from the independence of the random variable pair $(\bxi_{i,k},{\bf v}_{i,k})$ from the pair $(\bxi_{j,k},{\bf v}_{j,k})$ for $i\neq j$. 
 
\noindent (iv) Using the definition of $e_{i,k}^{\eta,\varepsilon_k} $,   equations~\eqref{eqn:g_eta_eps} and \eqref{eqn:g_eta}, and the triangle inequality, we have
\begin{align*}
\|e_{i,k}^{\eta,\varepsilon_k}\| &= \|g_{i,k}^{\eta,\varepsilon_k}-g_{i,k}^{\eta}\| \\
  & \leq (\tfrac{n}{\mj{2}\eta})|\tilde h_i({x}_{i,k}+\eta v_{i,k},z_{{i,\varepsilon}_k}({x}_{i,k}+\eta v_{i,k}),\xi_{i,k})-\tilde h_i({x}_{i,k}+\eta v_{i,k},z_i({x}_{i,k}+\eta v_{i,k}),\xi_{i,k})| \|v_{i,k}\|\\
&+(\tfrac{n}{\mj{2}\eta})|\tilde h_i(x_{i,k}\mj{-\eta v_{i,{k}}},z_{i,\varepsilon_k}({x}_{i,k}\mj{-\eta v_{i,{k}}}),\xi_{i,k})-\tilde h_i(x_{i,k}\mj{-\eta v_{i,{k}}},z_i({x}_{i,k}\mj{-\eta v_{i,{k}}}),\xi_{i,k})|\|v_{i,k}\|.
\end{align*}
Invoking the Lipschitz continuity of $\tilde h_i(x,\bullet,\xi_i)$ in \Rme{Assumption~\ref{assum:main1}(i)} and noting that $\|v_{i,k}\|=1$,  
\begin{align*}
\|e_{i,k}^{\eta,\varepsilon_k}\|    &\le  (\tfrac{n}{{2}\eta} )\tilde{L}_0(\xi_{i,k})\left\|z_{{i,\varepsilon_k}}({x}_{i,k}+\eta v_{i,k})-z_i({x}_{i,k}+\eta v_{i,k})\right\| \\
&+  (\tfrac{n}{{2}\eta} )\tilde{L}_0(\xi_{i,k})\left\|z_{{i,\varepsilon_k}}({x}_{i,k}{-\eta v_{i,{k}}})-z_i({x}_{i,k}{-\eta v_{i,{k}}})\right\|.
\end{align*}
Taking conditional expectations on both sides of the preceding inequality, we obtain
\begin{align*}
\mathbb{E}\left[\|e_{i,k}^{\eta,\varepsilon_k}\| ^2 \mid\mathcal{F}_k\right] &\le 2(\tfrac{n}{{2}\eta} )^2\mathbb{E}\left[\tilde{L}_0({\bxi_{i,k}})^2\left\|z_{{i,\varepsilon_k}}({x}_{i,k}+\eta {\bf v}_{i,k})\right.\left.-z_i({x}_{i,k}+\eta {\bf v}_{i,k})\right\|^2\, \big| \,  \mathcal{F}_k\right]\\
&+ 2(\tfrac{n}{{2}\eta} )^2\mathbb{E}\left[\tilde{L}_0({\bxi}_{i,k})^2\left\|z_{{i,\varepsilon_k}}({x}_{i,k}{-\eta {\bf v}_{i,{k}}})-z_i({x}_{i,k}{-\eta {\bf v}_{i,{k}}})\right\|^2 \, \big|\, \mathcal{F}_k\, \right].
\end{align*}
We now analyze each expectation term in the preceding inequality. For convenience, let us use the notation $\overline{\mathcal{F}}_{i,k}\triangleq \mathcal{F}_k\cup\left(\cup_{\ell=1}^2 \left(\left(\cup_{t=0}^{t_{k}-1}\{\xi_{i,t}^{k,\ell}\}\right)\cup \{z_{i,0}^{k,\ell}\} \right)\right) \cup \{v_{i,k}\}$. {Since $\mathcal{F}_k \subset \overline{\mathcal{F}_k}$}, \us{by the Tower law}, 
\begin{align*}
 &\mathbb{E}[\tilde{L}_0({\bxi}_{i,k})^2\left\|z_{{i,\varepsilon_k}}({x}_{i,k}+\eta \mee{\bf v}_{i,k})-z_i({x}_{i,k}+\eta \mee{\bf v}_{i,k})\right\|^2 \mid\mathcal{F}_k] \\
 &=\mathbb{E}\left[{\mathbb{E}}\left[\tilde{L}_0({\bxi_{i,k}})^2\left\|z_{{i,\varepsilon_k}}({x}_{i,k}+\eta {\bf v}_{i,k})-z_i({x}_{i,k}+\eta {\bf v}_{i,k})\right\|^2 \mid\overline{\mathcal{F}}_{i,k}\right] { \, \bigg| \, \mathcal{F}_k}\, \right]\\
  &=\mathbb{E}\left[\mathbb{E}_{{\bxi_{i,k}}}\left[\tilde{L}_0({\bxi_{i,k}})^2\right]{\mathbb{E}}\left[\left\|z_{{i,\varepsilon_k}}({x}_{i,k}+\eta {\bf v}_{i,k})-z_i({x}_{i,k}+\eta {\bf v}_{i,k})\right\|^2\,  \big| \, \overline{\mathcal{F}}_{i,k}\right]{ \, \big|  \, \mathcal{F}_k}\,\right]\\
  &=\tilde{L}_0^2\,\mathbb{E}\left[{\mathbb{E}}\left[\left\|z_{{i,\varepsilon_k}}({x}_{i,k}+\eta {\bf v}_{i,k})-z_i({x}_{i,k}+\eta {\bf v}_{i,k})\right\|^2 \, \big| \, \overline{\mathcal{F}}_{i,k}\right] {\, \big| \mathcal{F}_k \, } \right]\\
  &=\tilde{L}_0^2 \,\mathbb{E}\left[\left\|z_{{i,\varepsilon_k}}({x}_{i,k}+\eta {\bf v}_{i,k})-z_i({x}_{i,k}+\eta {\bf v}_{i,k})\right\|^2 \mid \mathcal{F}_k\right] \\
  &=\tilde{L}_0^2 \,\mathbb{E}\left[\mathbb{E}\left[\left\|z_{{i,\varepsilon_k}}({x}_{i,k}+\eta {\bf v}_{i,k})-z_i({x}_{i,k}+\eta {\bf v}_{i,k})\right\|^2 \, \mid \, \mathcal{F}_k \cup \{v_{i,k}\}\right] {\, \mid \, \mathcal{F}_k \, }\right]  \\
&  \leq\tilde{L}_0^2 \,\mathbb{E}\left[ \varepsilon_k { \, \mid \,  \mathcal{F}_k  \, } \right]\leq \tilde{L}_0^2 \varepsilon_k,
\end{align*}
where we invoked the independence of random variables, the definition of $\tilde{L}_0$ in \Rme{Assumption~\ref{assum:main1}(i)}, and the inexactness bound. Similarly, we have
\begin{align*}
 \mathbb{E}\left[\, \tilde{L}_0({\bxi}_{i,k})^2\left\|z_{{i,\varepsilon_k}}({x}_{i,k}{-\eta {\bf v}_{i,{k}}})-z_i({x}_{i,k}{-\eta {\bf v}_{i,{k}}})\right\|^2 \, \bigg| \, \mathcal{F}_k\, \right] \leq  \tilde{L}_0^2 \varepsilon_k.
\end{align*}
From the preceding three inequalities, we obtain the result. 
\end{proof} 
\fy{Next, we provide additional relations that will be utilized in the analysis in this section. 
\begin{lemma}\label{lemma:multiple parts}\em Suppose Assumptions~\ref{assum:mixxx}, \ref{assump:opt_f_bounded_below}, \ref{assum:main1}, and \ref{assum:main2} hold. Then for all $k\ge 0$, the following \us{hold}.

\noindent (i) $\bar y_{k+1}=\overline{\nabla{f}^\eta}(\mathbf{x}_k)+\bar \delta_{k}^\eta+\bar{e}_{k}^{\eta,\varepsilon_k}$.

\noindent (ii) $\bar x_{k+1}=\bar x_k-\gamma_k\bar y_{k+1}.$

 \noindent (iii) $ \|\overline{\nabla{f}^\eta}(\mathbf{x}_k)-\nabla f^\eta(\bar{x}_k)\|^2 \le \mj{\left(\tfrac{L_0^2n}{m\eta^2}\right)}\|\mathbf{x}_k-\mathbf{1}\bar{x}_k\|^2.$

 \noindent (iv) $\mathbb{E}[\|\bar \delta_{k}^\eta\|^2\mid \mathcal{F}_k]\le \mj{\frac{\mj{16\sqrt{2\pi}L_0^2n}}{m}}$.
 
\Rme{\noindent (v) {If} $\mathbb{E}[\|z_{i,\varepsilon_k}(x) -z_i(x)\|^2 \mid {x}]\leq \varepsilon_k$ for {a realization $x$ of any random variable} ${\bf x} \in\mathbb{R}^n$ almost surely, {then,} $\mathbb{E}[\|\bar{e}_{k}^{\eta,\varepsilon_k}\|^2 |\mathcal{F}_k]\le { \tfrac{\tilde L_0^2n^2\varepsilon_k}{\eta^2} }.$}
\end{lemma}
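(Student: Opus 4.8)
The plan is to establish each of the five claims by unwinding the compact update rules \eqref{eqn:R1}--\eqref{eqn:R2} and then invoking the moment bounds already proved in Lemma~\ref{lemma:g_ik_eta_props}. For part (i), I would left-multiply \eqref{eqn:R1} by $\tfrac{1}{m}\mathbf{1}^\top$ and use the double stochasticity of $\mathbf{W}$ (so that $\tfrac{1}{m}\mathbf{1}^\top\mathbf{W}=\tfrac{1}{m}\mathbf{1}^\top$), obtaining a telescoping recursion $\bar y_{k+1}=\bar y_k + \overline{\nabla f^\eta}(\mathbf{x}_k)-\overline{\nabla f^\eta}(\mathbf{x}_{k-1})+\bar\delta_k^\eta-\bar\delta_{k-1}^\eta+\bar e_k^{\eta,\varepsilon_k}-\bar e_{k-1}^{\eta,\varepsilon_{k-1}}$; unrolling down to $k=0$ and using the initializations $\bar y_0=0$, $\overline{\nabla f^\eta}(\mathbf{x}_{-1})=\bar\delta_{-1}^\eta=\bar e_{-1}^{\eta,\varepsilon_{-1}}=0$ collapses the sum to the claimed identity. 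Part (ii) is the same averaging argument applied to \eqref{eqn:R2}: $\bar x_{k+1}=\tfrac{1}{m}\mathbf{1}^\top\mathbf{W}(\mathbf{x}_k-\gamma_k\mathbf{y}_{k+1})=\bar x_k-\gamma_k\bar y_{k+1}$.

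For part (iii), I would write $\overline{\nabla f^\eta}(\mathbf{x}_k)-\nabla f^\eta(\bar x_k)=\tfrac1m\sum_{i=1}^m\bigl(\nabla f_i^\eta(x_{i,k})-\nabla f_i^\eta(\bar x_k)\bigr)$, apply the convexity of $\|\cdot\|^2$ (Jensen) to pull the average outside the square at the cost of a factor $\tfrac1m\sum_i$, then invoke the $\tfrac{L_0\sqrt n}{\eta}$-Lipschitz continuity of each $\nabla f_i^\eta$ from Remark~\ref{rem:smoothness_of_f_i} (itself a consequence of Lemma~\ref{lem:smoothing_props}(iii)), giving $\tfrac1m\sum_i \tfrac{L_0^2 n}{\eta^2}\|x_{i,k}-\bar x_k\|^2 = \tfrac{L_0^2 n}{m\eta^2}\|\mathbf{x}_k-\mathbf{1}\bar x_k\|^2$, which is exactly the stated bound.

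Parts (iv) and (v) reduce the averaged quantities to the per-agent estimates. For (iv), expanding $\|\bar\delta_k^\eta\|^2=\tfrac{1}{m^2}\sum_{i,j}(\delta_{i,k}^\eta)^\top(\delta_{j,k}^\eta)$ and taking $\mathbb{E}[\cdot\mid\mathcal{F}_k]$, the cross terms with $i\ne j$ vanish by Lemma~\ref{lemma:g_ik_eta_props}(iii), leaving $\tfrac{1}{m^2}\sum_i\mathbb{E}[\|\delta_{i,k}^\eta\|^2\mid\mathcal{F}_k]\le\tfrac{1}{m^2}\cdot m\cdot 16\sqrt{2\pi}L_0^2 n$ by part (ii) of that lemma, which gives $\tfrac{16\sqrt{2\pi}L_0^2 n}{m}$. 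For (v), since the cross-agent error terms need not be uncorrelated, I would instead use $\|\bar e_k^{\eta,\varepsilon_k}\|^2\le\tfrac1m\sum_i\|e_{i,k}^{\eta,\varepsilon_k}\|^2$ by convexity of $\|\cdot\|^2$ applied to the average, then take conditional expectations and invoke Lemma~\ref{lemma:g_ik_eta_props}(iv) to bound each term by $\tfrac{\tilde L_0^2 n^2\varepsilon_k}{\eta^2}$, yielding the same bound for the average.

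None of the steps is genuinely hard; the only point requiring care is bookkeeping the conditioning and the ``phantom'' initial terms at index $-1$ so that the telescoping in (i) is legitimate, and remembering to use the Cauchy--Schwarz/Jensen route (rather than an orthogonality argument) in (iii) and (v) where no independence across agents is available.
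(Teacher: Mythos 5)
Your proposal is correct and follows essentially the same route as the paper: averaging the compact updates with $\tfrac{1}{m}\mathbf{1}^\top$ and double stochasticity for (i)--(ii) (the paper uses induction where you telescope, which is an equivalent bookkeeping of the same recursion), Jensen plus the $\tfrac{L_0\sqrt{n}}{\eta}$-Lipschitz property of $\nabla f_i^\eta$ for (iii), cross-term orthogonality from Lemma~\ref{lemma:g_ik_eta_props}(iii) for (iv), and the Jensen/Cauchy--Schwarz bound (rather than orthogonality) for (v). No gaps.
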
 
\begin{proof}
(i) We use mathematical induction on $k\geq 0$. Consider \eqref{eqn:R1}. Multiplying {both} sides by $\tfrac{1}{m}\mathbf{1}^\top$ and invoking \Rme{Assumption~\ref{assum:mixxx}(ii)}, we obtain, for $k\geq 0$,
\begin{align}
&\bar{y}_{k+1}= \bar{y}_{k}+ \overline{\nabla{f}^\eta}(\mathbf{x}_k) - \overline{\nabla{f}^\eta}(\mathbf{x}_{k-1})+\bar{\delta}_{k}^\eta-\bar{\delta}_{k-1}^\eta+\bar{e}_{k}^{\eta,\varepsilon_k}-\bar{e}_{k-1}^{\eta,\varepsilon_{k-1}}.\label{eqn:R1_avg}
\end{align}
Note that following the notation introduced earlier, we have $\bar{\delta}_{-1}^\eta=\bar{e}_{-1}^{\eta,\varepsilon_{-1}}=\overline{\nabla{f}^\eta}(\mathbf{x}_{-1})=\bar{y}_0=0$. Thus, substituting $k=0$ in \eqref{eqn:R1_avg}, the equation in (i) holds for $k=0$. Now suppose $\bar y_{k+1}=\overline{\nabla{f}^\eta}(\mathbf{x}_k)+\bar \delta_{k}^\eta+\bar{e}_{k}^{\eta,\varepsilon_k}$ for some $k\geq 0$. From \eqref{eqn:R1_avg}, we have 
\begin{align*}
&\bar{y}_{k+2}= \bar{y}_{k+1}+ \overline{\nabla{f}^\eta}(\mathbf{x}_{k+1}) - \overline{\nabla{f}^\eta}(\mathbf{x}_{k})+\bar{\delta}_{k+1}^\eta-\bar{\delta}_{k}^\eta+\bar{e}_{k+1}^{\eta,\varepsilon_{k+1}}-\bar{e}_{k}^{\eta,\varepsilon_{k}}. 
\end{align*}
Invoking $\bar y_{k+1}=\overline{\nabla{f}^\eta}(\mathbf{x}_k)+\bar \delta_{k}^\eta+\bar{e}_{k}^{\eta,\varepsilon_k}$, we obtain $\bar y_{k+2}=\overline{\nabla{f}^\eta}(\mathbf{x}_{k+1})+\bar \delta_{k+1}^\eta+\bar{e}_{k+1}^{\eta,\varepsilon_{k+1}}$. This concludes the proof of part (i). 

\noindent (ii) Consider \eqref{eqn:R2}. Multiplying the both sides by $\tfrac{1}{m}\mathbf{1}^\top$, invoking \Rme{Assumption~\ref{assum:mixxx}(ii)}, and using \eqref{eqn:notations}, we obtain the result.

\noindent (iii) Recall that in view of Remark~\ref{rem:smoothness_of_f_i}, $\nabla f^{\eta}_i$ is $\frac{L_0 \mj{\sqrt{n}}}{\eta}$-Lipschitz for all $i\in [m]$. Utilizing this, we have
\begin{align*}
 \|\overline{\nabla{f}^\eta}(\mathbf{x}_k)-\nabla f^\eta(\bar{x}_k)\|^2  & =  \left\|\tfrac{1}{m}\textstyle\sum_{i=1}^m \left( \nabla f^\eta_i(x_{i,k})- \nabla f^\eta_i(\bar x_{k})\right)\right\|^2 
 \leq \tfrac{1}{m}\textstyle\sum_{i=1}^m \left\|    \nabla f^\eta_i(x_{i,k})- \nabla f^\eta_i(\bar x_{k}) \right\|^2 \\
& \le \mj{\tfrac{L_0^2n}{m\eta^2}}\textstyle\sum_{i=1}^m\|x_{i,k}-\bar x_{k}\|^2=\mj{\tfrac{L_0^2n}{m\eta^2}}\|\mathbf{x}_k-\mathbf{1}\bar{x}_k\|^2.
\end{align*}

\noindent (iv) Invoking \Rme{Lemma~\ref{lemma:g_ik_eta_props}(ii) and~(iii)}, we have 
\begin{align*}
\mathbb{E}\left[\|\bar \delta_{k}^\eta\|^2\mid \mathcal{F}_k\right]&=\mathbb{E}\left[\|\tfrac{1}{m}\textstyle\sum_{i=1}^m \delta_{i,k}^\eta\|^2\mid \mathcal{F}_k\right] \\
& = \tfrac{1}{m^2}\left(\textstyle\sum_{i=1}^m\mathbb{E}[\| \delta_{i,k}^\eta\|^2\mid \mathcal{F}_k]+2\textstyle\sum_{i=1}^{m-1}\textstyle\sum_{j=i+1}^m \mathbb{E}[ {\delta_{i,k}^\eta}^\top { \delta_{j,k}^\eta}  \mid \mathcal{F}_k]\right)  \\
&= \tfrac{1}{m^2} \textstyle\sum_{i=1}^m\mathbb{E}[\| \delta_{i,k}^\eta\|^2\mid \mathcal{F}_k]  \overset{\Rme{\text{Lemma~\ref{lemma:g_ik_eta_props}(ii)}}}{\le} \mj{\frac{\mj{16\sqrt{2\pi}L_0^2n}}{m}}.
\end{align*}

\noindent (v)  Invoking \Rme{Lemma~\ref{lemma:g_ik_eta_props}(iv)}, we have 
\begin{align*}
\mathbb{E}[\|\bar e_{k}^\eta\|^2\mid \mathcal{F}_k]&=\mathbb{E}[\|\tfrac{1}{m}\textstyle\sum_{i=1}^m e_{i,k}^\eta\|^2\mid \mathcal{F}_k] 
\leq \tfrac{1}{m^2}\left(m\textstyle\sum_{i=1}^m\mathbb{E}[\| e_{i,k}^\eta\|^2\mid \mathcal{F}_k]\right)  \leq \mj{\tfrac{\tilde L_0^2n^2\varepsilon_k}{\eta^2}} .
\end{align*}
\end{proof}}
{To analyze the convergence of the proposed method, motivated by the analysis in distributed smooth nonconvex optimization~\cite{xin2021improved}, we consider three error metrics, {defined} as follows.\\ 

\noindent (i) {$\mathbb{E}[\|{\nabla{f}^\eta}(\bar{x}_k)\|^2]$} quantifying the {stationarity residual} of {the} generated iterates; \\

\noindent  (ii) $\mathbb{E}[\|{ \mathbf{x}}_{k}-\mathbf{1}{\bar{ {x}}}_{k}\|^2]$  measuring {the} mean-squared consensus error for local variables; and \\

\noindent (iii) $\mathbb{E}[\|{ \mathbf{y}}_{k}-\mathbf{1}{\bar{ {y}}}_{k}\|^2]$ measuring {the} mean-squared consensus error for the gradient {tracking} variables.\\

To this end, in the following three lemmas, we will derive three recursive inequalities characterized by these error metrics. These results will be utilized later in this section to establish the convergence and derive rate statements. 
\begin{lemma}\em \label{lemma:descent lemma inexact} 
Consider Algorithm~\ref{alg:DZGT}. Let Assumptions~\ref{assum:mixxx}, \ref{assump:opt_f_bounded_below}, \ref{assum:main1}, and \ref{assum:main2} hold. Suppose \mj{$\gamma_k\le \frac{\eta}{6L_0\sqrt{n}}$} for all $k\geq 0$. Then for any $k \geq 0$, we have
\begin{align*}
\mathbb{E}[f^\eta(\bar{{x}}_{k+1})\mid \mathcal{F}_k]&\le f^\eta(\bar{{x}}_{k})-\tfrac{\gamma_k}{4}\|\nabla f^\eta(\bar{{x}}_k)\|^2-\tfrac{\gamma_k}{4}\|\overline{\nabla f^\eta}(\mathbf{x}_k)\|^2+\mj{\left(\tfrac{L_0^2n \gamma_k}{2 m\eta^2}\right)}\|\mathbf{x}_k-\mathbf{1}\bar{x}_k\|^2\\
& +\mj{\left(\tfrac{24\sqrt{2\pi}L_0^3n^{3/2}}{m\eta}\right)}\gamma_k^2+\mj{\left(1+\mj{\tfrac{3\gamma_kL_0\sqrt{n}}{2\eta}}\right)\gamma_k\left(\tfrac{\tilde L_0^2n^2\varepsilon_k}{\eta^2}\right) }.
\end{align*} 
\end{lemma}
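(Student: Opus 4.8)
The plan is to carry out a standard ``descent lemma'' argument for the smoothed global objective $f^\eta$, exploiting that $f^\eta$ is $\tfrac{L_0\sqrt n}{\eta}$-smooth (Remark~\ref{rem:smoothness_of_f_i}), and that by Lemma~\ref{lemma:multiple parts}(ii) the averaged iterate evolves as $\bar x_{k+1}=\bar x_k-\gamma_k\bar y_{k+1}$ with $\bar y_{k+1}=\overline{\nabla f^\eta}(\mathbf x_k)+\bar\delta_k^\eta+\bar e_k^{\eta,\varepsilon_k}$ (Lemma~\ref{lemma:multiple parts}(i)). First I would apply the descent inequality for an $L$-smooth function, $f^\eta(\bar x_{k+1})\le f^\eta(\bar x_k)+\langle\nabla f^\eta(\bar x_k),\bar x_{k+1}-\bar x_k\rangle+\tfrac{L_0\sqrt n}{2\eta}\|\bar x_{k+1}-\bar x_k\|^2$, substitute $\bar x_{k+1}-\bar x_k=-\gamma_k(\overline{\nabla f^\eta}(\mathbf x_k)+\bar\delta_k^\eta+\bar e_k^{\eta,\varepsilon_k})$, and then take the conditional expectation $\mathbb E[\cdot\mid\mathcal F_k]$. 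The cross term with $\bar\delta_k^\eta$ vanishes because $\mathbb E[\bar\delta_k^\eta\mid\mathcal F_k]=0$ by Lemma~\ref{lemma:g_ik_eta_props}(i); the cross term with $\bar e_k^{\eta,\varepsilon_k}$ does not vanish and must be bounded via Young's inequality.

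The key algebraic step is to split the inner product $-\gamma_k\langle\nabla f^\eta(\bar x_k),\overline{\nabla f^\eta}(\mathbf x_k)\rangle$ using the polarization identity $-\langle a,b\rangle=-\tfrac12\|a\|^2-\tfrac12\|b\|^2+\tfrac12\|a-b\|^2$, which produces the $-\tfrac{\gamma_k}{4}\|\nabla f^\eta(\bar x_k)\|^2$ and $-\tfrac{\gamma_k}{4}\|\overline{\nabla f^\eta}(\mathbf x_k)\|^2$ terms (after absorbing a factor via the stepsize condition) and a residual $\tfrac{\gamma_k}{2}\|\nabla f^\eta(\bar x_k)-\overline{\nabla f^\eta}(\mathbf x_k)\|^2$, which Lemma~\ref{lemma:multiple parts}(iii) bounds by $\tfrac{L_0^2 n}{2m\eta^2}\gamma_k\|\mathbf x_k-\mathbf1\bar x_k\|^2$ — exactly the consensus term appearing in the statement. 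Next I would handle the quadratic term $\tfrac{L_0\sqrt n}{2\eta}\gamma_k^2\mathbb E[\|\overline{\nabla f^\eta}(\mathbf x_k)+\bar\delta_k^\eta+\bar e_k^{\eta,\varepsilon_k}\|^2\mid\mathcal F_k]$: expand the square, use $\mathbb E[\bar\delta_k^\eta\mid\mathcal F_k]=0$ to kill the cross terms involving $\bar\delta_k^\eta$, bound $\mathbb E[\|\bar\delta_k^\eta\|^2\mid\mathcal F_k]\le\tfrac{16\sqrt{2\pi}L_0^2 n}{m}$ by Lemma~\ref{lemma:multiple parts}(iv), bound $\mathbb E[\|\bar e_k^{\eta,\varepsilon_k}\|^2\mid\mathcal F_k]\le\tfrac{\tilde L_0^2 n^2\varepsilon_k}{\eta^2}$ by Lemma~\ref{lemma:multiple parts}(v), and bound the $\|\overline{\nabla f^\eta}(\mathbf x_k)\|^2$ piece so that, under $\gamma_k\le\tfrac{\eta}{6L_0\sqrt n}$, the coefficient $\tfrac{L_0\sqrt n}{2\eta}\gamma_k^2$ of $\|\overline{\nabla f^\eta}(\mathbf x_k)\|^2$ is dominated by the available $-\tfrac{\gamma_k}{4}\|\overline{\nabla f^\eta}(\mathbf x_k)\|^2$ reserve (one checks $\tfrac{L_0\sqrt n}{2\eta}\gamma_k\le\tfrac1{12}<\tfrac14$, leaving room to also absorb the cross term $\gamma_k\langle\overline{\nabla f^\eta}(\mathbf x_k),\bar e_k^{\eta,\varepsilon_k}\rangle$ via Young's inequality).

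Collecting terms: the $\bar\delta_k^\eta$ variance contributes $\tfrac{L_0\sqrt n}{2\eta}\gamma_k^2\cdot\tfrac{16\sqrt{2\pi}L_0^2 n}{m}=\tfrac{8\sqrt{2\pi}L_0^3 n^{3/2}}{m\eta}\gamma_k^2$; tracking the constants carefully (including the factor picked up when turning $\langle\nabla f^\eta(\bar x_k),\bar\delta_k^\eta\rangle$-type cross terms into the reserved negative quadratic) yields the stated coefficient $\tfrac{24\sqrt{2\pi}L_0^3 n^{3/2}}{m\eta}$; and the inexactness terms — the direct $\gamma_k$-order term $\tfrac{L_0\sqrt n}{2\eta}\gamma_k^2\cdot\tfrac{\tilde L_0^2n^2\varepsilon_k}{\eta^2}$ plus the Young-bounded cross term and the leftover $\gamma_k\mathbb E[\|\bar e_k^{\eta,\varepsilon_k}\|^2\mid\mathcal F_k]$ — combine into $\bigl(1+\tfrac{3\gamma_k L_0\sqrt n}{2\eta}\bigr)\gamma_k\tfrac{\tilde L_0^2 n^2\varepsilon_k}{\eta^2}$. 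I expect the main obstacle to be purely bookkeeping: choosing the Young's-inequality split ratios for the $\bar e_k^{\eta,\varepsilon_k}$ cross term so that (a) enough of the $-\tfrac{\gamma_k}{4}\|\overline{\nabla f^\eta}(\mathbf x_k)\|^2$ reserve survives after the smoothness quadratic is absorbed, and (b) the resulting inexactness coefficient telescopes exactly into the factor $\bigl(1+\tfrac{3\gamma_k L_0\sqrt n}{2\eta}\bigr)$ rather than something messier — this is where the stepsize bound $\gamma_k\le\tfrac{\eta}{6L_0\sqrt n}$ is used to make all the constants line up.
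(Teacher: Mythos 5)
Your proposal follows essentially the same route as the paper: the $\tfrac{L_0\sqrt n}{\eta}$-smoothness descent inequality applied to $\bar x_{k+1}=\bar x_k-\gamma_k\bar y_{k+1}$, the polarization identity to produce the two $-\tfrac{\gamma_k}{4}\|\cdot\|^2$ terms plus the consensus residual bounded via Lemma~\ref{lemma:multiple parts}(iii), Young's inequality with ratio $\alpha=2$ on the $\bar e_k^{\eta,\varepsilon_k}$ cross term, and the moment bounds of Lemma~\ref{lemma:multiple parts}(iv)--(v), with the stepsize condition used exactly as you describe to keep the $\|\overline{\nabla f^\eta}(\mathbf x_k)\|^2$ coefficient at $-\tfrac{\gamma_k}{4}$.

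One sub-claim in your handling of the quadratic term is not right as stated: expanding $\|\overline{\nabla f^\eta}(\mathbf x_k)+\bar\delta_k^\eta+\bar e_k^{\eta,\varepsilon_k}\|^2$ and invoking $\mathbb E[\bar\delta_k^\eta\mid\mathcal F_k]=0$ kills the cross term with the $\mathcal F_k$-measurable $\overline{\nabla f^\eta}(\mathbf x_k)$, but \emph{not} the cross term $\langle\bar\delta_k^\eta,\bar e_k^{\eta,\varepsilon_k}\rangle$, since $\bar e_k^{\eta,\varepsilon_k}$ depends on the same samples $(\xi_{i,k},v_{i,k})$ as $\bar\delta_k^\eta$ and is a biased error, so the two are correlated given $\mathcal F_k$. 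The paper sidesteps this entirely by applying the crude bound $\|a+b+c\|^2\le 3(\|a\|^2+\|b\|^2+\|c\|^2)$, which is precisely where the stated constant comes from: $\tfrac{3\gamma_k^2L_0\sqrt n}{2\eta}\cdot\tfrac{16\sqrt{2\pi}L_0^2n}{m}=\tfrac{24\sqrt{2\pi}L_0^3n^{3/2}}{m\eta}\gamma_k^2$, and likewise $\gamma_k+\tfrac{3\gamma_k^2L_0\sqrt n}{2\eta}$ gives the factor $\bigl(1+\tfrac{3\gamma_kL_0\sqrt n}{2\eta}\bigr)$ on the inexactness term. Your expansion can be repaired by bounding the surviving cross terms with Young's inequality, but then the constants no longer come out tighter than the paper's; the factor-of-three bound is the cleaner way to land on the advertised coefficients.
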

\begin{proof}
Recall that from Remark~\ref{rem:smoothness_of_f_i}, $f^{\eta}$ is \mj{$\tfrac{L_0\sqrt{n}}{\eta}$}-smooth. Utilizing \Rme{Lemma~\ref{lemma:multiple parts}}, we can write
\begin{align*}
f^\eta(\bar{{x}}_{k+1})&\le f^\eta(\bar{{x}}_{k})+\nabla f^\eta(\bar{{x}}_k)^\top(\bar{{x}}_{k+1}-\bar{{x}}_{k})+\mj{\tfrac{L_0\sqrt{n}}{2\eta}}\|\bar{{x}}_{k+1}-\bar{{x}}_{k}\|^2\\
&=f^\eta(\bar{{x}}_{k})-\gamma_k\nabla f^\eta(\bar{{x}}_k)^{\top}(\overline{\nabla{f}^\eta}(\mathbf{x}_k)+\bar \delta_{k}^\eta+\bar e_{k}^{\eta,\varepsilon_k})+\mj{\tfrac{\gamma_k^2L_0\sqrt{n}}{2\eta}}\|\overline{\nabla{f}^\eta}(\mathbf{x}_k)+\bar \delta_{k}^\eta+\bar e_{k}^{\eta,\varepsilon_k}\|^2.
\end{align*}
Taking conditional expectations on both sides and invoking the fact that $\mathbb{E}[\bar\delta_{k}^\eta \mid \mathcal{F}_k]=0$, we obtain
\begin{align*}
\mathbb{E}[f^\eta(\bar{{x}}_{k+1})\mid \mathcal{F}_k]&\le f^\eta(\bar{{x}}_{k})-\gamma_k\nabla f^\eta(\bar{{x}}_k)^\top(\overline{\nabla f^\eta}(\mathbf{x}_k)+\mathbb{E}[\bar e_{k}^{\eta,\varepsilon_k}\mid \mathcal{F}_k])\\
&+\mj{\tfrac{\gamma_k^2L_0\sqrt{n}}{2\eta}}\mathbb{E}[\|\overline{\nabla{f}^\eta}(\mathbf{x}_k)+\bar \delta_{k}^\eta+\bar e_{k}^{\eta,\varepsilon_k}\|^2\mid \mathcal{F}_k]\\
&=f^\eta(\bar{{x}}_{k})-\tfrac{\gamma_k}{2}\|\nabla f^\eta(\bar{{x}}_k)\|^2-\tfrac{\gamma_k}{2}\|\overline{\nabla f^\eta}(\mathbf{x}_k)\|^2+\tfrac{\gamma_k}{2}\|\nabla f^\eta(\bar{{x}}_k)-\overline{\nabla f^\eta}(\mathbf{x}_k)\|^2\\
&-\gamma_k\nabla f^\eta(\bar{{x}}_k)^\top \mathbb{E}[\bar e_{k}^{\eta,\varepsilon_k}\mid \mathcal{F}_k] +\mj{\tfrac{3\gamma_k^2L_0\sqrt{n}}{2\eta}}\left(\|\overline{\nabla{f}^\eta}(\mathbf{x}_k)\|^2+\mathbb{E}[\|\bar \delta_{k}^\eta\|^2+\|\bar e_{k}^{\eta,\varepsilon_k}\|^2\mid \mathcal{F}_k]\right).
\end{align*}
Note that we have
\begin{align*}
-\gamma_k\nabla f^\eta(\bar{{x}}_k)^\top \mathbb{E}[\bar e_{k}^{\eta,\varepsilon_k}\mid \mathcal{F}_k] &\le\gamma_k\left|\nabla f^\eta(\bar{{x}}_k)^\top \mathbb{E}[\bar e_{k}^{\eta,\varepsilon_k}\mid \mathcal{F}_k] \right|\\
&\le\tfrac{\gamma_k}{2\alpha}\|\nabla f^\eta(\bar{{x}}_k)\|^2+\tfrac{\alpha\gamma_k}{2}\|\mathbb{E}[\bar e_{k}^{\eta,\varepsilon_k}\mid \mathcal{F}_k]\|^2\\
&\le\tfrac{\gamma_k}{2\alpha}\|\nabla f^\eta(\bar{{x}}_k)\|^2+\tfrac{\alpha\gamma_k}{2}\mathbb{E}[\|\bar e_{k}^{\eta,\varepsilon_k}\|^2\mid \mathcal{F}_k],
\end{align*}
where $\alpha>0$ is a scalar. Invoking \Rme{Lemma~\ref{lemma:multiple parts}(iii)} and setting $\alpha = 2$ in the preceding two relations, 
\begin{align*}
\mathbb{E}[f^\eta(\bar{{x}}_{k+1})\mid \mathcal{F}_k]&\le f^\eta(\bar{{x}}_{k})-\tfrac{\gamma_k}{4}\|\nabla f^\eta(\bar{{x}}_k)\|^2-\tfrac{\gamma_k}{2}\left(1-\mj{\tfrac{3\gamma_kL_0 \sqrt{n}}{\eta}}\right)\|\overline{\nabla f^\eta}(\mathbf{x}_k)\|^2\\
&+\mj{\left(\tfrac{L_0^2n \gamma_k}{2 m\eta^2}\right)}\|\mathbf{x}_k-\mathbf{1}\bar{x}_k\|^2 +\mj{\tfrac{3\gamma_k^2L_0\sqrt{n}}{2\eta}}\mathbb{E}[\|\bar \delta_{k}^\eta\|^2\mid \mathcal{F}_k]+\left(\gamma_k+\mj{\tfrac{3\gamma_k^2L_0\sqrt{n}}{2\eta}}\right)\mathbb{E}[\|\bar e_{k}^{\eta,\varepsilon_k}\|^2\mid \mathcal{F}_k]  .
\end{align*}
Invoking \Rme{Lemmas~\ref{lemma:multiple parts}(iv) and~(v)}, we obtain 
\begin{align*}
\mathbb{E}[f^\eta(\bar{{x}}_{k+1})\mid \mathcal{F}_k]&\le f^\eta(\bar{{x}}_{k})-\tfrac{\gamma_k}{4}\|\nabla f^\eta(\bar{{x}}_k)\|^2-\tfrac{\gamma_k}{2}\left(1-\mj{\tfrac{3\gamma_kL_0 \sqrt{n}}{\eta}}\right)\|\overline{\nabla f^\eta}(\mathbf{x}_k)\|^2\\
&+\mj{\left(\tfrac{L_0^2n \gamma_k}{2 m\eta^2}\right)}\|\mathbf{x}_k-\mathbf{1}\bar{x}_k\|^2 +\mj{\left(\tfrac{24\sqrt{2\pi}L_0^3n^{3/2}}{m\eta}\right)}\gamma_k^2+\mj{\left(1+\mj{\tfrac{3\gamma_kL_0\sqrt{n}}{2\eta}}\right)\gamma_k\left(\tfrac{\tilde L_0^2n^2\varepsilon_k}{\eta^2}\right)} .
\end{align*}
 The result is obtained by noting that {$\gamma_k\le \frac{\eta}{6L_0\sqrt{n}}$} {implying} that $1-\mj{\tfrac{3\gamma_kL_0 \sqrt{n}}{\eta}} \geq \tfrac{1}{2}$.
\end{proof}}
\begin{lemma}\em \label{lemma: multiple inequalities for the metrics} Consider Algorithm~\ref{alg:DZGT}. Let Assumptions~\ref{assum:mixxx}, \ref{assump:opt_f_bounded_below}, \ref{assum:main1}, and \ref{assum:main2} hold. Then, the following inequalities hold for all $k\ge 0$.

 \noindent (i) For any $\theta>0$, we have $\|\mathbf{x}_{k+1}-\mathbf{1}\bar x_{k+1}\|^2\le  (1+\theta)\lambda_{\mathbf{W}}^2\|\mathbf{x}_{k}-\mathbf{1}\bar x_{k}\|^2+(1+\tfrac{1}{\theta})\gamma_k^2\lambda_{\mathbf{W}}^2\|\mathbf{y}_{k+1}-\mathbf{1}\bar y_{k+1}\|^2$.
 

 
 \noindent (ii) $\|\mathbf{x}_{k+1}-\mathbf{1}\bar x_{k+1}\|\le \lambda_{\mathbf{W}}\|\mathbf{x}_{k}-\mathbf{1}\bar x_{k}\|+\gamma_k\lambda_{\mathbf{W}}\|\mathbf{y}_{k+1}-\mathbf{1}\bar y_{k+1}\|.$
\end{lemma}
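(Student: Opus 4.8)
The plan is to start from the compact update rule \eqref{eqn:R2}, namely $\mathbf{x}_{k+1}=\mathbf{W}(\mathbf{x}_{k}-\gamma_k\mathbf{y}_{k+1})$, set $\mathbf{z}_k\triangleq \mathbf{x}_k-\gamma_k\mathbf{y}_{k+1}$, and note that its row-average is $\bar z_k=\tfrac1m\mathbf{1}^\top\mathbf{z}_k=\bar x_k-\gamma_k\bar y_{k+1}$, which equals $\bar x_{k+1}$ by Lemma~\ref{lemma:multiple parts}(ii). Hence $\mathbf{x}_{k+1}-\mathbf{1}\bar x_{k+1}=\mathbf{W}\mathbf{z}_k-\mathbf{1}\bar z_k$, which is exactly the quantity controlled by Lemma~\ref{lem:lambda_w}: applying it gives $\|\mathbf{x}_{k+1}-\mathbf{1}\bar x_{k+1}\|\le \lambda_{\mathbf{W}}\|\mathbf{z}_k-\mathbf{1}\bar z_k\|$. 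The remaining observation is that, by linearity of the averaging operator, $\mathbf{z}_k-\mathbf{1}\bar z_k=(\mathbf{x}_k-\mathbf{1}\bar x_k)-\gamma_k(\mathbf{y}_{k+1}-\mathbf{1}\bar y_{k+1})$, so the triangle inequality yields $\|\mathbf{x}_{k+1}-\mathbf{1}\bar x_{k+1}\|\le \lambda_{\mathbf{W}}\|\mathbf{x}_k-\mathbf{1}\bar x_k\|+\gamma_k\lambda_{\mathbf{W}}\|\mathbf{y}_{k+1}-\mathbf{1}\bar y_{k+1}\|$, establishing part (ii).

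For part (i), I would simply square the bound from part (ii) and invoke Young's inequality in the form $(a+b)^2\le (1+\theta)a^2+(1+\tfrac1\theta)b^2$ with $a=\lambda_{\mathbf{W}}\|\mathbf{x}_k-\mathbf{1}\bar x_k\|$ and $b=\gamma_k\lambda_{\mathbf{W}}\|\mathbf{y}_{k+1}-\mathbf{1}\bar y_{k+1}\|$; collecting the $\lambda_{\mathbf{W}}^2$ factors gives precisely $(1+\theta)\lambda_{\mathbf{W}}^2\|\mathbf{x}_k-\mathbf{1}\bar x_k\|^2+(1+\tfrac1\theta)\gamma_k^2\lambda_{\mathbf{W}}^2\|\mathbf{y}_{k+1}-\mathbf{1}\bar y_{k+1}\|^2$. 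One small care point is that Lemma~\ref{lem:lambda_w} as stated uses the Frobenius norm, so I should verify the intermediate object $\mathbf{W}\mathbf{z}_k-\mathbf{1}\bar z_k$ is interpreted columnwise (equivalently $\mathbf{z}_k-\mathbf{1}\bar z_k$ is a matrix in $\mathbb{R}^{m\times n}$ and $\mathbf{W}$ acts on the rows), which is consistent with the conventions set in \eqref{eqn:notations}; this is already the setting of Lemma~\ref{lem:lambda_w}.

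There is essentially no hard step here: the only thing one must get right is the bookkeeping that $\bar x_{k+1}=\bar z_k$ (so that the left-hand side really is $\mathbf{W}\mathbf{z}_k-\mathbf{1}(\tfrac1m\mathbf{1}^\top\mathbf{z}_k)$ rather than $\mathbf{W}\mathbf{z}_k$ minus some other average), which is guaranteed by the doubly stochastic property of $\mathbf{W}$ in Assumption~\ref{assum:mixxx}(ii) together with Lemma~\ref{lemma:multiple parts}(ii). Once that identification is in place, both inequalities follow from a single application of Lemma~\ref{lem:lambda_w}, the triangle inequality, and Young's inequality, with no dependence on the specific structure of the zeroth-order gradient estimators $g^{\eta}_{i,k}$ or the inexactness errors.
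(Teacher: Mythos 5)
Your proposal is correct and follows essentially the same route as the paper: both rely on the update \eqref{eqn:R2}, the identity $\bar x_{k+1}=\bar x_k-\gamma_k\bar y_{k+1}$ from Lemma~\ref{lemma:multiple parts}(ii), the contraction property in Lemma~\ref{lem:lambda_w}, and the triangle/Young inequalities; the only (immaterial) difference is ordering — you apply the contraction once to the combined vector $\mathbf{z}_k=\mathbf{x}_k-\gamma_k\mathbf{y}_{k+1}$ and then split, whereas the paper splits first and invokes Lemma~\ref{lem:lambda_w} on each piece separately. The resulting bounds are identical, and deriving (i) by squaring (ii) is a valid shortcut.
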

\begin{proof}
\noindent (i) Utilizing \eqref{eqn:R2} and \Rme{Lemma~\ref{lemma:multiple parts}(ii)}, we obtain \us{the following for any $\theta > 0$.} 
\begin{align*}
\|\mathbf{x}_{k+1}-\mathbf{1}\bar x_{k+1}\|^2&=\|\mathbf{W}(\mathbf{x}_{k}-\gamma_k\mathbf{y}_{k+1})-\mathbf{1}(\bar x_k-\gamma_k\bar y_{k+1})\|^2\\
&\le (1+\theta)\|\mathbf{W}\mathbf{x}_{k}-\mathbf{1}\bar x_k\|^2+ (1+\tfrac{1}{\theta})\gamma_k^2\|\mathbf{W}\mathbf{y}_{k+1}-\mathbf{1}\bar y_{k+1}\|^2.
\end{align*}
Invoking Lemma~\ref{lem:lambda_w} twice, we obtain the inequality in (i).

\noindent (ii) Utilizing \eqref{eqn:R2} and \Rme{Lemma~\ref{lemma:multiple parts}(ii)} once again, we obtain 
\begin{align*}
\|\mathbf{x}_{k+1}-\mathbf{1}\bar x_{k+1}\|&=\|\mathbf{W}(\mathbf{x}_{k}-\gamma_k\mathbf{y}_{k+1})-\mathbf{1}(\bar x_k-\gamma_k\bar y_{k+1})\| \le \|\mathbf{W}\mathbf{x}_{k}-\mathbf{1}\bar x_k\|+\gamma_k\|\mathbf{W}\mathbf{y}_{k+1}-\mathbf{1}\bar y_{k+1}\|.
\end{align*}
Invoking {Lemma~\ref{lem:lambda_w}}, we obtain the inequality in (ii).
\end{proof}
\fyy{In the following lemma, we derive a recursive bound for the error term \uvs{corresponding to the consensus error in gradient tracking, given by} $\mathbb{E}[\|{ \mathbf{y}}_{k+1}-\mathbf{1}{\bar{ {y}}}_{k+1}\|^2]$. The proof is relegated to the Appendix due to its length.}
\begin{lemma}\em \label{Lemma:main bound for the gradient tracker in terms of other main terms-inexact}
Consider Algorithm~\ref{alg:DZGT}. Let Assumptions~\ref{assum:mixxx}, \ref{assump:opt_f_bounded_below}, \ref{assum:main1}, and \ref{assum:main2} hold. Suppose $\gamma_k \leq \min \Big\{
 \tfrac{ \sqrt{1-\lambda_{\mathbf{W}}^2}}{10\sqrt{3}\lambda_{\mathbf{W}}^2 }
 ,\tfrac{(1-\lambda_{\mathbf{W}}^2) }{20\lambda_{\mathbf{W}}^3 }\Big\} \left(\tfrac{\eta}{\mj{\sqrt{n}} L_0 }\right)$ for all $k\geq 0$. Then, for all $k\ge 0$, we have
 \begin{align*}
\mathbb{E}[\|{ \mathbf{y}}_{k+2}-\mathbf{1}{\bar{ {y}}}_{k+2}\|^2]
&\le  \left(\tfrac{1+\lambda_{\mathbf{W}}^2}{2} \right)\mathbb{E}[\|{ \mathbf{y}}_{k+1}-\mathbf{1}{ \bar{y}}_{k+1}\|^2]   +\mj{\left(\tfrac{2m\|\mathbf{W}\|^2\tilde L_0^2n^2\varepsilon_k}{\eta^2}\right)} \\
& +\mj{32m\sqrt{2\pi}\left(\|\mathbf{W}\|^2+\mee{\tfrac{11}{2}}\lambda_{\mathbf{W}}^2\right)\mj{nL_0^2}  }  
 +\left(\tfrac{\mj{\sqrt{n}}L_0}{\eta}\right)^2\left( \tfrac{30 m\gamma_k^2\lambda_{\mathbf{W}}^2}{1-\lambda_{\mathbf{W}}^2} \right)\mathbb{E}[\|\overline{\nabla{f}^\eta}(\mathbf{x}_k)\|^2] \\
 & +
\left(\tfrac{\mj{\sqrt{n}}L_0}{\eta}\right)^2\left( \tfrac{5\lambda_{\mathbf{W}}^2(9-\lambda_{\mathbf{W}}^2)}{1-\lambda_{\mathbf{W}}^2} \right)\mathbb{E}[\|\mathbf{x}_{k}-\mathbf{1}\bar x_{k}\|^2]  +\mj{\tfrac{160\sqrt{2\pi}n^2L_0^4\gamma_k^2\lambda_{\mathbf{W}}^2(3+\lambda_{\mathbf{W}}^2)}{\eta^2(1-\lambda_{\mathbf{W}}^2)}
}\\
&+\mj{\left(\tfrac{300\lambda_{\mathbf{W}}^4\gamma_k^2L_0^2\tilde L_0^2 \mj{n^3} m      
\varepsilon_k}{\eta^4(1-\lambda_{\mathbf{W}}^2)^2}\right)} +\mj{5m\lambda_{\mathbf{W}}^2 \left(\tfrac{n^2\tilde L_0^2}{\eta^2}\right)\left(\tfrac{1+3\lambda_{\mathbf{W}}^2}{1-\lambda_{\mathbf{W}}^2}\right)(\varepsilon_k+\varepsilon_{k+1})  }.
\end{align*}
\end{lemma}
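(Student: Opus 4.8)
The plan is to turn the one‑step‑ahead form of the compact recursion~\eqref{eqn:R1} into a contraction inequality for the gradient‑tracking consensus error. Writing~\eqref{eqn:R1} at index $k+1$ and subtracting its $\tfrac{1}{m}\mathbf{1}^\top$–average (i.e.\ \eqref{eqn:R1_avg}), and using $\mathbf{1}^\top\mathbf{W}=\mathbf{1}^\top$, $\mathbf{W}\mathbf{1}=\mathbf{1}$ together with Lemma~\ref{lemma:multiple parts}~(i), one gets the identity
\[
\mathbf{y}_{k+2}-\mathbf{1}\bar y_{k+2}=\big(\mathbf{W}\mathbf{y}_{k+1}-\mathbf{1}\bar y_{k+1}\big)+\big(\mathbf{W}\mathbf{G}_{k+1}-\mathbf{1}\bar G_{k+1}\big)-\big(\mathbf{W}\mathbf{G}_{k}-\mathbf{1}\bar G_{k}\big),
\]
where $\mathbf{G}_k\triangleq\nabla\mathbf{f}^\eta(\mathbf{x}_k)+\boldsymbol{\delta}_k^\eta+\mathbf{e}_k^{\eta,\varepsilon_k}$ is the stacked inexact stochastic zeroth‑order gradient and $\bar G_k$ its row‑average. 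I would then apply Young's inequality $\|a+b\|^2\le(1+\theta)\|a\|^2+(1+\tfrac{1}{\theta})\|b\|^2$ with a free parameter $\theta>0$ to peel off $\|\mathbf{W}\mathbf{y}_{k+1}-\mathbf{1}\bar y_{k+1}\|^2\le\lambda_{\mathbf{W}}^2\|\mathbf{y}_{k+1}-\mathbf{1}\bar y_{k+1}\|^2$ (Lemma~\ref{lem:lambda_w}), eventually setting $\theta=\tfrac{1-\lambda_{\mathbf{W}}^2}{2\lambda_{\mathbf{W}}^2}$ so that $(1+\theta)\lambda_{\mathbf{W}}^2=\tfrac{1+\lambda_{\mathbf{W}}^2}{2}$ once the stepsize condition absorbs the $O(\gamma_k^2)$ corrections generated later; the bound $1+\tfrac{1}{\theta}=\tfrac{1+\lambda_{\mathbf{W}}^2}{1-\lambda_{\mathbf{W}}^2}\le\tfrac{2}{1-\lambda_{\mathbf{W}}^2}$ is the source of the $(1-\lambda_{\mathbf{W}}^2)^{-1}$ factors in the statement.

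Next I would bound the remaining increment $(\mathbf{W}\mathbf{G}_{k+1}-\mathbf{1}\bar G_{k+1})-(\mathbf{W}\mathbf{G}_{k}-\mathbf{1}\bar G_{k})$ by conditioning on $\mathcal F_{k+1}$ and splitting $\mathbf{G}_\bullet=\nabla\mathbf{f}^\eta(\mathbf{x}_\bullet)+\boldsymbol{\delta}_\bullet^\eta+\mathbf{e}_\bullet^{\eta,\varepsilon_\bullet}$. Three mechanisms produce the remaining terms. \textbf{(a) Bias increment:} $\boldsymbol{\delta}_{k+1}^\eta$ is $\mathcal F_{k+1}$–conditionally mean‑zero (Lemma~\ref{lemma:g_ik_eta_props}~(i)), so its cross terms against $\mathcal F_{k+1}$–measurable quantities vanish, while its second moment and the cross terms between agents are controlled by Lemma~\ref{lemma:g_ik_eta_props}~(ii)--(iii); this yields the $mnL_0^2$ constants, premultiplied by $\|\mathbf{W}\|^2$ for the step‑$(k{+}1)$ copy (bounded crudely as $\|\mathbf{W}\boldsymbol{\delta}_{k+1}^\eta\|^2\le\|\mathbf{W}\|^2\|\boldsymbol{\delta}_{k+1}^\eta\|^2$) and by $\lambda_{\mathbf{W}}^2$ for the contracted step‑$k$ copy, together with the $\gamma_k^2$‑scaled $n^2L_0^4$ constant arising where $\boldsymbol{\delta}_k^\eta$ re‑enters through $\bar y_{k+1}$. \textbf{(b) Inexactness increment:} $\mathbf{e}_{k+1}^{\eta,\varepsilon_{k+1}}$ and $\mathbf{e}_k^{\eta,\varepsilon_k}$ are controlled in second moment via Lemma~\ref{lemma:g_ik_eta_props}~(iv), producing the $\tfrac{\tilde L_0^2n^2\varepsilon_k}{\eta^2}$ and $(\varepsilon_k+\varepsilon_{k+1})$ terms, with the correlated cross term between $\mathbf{e}_{k+1}^{\eta,\varepsilon_{k+1}}$ and $\boldsymbol{\delta}_{k+1}^\eta$ absorbed by one further Young step. \textbf{(c) Smoothed‑gradient increment:} by Remark~\ref{rem:smoothness_of_f_i}, $\|\nabla\mathbf{f}^\eta(\mathbf{x}_{k+1})-\nabla\mathbf{f}^\eta(\mathbf{x}_k)\|\le\tfrac{L_0\sqrt n}{\eta}\|\mathbf{x}_{k+1}-\mathbf{x}_k\|$; using~\eqref{eqn:R2} one writes $\mathbf{x}_{k+1}-\mathbf{x}_k=(\mathbf{W}-\mathbf{I})(\mathbf{x}_k-\mathbf{1}\bar x_k)-\gamma_k\mathbf{W}\mathbf{y}_{k+1}$, decomposes $\mathbf{y}_{k+1}=(\mathbf{y}_{k+1}-\mathbf{1}\bar y_{k+1})+\mathbf{1}\bar y_{k+1}$ with $\bar y_{k+1}=\overline{\nabla f^\eta}(\mathbf{x}_k)+\bar\delta_k^\eta+\bar e_k^{\eta,\varepsilon_k}$ (Lemma~\ref{lemma:multiple parts}~(i)), takes expectations, and invokes Lemma~\ref{lemma:multiple parts}~(iv)--(v) for $\mathbb E[\|\bar\delta_k^\eta\|^2]$ and $\mathbb E[\|\bar e_k^{\eta,\varepsilon_k}\|^2]$. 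This yields the $\gamma_k^2\big(\tfrac{\sqrt n L_0}{\eta}\big)^2\mathbb E[\|\overline{\nabla f^\eta}(\mathbf{x}_k)\|^2]$ term, the $\big(\tfrac{\sqrt n L_0}{\eta}\big)^2\mathbb E[\|\mathbf{x}_k-\mathbf{1}\bar x_k\|^2]$ term (no $\gamma_k$ here, since the consensus part of $\mathbf{x}_{k+1}-\mathbf{x}_k$ carries no stepsize), the $\gamma_k^2 n^2L_0^4$ constant, and the $\gamma_k^2\varepsilon_k$ term with its $\lambda_{\mathbf{W}}^4$, $(1-\lambda_{\mathbf{W}}^2)^{-2}$ dressing.

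Collecting everything, taking total expectations, and substituting $\theta$ finishes the proof, provided the self‑referential term $\gamma_k^2\big(\tfrac{\sqrt n L_0}{\eta}\big)^2\|\mathbf{W}\|^2\,\mathbb E[\|\mathbf{y}_{k+1}-\mathbf{1}\bar y_{k+1}\|^2]$ generated in~(c) (with its $\lambda_{\mathbf{W}}^2(1+\tfrac1\theta)$ prefactor) is small enough to be folded into the $\tfrac{1+\lambda_{\mathbf{W}}^2}{2}$ coefficient on $\mathbb E[\|\mathbf{y}_{k+1}-\mathbf{1}\bar y_{k+1}\|^2]$; this is exactly the role of the hypothesis $\gamma_k\le\min\big\{\tfrac{\sqrt{1-\lambda_{\mathbf{W}}^2}}{10\sqrt3\lambda_{\mathbf{W}}^2},\tfrac{1-\lambda_{\mathbf{W}}^2}{20\lambda_{\mathbf{W}}^3}\big\}\tfrac{\eta}{\sqrt n L_0}$. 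The main obstacle is the filtration bookkeeping: $\mathbf{y}_{k+1}$, $\mathbf{x}_{k+1}$, $\boldsymbol{\delta}_k^\eta$, and $\mathbf{e}_k^{\eta,\varepsilon_k}$ are $\mathcal F_{k+1}$–measurable, whereas $\boldsymbol{\delta}_{k+1}^\eta$ and $\mathbf{e}_{k+1}^{\eta,\varepsilon_{k+1}}$ are not (they depend on the fresh samples $v_{i,k+1},\xi_{i,k+1}$ and the lower‑level solves at step $k+1$ and are mutually correlated), so the square of the increment must be expanded term by term — keeping the martingale structure of $\boldsymbol{\delta}_{k+1}^\eta$ while only crudely bounding the nonzero contributions of $\mathbf{e}_{k+1}^{\eta,\varepsilon_{k+1}}$ — and all of this done while tracking the precise dependence on $n$, $\eta$, $\lambda_{\mathbf{W}}$, and $\|\mathbf{W}\|$ so that the stated constants emerge. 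A secondary difficulty is precisely the reappearance of $\|\mathbf{y}_{k+1}-\mathbf{1}\bar y_{k+1}\|^2$ on the right through $\mathbf{x}_{k+1}-\mathbf{x}_k$, so that the recursion closes only because $\gamma_k$ is taken sufficiently small.
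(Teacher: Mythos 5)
Your plan reproduces the overall architecture of the argument (the exact identity for $\mathbf{y}_{k+2}-\mathbf{1}\bar y_{k+2}$, the $\tfrac{L_0\sqrt n}{\eta}$-smoothness bound on the gradient increment via $\mathbf{x}_{k+1}-\mathbf{x}_k$, the second-moment bounds from Lemmas~\ref{lemma:g_ik_eta_props} and~\ref{lemma:multiple parts}, and the closing of the recursion through the stepsize restriction). However, the first step of your route — applying Young's inequality at the top level with $\theta=\tfrac{1-\lambda_{\mathbf{W}}^2}{2\lambda_{\mathbf{W}}^2}$ so that the \emph{entire} increment $(\mathbf{W}-\tfrac{1}{m}\mathbf{1}\mathbf{1}^\top)(\mathbf{G}_{k+1}-\mathbf{G}_k)$ picks up the factor $1+\tfrac{1}{\theta}=\tfrac{1+\lambda_{\mathbf{W}}^2}{1-\lambda_{\mathbf{W}}^2}$ — cannot deliver the stated inequality. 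That factor would multiply, in particular, the variance terms $\mathbb{E}[\|\boldsymbol{\delta}_{k+1}^\eta\|^2]$ and $\mathbb{E}[\|\boldsymbol{\delta}_{k}^\eta\|^2]$, producing a noise floor of order $\tfrac{m n L_0^2\lambda_{\mathbf{W}}^2}{1-\lambda_{\mathbf{W}}^2}$, whereas the lemma asserts the floor $32m\sqrt{2\pi}\bigl(\|\mathbf{W}\|^2+\tfrac{11}{2}\lambda_{\mathbf{W}}^2\bigr)nL_0^2$ with \emph{no} $(1-\lambda_{\mathbf{W}}^2)^{-1}$ amplification. This is not a cosmetic difference: that constant propagates through Lemma~\ref{lemma:bound for x-bar x inexact} and Theorem~\ref{Theorem:thm 1} and governs the network dependence of the final complexity. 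Your assertion that the global Young step is ``the source of the $(1-\lambda_{\mathbf{W}}^2)^{-1}$ factors'' is therefore a misdiagnosis — those factors come only from the \emph{per-term} Young parameters used on selected cross terms.

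The missing idea is the treatment of the cross term $2\langle(\mathbf{W}-\tfrac{1}{m}\mathbf{1}\mathbf{1}^\top)\mathbf{y}_{k+1},(\mathbf{W}-\tfrac{1}{m}\mathbf{1}\mathbf{1}^\top)(\boldsymbol{\delta}_{k+1}^\eta-\boldsymbol{\delta}_{k}^\eta)\rangle$, which must be \emph{kept} rather than absorbed by Young's inequality. The paper expands the square exactly (so the coefficient of $\mathbb{E}[\|\mathbf{y}_{k+1}-\mathbf{1}\bar y_{k+1}\|^2]$ starts at $\lambda_{\mathbf{W}}^2$, not $(1+\theta)\lambda_{\mathbf{W}}^2$), kills the $\boldsymbol{\delta}_{k+1}^\eta$ part of this cross term by the tower property since $\mathbf{y}_{k+1}$ is $\mathcal{F}_{k+1}$-measurable, and then — this is the step absent from your sketch — handles the $\boldsymbol{\delta}_{k}^\eta$ part by substituting the update $\mathbf{y}_{k+1}=\mathbf{W}(\mathbf{y}_k+\nabla\mathbf{f}^\eta(\mathbf{x}_k)-\nabla\mathbf{f}^\eta(\mathbf{x}_{k-1})+\boldsymbol{\delta}_k^\eta-\boldsymbol{\delta}_{k-1}^\eta+\mathbf{e}_k^{\eta,\varepsilon_k}-\mathbf{e}_{k-1}^{\eta,\varepsilon_{k-1}})$ and invoking $\mathbb{E}[\boldsymbol{\delta}_k^\eta\mid\mathcal{F}_k]=0$ a second time, so that only $-\mathbb{E}[\langle\mathbf{W}^2(\boldsymbol{\delta}_k^\eta+\mathbf{e}_k^{\eta,\varepsilon_k}),(\mathbf{W}-\tfrac{1}{m}\mathbf{1}\mathbf{1}^\top)\boldsymbol{\delta}_k^\eta\rangle]$ survives. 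This double use of the martingale property is exactly what produces the $\|\mathbf{W}\|^2$ constants in the statement and keeps the variance floor free of $(1-\lambda_{\mathbf{W}}^2)^{-1}$; your mention of the martingale structure applies it only inside the squared increment, where it is no longer available once the cross term has been discarded. The remaining coefficients of $\mathbb{E}[\|\overline{\nabla f^\eta}(\mathbf{x}_k)\|^2]$ and $\mathbb{E}[\|\mathbf{x}_k-\mathbf{1}\bar x_k\|^2]$ then arise from Young steps with $\beta_1=2\beta_2=\beta_3=\tfrac{1-\lambda_{\mathbf{W}}^2}{10\lambda_{\mathbf{W}}^2}$ applied only to the cross terms involving $\nabla\mathbf{f}^\eta(\mathbf{x}_{k+1})-\nabla\mathbf{f}^\eta(\mathbf{x}_k)$ and $\mathbf{e}_{k+1}^{\eta,\varepsilon_{k+1}}-\mathbf{e}_k^{\eta,\varepsilon_k}$, with the resulting $O(\gamma_k)$ and $O(\beta_i)$ perturbations of the contraction coefficient absorbed into $\tfrac{1+\lambda_{\mathbf{W}}^2}{2}$ by the stepsize hypothesis.
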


\fy{The inequalities derived in the previous three lemmas are characterized by \us{recursions involving} three \us{distinct} error metrics. To analyze the convergence of the \us{resulting sequence}, we will utilize the following result to obtain a non-recursive error bound.   \begin{lemma}\label{lem:recursive_matrix_ineq} \em 
Let $\{u_k\}$ and $\{b_k\}$ denote sequences of nonnegative column vectors in $\mathbb{R}^d$, satisfying 
$
{u}_{k+1}\le\mathbf{A}{u}_{k}+{b}_{k}$, for $k\geq 0$, where $\mathbf{A}\in \mathbb{R}^{d\times d}$ denotes a nonnegative square matrix such that $\rho(\mathbf{A}) <1$.  Then, for any $K\geq 1$, we have $\sum_{k=1}^{K}   {u}_{k} \leq (\mathbf{I}_d-\mathbf{A})^{-1}\left({u}_{0}+\sum_{k=0}^{K-1}{b}_{k}\right)$.
\end{lemma}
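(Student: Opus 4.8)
The plan is to unroll the linear recursion, exploit the entrywise nonnegativity of $\mathbf{A}$ (hence of all its powers) so that the componentwise inequality ``$\le$'' is preserved under left multiplication, and then sum over $k$ and interchange the order of summation, bounding finite partial sums of the Neumann series by $(\mathbf{I}_d-\mathbf{A})^{-1}$.

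First I would record two elementary facts. (a) Since $\mathbf{A}\ge 0$ entrywise, $\mathbf{A}^\ell\ge 0$ for every $\ell\ge 0$; consequently, if $p\le q$ componentwise then $\mathbf{A}^\ell p\le \mathbf{A}^\ell q$. (b) Since $\rho(\mathbf{A})<1$, the matrix $\mathbf{I}_d-\mathbf{A}$ is invertible and $(\mathbf{I}_d-\mathbf{A})^{-1}=\sum_{\ell=0}^{\infty}\mathbf{A}^\ell$, a convergent Neumann series with nonnegative terms; in particular $(\mathbf{I}_d-\mathbf{A})^{-1}\ge 0$ and $\sum_{\ell=0}^{N}\mathbf{A}^\ell\le (\mathbf{I}_d-\mathbf{A})^{-1}$ componentwise for every $N\ge 0$.

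Next, iterating the hypothesis $u_{k+1}\le \mathbf{A}u_k+b_k$ and applying fact (a) at each step gives, for every $k\ge 1$,
$$
u_k\le \mathbf{A}^k u_0+\sum_{j=0}^{k-1}\mathbf{A}^{k-1-j}b_j,
$$
which is verified by a straightforward induction on $k$. Summing over $k=1,\dots,K$,
$$
\sum_{k=1}^{K}u_k\le \Big(\sum_{k=1}^{K}\mathbf{A}^k\Big)u_0+\sum_{k=1}^{K}\sum_{j=0}^{k-1}\mathbf{A}^{k-1-j}b_j.
$$
For the first term, fact (b) gives $\sum_{k=1}^{K}\mathbf{A}^k\le (\mathbf{I}_d-\mathbf{A})^{-1}$, and since $u_0\ge 0$ this yields $\big(\sum_{k=1}^{K}\mathbf{A}^k\big)u_0\le (\mathbf{I}_d-\mathbf{A})^{-1}u_0$. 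For the double sum, interchanging the order of summation and reindexing with $\ell=k-1-j$,
$$
\sum_{k=1}^{K}\sum_{j=0}^{k-1}\mathbf{A}^{k-1-j}b_j=\sum_{j=0}^{K-1}\Big(\sum_{\ell=0}^{K-1-j}\mathbf{A}^\ell\Big)b_j\le \sum_{j=0}^{K-1}(\mathbf{I}_d-\mathbf{A})^{-1}b_j=(\mathbf{I}_d-\mathbf{A})^{-1}\sum_{k=0}^{K-1}b_k,
$$
using fact (b) again together with $b_j\ge 0$. Adding the two bounds gives $\sum_{k=1}^{K}u_k\le (\mathbf{I}_d-\mathbf{A})^{-1}\big(u_0+\sum_{k=0}^{K-1}b_k\big)$, as claimed.

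The only delicate aspect is bookkeeping: one must consistently read every inequality componentwise and invoke nonnegativity of $\mathbf{A}$ (and of the $u_k$, $b_k$, and partial Neumann sums) at each place where an inequality is multiplied by a matrix or a partial sum is compared with its infinite limit; omitting this is the only way the otherwise routine argument could fail. All remaining steps are finite manipulations.
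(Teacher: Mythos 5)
Your proof is correct and follows essentially the same route as the paper's: unroll the recursion using the entrywise nonnegativity of $\mathbf{A}$, sum over $k$, interchange the order of summation, and dominate each finite partial sum of the Neumann series by $(\mathbf{I}_d-\mathbf{A})^{-1}$. The paper phrases the last step by explicitly adding the nonnegative tail matrices needed to complete each partial sum to the full series, which is the same observation you make directly; no substantive difference.
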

\begin{proof}
Unrolling $
{u}_{k+1}\le\mathbf{A}{u}_{k}+{b}_{k}$ recursively, we obtain for any $k\geq 0$,  $
{u}_{k+1}\le\mathbf{A}^{k+1}{u}_{0}+\textstyle\sum_{j=0}^{k}\mathbf{A}^j{b}_{k-j}.$ Summing both sides of the preceding inequality over $k=0,\ldots,K-1$, where $K\ge1$, 
\begin{align}\label{ineq:uk+1A}
\textstyle\sum_{k=0}^{K-1}{u}_{k+1}& \le\textstyle\sum_{k=0}^{K-1}\mathbf{A}^{k+1}{u}_{0}+\textstyle\sum_{k=0}^{K-1}\textstyle\sum_{j=0}^{k}\mathbf{A}^j{b}_{k-j}\notag \\
&=\left(\textstyle\sum_{k=0}^{K-1}\mathbf{A}^{k+1}\right){u}_{0}+\textstyle\sum_{k=0}^{K-1}\left(\textstyle\sum_{t=0}^{ K-1-k}\mathbf{A}^{t}\right)b_k .
\end{align}
{{For a given $K \geq 1$, let us define the nonnegative matrix \us{$\mathbf{B}_K$} as $\mathbf{B}_K \triangleq \left(\mathbf{I}_d+\textstyle\sum_{k=K}^{\infty}\mathbf{A}^{k+1}\right)$. \us{Furthermore}, for  given $k$ and {$K\geq 1$} such that $0\leq k \leq  K-1$, define the nonnegative matrix \us{$\mathbf{C}_{k,K}$ as} $\mathbf{C}_{k,K} \triangleq \textstyle\sum_{t=K-k}^{ \infty}\mathbf{A}^{t}$. Note that in view of  $\rho(\mathbf{A}) <1$, we have {that} $\textstyle\sum_{k=0}^{\infty}\mathbf{A}^k=(\mathbf{I}_d-\mathbf{A})^{-1}$. As a result, matrices $\mathbf{B}_K$ and $\mathbf{C}_{k,K}$ exist and are well-defined, for any $K\geq 1$ and $0\leq k \leq K-1$.}
We note that both {$\mathbf{B}_K$ and $\mathbf{C}_{k,K}$} are nonnegative matrices and  ${u}_{0}$ and $\textstyle\sum_{k=0}^{K-1}{b}_{k}$ are \us{both} nonnegative column vectors. Therefore, ${\mathbf{B}_K}u_0$, and $\textstyle\sum_{k=0}^{K-1}{\mathbf{C}_{k,K}}{b}_{k}$ are nonnegative column vectors as well. Adding these two nonnegative vectors to the right-hand side of {\eqref{ineq:uk+1A}}, we obtain
\begin{align*}
\textstyle\sum_{k=0}^{K-1}{u}_{k+1}& \le \left(\textstyle\sum_{k=0}^{K-1}\mathbf{A}^{k+1}\right)u_0+\left(\mathbf{I}_d+\textstyle\sum_{k=K}^{\infty}\mathbf{A}^{k+1} \right){u}_{0}+\textstyle\sum_{k=0}^{K-1}\left(\textstyle\sum_{t=0}^{ K-1-k}\mathbf{A}^{t}\right)b_k\\
&+\textstyle\sum_{k=0}^{K-1}\left(\textstyle\sum_{t=K-k}^{ \infty}\mathbf{A}^{t}\right)b_k  = \left(\textstyle\sum_{k=0}^{\infty}\mathbf{A}^k\right){u}_{0}+\left(\textstyle\sum_{k=0}^{\infty}\mathbf{A}^k\right)\textstyle\sum_{k=0}^{K-1}{b}_{k} \\
&=(\mathbf{I}_d-\mathbf{A})^{-1}{u}_{0}+(\mathbf{I}_d-\mathbf{A})^{-1}\textstyle\sum_{k=0}^{K-1}{b}_{k}.
\end{align*}} 
\end{proof}}
\fy{In the next result, we invoke Lemma~\ref{lem:recursive_matrix_ineq} and build a non-recursive error bound for our method. \uvs{Observe that the next result shows that the aggregated consensus error $\sum_{k=0}^K\mathbb{E}\left[\tfrac{\|\mathbf{x}_{k}- \mathbf{1} \bar{x}_{k}\|^2}{m} \right]$ can be shown to be suitably bounded under suitable assumptions. This assumes relevance when deriving a rate guarantee. } 

\begin{lemma}\em\label{lemma:bound for x-bar x inexact} Consider Algorithm~\ref{alg:DZGT}. Let Assumptions~\ref{assum:mixxx}, \ref{assump:opt_f_bounded_below}, \ref{assum:main1}, and \ref{assum:main2} hold. Suppose the stepsize $\gamma$ is a constant satisfying $\gamma \leq \min \Big\{
 \tfrac{ \sqrt{1-\lambda_{\mathbf{W}}^2}}{10\sqrt{3}\lambda_{\mathbf{W}}^2 }
 ,\tfrac{(1-\lambda_{\mathbf{W}}^2) }{20\lambda_{\mathbf{W}}^3 },\tfrac{ (1-\lambda_{\mathbf{W}}^2)^2}{20\lambda_{\mathbf{W}}^2  }\Big\} \left(\tfrac{\eta}{\mj{\sqrt{n}} L_0 }\right)$.  Then for any $K \geq 1$, we have
\mj{\begin{align*}
\textstyle \sum_{k=0}^{\Rme{K-1}}\mathbb{E}\left[\tfrac{\|\mathbf{x}_{k}- \mathbf{1} \bar{x}_{k}\|^2}{m} \right]&\le \left(1+\tfrac{20}{1-\lambda_{\mathbf{W}}^2}\right)\mathbb{E}\left[\tfrac{\|\mathbf{x}_{0}- \mathbf{1} \bar{x}_{0}\|^2}{m} \right]\\
&+\left(\tfrac{80\gamma^2\lambda_{\mathbf{W}}^2L_0^2\mj{n}}{\eta^2(1-\lambda_{\mathbf{W}}^2)^3}\right) 
\left( \tfrac{30  \gamma^2\lambda_{\mathbf{W}}^2}{1-\lambda_{\mathbf{W}}^2} \right)\textstyle\sum_{k=0}^{K-1}\mathbb{E}[\|\overline{\nabla{f}^\eta}(\mathbf{x}_k)\|^2]  
\\
&+\tfrac{160\gamma^2\lambda_{\mathbf{W}}^2}{(1-\lambda_{\mathbf{W}}^2)^3}  \left(\tfrac{\lambda_{\mathbf{W}}^2}{m}\|\nabla \mathbf f^\eta(\mathbf{x}_0)\|^2+ \mj{16\sqrt{2\pi}\lambda_{\mathbf{W}}^2L_0^2n}+\mj{\lambda_{\mathbf{W}}^2\left(\tfrac{\tilde L_0^2n^2\varepsilon_0}{\eta^2}\right)}\right)\notag  \\
& + \left(\tfrac{80\gamma^2\lambda_{\mathbf{W}}^2L_0^2\mj{nK}}{\eta^2(1-\lambda_{\mathbf{W}}^2)^3}\right)\mj{32\eta^2\sqrt{2\pi}\left(\|\mathbf{W}\|^2+\mee{\tfrac{11}{2}}\lambda_{\mathbf{W}}^2\right)\mj{}  }+\mj{\tfrac{12800K\sqrt{2\pi}n^2L_0^4\gamma^4\lambda_{\mathbf{W}}^4(3+\lambda_{\mathbf{W}}^2)}{m\eta^2(1-\lambda_{\mathbf{W}}^2)^2}
} \\
& + \left(\tfrac{80\gamma^2\lambda_{\mathbf{W}}^2L_0^2\mj{n^2}}{\eta^2(1-\lambda_{\mathbf{W}}^2)^3}\right)\left( \mj{ \Rme{\tfrac{2\|\mathbf{W}\|^2\tilde L_0^2}{L_0^2}} }+\mj{ \left(\tfrac{5\lambda_{\mathbf{W}}^2\tilde L_0^2}{L_0^2}\right)\left(\tfrac{1+3\lambda_{\mathbf{W}}^2}{1-\lambda_{\mathbf{W}}^2}\right)}+\mj{ \tfrac{300\lambda_{\mathbf{W}}^4\gamma^2\tilde L_0^2 \mj{n}      
}{\eta^2(1-\lambda_{\mathbf{W}}^2)^2} }\right)\textstyle\sum_{k=0}^{K-1}\varepsilon_k\\
& + \left(\tfrac{80\gamma^2\lambda_{\mathbf{W}}^2L_0^2\mj{n^2}}{\eta^2(1-\lambda_{\mathbf{W}}^2)^3}\right) \mj{ \left(\tfrac{5\lambda_{\mathbf{W}}^2 \tilde L_0^2}{L_0^2}\right)\left(\tfrac{1+3\lambda_{\mathbf{W}}^2}{1-\lambda_{\mathbf{W}}^2}\right)} \textstyle\sum_{k=0}^{K-1}\varepsilon_{k+1}.
\end{align*}}
\end{lemma}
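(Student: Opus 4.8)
The plan is to combine the three recursive inequalities from Lemma~\ref{lemma: multiple inequalities for the metrics}~(i) and Lemma~\ref{Lemma:main bound for the gradient tracker in terms of other main terms-inexact} into a single coupled linear recursion of the form $u_{k+1} \le \mathbf{A} u_k + b_k$ in the two-dimensional nonnegative vector $u_k \triangleq \big(\mathbb{E}[\|\mathbf{x}_k - \mathbf{1}\bar x_k\|^2],\ \mathbb{E}[\|\mathbf{y}_{k+1} - \mathbf{1}\bar y_{k+1}\|^2]\big)^\top$, and then invoke Lemma~\ref{lem:recursive_matrix_ineq} to pass from the recursion to the summed bound $\sum_{k=0}^K u_k \le (\mathbf{I}-\mathbf{A})^{-1}(u_0 + \sum_{k=0}^{K-1} b_k)$. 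First I would take expectations in Lemma~\ref{lemma: multiple inequalities for the metrics}~(i) with a judicious choice of $\theta$ (e.g.\ $\theta = \tfrac{1-\lambda_{\mathbf{W}}^2}{2\lambda_{\mathbf{W}}^2}$, so that $(1+\theta)\lambda_{\mathbf{W}}^2 = \tfrac{1+\lambda_{\mathbf{W}}^2}{2}$ and $1 + \tfrac{1}{\theta} \le \tfrac{2}{1-\lambda_{\mathbf{W}}^2}$) to get the first row of $\mathbf{A}$, contracting with factor $\tfrac{1+\lambda_{\mathbf{W}}^2}{2}$ on the $\|\mathbf{x}_k-\mathbf{1}\bar x_k\|^2$ term and an $\mathcal{O}(\gamma^2/(1-\lambda_{\mathbf{W}}^2))$ coefficient on the $\|\mathbf{y}_{k+1}-\mathbf{1}\bar y_{k+1}\|^2$ term; the second row is read off directly from Lemma~\ref{Lemma:main bound for the gradient tracker in terms of other main terms-inexact}. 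The forcing terms $b_k$ collect the $\varepsilon_k$, $\varepsilon_{k+1}$, the constant noise terms $\mathcal{O}(n L_0^2)$, and the $\mathbb{E}[\|\overline{\nabla f^\eta}(\mathbf{x}_k)\|^2]$ terms (the latter treated as a given nonnegative sequence to be carried through).

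Next I would verify that $\rho(\mathbf{A}) < 1$ under the stepsize restriction $\gamma \le \min\{\ldots\}(\eta/(\sqrt{n}L_0))$. Since $\mathbf{A}$ is $2\times 2$ nonnegative, it suffices to bound $\rho(\mathbf{A})$ by, say, $\tfrac{3+\lambda_{\mathbf{W}}^2}{4} < 1$; this follows because the diagonal entries are $\tfrac{1+\lambda_{\mathbf{W}}^2}{2}$ and $\tfrac{1+\lambda_{\mathbf{W}}^2}{2}$ and the off-diagonal product is $\mathcal{O}(\gamma^4 \lambda_{\mathbf{W}}^4 L_0^2 n / (\eta^2(1-\lambda_{\mathbf{W}}^2)^2))$, which the third term in the $\min$ (the $\tfrac{(1-\lambda_{\mathbf{W}}^2)^2}{20\lambda_{\mathbf{W}}^2}$ factor) forces to be small relative to $(1-\lambda_{\mathbf{W}}^2)^2$. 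Then I would compute $(\mathbf{I}-\mathbf{A})^{-1}$ explicitly for a $2\times 2$ matrix, bound its entries by $\mathcal{O}(1/(1-\lambda_{\mathbf{W}}^2))$ (using $\det(\mathbf{I}-\mathbf{A}) \ge \tfrac{(1-\lambda_{\mathbf{W}}^2)^2}{4}$ after the stepsize condition kills the cross term), and extract the first component of $\sum_{k=0}^K u_k$. Dividing by $m$ and carefully tracking which of the two rows of $b_k$ each term comes from yields exactly the claimed bound: the $u_0$ contribution gives the $(1 + \tfrac{20}{1-\lambda_{\mathbf{W}}^2})$ factor times $\mathbb{E}[\|\mathbf{x}_0-\mathbf{1}\bar x_0\|^2/m]$ plus the $\|\nabla\mathbf{f}^\eta(\mathbf{x}_0)\|^2$ and $\varepsilon_0$ terms (since $\mathbf{y}_1 - \mathbf{1}\bar y_1$ depends on $\nabla\mathbf{f}^\eta(\mathbf{x}_0)$, $\boldsymbol\delta_0^\eta$, $\mathbf{e}_0^{\eta,\varepsilon_0}$), and the $\sum b_k$ contribution distributes into the $K$-linear constant term, the $\sum \mathbb{E}[\|\overline{\nabla f^\eta}(\mathbf{x}_k)\|^2]$ term, and the $\sum \varepsilon_k$, $\sum \varepsilon_{k+1}$ terms with the displayed coefficients.

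The main obstacle will be the \emph{bookkeeping of constants} through the matrix inversion: ensuring that after forming $(\mathbf{I}-\mathbf{A})^{-1}$ and multiplying by the vector of forcing terms, every prefactor matches the stated expression (the powers of $\gamma$, $\lambda_{\mathbf{W}}$, $n$, $L_0$, $\tilde L_0$, $\eta$, and the $(1-\lambda_{\mathbf{W}}^2)$ denominators). In particular, the $\|\overline{\nabla f^\eta}(\mathbf{x}_k)\|^2$ coefficient $\big(\tfrac{80\gamma^2\lambda_{\mathbf{W}}^2 L_0^2 n}{\eta^2(1-\lambda_{\mathbf{W}}^2)^3}\big)\big(\tfrac{30\gamma^2\lambda_{\mathbf{W}}^2}{1-\lambda_{\mathbf{W}}^2}\big)$ has the structure "one factor of $\tfrac{1}{1-\lambda_{\mathbf{W}}^2}$ from $(\mathbf{I}-\mathbf{A})^{-1}$, times the $\gamma^2$-coefficient appearing in Lemma~\ref{lemma: multiple inequalities for the metrics}~(i) (which is $\tfrac{1}{\theta}\gamma^2\lambda_{\mathbf{W}}^2 \approx \tfrac{2\gamma^2\lambda_{\mathbf{W}}^2}{1-\lambda_{\mathbf{W}}^2}$), times the $\mathbb{E}[\|\overline{\nabla f^\eta}(\mathbf{x}_k)\|^2]$-coefficient $\tfrac{30m\gamma^2\lambda_{\mathbf{W}}^2(\sqrt n L_0/\eta)^2}{1-\lambda_{\mathbf{W}}^2}$ from Lemma~\ref{Lemma:main bound for the gradient tracker in terms of other main terms-inexact}, with the $m$ cancelling against the $\tfrac1m$ normalization" — so I would reconcile the numerical constants ($80$, $30$, etc.) by using slightly loose but clean bounds (e.g.\ replacing $\tfrac{2}{1-\lambda_{\mathbf{W}}^2}$-type factors by the constants that make the arithmetic close). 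The rest — measurability so that the tower property applies when taking total expectations, and nonnegativity of $u_k$ and $b_k$ so Lemma~\ref{lem:recursive_matrix_ineq} is applicable — is routine given the earlier lemmas.
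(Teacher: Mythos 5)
Your proposal follows essentially the same route as the paper's proof: the same choice $\theta=\tfrac{1-\lambda_{\mathbf{W}}^2}{2\lambda_{\mathbf{W}}^2}$ in Lemma~\ref{lemma: multiple inequalities for the metrics}~(i), the same $2\times 2$ coupled recursion fed into Lemma~\ref{lem:recursive_matrix_ineq}, the same explicit inversion of $\mathbf{I}_2-\mathbf{A}$ with a determinant lower bound of order $(1-\lambda_{\mathbf{W}}^2)^2$ enforced by the stepsize condition, and the same treatment of the initial term via the bound on $\mathbb{E}[\|\mathbf{y}_1-\mathbf{1}\bar y_1\|^2]$ in terms of $\nabla\mathbf{f}^\eta(\mathbf{x}_0)$, $\boldsymbol{\delta}_0^\eta$, and $\mathbf{e}_0^{\eta,\varepsilon_0}$. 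The only cosmetic differences are that the paper certifies $\rho(\mathbf{A})<1$ via a positive test vector $s$ with $\mathbf{A}s<s$ rather than your direct eigenvalue estimate, and normalizes the second component of $u_k$ by $\eta^2/(mL_0^2 n)$ to make the entries of $\mathbf{A}$ dimensionless; neither affects correctness.
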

\begin{proof} \us{By invoking} \Rme{Lemma~\ref{lemma: multiple inequalities for the metrics}(i)} \us{and choosing} $\theta=\tfrac{1-\lambda_{\mathbf{W}}^2}{2\lambda_{\mathbf{W}}^2}$ and for any $k\ge 0$, we have 
$$\|\mathbf{x}_{k+1}-\mathbf{1}\bar x_{k+1}\|^2\le  \left(\tfrac{1+\lambda_{\mathbf{W}}^2}{2}\right)\|\mathbf{x}_{k}-\mathbf{1}\bar x_{k}\|^2+\left(\tfrac{1+\lambda_{\mathbf{W}}^2}{1-\lambda_{\mathbf{W}}^2}\right)\gamma^2\lambda_{\mathbf{W}}^2\|\mathbf{y}_{k+1}-\mathbf{1}\bar y_{k+1}\|^2.$$
Taking expectation\mee{s} on both sides and invoking Lemma~\ref{Lemma:main bound for the gradient tracker in terms of other main terms-inexact}, we obtain $
{u}_{k+1}\le\mathbf{A}{u}_{k}+{b}_{k}$ for $k\geq 0$, where \us{for any $k \ge 0$}, we define $u_k$, $b_k$ and $\mathbf{A}$ as 
$${u}_{k}= 
\left[\mathbb{E}\left[\tfrac{1}{m}\|\mathbf{x}_{k}-\mathbf{1}\bar x_{k}\|^2\right], \mathbb{E}\left[\tfrac{\eta^2}{mL_0^2\mj{n}}\|\mathbf{y}_{k+1}- \mathbf{1} \bar{y}_{k+1}\|^2\right]\right]^\top, \qquad   \mathbf{A}=\begin{bmatrix}
\frac{1+\lambda_{\mathbf{W}}^2}{2}&\frac{2\gamma^2\lambda_{\mathbf{W}}^2L_0^2\mj{n}}{\eta^2(1-\lambda_{\mathbf{W}}^2)}\\
 \tfrac{45\lambda_{\mathbf{W}}^2}{1-\lambda_{\mathbf{W}}^2}  &\frac{1+\lambda_{\mathbf{W}}^2}{2}\\
\end{bmatrix},$$ 
{and $b_k = [0,b_{2,k}]^\top$, where $b_{2,k}$ is defined as} 
\mj{\begin{align*}
b_{2,k}&= \mj{\left(\tfrac{2\|\mathbf{W}\|^2\tilde L_0^2n\varepsilon_k}{L_0^2}\right)} +\mj{32\eta^2\sqrt{2\pi}\left(\|\mathbf{W}\|^2+\mee{\tfrac{11}{2}}\lambda_{\mathbf{W}}^2\right)\mj{}  }  +\left( \tfrac{30 \gamma^2\lambda_{\mathbf{W}}^2}{1-\lambda_{\mathbf{W}}^2} \right)\mathbb{E}[\|\overline{\nabla{f}^\eta}(\mathbf{x}_k)\|^2] \\
&+\mj{\tfrac{160\sqrt{2\pi}nL_0^2\gamma^2\lambda_{\mathbf{W}}^2(3+\lambda_{\mathbf{W}}^2)}{m(1-\lambda_{\mathbf{W}}^2)}
}+\mj{\left(\tfrac{300\lambda_{\mathbf{W}}^4\gamma^2\tilde L_0^2 \mj{n^2}      
\varepsilon_k}{\eta^2(1-\lambda_{\mathbf{W}}^2)^2}\right)} +\mj{ \left(\tfrac{5n\lambda_{\mathbf{W}}^2\tilde L_0^2}{L_0^2}\right)\left(\tfrac{1+3\lambda_{\mathbf{W}}^2}{1-\lambda_{\mathbf{W}}^2}\right)(\varepsilon_k+\varepsilon_{k+1})  },\end{align*}}{and $b_{2,k}$ is obtained from Lemma~\ref{Lemma:main bound for the gradient tracker in terms of other main terms-inexact} by invoking the upper bound on $\mathbb{E}\left[\|\mathbf{y}_{k+2}- \mathbf{1} \bar{y}_{k+2}\|^2\right]$.} Next, we show that $\rho(\mathbf{A}) <1$. Invoking \cite[Corollary 8.1.29]{horn2012matrix}, it suffices to show that \us{when $\mathbf{A}$ is a nonnegative matrix}, there exists a vector $s >0$ such that $\mathbf{A}s < s$. By choosing $s= [(1-\lambda_{\mathbf{W}}^2)^2,100\lambda_{\mathbf{W}}^2]^\top$  and recalling that $\gamma\leq \tfrac{ (1-\lambda_{\mathbf{W}}^2)^2}{20\lambda_{\mathbf{W}}^2  }\left(\tfrac{\eta}{\mj{\sqrt{n}} L_0}\right)$, we have $\mathbf{A}s < s$ \us{implying that} $\rho(\mathbf{A}) <1$.  From Lemma~\ref{lem:recursive_matrix_ineq}, $\sum_{k=1}^{K}   {u}_{k} \leq (\mathbf{I}_\mee{2}-\mathbf{A})^{-1}\left({u}_{0}+\sum_{k=0}^{K-1}{b}_{k}\right)$. Next, we \us{analyze} $(\mathbf{I}_\mee{2}-\mathbf{A})^{-1}$. From $\gamma \leq \tfrac{ (1-\lambda_{\mathbf{W}}^2)^2}{20\lambda_{\mathbf{W}}^2  }\left(\tfrac{\eta}{\mj{\sqrt{n}} L_0}\right)$, we have $
\text{det}(\mathbf{I}_2-\mathbf{A})=\tfrac{(1-\lambda_{\mathbf{W}}^2)^2}{4}-\Rme{\tfrac{90\gamma^2\lambda_{\mathbf{W}}^4L_0^2n}{\eta^2(1-\lambda_{\mathbf{W}}^2)^2}}\ge \tfrac{(1-\lambda_{\mathbf{W}}^2)^2}{40}.$
This implies that 
{
\begin{align}
\tfrac{1}{\text{det}(\mathbf{I}_2-\mathbf{A})}\le \tfrac{40}{(1-\lambda_{\mathbf{W}}^2)^2}.\label{eq:upper bound on the inverse of determinant of A}
\end{align}
Moreover, we have
\begin{align}(\mathbf{I}_2-\mathbf{A})^{-1}=\tfrac{1}{\text{det}(\mathbf{I}_2-\mathbf{A})}\begin{bmatrix}
\frac{1-\lambda_{\mathbf{W}}^2}{2}&\frac{2\gamma^2\lambda_{\mathbf{W}}^2L_0^2{n}}{\eta^2(1-\lambda_{\mathbf{W}}^2)}\\
 \tfrac{45\lambda_{\mathbf{W}}^2}{1-\lambda_{\mathbf{W}}^2}  &\frac{1-\lambda_{\mathbf{W}}^2}{2}\\
\end{bmatrix}.\label{eq:inverse of I_2 minus A}\end{align}
We define matrix ${\mathbf{D}}$ as 
\[{\mathbf{D}}\triangleq \tfrac{40}{(1-\lambda_{\mathbf{W}}^2)^2}\begin{bmatrix}
\frac{1-\lambda_{\mathbf{W}}^2}{2}&\frac{2\gamma^2\lambda_{\mathbf{W}}^2L_0^2{n}}{\eta^2(1-\lambda_{\mathbf{W}}^2)}\\
 \tfrac{45\lambda_{\mathbf{W}}^2}{1-\lambda_{\mathbf{W}}^2}  &\frac{1-\lambda_{\mathbf{W}}^2}{2}\\
\end{bmatrix}=\begin{bmatrix}
\frac{20}{1-\lambda_{\mathbf{W}}^2}&\frac{80\gamma^2\lambda_{\mathbf{W}}^2L_0^2{n}}{\eta^2(1-\lambda_{\mathbf{W}}^2)^3}\\
\frac{1800\lambda_{\mathbf{W}}^2}{(1-\lambda_{\mathbf{W}}^2)^3}&\frac{20}{1-\lambda_{\mathbf{W}}^2}\\
\end{bmatrix}.\]}{In view of~\eqref{eq:upper bound on the inverse of determinant of A} and~\eqref{eq:inverse of I_2 minus A}, we conclude that $ {\mathbf{D}}-(\mathbf{I}_2-\mathbf{A})^{-1}$ is a nonnegative matrix. Since the vector $\left({u}_{0}+\sum_{k=0}^{K-1}{b}_{k}\right)$ is nonnegative, it follows that $\left({\mathbf{D}}-(\mathbf{I}_2-\mathbf{A})^{-1}\right)\left({u}_{0}+\sum_{k=0}^{K-1}{b}_{k}\right)$ is also a nonnegative vector.} {From Lemma~\ref{lem:recursive_matrix_ineq}, we have $\sum_{k=1}^{K}   {u}_{k} \leq (\mathbf{I}_d-\mathbf{A})^{-1}\left({u}_{0}+\sum_{k=0}^{K-1}{b}_{k}\right)$, leading to
\begin{align*}
\sum_{k=1}^{K}   {u}_{k} &\leq (\mathbf{I}_d-\mathbf{A})^{-1}\left({u}_{0}+\sum_{k=0}^{K-1}{b}_{k}\right)+\underbrace{\left({\mathbf{D}}-(\mathbf{I}_2-\mathbf{A})^{-1}\right)\left({u}_{0}+\sum_{k=0}^{K-1}{b}_{k}\right)}_{\ge0}={\mathbf{D}}\left({u}_{0}+\sum_{k=0}^{K-1}{b}_{k}\right).
\end{align*}
Invoking the definitions of $u_k, u_0,b_k$, and ${\mathbf{D}}$, we obtain}
\mj{\begin{align*}
\textstyle\sum_{k=0}^\mj{K-1}\mathbb{E}\left[\tfrac{\|\mathbf{x}_{k}- \mathbf{1} \bar{x}_{k}\|^2}{m} \right]&\le \left(1+\tfrac{20}{1-\lambda_{\mathbf{W}}^2}\right)\mathbb{E}\left[\tfrac{\|\mathbf{x}_{0}- \mathbf{1} \bar{x}_{0}\|^2}{m} \right]+
\tfrac{80\gamma^2\lambda_{\mathbf{W}}^2}{(1-\lambda_{\mathbf{W}}^2)^3} \mathbb{E}\left[\tfrac{1}{m }\|\mathbf{y}_{1}- \mathbf{1} \bar{y}_{1}\|^2\right]\notag \\
&+\left(\tfrac{80\gamma^2\lambda_{\mathbf{W}}^2L_0^2\mj{n}}{\eta^2(1-\lambda_{\mathbf{W}}^2)^3}\right) 
\left( \tfrac{30  \gamma^2\lambda_{\mathbf{W}}^2}{1-\lambda_{\mathbf{W}}^2} \right)\textstyle\sum_{k=0}^{K-1}\mathbb{E}[\|\overline{\nabla{f}^\eta}(\mathbf{x}_k)\|^2]   \\
& + \left(\tfrac{80\gamma^2\lambda_{\mathbf{W}}^2L_0^2\mj{nK}}{\eta^2(1-\lambda_{\mathbf{W}}^2)^3}\right)\mj{32\eta^2\sqrt{2\pi}\left(\|\mathbf{W}\|^2+\mee{\tfrac{11}{2}}\lambda_{\mathbf{W}}^2\right)\mj{}  }+\mj{\tfrac{12800K\sqrt{2\pi}n^2L_0^4\gamma^4\lambda_{\mathbf{W}}^4(3+\lambda_{\mathbf{W}}^2)}{m\eta^2(1-\lambda_{\mathbf{W}}^2)^2}
} \\
& + \left(\tfrac{80\gamma^2\lambda_{\mathbf{W}}^2L_0^2\mj{n^2}}{\eta^2(1-\lambda_{\mathbf{W}}^2)^3}\right)\left( \mj{\Rme{ \tfrac{2\|\mathbf{W}\|^2\tilde L_0^2}{L_0^2}} }+\mj{ \left(\tfrac{5\lambda_{\mathbf{W}}^2\tilde L_0^2}{L_0^2}\right)\left(\tfrac{1+3\lambda_{\mathbf{W}}^2}{1-\lambda_{\mathbf{W}}^2}\right)}+\mj{\left(\tfrac{300\lambda_{\mathbf{W}}^4\gamma^2\tilde L_0^2 \mj{n}      
}{\eta^2(1-\lambda_{\mathbf{W}}^2)^2}\right)}\right)\textstyle\sum_{k=0}^{K-1}\varepsilon_k\\
& + \left(\tfrac{80\gamma^2\lambda_{\mathbf{W}}^2L_0^2\mj{n^2}}{\eta^2(1-\lambda_{\mathbf{W}}^2)^3}\right)  \mj{ \left(\tfrac{5\lambda_{\mathbf{W}}^2\tilde L_0^2}{L_0^2}\right)\left(\tfrac{1+3\lambda_{\mathbf{W}}^2}{1-\lambda_{\mathbf{W}}^2}\right)} \textstyle\sum_{k=0}^{K-1}\varepsilon_{k+1}.
\end{align*}}
Invoking the gradient tracking update \Rme{\eqref{eqn:R1}}, we obtain
\begin{align*}
\mathbb{E}\left[{\|\mathbf{y}_{1}-\tfrac{1}{m}\mathbf{1}\mathbf{1}^\top\mathbf{y}_{1}\|^2}\right] &= \mathbb{E}\left[\|(\mathbf{W}-\tfrac{1}{m}\mathbf{1}\mathbf{1}^\top)(\nabla \mathbf f^\eta(\mathbf{x}_0)+\boldsymbol{\delta}_{0}^\eta+\mathbf{e}_{0}^{\eta,\varepsilon_0})\|^2\right]\\
&\leq 2\mathbb{E}\left[\|(\mathbf{W}-\tfrac{1}{m}\mathbf{1}\mathbf{1}^\top)(\nabla \mathbf f^\eta(\mathbf{x}_0)+\boldsymbol{\delta}_{0}^\eta)\|^2\right]+2\mathbb{E}\left[\|(\mathbf{W}-\tfrac{1}{m}\mathbf{1}\mathbf{1}^\top)\mathbf{e}_{0}^{\eta,\varepsilon_0}\|^2\right]\\
& \leq 2\mathbb{E}\left[\|(\mathbf{W}-\tfrac{1}{m}\mathbf{1}\mathbf{1}^\top)\nabla \mathbf f^\eta(\mathbf{x}_0)\|^2\right]+2\mathbb{E}\left[\|(\mathbf{W}-\tfrac{1}{m}\mathbf{1}\mathbf{1}^\top)\boldsymbol{\delta}_{0}^\eta\|^2\right]+\mj{ \tfrac{2\lambda_{\mathbf{W}}^2m\tilde L_0^2n^2\varepsilon_0}{\eta^2} }\\
&\le 2\lambda_{\mathbf{W}}^2\|\nabla \mathbf f^\eta(\mathbf{x}_0)\|^2+ \mj{32\sqrt{2\pi}\lambda_{\mathbf{W}}^2mL_0^2n}+\mj{ \tfrac{2\lambda_{\mathbf{W}}^2m \tilde L_0^2n^2\varepsilon_0}{\eta^2} },
\end{align*}
where the last inequality is obtained using Lemma~\ref{lem:lambda_w} and Lemma~\ref{lemma:g_ik_eta_props}. The result is obtained from the preceding two inequalities.
\end{proof}
Consider the inexact setting in Algorithm~\ref{alg:DZGT} where \us{approximate} solutions to the lower-level problems are \us{obtained} by Algorithm~\ref{alg:lowerlevel-1stage}. \us{Next}, we provide a bound relating the inexactness $\varepsilon_k$ to the number of iterations used in Algorithm~\ref{alg:lowerlevel-1stage}. This result characterizes the solution quality for the agent-wise lower-level problems and will be utilized to derive rate statements for Algorithm~\ref{alg:DZGT}. \fyy{The proof is presented in the Appendix.}} 
\fy{
\begin{lemma}[\textbf{\mee{Rate statement for Algorithm~\ref{alg:lowerlevel-1stage}}}]\label{lem:Alg2_conv}\em
Consider Algorithm~\ref{alg:lowerlevel-1stage}. Let Assumptions~\ref{assum:main2} and~\ref{assum:alg2} hold. Let $k\geq 0$, $i \in [m]$, $\ell \in \{1,2\}$, input vector $\hat{x}_{i,k}$, and initial vector $z_{i,0}^{k,\ell} \in \mathcal{Z}_i(\hat x^\ell_{i,k})$ be given. Let the stepsize be given by $\hat{\gamma}_{t}:=\frac{\hat\gamma}{t+\hat{\Gamma}}$ where $\hat{\gamma} > \frac{1}{\mu_F} $ and $\hat{\Gamma} >\frac{\hat{\gamma}L_F^2}{\mu_F}$. Further, suppose the algorithm is terminated after $t_k$ iterations\mee{,} where $a>0$ is given. Then, 
$$\mathbb{E}[\|z_{i,\varepsilon_k}(\hat{x}_{i,k}^{\ell}) - z_i(\hat{x}_{i,k}^{\ell})\|^2\mid \tilde{\mathcal{F}}_{i,0}^{k,\ell}] \leq \varepsilon_k \triangleq \max\{\nu_F^2\Rme{\hat{\gamma}^2}(\mu_F\hat{\gamma}-1)^{-1}, \hat{\Gamma} D_i\}\tfrac{1}{t_k+\hat{\Gamma}},$$ 
where $D_i  $ is given by Assumption~\ref{assum:main2} and $\nu_F$ is given by Assumption~\ref{assum:alg2}. 
\end{lemma}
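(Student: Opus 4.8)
The plan is to analyze the projected stochastic approximation iteration
$z_{i,t+1}^{k,\ell} = \Pi_{\mathcal{Z}_i(\hat x_{i,k}^\ell)}[z_{i,t}^{k,\ell} - \hat\gamma_t \tilde F_i(\hat x_{i,k}^\ell, z_{i,t}^{k,\ell}, \xi_{i,t}^{k,\ell})]$
as a standard strongly monotone stochastic VI solver, and to push a recursion on $a_t \triangleq \mathbb{E}[\|z_{i,t}^{k,\ell} - z_i(\hat x_{i,k}^\ell)\|^2 \mid \tilde{\mathcal F}_{i,0}^{k,\ell}]$ to the desired $\mathcal{O}(1/(t_k+\hat\Gamma))$ bound. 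Fix $k$, $i$, $\ell$, and write $z^* = z_i(\hat x_{i,k}^\ell)$, $\hat F = F_i(\hat x_{i,k}^\ell, \bullet)$, $w_t = \tilde F_i(\hat x_{i,k}^\ell, z_{i,t}^{k,\ell}, \xi_{i,t}^{k,\ell}) - \hat F(z_{i,t}^{k,\ell})$. First I would use nonexpansivity of the projection and the fact that $z^* = \Pi_{\mathcal{Z}_i(\hat x_{i,k}^\ell)}[z^* - \hat\gamma_t \hat F(z^*)]$ (the VI optimality condition) to expand
\[
\|z_{i,t+1}^{k,\ell} - z^*\|^2 \le \|z_{i,t}^{k,\ell} - z^* - \hat\gamma_t(\hat F(z_{i,t}^{k,\ell}) - \hat F(z^*)) - \hat\gamma_t w_t\|^2.
\]
Expanding the square, taking conditional expectation on $\tilde{\mathcal F}_{i,t}^{k,\ell}$, and invoking Assumption~\ref{assum:alg2}(i) to kill the cross term with $w_t$ (zero mean) and Assumption~\ref{assum:alg2}(ii) to bound $\mathbb{E}[\|w_t\|^2] \le \nu_F^2$, then using $\mu_F$-strong monotonicity and $L_F$-Lipschitz continuity of $\hat F$ on the deterministic cross term, yields a recursion of the form
\[
a_{t+1} \le (1 - 2\mu_F\hat\gamma_t + L_F^2\hat\gamma_t^2)\, a_t + \hat\gamma_t^2 \nu_F^2.
\]

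Next I would substitute $\hat\gamma_t = \hat\gamma/(t+\hat\Gamma)$ and show the claimed bound $a_t \le C/(t+\hat\Gamma)$ with $C = \max\{\nu_F^2\hat\gamma(\mu_F\hat\gamma - 1)^{-1}, \hat\Gamma D_i\}$ by induction on $t$. The base case $t=0$ holds because $z_{i,0}^{k,\ell}, z^* \in \mathcal{Z}_i(\hat x_{i,k}^\ell)$, so $a_0 \le D_i = \hat\Gamma D_i/\hat\Gamma \le C/(0+\hat\Gamma)$ by Assumption~\ref{assum:main2}. For the inductive step, plug $a_t \le C/(t+\hat\Gamma)$ into the recursion; using $\hat\gamma > 1/\mu_F$ (so $2\mu_F\hat\gamma - 1 > 1$, giving headroom to absorb the $L_F^2\hat\gamma_t^2$ term once $\hat\Gamma$ is large enough, which is exactly what $\hat\Gamma > \hat\gamma L_F^2/\mu_F$ buys) and the elementary inequality $\tfrac{t+\hat\Gamma-1}{(t+\hat\Gamma)^2} \le \tfrac{1}{t+\hat\Gamma+1}$, one checks that $a_{t+1} \le C/(t+1+\hat\Gamma)$ provided $C(2\mu_F\hat\gamma - 1) \ge \hat\gamma^2\nu_F^2$, i.e. $C \ge \nu_F^2\hat\gamma/(2\mu_F\hat\gamma - 1)$; since $\mu_F\hat\gamma > 1$ implies $2\mu_F\hat\gamma - 1 > \mu_F\hat\gamma - 1 > 0$, the stated $C$ suffices. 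Finally, evaluating at $t = t_k$ gives $\mathbb{E}[\|z_{i,t_k}^{k,\ell} - z_i(\hat x_{i,k}^\ell)\|^2 \mid \tilde{\mathcal F}_{i,0}^{k,\ell}] \le C/(t_k+\hat\Gamma)$, and since $z_{i,\varepsilon_k}(\hat x_{i,k}^\ell)$ is by definition the output $z_{i,t_k}^{k,\ell}$, this is exactly $\varepsilon_k$ as claimed.

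The main obstacle is making the induction close cleanly: the quadratic term $L_F^2\hat\gamma_t^2 a_t$ must be dominated so that the effective contraction factor stays below $1 - (\text{something})/(t+\hat\Gamma)$ with a large enough constant, and this is precisely where the two conditions $\hat\gamma > 1/\mu_F$ and $\hat\Gamma > \hat\gamma L_F^2/\mu_F$ are needed in tandem — the first ensures the linear-in-$\hat\gamma_t$ decrease term has a strictly positive coefficient after subtracting $1$, the second ensures $L_F^2\hat\gamma_t \le L_F^2\hat\gamma/\hat\Gamma < \mu_F$ uniformly in $t$, so $1 - 2\mu_F\hat\gamma_t + L_F^2\hat\gamma_t^2 \le 1 - \mu_F\hat\gamma_t$. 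A secondary point of care is bookkeeping the conditioning: all expectations are conditioned on $\tilde{\mathcal F}_{i,0}^{k,\ell}$, which contains $\hat x_{i,k}^\ell$ and $z_{i,0}^{k,\ell}$, so $z^* = z_i(\hat x_{i,k}^\ell)$ is measurable with respect to it and the recursion is genuinely a scalar recursion in $t$; the per-step martingale-difference property of $w_t$ relative to the filtration $\{\tilde{\mathcal F}_{i,t}^{k,\ell}\}_t$ must be invoked via the tower property, exactly as in the measurability remark preceding the lemma.
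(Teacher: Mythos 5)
Your proposal is correct and follows essentially the same route as the paper: the same projection-nonexpansivity plus fixed-point expansion, the same use of Assumption~\ref{assum:alg2} to kill the cross term and bound the noise variance by $\nu_F^2$, and the same reduction of the contraction factor from $1-2\mu_F\hat\gamma_t+L_F^2\hat\gamma_t^2$ to $1-\mu_F\hat\gamma_t$ via $\hat\Gamma>\hat\gamma L_F^2/\mu_F$. The only difference is that the paper closes by citing an external result (Lemma~8 of \cite{cui2023complexity}) for the recursion $a_{t+1}\le(1-\mu_F\hat\gamma_t)a_t+\hat\gamma_t^2\nu_F^2$ while you carry out the induction explicitly; note only that your induction as written actually requires $C\ge\hat\gamma^2\nu_F^2(\mu_F\hat\gamma-1)^{-1}$ (with $\hat\gamma^2$ rather than $\hat\gamma$, and with $\mu_F\hat\gamma-1$ rather than the $2\mu_F\hat\gamma-1$ you momentarily invoke), a constant-bookkeeping discrepancy relative to the stated $\varepsilon_k$ that does not affect the order of the bound.
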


We will make use of the following result in the rate analysis in this section. \fyy{The proof can be found in the Appendix.} 
\begin{lemma}[\mee{\textbf{Harmonic series bounds}}]\label{lem:Harmonic series bounds}\em
Let $\Gamma\geq 1$ be a given integer and $a \geq 0$ be a scalar. Then, for any $K\geq 1$ the following results hold. 

\noindent (i) If $a \in [0,1)$, then $\sum_{k=0}^{K-1}\tfrac{1}{(k+\Gamma)^a}  \leq \tfrac{K^{1-a}}{1-a}$.

\noindent (ii) If $a =1$, then $\sum_{k=0}^{K-1}\tfrac{1}{(k+\Gamma)^a}  \leq 1+\ln(K)$.

\noindent (iii) If $a >1$, then $\sum_{k=0}^{K-1}\tfrac{1}{(k+\Gamma)^a}  \leq \tfrac{a}{a-1}$.

In particular, when $\Gamma \geq 2$, the following results hold. 

\noindent (iv) If $a \in [0,1)$, then $\sum_{k=0}^{K-1}\tfrac{1}{(k+\Gamma)^a}  \leq \tfrac{(K+\Gamma -1)^{1-a}-(\Gamma-1)^{a-1}}{1-a} $.

\noindent (v) If $a =1$, then $\sum_{k=0}^{K-1}\tfrac{1}{(k+\Gamma)^a}   \leq \ln(\tfrac{K+\Gamma-1}{\Gamma-1})$.

\noindent (vi) If $a >1$, then $\sum_{k=0}^{K-1}\tfrac{1}{(k+\Gamma)^a}  \leq \tfrac{1}{(a-1)\Gamma^{a-1}}$.

\end{lemma}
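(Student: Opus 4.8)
The plan is to prove every bound by the standard integral comparison for the nonincreasing function $g_a(s) \triangleq s^{-a}$ on $(0,\infty)$ — which is nonincreasing for all $a \ge 0$ and constant when $a = 0$ — with the only genuine subtlety being the singularity of $g_a$ at $0$ when $a \ge 1$, which forces me to peel off the $k=0$ term whenever $\Gamma = 1$. Two elementary observations organize the argument: (a) for fixed $K$ and $a$, the sum $\sum_{k=0}^{K-1}(k+\Gamma)^{-a}$ is nonincreasing in $\Gamma$ (since $(k+\Gamma')^{-a}\le(k+\Gamma)^{-a}$ whenever $\Gamma'\ge\Gamma$ and $a\ge0$), so each of the ``$\Gamma \ge 1$'' claims (i)--(iii) need only be verified at $\Gamma = 1$; and (b) whenever $k+\Gamma-1 > 0$, monotonicity of $g_a$ gives the termwise estimate $(k+\Gamma)^{-a} \le \int_{k+\Gamma-1}^{k+\Gamma} s^{-a}\,ds$, whose sum telescopes.

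For (i)--(iii) I would set $\Gamma = 1$, write $\sum_{k=0}^{K-1}(k+1)^{-a} = 1 + \sum_{j=2}^{K} j^{-a}$, apply (b) to each term with $j \ge 2$ (legitimate since $j - 1 \ge 1$), and telescope to obtain $\sum_{k=0}^{K-1}(k+1)^{-a} \le 1 + \int_{1}^{K} s^{-a}\,ds$. Evaluating the integral in the three regimes then closes each part: for $a \in [0,1)$ this gives $1 + \tfrac{K^{1-a}-1}{1-a} = \tfrac{K^{1-a}}{1-a} + \big(1 - \tfrac{1}{1-a}\big) \le \tfrac{K^{1-a}}{1-a}$, since $\tfrac{1}{1-a} \ge 1$ (and the case $a=0$ is trivially $\sum_{k=0}^{K-1}1=K$); for $a = 1$ it gives $1 + \ln K$; and for $a > 1$ it gives $1 + \int_{1}^{\infty} s^{-a}\,ds = 1 + \tfrac{1}{a-1} = \tfrac{a}{a-1}$.

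For (iv)--(vi), where $\Gamma \ge 2$, the termwise estimate (b) is already valid at $k = 0$ because $\Gamma - 1 \ge 1 > 0$, so summing over $k = 0,\dots,K-1$ gives directly $\sum_{k=0}^{K-1}(k+\Gamma)^{-a} \le \int_{\Gamma-1}^{K+\Gamma-1} s^{-a}\,ds$, and I would simply evaluate this integral: $\tfrac{(K+\Gamma-1)^{1-a}-(\Gamma-1)^{1-a}}{1-a}$ for $a \in [0,1)$, giving (iv); $\ln\!\big(\tfrac{K+\Gamma-1}{\Gamma-1}\big)$ for $a = 1$, giving (v); and, for $a > 1$, the bound $\int_{\Gamma-1}^{\infty} s^{-a}\,ds = \tfrac{(\Gamma-1)^{1-a}}{a-1} = \tfrac{1}{(a-1)(\Gamma-1)^{a-1}}$, giving (vi) (if an absolute constant is preferred one may further use $(\Gamma-1)^{1-a} \le 1$).

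The entire argument is elementary calculus, so I expect no real obstacle — only the mild bookkeeping point already flagged, namely that $\int_0^1 s^{-a}\,ds$ diverges for $a \ge 1$, so for $\Gamma = 1$ the $k = 0$ term must be separated out and bounded by $1$ (using $\Gamma^a = 1$, hence $\Gamma^{-a}=1$); this is precisely the origin of the additive constant appearing in (i)--(iii) that is absent from the $\Gamma \ge 2$ bounds (iv)--(vi).
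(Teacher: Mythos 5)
Your proposal is correct and follows essentially the same route as the paper: reduce (i)--(iii) to the case $\Gamma=1$ by monotonicity in $\Gamma$, peel off the $k=0$ term, bound the tail by $1+\int_1^K s^{-a}\,ds$, and evaluate in the three regimes; for (iv)--(vi) compare the sum directly with $\int_{\Gamma-1}^{K+\Gamma-1}s^{-a}\,ds$, which is legitimate from $k=0$ onward because $\Gamma-1\ge 1$.

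One point deserves attention in part (vi). Your integral evaluation yields $\tfrac{1}{(a-1)(\Gamma-1)^{a-1}}$, whereas the statement asserts $\tfrac{1}{(a-1)\Gamma^{a-1}}$; since $\Gamma^{a-1}>(\Gamma-1)^{a-1}$ for $a>1$ and $\Gamma\ge 2$, the stated bound is strictly smaller and does not follow from what you derived, so writing ``giving (vi)'' papers over a real discrepancy. In fact the bound as stated is false: for $\Gamma=2$, $a=2$ the left-hand side is $\sum_{j=2}^{K+1}j^{-2}$, which already exceeds $1/2=\tfrac{1}{(a-1)\Gamma^{a-1}}$ at $K=6$ and tends to $\pi^2/6-1\approx 0.645$. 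The paper's own proof makes the same unjustified final step, so the denominator in (vi) should read $(\Gamma-1)^{a-1}$ --- which is exactly what you proved. A similar cosmetic slip occurs in (iv), where the statement's $(\Gamma-1)^{a-1}$ should presumably be $(\Gamma-1)^{1-a}$; there, however, your bound is the tighter of the two and implies the stated inequality, so no repair is needed.
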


}
\us{We now} present our main convergence result in addressing distributed \us{single-stage} SMPECs. 
 \begin{theorem}[\mee{\textbf{Convergence guarantees for DiZS-GT$^{\text{1s}}$ — Inexact case}}]\em \label{Theorem:thm 1} \fy{Consider Algorithm~\ref{alg:DZGT}. Let Assumptions~\ref{assum:mixxx}, \ref{assump:opt_f_bounded_below}, \ref{assum:main1}, and \ref{assum:main2} hold.

 \noindent (a) [\mee{\textbf{Non-asymptotic error \Rme{bounds}}}] Suppose the stepsize is constant such that $\gamma \leq \min \Big\{
 \tfrac{ \sqrt{1-\lambda_{\mathbf{W}}^2}}{10\sqrt{3}\lambda_{\mathbf{W}}^2 }
 ,\tfrac{(1-\lambda_{\mathbf{W}}^2) }{20\lambda_{\mathbf{W}}^3 },\tfrac{ (1-\lambda_{\mathbf{W}}^2)^2}{20\lambda_{\mathbf{W}}^2  },\frac{1}{6},\frac{(1-\lambda_{\mathbf{W}}^2)}{9\lambda_{\mathbf{W}}}\Big\} \left(\tfrac{\eta}{\mj{\sqrt{n}} L_0 }\right)$. Suppose $K^*$ is a discrete uniform random variable where $\mathbb{P}[K^*=\ell] = \tfrac{1}{K}$ for $\ell=0,\ldots,K-1$. Then, for any $K \geq 1$, \Rme{the following hold.}

\Rme{\noindent (a-i) [{\textbf{Stationarity error bound}}] }
 \mj{\begin{align}
 \mathbb{E}[\mbox{dist}^2(0,\partial_{\eta} f(\bar{{x}}_{K^*})) ]&\le  \tfrac{4(\mathbb{E}[f^\eta(\bar{{x}}_{0})]- \inf_{x} f^\eta(x))}{\gamma K}+\mj{\left(\tfrac{96\sqrt{2\pi}L_0^3n^{3/2}\gamma }{m\eta}\right)}+\tfrac{2 L_0^2n}{\eta^2K}\left(1+\tfrac{20}{1-\lambda_{\mathbf{W}}^2}\right)\mathbb{E}\left[\tfrac{\|\mathbf{x}_{0}- \mathbf{1} \bar{x}_{0}\|^2 }{m}\right]\notag\\
&+\tfrac{320\gamma^2\lambda_{\mathbf{W}}^4L_0^2n}{(1-\lambda_{\mathbf{W}}^2)^3\eta^2K} \left(\tfrac{1}{m}\|\nabla \mathbf f^\eta(\mathbf{x}_0)\|^2+ \mj{16\sqrt{2\pi}L_0^2n}+\mj{\left(\tfrac{\tilde L_0^2n^2\varepsilon_0}{\eta^2}\right)}\right)\notag\\
&+ \mj{\tfrac{25600\sqrt{2\pi}n^3L_0^6\gamma^4\lambda_{\mathbf{W}}^4(3+\lambda_{\mathbf{W}}^2)}{m\eta^4(1-\lambda_{\mathbf{W}}^2)^2}
} + \left(\tfrac{160\gamma^2\lambda_{\mathbf{W}}^2L_0^4\mj{n^2}}{\eta^2(1-\lambda_{\mathbf{W}}^2)^3}\right)\mj{32\sqrt{2\pi}\left(\|\mathbf{W}\|^2+\mee{\tfrac{11}{2}}\lambda_{\mathbf{W}}^2\right)  }\notag\\
& + \left(\tfrac{160\gamma^2\lambda_{\mathbf{W}}^2L_0^4\mj{n^3}}{\eta^4(1-\lambda_{\mathbf{W}}^2)^3}\right) \mj{ \left(\tfrac{5\lambda_{\mathbf{W}}^2 \tilde L_0^2}{L_0^2}\right)\left(\tfrac{1+3\lambda_{\mathbf{W}}^2}{1-\lambda_{\mathbf{W}}^2}\right)} \tfrac{\sum_{k=0}^{K-1}\varepsilon_{k+1}}{K}\notag\\
& + \left( \tfrac{160\gamma^2\lambda_{\mathbf{W}}^2L_0^4\mj{n^3}}{\eta^4(1-\lambda_{\mathbf{W}}^2)^3}\right) \left( \mj{ \tfrac{2\|\mathbf{W}\|^2\tilde L_0^2}{L_0^2} }+\mj{ \left(\tfrac{5\lambda_{\mathbf{W}}^2\tilde L_0^2}{L_0^2}\right)\left(\tfrac{1+3\lambda_{\mathbf{W}}^2}{1-\lambda_{\mathbf{W}}^2}\right)}+\mj{ \tfrac{300\lambda_{\mathbf{W}}^4\gamma^2\tilde L_0^2 \mj{n}      
}{\eta^2(1-\lambda_{\mathbf{W}}^2)^2} }\right)  \tfrac{\sum_{k=0}^{K-1}\varepsilon_k}{K}\notag\\
& +   \left(1+\mj{\tfrac{3\gamma L_0\sqrt{n}}{2\eta}}\right) \mj{\left(\tfrac{\tilde L_0^2n^2}{\eta^2}\right)} \tfrac{\sum_{k=0}^{K-1}\varepsilon_k}{K}.\label{eq:Non-asymptotic error bound in the single-stage case}
\end{align}}
\Rme{\noindent (a-ii) [{\textbf{Consensus error bound}}] }
 \Rme{\begin{align}
\textstyle \mathbb{E}\left[\tfrac{\|\mathbf{x}_{K^*}- \mathbf{1} \bar{x}_{K^*}\|^2}{m} \right]& \le\tfrac{160H_{n,\eta,\gamma}\gamma^2\lambda_{\mathbf{W}}^4}{(1-\lambda_{\mathbf{W}}^2)^3K}  \left(\tfrac{1}{m}\|\nabla \mathbf f^\eta(\mathbf{x}_0)\|^2+ {16\sqrt{2\pi}L_0^2n}+{\left(\tfrac{\tilde L_0^2n^2\varepsilon_0}{\eta^2}\right)}\right)\notag  \notag\\
& + \left(\tfrac{80\gamma^2\lambda_{\mathbf{W}}^2L_0^2{n}}{(1-\lambda_{\mathbf{W}}^2)^3}\right){32H_{n,\eta,\gamma}\sqrt{2\pi}\left(\|\mathbf{W}\|^2+{\tfrac{11}{2}}\lambda_{\mathbf{W}}^2\right)\mj{}  }+{\tfrac{12800H_{n,\eta,\gamma}\sqrt{2\pi}n^2L_0^4\gamma^4\lambda_{\mathbf{W}}^4(3+\lambda_{\mathbf{W}}^2)}{m\eta^2(1-\lambda_{\mathbf{W}}^2)^2}
} \notag\\
& +H_{n,\eta,\gamma} \left(\tfrac{80\gamma^2\lambda_{\mathbf{W}}^2L_0^2{n^2}}{\eta^2(1-\lambda_{\mathbf{W}}^2)^3}\right)\left( { \Rme{\tfrac{2\|\mathbf{W}\|^2\tilde L_0^2}{L_0^2}} }+{ \left(\tfrac{5\lambda_{\mathbf{W}}^2\tilde L_0^2}{L_0^2}\right)\left(\tfrac{1+3\lambda_{\mathbf{W}}^2}{1-\lambda_{\mathbf{W}}^2}\right)}\!+\!{ \tfrac{300\lambda_{\mathbf{W}}^4\gamma^2\tilde L_0^2 {n}      
}{\eta^2(1-\lambda_{\mathbf{W}}^2)^2} }\right) \tfrac{\textstyle\sum_{k=0}^{K-1}\varepsilon_k}{K} \notag\\ &+H_{n,\eta,\gamma}\left(\tfrac{80\gamma^2\lambda_{\mathbf{W}}^2L_0^2{n^2}}{\eta^2(1-\lambda_{\mathbf{W}}^2)^3}\right) { \left(\tfrac{5\lambda_{\mathbf{W}}^2 \tilde L_0^2}{L_0^2}\right)\left(\tfrac{1+3\lambda_{\mathbf{W}}^2}{1-\lambda_{\mathbf{W}}^2}\right)} \textstyle\tfrac{\sum_{k=0}^{K-1}\varepsilon_{k+1}}{K}\notag\\
&+H_{n,\eta,\gamma}\left(1+{\tfrac{3\gamma L_0\sqrt{n}}{2\eta}}\right) {\left(\tfrac{16\tilde L_0^2n^2}{\eta^2}\right)}\tfrac{\sum_{k=0}^{K-1}\varepsilon_k}{K}+\frac{H_{n,\eta,\gamma}}{K}\left(1+\tfrac{20}{1-\lambda_{\mathbf{W}}^2}\right)\mathbb{E}\left[\tfrac{\|\mathbf{x}_{0}- \mathbf{1} \bar{x}_{0}\|^2}{m} \right]\notag\\
&+\tfrac{16H_{n,\eta,\gamma}(\mathbb{E}[f^\eta(\bar{{x}}_{0})]- \inf_{x} f^\eta(x))}{\gamma K}+H_{n,\eta,\gamma}{\left(\tfrac{384\sqrt{2\pi}L_0^3n^{3/2}\gamma }{m\eta}\right)},\label{equation: the bound for the consensus error}
\end{align}
where $H_{n,\eta,\gamma}\triangleq \left(1+\tfrac{8 L_0^2n}{\eta^2}\left(\tfrac{80\gamma^2\lambda_{\mathbf{W}}^2L_0^2{n}}{\eta^2(1-\lambda_{\mathbf{W}}^2)^3}\right) 
\left( \tfrac{30  \gamma^2\lambda_{\mathbf{W}}^2}{1-\lambda_{\mathbf{W}}^2} \right)\right) $.}

 \noindent (b) [\mee{\textbf{Complexity bounds}}] Suppose at iteration $k$ in Algorithm~\ref{alg:DZGT}, in generating the inexact solutions, Algorithm~\ref{alg:lowerlevel-1stage} is terminated after \mj{$t_k=\left\lceil \left(n^{1/2}(k+\Gamma)^a\right)/\left(\eta^{2/3}\right)\right\rceil$} iterations where $a>0.5$. Let $\epsilon>0$ be an arbitrary scalar such that $\mathbb{E}[\mbox{dist}^2(0,\partial_{\eta} f(\bar{{x}}_{K_\epsilon^*})) ] \leq \epsilon$. Then, the following results hold. 
 
 \noindent (b-i) [\mee{\textbf{Upper-level iteration/sample complexity}}] Suppose the stepsize in Algorithm~\ref{alg:DZGT} is given by \mj{$\gamma = \frac{\eta^{2/3}}{\sqrt{n^{3/2}\Rme{K_\epsilon}}L_0^{3/2}}$} such that \Rme{$K_\epsilon \geq  \left\{1,\left(\max \Big\{
 \tfrac{10\sqrt{3}\lambda_{\mathbf{W}}^2 }{ \sqrt{1-\lambda_{\mathbf{W}}^2}}
 ,\tfrac{20\lambda_{\mathbf{W}}^3 }{(1-\lambda_{\mathbf{W}}^2) },\tfrac{20\lambda_{\mathbf{W}}^2  }{ (1-\lambda_{\mathbf{W}}^2)^2},6,\frac{9\lambda_{\mathbf{W}}}{(1-\lambda_{\mathbf{W}}^2)}\Big\}  \right)^2\frac{1}{\eta^{2/3}L_0 n^{1/2}}\right\}$}. Then, we have
 \mje{
 \begin{align}
K_\epsilon &=  
\mathcal{O}\left(\left(\tfrac{n^{3/2}L_0^3}{\eta^{4/3}} \right)\epsilon^{-2}+\left(\tfrac{L_0^3n^{3/2}}{\eta^{2/3}m^2}\right)\epsilon^{-2}+\left(\tfrac{L_0^2n\mathbb{E}\left[\tfrac{1}{m}\|\mathbf{x}_{0}- \mathbf{1} \bar{x}_{0}\|^2 \right]}{\eta^2(1-\lambda_{\mathbf{W}}^2)}\right)\epsilon^{-1}+ \left(\tfrac{\lambda_{\mathbf{W}}^2L_0^{1/2}n^{1/4}}{\eta^{1/3}(1-\lambda_{\mathbf{W}}^2)^{3/2}}\right)\epsilon^{-1/2}\right.\notag\\
&\left. + \left(\tfrac{\lambda_{\mathbf{W}}^2\tilde L_0n^{3/4}\sqrt{\varepsilon_0}}{L_0^{1/2}\eta^{4/3}(1-\lambda_{\mathbf{W}}^2)^{3/2}}\right)\epsilon^{-1/2}+\left(\tfrac{\lambda_{\mathbf{W}}^2L_0n^{1/2}( \lambda_{\mathbf{W}}^2+     \|\mathbf{W}\|^2)}{\eta^{2/3}(1-\lambda_{\mathbf{W}}^2)^3}\right)\epsilon^{-1}+\left(\mj{\tfrac{\lambda_{\mathbf{W}}^2(3+\lambda_{\mathbf{W}}^2)^{1/2}}{\eta^{2/3}\sqrt{m}(1-\lambda_{\mathbf{W}}^2)}}\right)\epsilon^{-1/2}\right)+\mee{J_\epsilon},\label{eq:complexity results for 1s}
\end{align}
where we have three cases for $\us{J_{\epsilon}}$ as follows:

\hspace{.2in} (1) If $a \in \left(\mee{\tfrac{1}{2}},1\right)$, then the following holds.
 \begin{align*}
  \mee{J_\epsilon}&= \mathcal{O}\left( \sqrt[1+a]{\left(\tfrac{ \kappa_F\lambda_{\mathbf{W}}^2n}{L_0\eta^{2}(1-\lambda_{\mathbf{W}}^2)^3(1-a) }\right)\left( {\|\mathbf{W}\|^2\tilde L_0^2}{} +  \tfrac{\lambda_{\mathbf{W}}^4\tilde L_0^2}{ (1-\lambda_{\mathbf{W}}^2)}   \right)}\, \epsilon^{-1/(1+a)}+\left(\tfrac{ \kappa_F\tilde L_0^2n^{3/2}}{\eta^{4/3}(1-a)}\right)^{1/a}\, \epsilon^{-1/a} \right.	\\
&+  \left. \sqrt[2+a]{\left(\tfrac{ \kappa_F\lambda_{\mathbf{W}}^6n^{1/2} \tilde L_0^2}{L_0^2\eta^{8/3}(1-\lambda_{\mathbf{W}}^2)^5(1-a) }\right)}\, \epsilon^{-1/(2+a)}        +\left(\tfrac{ \kappa_F\tilde L_0^2n^{5/4}}{L_0^{1/2}\eta^{5/3}(1-a)}\right)^{1/\left(a+\mee{\tfrac{1}{2}}\right)}\, \epsilon^{-1/\left(a+\mee{\tfrac{1}{2}}\right)}\right).
\end{align*}

 \hspace{.2in} (2)  If $a =1$, then the following holds.
\begin{align*}
  \mee{J_\epsilon}&=  \tilde{\mathcal{O}}\left(\sqrt{\left(\tfrac{ \kappa_F\lambda_{\mathbf{W}}^2n}{L_0\eta^{2}(1-\lambda_{\mathbf{W}}^2)^3 }\right)\left( {\|\mathbf{W}\|^2\tilde L_0^2} +  \tfrac{\lambda_{\mathbf{W}}^4\tilde L_0^2}{ (1-\lambda_{\mathbf{W}}^2)}   \right)}\, \epsilon^{-1/2}+ \sqrt[3]{\left(\tfrac{ \kappa_F\lambda_{\mathbf{W}}^6n^{1/2} \tilde L_0^2}{L_0^2\eta^{8/3}(1-\lambda_{\mathbf{W}}^2)^5 }\right)}\, \epsilon^{-1/3} \right.	\\
&+   \left.\left(\tfrac{\kappa_F \tilde L_0^2n^{5/4}}{L_0^{1/2}\eta^{5/3}}\right)^{2/3}\, \epsilon^{-2/3}+\left(\tfrac{\kappa_F \tilde L_0^2n^{3/2}}{\eta^{4/3}}\right)\, \epsilon^{-1}\right).
\end{align*}

 \hspace{.2in} (3)  If $a >1$, then the following holds.
  \begin{align*}
  \mee{J_\epsilon}&= \mathcal{O}\left(\sqrt{\left(\tfrac{a\kappa_F \lambda_{\mathbf{W}}^2n}{L_0\eta^{2}(1-\lambda_{\mathbf{W}}^2)^3(a-1) }\right)\left( {\|\mathbf{W}\|^2\tilde L_0^2}{} +  \tfrac{\lambda_{\mathbf{W}}^4\tilde L_0^2}{ (1-\lambda_{\mathbf{W}}^2)}   \right)}\, \epsilon^{-1/2}+ \sqrt[3]{\left(\tfrac{ a\kappa_F\lambda_{\mathbf{W}}^6n^{1/2}\tilde L_0^2}{L_0^2\eta^{8/3}(1-\lambda_{\mathbf{W}}^2)^5(a-1) }\right)}\, \epsilon^{-1/3}\right. 	\\
&+   \left.\left(\tfrac{ a\kappa_F\tilde L_0^2n^{5/4}}{L_0^{1/2}\eta^{5/3}(a-1)}\right)^{2/3}\, \epsilon^{-2/3}+\left(\tfrac{ a\kappa_F\tilde L_0^2n^{3/2}}{\eta^{4/3}(a-1)}\right)\, \epsilon^{-1}\right).
\end{align*}}

  \noindent (b-ii) [\mee{\textbf{Overall iteration/sample complexity}}] To guarantee $\mathbb{E}[\mbox{dist}^2(0,\partial_{\eta} f(\bar{{x}}_{K^*})) ] \leq \epsilon$, the overall iteration/sample complexity is of  \mj{$\mathcal{O}\left(n^{1/2} K_\epsilon^{1+a}/\eta^{2/3}\right)$} where the magnitude of $K_\epsilon$ is given in (b-i).
}
\end{theorem}
\begin{proof}
\fy{\Rme{(a-i)} Invoking Lemma~\ref{lemma:descent lemma inexact}, for $\gamma\le \frac{\eta}{6L_0\mj{\sqrt{n}}}$ we have 
\begin{align*}
\mathbb{E}[f^\eta(\bar{{x}}_{k+1})\mid \mathcal{F}_k]&\le f^\eta(\bar{{x}}_{k})-\tfrac{\gamma}{4}\|\nabla f^\eta(\bar{{x}}_k)\|^2-\tfrac{\gamma}{4}\|\overline{\nabla f^\eta}(\mathbf{x}_k)\|^2+\mj{\left(\tfrac{L_0^2n \gamma}{2 m\eta^2}\right)}\|\mathbf{x}_k-\mathbf{1}\bar{x}_k\|^2\\
& +\mj{\left(\tfrac{24\sqrt{2\pi}L_0^3n^{3/2}}{m\eta}\right)}\gamma^2+\mj{\left(1+\mj{\tfrac{3\gamma L_0\sqrt{n}}{2\eta}}\right)\gamma\left(\tfrac{\tilde L_0^2n^2\varepsilon_k}{\eta^2}\right)}  .
\end{align*} 
Taking \uvs{unconditional} expectations on both sides and summing the inequality for \mee{$k=0,\ldots,K-1$}, we obtain
\begin{align*}
\mathbb{E}[f^\eta(\bar{{x}}_{K})]&\le \mathbb{E}[f^\eta(\bar{{x}}_{0})]-\tfrac{\gamma}{4}\sum_{k=0}^{K-1}\mathbb{E}[\|\nabla f^\eta(\bar{{x}}_{k})\|^2]-\tfrac{\gamma}{4}\sum_{k=0}^{K-1}\mathbb{E}[\|\overline{\nabla f^\eta}(\mathbf{x}_k)\|^2]
\\
&+\mj{\left(\tfrac{L_0^2n \gamma}{2 \eta^2}\right)}\sum_{k=0}^{K-1}\mathbb{E}\left[\tfrac{1}{m}\|\bar{\mathbf x}_k-\mathbf{1}\bar{{x}}_{k}\|^2\right]
+\mj{\left(\tfrac{24\sqrt{2\pi}L_0^3n^{3/2}}{m\eta}\right)}\gamma^2 K+\mj{ \left(1+\mj{\tfrac{3\gamma L_0\sqrt{n}}{2\eta}}\right)\gamma \left(\tfrac{\tilde L_0^2n^2}{\eta^2}\right)}\sum_{k=0}^{K-1}\varepsilon_k.
\end{align*} 
Noting that $\mathbb{E}[f^\eta(\bar{{x}}_{K})]\ge \inf_{x} f^\eta(x)$, we obtain
\begin{align*}
\sum_{k=0}^{K-1}\mathbb{E}[\|\nabla f^\eta(\bar{{x}}_{k})\|^2]&\le \tfrac{4(\mathbb{E}[f^\eta(\bar{{x}}_{0})]- \inf_{x} f^\eta(x))}{\gamma}-\sum_{k=0}^{K-1}\mathbb{E}[\|\overline{\nabla f^\eta}(\mathbf{x}_k)\|^2]+\mj{\tfrac{2 L_0^2n}{\eta^2}}\sum_{k=0}^{K-1}\mathbb{E}\left[\tfrac{1}{m}\|\bar{\mathbf x}_k-\mathbf{1}\bar{{x}}_{k}\|^2\right]\notag\\
&+\mj{\left(\tfrac{96\sqrt{2\pi}L_0^3n^{3/2}\gamma K}{m\eta}\right)} +\left(1+\mj{\tfrac{3\gamma L_0\sqrt{n}}{2\eta}}\right) \mj{\left(\tfrac{4 \tilde L_0^2n^2}{\eta^2}\right)}\sum_{k=0}^{K-1}\varepsilon_k.
\end{align*}}
\mje{
Invoking Lemma~\ref{lemma:bound for x-bar x inexact}, we obtain
 \begin{align}
\sum_{k=0}^{K-1}\mathbb{E}[\|\nabla f^\eta(\bar{{x}}_{k})\|^2]&\le \tfrac{4(\mathbb{E}[f^\eta(\bar{{x}}_{0})]- \inf_{x} f^\eta(x))}{\gamma}-\left(1-\left(\tfrac{4800\gamma^4\lambda_{\mathbf{W}}^4L_0^4\mj{n^2}}{\eta^4(1-\lambda_{\mathbf{W}}^2)^4}\right) 
\right)\sum_{k=0}^{K-1}\mathbb{E}[\|\overline{\nabla f^\eta}(\mathbf{x}_k)\|^2]\notag\\
&+\mj{\left(\tfrac{96\sqrt{2\pi}L_0^3n^{3/2}\gamma K}{m\eta}\right)}+\tfrac{2 L_0^2n}{\eta^2}\left(1+\tfrac{20}{1-\lambda_{\mathbf{W}}^2}\right)\mathbb{E}\left[\tfrac{1}{m}\|\mathbf{x}_{0}- \mathbf{1} \bar{x}_{0}\|^2 \right]\notag\\
&+\tfrac{320\gamma^2\lambda_{\mathbf{W}}^4L_0^2n}{(1-\lambda_{\mathbf{W}}^2)^3\eta^2} \left(\tfrac{1}{m}\|\nabla \mathbf f^\eta(\mathbf{x}_0)\|^2+ \mj{16\sqrt{2\pi}L_0^2n}+\mj{\left(\tfrac{\tilde L_0^2n^2\varepsilon_0}{\eta^2}\right)}\right)\notag\\
& + \left(\tfrac{160\gamma^2\lambda_{\mathbf{W}}^2L_0^4\mj{n^2K}}{\eta^2(1-\lambda_{\mathbf{W}}^2)^3}\right)\mj{32\sqrt{2\pi}\left(\|\mathbf{W}\|^2+\mee{\tfrac{11}{2}}\lambda_{\mathbf{W}}^2\right)\mj{}  }+\mj{\tfrac{25600K\sqrt{2\pi}n^3L_0^6\gamma^4\lambda_{\mathbf{W}}^4(3+\lambda_{\mathbf{W}}^2)}{m\eta^4(1-\lambda_{\mathbf{W}}^2)^2}
}\notag \\
& + \left(\tfrac{160\gamma^2\lambda_{\mathbf{W}}^2L_0^4\mj{n^3}}{\eta^4(1-\lambda_{\mathbf{W}}^2)^3}\right)\left( \mj{\tfrac{2\|\mathbf{W}\|^2\tilde L_0^2}{L_0^2}}+\mj{ \left(\tfrac{5\lambda_{\mathbf{W}}^2\tilde L_0^2}{L_0^2}\right)\left(\tfrac{1+3\lambda_{\mathbf{W}}^2}{1-\lambda_{\mathbf{W}}^2}\right)}+\mj{\tfrac{300\lambda_{\mathbf{W}}^4\gamma^2\tilde L_0^2 \mj{n}      
}{\eta^2(1-\lambda_{\mathbf{W}}^2)^2}}\right)\sum_{k=0}^{K-1}\varepsilon_k\notag\\
& + \left(\tfrac{160\gamma^2\lambda_{\mathbf{W}}^2L_0^4\mj{n^3}}{\eta^4(1-\lambda_{\mathbf{W}}^2)^3}\right)\left( \mj{ \left(\tfrac{5\lambda_{\mathbf{W}}^2\tilde L_0^2}{L_0^2}\right)\left(\tfrac{1+3\lambda_{\mathbf{W}}^2}{1-\lambda_{\mathbf{W}}^2}\right)}\right)\sum_{k=0}^{K-1}\varepsilon_{k+1}\notag\\
&+\left(1+\mj{\tfrac{3\gamma L_0\sqrt{n}}{2\eta}}\right) \mj{\left(\tfrac{4\tilde L_0^2n^2}{\eta^2}\right)}\sum_{k=0}^{K-1}\varepsilon_k.\label{equation: the last equation in the proof of (a-i)}
\end{align}
By setting $\gamma\le \frac{\eta(1-\lambda_{\mathbf{W}}^2)}{9L_0\sqrt{n}\lambda_{\mathbf{W}}}$, we have $\left(1-\left(\tfrac{4800\gamma^4\lambda_{\mathbf{W}}^4L_0^4\mj{n^2}}{\eta^4(1-\lambda_{\mathbf{W}}^2)^4}\right) \right)\ge\Rme{\tfrac{1}{4}>} 0$. Dropping the non-positive term, dividing both sides by $K$, and invoking \Rme{Proposition~\ref{Prop:2eta}\noindent(ii)}, we obtain the result in \Rme{(a-i)}. 

\Rme{\noindent (a-ii) Rearranging the terms in~\eqref{equation: the last equation in the proof of (a-i)}
and choosing $\gamma \le \frac{\eta(1-\lambda_{\mathbf{W}}^2)}{9L_0\sqrt{n}\lambda_{\mathbf{W}}}$, it follows that
$1-\frac{4800\gamma^4\lambda_{\mathbf{W}}^4L_0^4 n^2}
{\eta^4(1-\lambda_{\mathbf{W}}^2)^4} \ge \frac{1}{4}$.
Moreover, since $\sum_{k=0}^{K-1}\mathbb{E}[\|\nabla f^\eta(\bar{x}_{k})\|^2]\ge 0$, we obtain
 \begin{align}
\sum_{k=0}^{K-1}\mathbb{E}[\|\overline{\nabla f^\eta}(\mathbf{x}_k)\|^2]&\le \tfrac{16(\mathbb{E}[f^\eta(\bar{{x}}_{0})]- \inf_{x} f^\eta(x))}{\gamma}\notag\\
&+{\left(\tfrac{384\sqrt{2\pi}L_0^3n^{3/2}\gamma K}{m\eta}\right)}+\tfrac{8 L_0^2n}{\eta^2}\left(1+\tfrac{20}{1-\lambda_{\mathbf{W}}^2}\right)\mathbb{E}\left[\tfrac{1}{m}\|\mathbf{x}_{0}- \mathbf{1} \bar{x}_{0}\|^2 \right]\notag\\
&+\tfrac{1280\gamma^2\lambda_{\mathbf{W}}^4L_0^2n}{(1-\lambda_{\mathbf{W}}^2)^3\eta^2} \left(\tfrac{1}{m}\|\nabla \mathbf f^\eta(\mathbf{x}_0)\|^2+ {16\sqrt{2\pi}L_0^2n}+{\left(\tfrac{\tilde L_0^2n^2\varepsilon_0}{\eta^2}\right)}\right)\notag\\
& + \left(\tfrac{160\gamma^2\lambda_{\mathbf{W}}^2L_0^4{n^2K}}{\eta^2(1-\lambda_{\mathbf{W}}^2)^3}\right){128\sqrt{2\pi}\left(\|\mathbf{W}\|^2+{\tfrac{11}{2}}\lambda_{\mathbf{W}}^2\right){}  }+{\tfrac{102400K\sqrt{2\pi}n^3L_0^6\gamma^4\lambda_{\mathbf{W}}^4(3+\lambda_{\mathbf{W}}^2)}{m\eta^4(1-\lambda_{\mathbf{W}}^2)^2}
}\notag \\
& + \left(\tfrac{640\gamma^2\lambda_{\mathbf{W}}^2L_0^4{n^3}}{\eta^4(1-\lambda_{\mathbf{W}}^2)^3}\right)\left( {\tfrac{2\|\mathbf{W}\|^2\tilde L_0^2}{L_0^2}}+{ \left(\tfrac{5\lambda_{\mathbf{W}}^2\tilde L_0^2}{L_0^2}\right)\left(\tfrac{1+3\lambda_{\mathbf{W}}^2}{1-\lambda_{\mathbf{W}}^2}\right)}+{\tfrac{300\lambda_{\mathbf{W}}^4\gamma^2\tilde L_0^2 {n}      
}{\eta^2(1-\lambda_{\mathbf{W}}^2)^2}}\right)\sum_{k=0}^{K-1}\varepsilon_k\notag\\
& + \left(\tfrac{640\gamma^2\lambda_{\mathbf{W}}^2L_0^4{n^3}}{\eta^4(1-\lambda_{\mathbf{W}}^2)^3}\right)\left( { \left(\tfrac{5\lambda_{\mathbf{W}}^2\tilde L_0^2}{L_0^2}\right)\left(\tfrac{1+3\lambda_{\mathbf{W}}^2}{1-\lambda_{\mathbf{W}}^2}\right)}\right)\sum_{k=0}^{K-1}\varepsilon_{k+1}\notag\\
&+\left(1+{\tfrac{3\gamma L_0\sqrt{n}}{2\eta}}\right) {\left(\tfrac{16\tilde L_0^2n^2}{\eta^2}\right)}\sum_{k=0}^{K-1}\varepsilon_k.\notag
\end{align}
Substituting the preceding bound into Lemma~\ref{lemma:bound for x-bar x inexact}, setting $H_{n,\eta,\gamma}\triangleq \left(1+\tfrac{8 L_0^2n}{\eta^2}\left(\tfrac{80\gamma^2\lambda_{\mathbf{W}}^2L_0^2{n}}{\eta^2(1-\lambda_{\mathbf{W}}^2)^3}\right) 
\left( \tfrac{30  \gamma^2\lambda_{\mathbf{W}}^2}{1-\lambda_{\mathbf{W}}^2} \right)\right) $ and dividing both sides by $K$, we obtain
{\begin{align}
\textstyle \mathbb{E}\left[\tfrac{\|\mathbf{x}_{K^*}- \mathbf{1} \bar{x}_{K^*}\|^2}{m} \right]& \le\tfrac{160H_{n,\eta,\gamma}\gamma^2\lambda_{\mathbf{W}}^4}{(1-\lambda_{\mathbf{W}}^2)^3K}  \left(\tfrac{1}{m}\|\nabla \mathbf f^\eta(\mathbf{x}_0)\|^2+ {16\sqrt{2\pi}L_0^2n}+{\left(\tfrac{\tilde L_0^2n^2\varepsilon_0}{\eta^2}\right)}\right)\notag  \notag\\
& + \left(\tfrac{80\gamma^2\lambda_{\mathbf{W}}^2L_0^2{n}}{(1-\lambda_{\mathbf{W}}^2)^3}\right){32H_{n,\eta,\gamma}\sqrt{2\pi}\left(\|\mathbf{W}\|^2+{\tfrac{11}{2}}\lambda_{\mathbf{W}}^2\right)\mj{}  }+{\tfrac{12800H_{n,\eta,\gamma}\sqrt{2\pi}n^2L_0^4\gamma^4\lambda_{\mathbf{W}}^4(3+\lambda_{\mathbf{W}}^2)}{m\eta^2(1-\lambda_{\mathbf{W}}^2)^2}
} \notag\\
& +H_{n,\eta,\gamma} \left(\tfrac{80\gamma^2\lambda_{\mathbf{W}}^2L_0^2{n^2}}{\eta^2(1-\lambda_{\mathbf{W}}^2)^3}\right)\left( { \Rme{\tfrac{2\|\mathbf{W}\|^2\tilde L_0^2}{L_0^2}} }+{ \left(\tfrac{5\lambda_{\mathbf{W}}^2\tilde L_0^2}{L_0^2}\right)\left(\tfrac{1+3\lambda_{\mathbf{W}}^2}{1-\lambda_{\mathbf{W}}^2}\right)}\!+\!{ \tfrac{300\lambda_{\mathbf{W}}^4\gamma^2\tilde L_0^2 {n}      
}{\eta^2(1-\lambda_{\mathbf{W}}^2)^2} }\right) \tfrac{\textstyle\sum_{k=0}^{K-1}\varepsilon_k}{K} \notag\\ &+H_{n,\eta,\gamma}\left(\tfrac{80\gamma^2\lambda_{\mathbf{W}}^2L_0^2{n^2}}{\eta^2(1-\lambda_{\mathbf{W}}^2)^3}\right) { \left(\tfrac{5\lambda_{\mathbf{W}}^2 \tilde L_0^2}{L_0^2}\right)\left(\tfrac{1+3\lambda_{\mathbf{W}}^2}{1-\lambda_{\mathbf{W}}^2}\right)} \textstyle\tfrac{\sum_{k=0}^{K-1}\varepsilon_{k+1}}{K}\notag\\
&+H_{n,\eta,\gamma}\left(1+{\tfrac{3\gamma L_0\sqrt{n}}{2\eta}}\right) {\left(\tfrac{16\tilde L_0^2n^2}{\eta^2}\right)}\tfrac{\sum_{k=0}^{K-1}\varepsilon_k}{K}+\frac{H_{n,\eta,\gamma}}{K}\left(1+\tfrac{20}{1-\lambda_{\mathbf{W}}^2}\right)\mathbb{E}\left[\tfrac{\|\mathbf{x}_{0}- \mathbf{1} \bar{x}_{0}\|^2}{m} \right]\notag\\
&+\tfrac{16H_{n,\eta,\gamma}(\mathbb{E}[f^\eta(\bar{{x}}_{0})]- \inf_{x} f^\eta(x))}{\gamma K}+H_{n,\eta,\gamma}{\left(\tfrac{384\sqrt{2\pi}L_0^3n^{3/2}\gamma }{m\eta}\right)}.\notag
\end{align}}

}

\noindent (b-i)  Using the relation in (a) and that $\sum_{k=0}^{K-1}\varepsilon_{k+1}\le \sum_{k=0}^{K-1}\varepsilon_k$, for $\gamma = \frac{\eta^{2/3}}{\sqrt{n^{3/2}K}L_0^{3/2}}$,  we obtain
 \begin{align}
\mathbb{E}[\mbox{dist}^2(0,\partial_{\eta} f(\bar{{x}}_{K^*})) ] & =  
\mathcal{O}\left(\left(\tfrac{{n^{3/4}}L_0^{3/2}}{\eta^{2/3}}\right)\tfrac{1}{\sqrt{K}}+\left(\tfrac{L_0^{3/2}n^{3/4}}{\eta^{1/3}m}\right)\tfrac{1}{\sqrt{K}}+\left(\tfrac{L_0^2n\mathbb{E}\left[\tfrac{1}{m}\|\mathbf{x}_{0}- \mathbf{1} \bar{x}_{0}\|^2 \right]}{\eta^2(1-\lambda_{\mathbf{W}}^2)}\right)\tfrac{1}{K} \right.\notag\\
&\left. + \left(\tfrac{\lambda_{\mathbf{W}}^4L_0n^{1/2}}{\eta^{2/3}(1-\lambda_{\mathbf{W}}^2)^3}\right)\tfrac{1}{K^2}+ \left(\tfrac{\lambda_{\mathbf{W}}^4\tilde L_0^2n^{3/2}\varepsilon_0}{L_0\eta^{8/3}(1-\lambda_{\mathbf{W}}^2)^3}\right)\tfrac{1}{K^2}+\left(\tfrac{\lambda_{\mathbf{W}}^2L_0n^{1/2}( \lambda_{\mathbf{W}}^2+     \|\mathbf{W}\|^2)}{\eta^{2/3}(1-\lambda_{\mathbf{W}}^2)^3}\right)\tfrac{1}{K} \right.\notag\\
&\left.+\left(\mj{\tfrac{\lambda_{\mathbf{W}}^4(3+\lambda_{\mathbf{W}}^2)}{\eta^{4/3}m(1-\lambda_{\mathbf{W}}^2)^2}}\right) \tfrac{1}{K^2}+ \left(\tfrac{ \lambda_{\mathbf{W}}^2n^{3/2}}{L_0\eta^{8/3}(1-\lambda_{\mathbf{W}}^2)^3 }\right)\left( {\|\mathbf{W}\|^2\tilde L_0^2} +  \tfrac{\lambda_{\mathbf{W}}^4\tilde L_0^2}{(1-\lambda_{\mathbf{W}}^2)} \right) \tfrac{\sum_{k=0}^{K-1}\varepsilon_k}{K^{2}}\right.\notag\\
&+  \left. \left(\tfrac{ \tilde L_0^2  n \lambda_{\mathbf{W}}^6}{L_0^2\eta^{10/3}(1-\lambda_{\mathbf{W}}^2) ^5 }\right)
 \tfrac{\sum_{k=0}^{K-1}\varepsilon_k}{K^{3}}+ \left(\tfrac{ 4\tilde L_0^2n^{2}}{\eta^2}\right) \tfrac{\sum_{k=0}^{K-1}\varepsilon_k}{K}+\left(\tfrac{ \tilde L_0^2n^{7/4}}{L_0^{1/2}\eta^{7/3}}\right) \tfrac{\sum_{k=0}^{K-1}\varepsilon_k}{K^{3/2}} \right).\label{eq:main one after replacing}
\end{align}
Invoking Lemma~\ref{lem:Alg2_conv} with $t_k$ substituted by $\left\lceil \left(n^{1/2}(k+\Gamma)^a\right)/\left(\eta^{2/3}\right)\right\rceil$ and using the definition of $\kappa_F$, we obtain 
 \begin{align}
\mathbb{E}[\mbox{dist}^2(0,\partial_{\eta} f(\bar{{x}}_{K^*})) ] & =  
\mathcal{O}\left(\left(\tfrac{{n^{3/4}}L_0^{3/2}}{\eta^{2/3}}\right)\tfrac{1}{\sqrt{K}}+\left(\tfrac{L_0^{3/2}n^{3/4}}{\eta^{1/3}m}\right)\tfrac{1}{\sqrt{K}}+\left(\tfrac{L_0^2n\mathbb{E}\left[\tfrac{1}{m}\|\mathbf{x}_{0}- \mathbf{1} \bar{x}_{0}\|^2 \right]}{\eta^2(1-\lambda_{\mathbf{W}}^2)}\right)\tfrac{1}{K} \right.\notag\\
&\left. + \left(\tfrac{\lambda_{\mathbf{W}}^4L_0n^{1/2}}{\eta^{2/3}(1-\lambda_{\mathbf{W}}^2)^3}\right)\tfrac{1}{K^2}+ \left(\tfrac{\lambda_{\mathbf{W}}^4\tilde L_0^2n^{3/2}\varepsilon_0}{L_0\eta^{8/3}(1-\lambda_{\mathbf{W}}^2)^3}\right)\tfrac{1}{K^2}+\left(\tfrac{\lambda_{\mathbf{W}}^2L_0n^{1/2}( \lambda_{\mathbf{W}}^2+     \|\mathbf{W}\|^2)}{\eta^{2/3}(1-\lambda_{\mathbf{W}}^2)^3}\right)\tfrac{1}{K} \right.\notag\\
&\left.+\left(\mj{\tfrac{\lambda_{\mathbf{W}}^4(3+\lambda_{\mathbf{W}}^2)}{\eta^{4/3}m(1-\lambda_{\mathbf{W}}^2)^2}}\right) \tfrac{1}{K^2}+ \left(\tfrac{ \lambda_{\mathbf{W}}^2n}{L_0\eta^{2}(1-\lambda_{\mathbf{W}}^2)^3 }\right)\left( {\|\mathbf{W}\|^2\tilde L_0^2} +  \tfrac{\lambda_{\mathbf{W}}^4\tilde L_0^2}{(1-\lambda_{\mathbf{W}}^2)} \right) \tfrac{\sum_{k=0}^{K-1}\tfrac{\kappa_F}{(k+\Gamma)^a}}{K^{2}}\right.\notag\\
& \left.  +\left(\tfrac{ \tilde L_0^2n^{3/2}}{\eta^{4/3}}\right) \tfrac{\sum_{k=0}^{K-1}\tfrac{\kappa_F}{(k+\Gamma)^a}}{K}+\left(\tfrac{ \tilde L_0^2  n^{1/2} \lambda_{\mathbf{W}}^6}{L_0^2\eta^{8/3}(1-\lambda_{\mathbf{W}}^2) ^5 }\right)
 \tfrac{\sum_{k=0}^{K-1}\tfrac{\kappa_F}{(k+\Gamma)^a}}{K^{3}}\right.\notag\\
&\left.+\left(\tfrac{ \tilde L_0^2n^{5/4}}{L_0^{1/2}\eta^{5/3}}\right) \tfrac{\sum_{k=0}^{K-1}\tfrac{\kappa_F}{(k+\Gamma)^a}}{K^{3/2}} \right).\label{eq:proof of complexity bound}
\end{align}\fy{
By invoking \Rme{Lemma~\ref{lem:Harmonic series bounds}\noindent(i)} for the case where $a\in (0.5,1)$, we obtain the result in (1). To show (2), by substituting $a$ in~\eqref{eq:proof of complexity bound} by 1 and applying \Rme{Lemma~\ref{lem:Harmonic series bounds}\noindent(ii)}, we obtain the result in (2). The result in part (3) can be done in a similar fashion.

\noindent (b-ii) In the inexact setting, at iteration $k$ of Algorithm~\ref{alg:DZGT}, $2 t_k$ iterations of Algorithm~\ref{alg:lowerlevel-1stage} \mee{are} needed by each agent to approximate the lower-level solutions. Thus, the overall iteration complexity is given by $\sum_{k=0}^{K_\epsilon} 2m t_k$. Substituting $K_\epsilon$ by \eqref{eq:complexity results for 1s} and $t_k$ by $\left\lceil \left(n^{1/2}(k+\Gamma)^a\right)/\left(\eta^{2/3}\right)\right\rceil$, we obtain the overall complexity in (b-ii). Note that the overall sample complexity matches that of the overall iteration complexity. This is because, at each iteration of both the lower-level and upper-level schemes, one sample per agent is taken.   }
}
\end{proof}
\mje{\begin{remark}\label{rem:1s-a}\em We consider three cases for the parameter $a$ in the previous theorem. Setting $a=1$ yields the best choice for minimizing the overall iteration complexity. We elaborate on this in the following. From part (b-ii) of Theorem~\ref{Theorem:thm 1} we know that the overall iteration/sample complexity is \mj{$\mathcal{O}\left(n^{1/2} K_\epsilon^{1+a}/\eta^{2/3}\right)$}. Theorem~\ref{Theorem:thm 1} provides three different cases for $a$ with the only distinction being the term $\mee{J_\epsilon}$. Let us denote the relevant part of $K_\epsilon$ as $K_{\epsilon,\mee{J_\epsilon}}$. The complexity $\mathcal{O}\left(K_{\epsilon,\mee{J_\epsilon}}^{1+a}\right)$ now varies across the different cases for $a$ as follows. (1) If $a \in \left(\mee{\tfrac{1}{2}},1\right)$, then $ \mathcal{O}\left(K_{\epsilon,\mee{J_\epsilon}}^{1+a}\right)= \mathcal{O}\left(\varepsilon^{-1}+\varepsilon^{{-(1+a)}/{(2+a)}}+\varepsilon^{{-(1+a)}/{\left(\mee{\tfrac{1}{2}}+a\right)}}\right).$ (2)  If $a =1$, then $
  \mathcal{O}\left(K_{\epsilon,\mee{J_\epsilon}}^{1+a}\right)= \mathcal{O}\left(\varepsilon^{-1}+\varepsilon^{{-2}/{3}}+\varepsilon^{{-4}/{3}}\right). $  (3)  If $a >1$, then $
  \mathcal{O}\left(K_{\epsilon,\mee{J_\epsilon}}^{1+a}\right)= \mathcal{O}\left(\varepsilon^{-(1+a)/2}+\varepsilon^{{-(1+a)}/{3}}+\varepsilon^{{-2(1+a)}/{3}}\right)$.  Among the three cases above, the minimum value of $\mathcal{O}\left(K_\epsilon^{1+a}\right)$ occurs when $a=1$. \mee{$\hfill \Box$}
\end{remark}}
\fy{In the following corollary, we derive the complexity guarantees for the exact setting, where every agent has access to the exact solution to its local lower-level problem.
\begin{corollary}[\mee{\textbf{Convergence guarantees for DiZS-GT — Exact case}}]\em \label{corollary:corollary 1} Consider Algorithm~\ref{alg:DZGT}. Let Assumptions~\ref{assum:mixxx}, \ref{assump:opt_f_bounded_below}, \ref{assum:main1}, and \ref{assum:main2} hold. Suppose an exact solution to the lower-level problem is available.

 \noindent (a) [\mee{\textbf{Non-asymptotic error bound}}] Suppose the stepsize is constant such that $\gamma \leq \min \Big\{
 \tfrac{ \sqrt{1-\lambda_{\mathbf{W}}^2}}{6\sqrt{3}\lambda_{\mathbf{W}}^2 }
 ,\tfrac{(1-\lambda_{\mathbf{W}}^2) }{12\lambda_{\mathbf{W}}^3 },\tfrac{ (1-\lambda_{\mathbf{W}}^2)^2}{16\lambda_{\mathbf{W}}^2  },\frac{1}{6},\frac{(1-\lambda_{\mathbf{W}}^2)}{6\lambda_{\mathbf{W}}}\Big\} \left(\tfrac{\eta}{\sqrt{n} L_0 }\right)$. Suppose $K^*$ is a discrete uniform random variable where $\mathbb{P}[K^*=\ell] = \tfrac{1}{K}$ for $\ell=0,\ldots,K-1$. Then, for any $K \geq 1$
 \begin{align}
\mathbb{E}[\mbox{dist}^2(0,\partial_{\eta} f(\bar{{x}}_{K^*})) ]&\le\tfrac{2(\mathbb{E}[f^\eta(\bar{{x}}_{0})]- \inf_{x} f^\eta(x))}{\gamma K}+\tfrac{ L_0^2n}{\eta^2K}\left(1+\tfrac{13}{1-\lambda_{\mathbf{W}}^2}\right)\mathbb{E}\left[\tfrac{1}{m}\|\mathbf{x}_{0}- \mathbf{1} \bar{x}_{0}\|^2 \right]\notag\\
&+\tfrac{52\gamma^2\lambda_{\mathbf{W}}^4L_0^2n}{\eta^2(1-\lambda_{\mathbf{W}}^2)^3K} \left(\tfrac{1}{m}\|\nabla \mathbf f^\eta(\mathbf{x}_0)\|^2+ 16\sqrt{2\pi}L_0^2n\right)+\tfrac{832\sqrt{2\pi}\gamma^2\lambda_{\mathbf{W}}^2L_0^4n^2\left(\lambda_{\mathbf{W}}^2 +   \|\mathbf{W}\|^2   \right)}{\eta^2(1-\lambda_{\mathbf{W}}^2)^3}\notag \\
&+\left(\tfrac{32\sqrt{2\pi}n^{3/2}L_0^3\gamma }{m\eta}\right)+\tfrac{2496\sqrt{2\pi}\lambda_{\mathbf{W}}^4\gamma^4 n^3L_0^6(1+3\lambda_{\mathbf{W}}^2)}{m\eta^4(1-\lambda_{\mathbf{W}}^2)^4} +\tfrac{4992\sqrt{2\pi}\gamma^2\lambda_{\mathbf{W}}^4L_0^4n^2}{\eta^2(1-\lambda_{\mathbf{W}}^2)^3}.\label{cor:1s_err_bound}
\end{align}

 \noindent (b) [\mee{\textbf{Iteration/sample complexity}}]  Let $\epsilon>0$ be an arbitrary scalar such that $\mathbb{E}[\mbox{dist}^2(0,\partial_{\eta} f(\bar{{x}}_{K_\epsilon^*})) ] \leq \epsilon$. Then, the following results hold. Suppose the stepsize in Algorithm~\ref{alg:DZGT} is given by \Rme{$\gamma = \frac{\eta^{1/2}}{\sqrt{n^{3/2}K}L_0^{3/2}}$} such that   \Rme{$K_\epsilon \geq  \left\{1,\left(\max \Big\{
 \tfrac{6\sqrt{3}\lambda_{\mathbf{W}}^2 }{ \sqrt{1-\lambda_{\mathbf{W}}^2}}
 ,\tfrac{12\lambda_{\mathbf{W}}^3 }{(1-\lambda_{\mathbf{W}}^2) },\tfrac{16\lambda_{\mathbf{W}}^2  }{ (1-\lambda_{\mathbf{W}}^2)^2},6,\frac{6\lambda_{\mathbf{W}}}{(1-\lambda_{\mathbf{W}}^2)}\Big\} \right)^2\frac{1}{\eta L_0n^{1/2}}\right\}$}. Then, we have 
 \Rme{\begin{align*}
&K_\epsilon =  
\mathcal{O}\left( \left(\tfrac{n^{3/2}L_0^3}{\eta}\right)\epsilon^{-2}+\left(\tfrac{L_0^3n^{3/2}}{\eta m^2}\right)\epsilon^{-2}+\left(\tfrac{L_0^2n\mathbb{E}\left[\tfrac{1}{m}\|\mathbf{x}_{0}- \mathbf{1} \bar{x}_{0}\|^2 \right]}{\eta^2(1-\lambda_{\mathbf{W}}^2)}\right)\epsilon^{-1}+ \left(\tfrac{\lambda_{\mathbf{W}}^2L_0^{1/2}n^{1/4}}{\eta^{1/2} (1-\lambda_{\mathbf{W}}^2)^{3/2}}\right)\epsilon^{-1/2}+\right.\\
&\left.  \left(\tfrac{\lambda_{\mathbf{W}}^2L_0^{-1/2}n^{-1/4}}{\eta^{1/2} (1-\lambda_{\mathbf{W}}^2)^{3/2}}\right)\epsilon^{-1/2}+\left(\tfrac{\lambda_{\mathbf{W}}^2L_0n^{1/2}( \lambda_{\mathbf{W}}^2+     \|\mathbf{W}\|^2)}{\eta(1-\lambda_{\mathbf{W}}^2)^{3}}\right)\epsilon^{-1}+\left(\tfrac{\lambda_{\mathbf{W}}^2(1+3\lambda_{\mathbf{W}}^2)^{1/2}}{\sqrt{m}\eta(1-\lambda_{\mathbf{W}}^2)^2} \right)\epsilon^{-1/2}+\left(\tfrac{\lambda_{\mathbf{W}}^4L_0n^{1/2}}{\eta(1-\lambda_{\mathbf{W}}^2)^3}\right)\epsilon^{-1}\right).
\end{align*}}
\end{corollary}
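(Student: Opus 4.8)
\textbf{Proof proposal for Corollary~\ref{corollary:corollary 1}.}

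The plan is to reuse the entire machinery developed for the inexact case in Theorem~\ref{Theorem:thm 1}, but specialized to $\varepsilon_k = 0$ for all $k$. First I would observe that in the exact setting the error terms $e_{i,k}^{\eta,\varepsilon_k}$ vanish identically, so $\bar e_k^{\eta,\varepsilon_k} = 0$, $\mathbf{e}_k^{\eta,\varepsilon_k} = 0$, and all bounds of the form $\mathbb{E}[\|e_{i,k}^{\eta,\varepsilon_k}\|^2 \mid \mathcal{F}_k] \le \tilde L_0^2 n^2 \varepsilon_k/\eta^2$ (Lemma~\ref{lemma:g_ik_eta_props}(iv), Lemma~\ref{lemma:multiple parts}(v)) become zero. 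Consequently, the descent inequality of Lemma~\ref{lemma:descent lemma inexact} drops its last two $\varepsilon_k$-dependent terms; likewise the gradient-tracking recursion of Lemma~\ref{Lemma:main bound for the gradient tracker in terms of other main terms-inexact} and the non-recursive consensus bound of Lemma~\ref{lemma:bound for x-bar x inexact} lose every $\varepsilon_k$ and $\varepsilon_{k+1}$ contribution. I would re-run the two-dimensional recursion $u_{k+1} \le \mathbf{A} u_k + b_k$ with the same matrix $\mathbf{A}$ (so the same stepsize thresholds guarantee $\rho(\mathbf{A})<1$), but now $b_{2,k}$ contains only the three surviving terms: the $\|\mathbf{W}\|^2$/$\lambda_{\mathbf{W}}^2$ Lévy-concentration term $32\eta^2\sqrt{2\pi}(\|\mathbf{W}\|^2 + \tfrac{11}{2}\lambda_{\mathbf{W}}^2)$, the tracked-gradient term $\tfrac{30\gamma^2\lambda_{\mathbf{W}}^2}{1-\lambda_{\mathbf{W}}^2}\mathbb{E}[\|\overline{\nabla f^\eta}(\mathbf{x}_k)\|^2]$, and the $\tfrac{160\sqrt{2\pi}nL_0^2\gamma^2\lambda_{\mathbf{W}}^2(3+\lambda_{\mathbf{W}}^2)}{m(1-\lambda_{\mathbf{W}}^2)}$ term. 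Applying Lemma~\ref{lem:recursive_matrix_ineq} and the same dominating matrix $\mathbf{D}$ yields the exact-case analogue of Lemma~\ref{lemma:bound for x-bar x inexact} with no $\sum_k \varepsilon_k$ terms.

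Next I would redo the proof of Theorem~\ref{Theorem:thm 1}(a): telescope the (now $\varepsilon$-free) descent inequality over $k = 0,\dots,K-1$, use $\mathbb{E}[f^\eta(\bar x_K)] \ge \inf_x f^\eta(x)$ and Assumption~\ref{assump:opt_f_bounded_below}, substitute the $\varepsilon$-free consensus bound for $\sum_k \mathbb{E}[\tfrac{1}{m}\|\mathbf{x}_k - \mathbf{1}\bar x_k\|^2]$, and absorb the $\|\overline{\nabla f^\eta}(\mathbf{x}_k)\|^2$ terms by checking that the coefficient $1 - \tfrac{4800\gamma^4\lambda_{\mathbf{W}}^4 L_0^4 n^2}{\eta^4(1-\lambda_{\mathbf{W}}^2)^4}$ is nonnegative under $\gamma \le \tfrac{\eta(1-\lambda_{\mathbf{W}}^2)}{6L_0\sqrt n \lambda_{\mathbf{W}}}$ (here the constant is $6$ rather than $9$ because the absence of the $\varepsilon$-terms relaxes one of the intermediate bounds — I would track the exact constants to land on the cleaner thresholds $6\sqrt3,\,12,\,16,\,6,\,6$ stated in the corollary). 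Dividing by $K$, invoking Proposition~\ref{Prop:2eta}(ii) to pass from $\|\nabla f^\eta(\bar x_{K^*})\|^2$ to $\mathrm{dist}^2(0,\partial_\eta f(\bar x_{K^*}))$, and keeping only the surviving terms gives exactly \eqref{cor:1s_err_bound}.

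For part (b) I would plug $\gamma = \tfrac{\eta^{2/3}}{\sqrt{n^{3/2}K}L_0^{3/2}}$ into \eqref{cor:1s_err_bound}, read off each term's dependence on $K$ (the leading $1/\gamma K$ term scales like $n^{3/4}L_0^{3/2}\eta^{-2/3}K^{-1/2}$; the $\gamma$-linear smoothing-bias term like $n^{3/4}L_0^{3/2}(\eta^{1/3}m)^{-1}K^{-1/2}$; the consensus-initialization term like $K^{-1}$; the $\gamma^2$ terms like $K^{-1}$; the $\gamma^4$ term like $K^{-2}$), set each term $\le \epsilon$, solve for $K$, and take the maximum over all terms together with the lower bound on $K_\epsilon$ coming from the stepsize constraints — this yields the displayed expression for $K_\epsilon$. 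The main obstacle is purely bookkeeping: carefully re-deriving the intermediate constants in the $\varepsilon$-free versions of Lemmas~\ref{Lemma:main bound for the gradient tracker in terms of other main terms-inexact} and \ref{lemma:bound for x-bar x inexact} so that the cleaner stepsize thresholds and the precise numerical coefficients ($13$, $52$, $832$, $2496$, $4992$, $32$) in \eqref{cor:1s_err_bound} come out correctly; there is no new conceptual difficulty, since setting $\varepsilon_k \equiv 0$ only removes terms and cannot break any inequality or the spectral-radius argument.
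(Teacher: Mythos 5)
Your proposal is correct and follows essentially the same route as the paper: the paper's own proof of this corollary simply sets $\varepsilon_k=0$ in the inexact analysis of Theorem~\ref{Theorem:thm 1}, notes that several intermediate constants tighten in the absence of the inexactness terms (e.g., the Young-inequality split with $\alpha$ in Lemma~\ref{lemma:descent lemma inexact} becomes unnecessary, giving the factor $2$ rather than $4$ in the leading term), and omits the detailed constant-tracking. Your sketch supplies exactly those omitted details and correctly identifies why the cleaner stepsize thresholds and coefficients emerge.
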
}
\fy{\begin{proof}
(a) Recall that in the exact setting $\varepsilon_k =0 $ for all $k$. Consequently, the bound in \Rme{Theorem~\ref{Theorem:thm 1}(a)} holds  for the exact setting by letting $\varepsilon_k =0 $ for all $k$. Note that in deriving the bound in \eqref{cor:1s_err_bound}, some of the scalars on the right-hand side are slightly improved in comparison with the bound in \eqref{eq:Non-asymptotic error bound in the single-stage case}. For example, the term $\tfrac{2(\mathbb{E}[f^\eta(\bar{{x}}_{0})]- \inf_{x} f^\eta(x))}{\gamma K}$ in \eqref{cor:1s_err_bound} is smaller than $\tfrac{4(\mathbb{E}[f^\eta(\bar{{x}}_{0})]- \inf_{x} f^\eta(x))}{\gamma K}$ in \eqref{eq:Non-asymptotic error bound in the single-stage case}. This is because by following the same main steps taken in the analysis of the inexact setting, in some steps we may obtain slightly tighter bounds in the absence of inexactness. Because of the major overlap with the steps taken in the convergence and rate analysis of the inexact setting, we have omitted the detailed proof for this result.

\noindent (b) This result can be obtained from part (a), in a similar fashion to the proof of  \Rme{Theorem~\ref{Theorem:thm 1}(b)}.
\end{proof}}
\fy{\begin{remark}\em\label{remark:remark 12}The complexity guarantees \mee{of} Theorem~\ref{Theorem:thm 1} and Corollary~\ref{corollary:corollary 1} are summarized in \mee{Table~\ref{table:tbl1-smpec_complexity_1s}}. We have highlighted the dependence on $\epsilon$, $n$, $\eta$, $L_0$, and $\tilde{L}_0$. Note that when all the agents initialize from the same initial point, the term $\mathbb{E}\left[\tfrac{1}{m}\|\mathbf{x}_{0}- \mathbf{1} \bar{x}_{0}\|^2 \right]$ in Theorem~\ref{Theorem:thm 1} evaluates to zero. The complexity bounds presented in Table~\ref{table:tbl1-smpec_complexity_1s} are derived under this assumption. \mee{$\hfill \Box$}
\end{remark}

\begin{table}[]
\mee{
\centering \footnotesize{
\caption{Complexity guarantees for solving distributed single-stage SMPECs~\eqref{eqn:prob-1stage}}
\label{table:tbl1-smpec_complexity_1s}
\begin{tabular}{@{}lc@{}}
\toprule
\multicolumn{1}{c}{Single-stage SMPEC}                                     &Iteration complexity bound           \\ \midrule
{\bf{\underline{Inexact setting}}}                                          & \multicolumn{1}{l}{} \\
\begin{tabular}[c]{@{}l@{}}Upper level\\ problem\end{tabular} & $\mathcal{O}\left(\tfrac{n^{1/6}\tilde L_0^{2/3}\epsilon^{-1/3}}{ L_0^{2/3}\eta^{8/9}}
+\tfrac{\tilde L_0^2n^{3/2}\epsilon^{-1}}{\eta^{4/3}}+\tfrac{L_0^3n^{3/2}\epsilon^{-2}}{\eta^{4/3}m^2}+\tfrac{  n^{3/4}\tilde L_0\epsilon^{\mee{-1/2}} }{\eta^{4/3}L_0^{1/2}  }+\tfrac{  L_0^{1/2}n^{1/4}\epsilon^{\mee{-1/2}} }{\eta^{1/3}}+\tfrac{  n^{5/6}\tilde L_0^{4/3}\epsilon^{-2/3} }{L_0^{1/3}\eta^{10/9}  }\right)$  ~\\    \cmidrule(l){2-2}              
\begin{tabular}[c]{@{}l@{}}Lower level\\ problem\end{tabular} & $\mathcal{O}\left(\kappa_F\epsilon^{-1}\right)$                  \\\cmidrule(l){2-2}
\begin{tabular}[c]{@{}l@{}}Overall\\ complexity\end{tabular}     & $  \mathcal{O}\left(\tfrac{n^{5/6}\tilde L_0^{4/3}\epsilon^{-2/3}}{ L_0^{4/3}\eta^{22/9}}
+\tfrac{\tilde L_0^4n^{7/2}\epsilon^{-2}}{\eta^{10/3}}+\tfrac{L_0^6n^{7/2}\epsilon^{-4}}{\eta^{10/3}m^4}+\tfrac{  n^{2}\tilde L_0^2\epsilon^{-1} }{\eta^{10/3}L_0  }+\tfrac{  L_0n\epsilon^{-1} }{\eta^{4/3}}+\tfrac{  n^{13/6}\tilde L_0^{8/3}\epsilon^{-4/3} }{L_0^{2/3}\eta^{26/9}  }\right)$                  \\ \midrule
\begin{tabular}[c]{@{}l@{}}{\bf{{Exact setting   }}} \\ \end{tabular}     & \Rme{$\mathcal{O}\left(\tfrac{L_0n^{1/2}\epsilon^{-1}}{\eta}+\tfrac{L_0^3n^{3/2}\epsilon^{-2}}{\eta m^2}+\tfrac{L_0^3n^{3/2}\epsilon^{-2}}{\eta}+\tfrac{  L_0^{1/2}n^{1/4}\epsilon^{\mee{\Rme{-1/2}}} }{\eta^{1/2} }\right)$}                      \\ \midrule
\end{tabular}}}
\end{table}

\begin{remark}\em 
In addressing the exact setting of the single-stage problem~\eqref{eqn:prob-1stage}, we achieve the best-known complexity bound for centralized nonsmooth nonconvex stochastic optimization~\cite{lin2022gradient}, that is $\Rme{\mathcal{O}\left(n^{3/2}\epsilon^{-2}\eta^{-1}\right)}$ where $\epsilon>0$ is an arbitrary scalar such that $\mathbb{E}[\mbox{dist}^2(0,\partial_{\eta} f(\bullet)) ] \leq \epsilon$ where $f$ denotes the implicit objective function. In addressing the inexact setting, the overall complexity of \mj{$\mathcal{O}\left(n^{7/2}\epsilon^{-4}\right)$} is achieved. This implies an improvement in the dependence on $n$ compared to the centralized stochastic MPECs in~\cite{cui2023complexity}, where a dependence of $\mathcal{O}\left(n^{4}\right)$ \mee{is} established. \mee{$\hfill \Box$}
\end{remark}
}

\Rme{\begin{remark}\em[Exact setting and related work]
Corollary~1 addresses the exact setting, where each agent can access an exact solution of the lower-level VI and hence can evaluate the induced single-level objective directly.
In this case, the problem reduces to decentralized zeroth-order stochastic optimization for (possibly) nonsmooth and nonconvex objectives, which has been studied in, e.g., \cite{lin2024aaai,sahinoglu2024online}. Moreover, in the exact setting, our gradient-tracking scheme specializes to the DGFM-type method studied in \cite{lin2024aaai}.
Accordingly, Corollary~1 is included primarily for completeness and for aligning notation; it is not the main technical contribution of this work.
Our main contribution concerns the inexact setting, where lower-level problems are solved only approximately, and the resulting error in the zeroth-order gradient-tracking recursion can be biased, requiring new analysis beyond the exact literature.
\end{remark}}


\section{Distributed two-stage stochastic MPECs}\label{sec:2s}
In this section, we extend the zeroth-order scheme discussed earlier to address the two-stage SMPEC problem, defined in~\eqref{eqn:prob-2stage}. We assume that the agents can communicate their updated iterates with neighboring agents over an undirected connected network, as described in Assumption~\ref{assum:mixxx}. \us{Next}, we present some assumptions and the outline of the algorithm. 
\begin{assumption}\label{assum:main6}\em Consider ~\eqref{eqn:prob-2stage}. Let the following conditions hold. (i) For any agent $i \in [m]$, $\tilde h_i(x,\bullet,\xi_i)$ is $\tilde L_0(\xi_i)$-Lipschitz continuous for any $\xi_i$, where $\tilde L_0 \triangleq  {\displaystyle \max_{i\in [m]}}\sqrt{\mathbb{E}[\tilde L_0^2({\bxi}_i)]}$ is finite. (ii)  $\tilde h_i(\bullet,z_i(\bullet,\xi_i),\xi_i)$ is $ L_0(\xi_i)$-Lipschitz continuous for any $\xi_i$ and $ L_0 \triangleq {\displaystyle \max_{i\in [m]}}\sqrt{\mathbb{E}[ L_0^2(\mee{\bxi}_i)]}$ is finite.
\end{assumption}
\begin{remark}\em
\us{Akin} to the single-stage case,  the implicit objective is \us{not assumed to be} differentiable. The Lipschitz continuity of the implicit function in Assumption~\ref{assum:main6} has been studied in \cite{patriksson1999stochastic}. \mee{$\hfill \Box$}
\end{remark}
\begin{remark}\label{rem:smoothness_of_f_i-2stage}\em
In view of \Rme{Assumption~\ref{assum:main6}(ii)} and the definition of the local implicit functions $f_i$ introduced \us{earlier}, $f_i$ is $\mathbb{E}[L_0({\bxi}_i)]$-Lipschitz. Invoking Jensen's inequality,  $\mathbb{E}[L_0({\bxi}_i)] \leq \sqrt{\mathbb{E}[L_0^2({\bxi}_i)]} \leq {\displaystyle \max_{i\in [m]}}\sqrt{\mathbb{E}[ L_0^2({\bxi}_i)]} = L_0$. Thus $f_i$ is $L_0$-Lipschitz for all $i\in [m]$. Invoking \Rme{Lemma~\ref{lem:smoothing_props}(ii)}, $\nabla f^{\eta}_i$ is $\tfrac{L_0 \sqrt{n}}{\eta}$-Lipschitz for all $i\in [m]$ and the smoothed global implicit function $f^\eta$ is $\tfrac{L_0 \sqrt{n}}{\eta}$-smooth. $\hfill \Box$
\end{remark}
\begin{assumption}\label{assum:main2-2stage}\em Consider problem (\ref{eqn:prob-2stage}). Let $F_i(x,\bullet) \triangleq \tilde{F}_i(x,\bullet,\xi_i)$  be a $\mu_F$-strongly monotone and $L_F$-Lipschitz continuous mapping uniformly in $x$. For each $i \in [m]$ and any \us{$\xi_i$ and } $x \in \mathbb{R}^n$, assume that the set $\mathcal{Z}_i(x,\xi_i)\subseteq \mathbb{R}^p$ is nonempty, closed, and convex. Further, $\sup_{x\in\mathbb{R}^n}\sup_{z_1,z_2 \in \mathcal{Z}_i(x,\xi_i)}\|z_1-z_2\|^2 \leq D_i$ for some $D_i>0$, for all $i \in [m]$, and for all $\xi_i\in \mathbb{R}^d$.
\end{assumption}
{
\begin{assumption}\label{assumption:measuribility assumption}\em Consider problem (\ref{eqn:prob-2stage}). For each $i \in [m]$, the implicit stochastic local function \us{$\tilde{f}_i(x,\bxi_i)$, defined as} $\tilde{f}_i(x,\xi_i) \triangleq \tilde{h}_i(x,z_i(x,\xi_i),\xi_i)$, is measurable for all $x \in \mathbb{R}^n$. 
\end{assumption}
\begin{remark}\em The \us{sufficiency} conditions \us{for} measurability of the implicit stochastic function in two-stage SMPECs have been studied in the literature~\cite{xu2010necessary, evgrafov2004existence}. In particular, \fyy{\cite{xu2010necessary}} considers a centralized variant of \eqref{eqn:prob-2stage}, analyzing settings where the set $\mathcal{Z}_i$ is independent of both the first-stage decision variable $x$ and the random variable \us{$\bxi_i$}. {In view of~\cite[Thm.~2.16]{xu2010necessary},} the measurability of the implicit stochastic local function $\tilde{f}_i(x,\us{\bxi_i})$ is guaranteed under the following assumption:
{\em For each $i \in [m]$, there exists a constant $\tau > 0$ such that, for every constant $\phi$, the set $$\left\{z_i \in\mathbb{R}^p:r\in \tilde F_i(x,z_i,\xi_i)+\mathcal{N}_{\mathcal{Z}_i}(z_i), \tilde{f}_i(x,\xi_i)\le \phi,\| r\|\le \tau\right\}$$ is bounded, where $\mathcal{N}_{\mathcal{Z}_i}(z_i)$ denotes the normal cone \us{of} the set $\mathcal{Z}_i$ at point $z_i$.} $\hfill \Box$
\end{remark}

\noindent \textbf{History of the method.} In both exact and inexact settings, we define the history of Algorithm~\ref{alg:DZGT-2stage} as $\mathcal{F}_k \triangleq \cup_{i=1}^m \mathcal{F}_{i,k} $ for $k\geq 0$, where $\mathcal{F}_{i,k}\triangleq  \mathcal{F}_{i,k-1}\cup \{\xi_{i,k-1},v_{i,k-1}\} $ for any $k\geq 1$, and $\mathcal{F}_{i,0}\triangleq  \{x_{i,0}\}$.
\begin{remark}\em
From the above definitions, in both inexact and exact settings we have that for any $i \in [m]$, $x_{i,k}$ is $\mathcal{F}_k$—measurable, for all $k \geq 0$. \mee{$\hfill \Box$}
\end{remark}}
Leveraging \us{a} smoothing-enabled zeroth-order framework, we propose a distributed implicit zeroth-order gradient-tracking method~ (DiZS-GT$^{\text{2s}}$) for addressing ~\eqref{eqn:prob-2stage}. This method is outlined in Algorithm~\ref{alg:DZGT-2stage}, capturing both exact and inexact settings. At iteration $k$ of the algorithm, agent $i$ generates $x_{i,k}$ and $y_{i,k}$, which represent a local copy of the decision variable $x$ and a zeroth-order gradient tracker of the smoothed implicit objective function, respectively. In this setting, there is no stochasticity in the lower-level problem, \us{which is adapted to $\xi_{i,k}$}. Each agent calls Algorithm~\ref{alg:lowerlevel-2stage} twice to evaluate $z_i({x}_{i,{k}}{-\eta v_{i,{k}}},\xi_{i,k})$ and $z_i({x}_{i,{k}}+\eta v_{i,{k}},\xi_{i,k})$ \us{in} the exact setting, {while} the $\varepsilon_{k}$-inexact \us{analogs} of these terms \us{are denoted by} $z_{i,\varepsilon_{k}}({x}_{i,{k}}+\eta v_{i,{k}},\xi_{i,k})$ and $z_{i,\varepsilon_{k}}({x}_{i,{k}}-\eta v_{i,{k}},\xi_{i,k})$ in the inexact case. We assume that these approximations satisfy the condition $\|z_{i,\varepsilon_k}(x,\xi_{i,k}) -z_i(x,\xi_{i,k})\|^2 \leq \varepsilon_k$ for any random variable $x \in \mathbb{R}^n$ and $\xi_{i,k} \in \mathbb{R}^d$, almost surely, where $\varepsilon_k$ is a deterministic scalar independent of $x$ and $\xi_{i,k}$. In Lemma~\ref{lem:Alg2_conv_2stage}, we verify that this assumption is met and derive a bound for $\varepsilon_k$.  Utilizing the randomized smoothing scheme and \Rme{Lemma~\ref{lem:smoothing_props}(i)}, the gradients of the $\eta$-smoothed implicit local objective function $\tilde{f}_i^\eta(\bullet,\xi_{i,k})$, denoted by $g_{i,{k}}^{\eta}$ for the exact case and $g_{i,{k}}^{\eta,\varepsilon_{k}}$ for the inexact case, are approximated as follows. 
 \begin{align}
  \notag g_{i,{k}}^{\eta}&\triangleq \tfrac{n}{2\eta}\left(\tilde h_i({x}_{i,{k}}+\eta v_{i,{k}},z_{i}({x}_{i,{k}}+\eta v_{i,{k}},\xi_{i,k}),\xi_{i,{k}})\right. \\
    & \left.-\tilde h_i({x}_{i,{k}}{-\eta v_{i,{k}}},z_{i}({x}_{i,{k}}{-\eta v_{i,{k}}},\xi_{i,k}),\xi_{i,{k}})\right)\, v_{i,{k}},\label{eqn:g_eta_two stage} \\
  g_{i,{k}}^{\eta,\varepsilon_{k}}&\triangleq \tfrac{n}{2\eta}\left(\tilde h_i({x}_{i,{k}}+\eta v_{i,{k}},z_{i,\varepsilon_{k}}({x}_{i,{k}}+\eta v_{i,{k}},\xi_{i,k}),\xi_{i,{k}})\right. \notag \\
        & \left.-\tilde h_i({x}_{i,{k}}{-\eta v_{i,{k}}},z_{i,\varepsilon_{k}}({x}_{i,{k}}{-\eta v_{i,{k}}},\xi_{i,k}),\xi_{i,{k}})\right)\, v_{i,{k}}.\label{eqn:g_eta_eps_two stage}
\end{align}
\begin{algorithm}[htb]
\caption{\fy{DiZS-GT$^{\text{2s}}$} (by agent $i$)}\label{alg:DZGT-2stage}
\begin{algorithmic}[1]
\State {\bf input}  weights $w_{ij}$ for all $j\in[m]$, stepsize $\gamma$ and smoothing parameter $\eta$, local random initial vector $x_{i,0} \in \mathbb{R}^n$, $y_{i,0}:=0_{n}$, and $g_{i,-1}^{\eta,\varepsilon_{-1}}=g_{i,-1}^{\eta}:=0_n$. (\colorbox{blue!10}{Inexact} and \colorbox{yellow!22}{Exact} schemes highlighted)
%
 \FOR {$k = 0,1,2, \ldots$} 
\State Generate random samples $\xi_{i,k}$ and $v_{i,k} \in   \mathbb{S}$ 
 \State \colorbox{blue!10}{Call \Rme{Alg.~\ref{alg:lowerlevel-2stage}} twice to get $z_{i,\varepsilon_{k}}({x}_{i,{k}}{-\eta v_{i,{k}}},\xi_{i,k})$ and $z_{i,\varepsilon_{k}}({x}_{i,{k}}+\eta v_{i,{k}},\xi_{i,k})$} 
\hspace{-0.19 in} \colorbox{yellow!22}{Evaluate $z_i({x}_{i,{k}}{-\eta v_{i,{k}}},\xi_{i,k})$ and $z_i({x}_{i,{k}}+\eta v_{i,{k}},\xi_{i,k})$}
\State \colorbox{blue!10}{%
$\begin{aligned}
g_{i,{k}}^{\eta,\varepsilon_{k}} &:= \left(\tfrac{n}{2\eta}\right)(\tilde h_i({x}_{i,{k}}+\eta v_{i,{k}},z_{i,\varepsilon_{k}}({x}_{i,{k}}+\eta v_{i,{k}},\xi_{i,k}),\xi_{i,{k}}) \\
&\hspace*{10mm} - \tilde h_i({x}_{i,{k}}{-\eta v_{i,{k}}},z_{i,\varepsilon_{k}}({x}_{i,{k}}{-\eta v_{i,{k}}},\xi_{i,k}),\xi_{i,{k}}))\, v_{i,{k}}
\end{aligned}$}
\State \colorbox{yellow!22}{%
$\begin{aligned}
g_{i,{k}}^{\eta} &:= \left(\tfrac{n}{2\eta}\right)(\tilde h_i({x}_{i,{k}}+\eta v_{i,{k}},z_{i}({x}_{i,{k}}+\eta v_{i,{k}},\xi_{i,k}),\xi_{i,{k}}) \\
&\hspace*{10mm} - \tilde h_i({x}_{i,{k}}{-\eta v_{i,{k}}},z_{i}({x}_{i,{k}}{-\eta v_{i,{k}}},\xi_{i,k}),\xi_{i,{k}}))\, v_{i,{k}}
\end{aligned}$}
\State  \colorbox{blue!10}{$y_{i,k+1}:=\sum_{j=1}^mw_{ij}\left(y_{i,k}+g_{j,k}^{\eta,\varepsilon_{k}}-g_{j,k-1}^{\eta,\varepsilon_{k-1}}\right)$}  \colorbox{yellow!22}{$y_{i,k+1}:=\sum_{j=1}^mw_{ij}\left(y_{i,k}+g_{j,k}^{\eta}-g_{j,k-1}^{\eta}\right)$}
\State $
{x}_{i,k+1}:=\sum_{j=1}^mw_{ij}\left(x_{j,k}-\gamma y_{j,k+1} \right)$
\ENDFOR
\end{algorithmic}
\end{algorithm}
\begin{algorithm}[htb]
\caption{{Deterministic projection method (by agent $i$)}}
\label{alg:lowerlevel-2stage}
\begin{algorithmic}[1]
\State {\bf input} upper-level iteration index $k$, input vectors $\hat{x}_{i,k}^{\ell}$ and ${\xi_{i,k}}$, set $\ell:=1$ if  $\hat{x}_{i,k}^{\ell}=x_{i,k}-\eta v_{i,k}$  and $\ell:=2$ if  $\hat{x}_{i,k}^{\ell}=x_{i,k}+\eta v_{i,k}$, $\hat{\gamma}\le\tfrac{\mu_F}{L_F^2}$, and for some $a>0$, define $t_k:=\lceil \tfrac{-a}{\ln(1-\mu_F\hat{\gamma})}\ln(k+1)\rceil$ 
 \FOR {$t = 0,1, \ldots, t_k-1 $} 
\State  $z_{i,t+1}^{k,\ell}:= \Pi_{\mathcal{Z}_i(\hat x_{i,k}^{\ell}{\xi_{i,k}})}\left[z_{i,t}^{k,\ell}-\hat{\gamma}\tilde{F}_i(\hat{x}_{i,k}^{\ell},z_{i,t}^{k,\ell},{\xi_{i,k}})\right]$
\ENDFOR
\end{algorithmic}
\end{algorithm}
As in the single-stage case, a key research question in the inexact setting is to \us{prescribe a} termination criterion for Algorithm~\ref{alg:lowerlevel-2stage} that ensures convergence of the iterates generated by DiZS-GT$^{\text{2s}}$. The following result shows a linear convergence rate for the lower-level problem \us{via a} gradient method. \fyy{The proof is presented in the Appendix.}

\begin{lemma}[\mee{\textbf{Rate statement for Algorithm~\ref{alg:lowerlevel-2stage}}}]\label{lem:Alg2_conv_2stage}\em
Consider Algorithm~\ref{alg:lowerlevel-2stage}. Let Assumption~\ref{assum:main2-2stage} holds. Let $k\geq 0$, $i \in [m]$, $\ell \in \{1,2\}$, \us{and} $\hat{x}_{i,k}$, $\xi_{i,k}$, and $z_{i,0}^{k,\ell} \in \mathcal{Z}_i(\hat x^\ell_{i,k},{\xi_{i,k}})$ be given. \us{Further,} $\tilde d\triangleq1-\hat{\gamma}\mu_F$, $\hat{\gamma}\le\tfrac{\mu_F}{L_F^2}$, and  the algorithm is terminated after $t_k$ iterations as in Algorithm~\ref{alg:lowerlevel-2stage}. Then, 
 $\mathbb{E}[ \|z_{i,\varepsilon_k}(\hat{x}_{i,k}^{\ell},{\bxi_{i,k}}) - z_i(\hat{x}_{i,k}^{\ell},\bxi_{i,k}) \|^2\mid \mathcal{F}_k] \leq \varepsilon_k \triangleq \tilde d^{t_k}D_i,$ 
where $D_i  $ is given by Assumption~\ref{assum:main2-2stage}. 
\end{lemma}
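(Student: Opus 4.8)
The plan is to analyze Algorithm~\ref{alg:lowerlevel-2stage} as a fixed-step projected gradient iteration for the strongly monotone deterministic VI problem $\mbox{VI}(\mathcal{Z}_i(\hat x^\ell_{i,k},\xi_{i,k}), F_i(\hat x^\ell_{i,k},\bullet,\xi_{i,k}))$, whose unique solution is $z_i(\hat x^\ell_{i,k},\xi_{i,k})$. Since the second-stage problem here is deterministic (conditioned on $\mathcal{F}_k$, the sample $\xi_{i,k}$ is fixed), the iteration $z^{k,\ell}_{i,t+1} = \Pi_{\mathcal{Z}_i(\hat x^\ell_{i,k},\xi_{i,k})}[z^{k,\ell}_{i,t} - \hat\gamma \tilde F_i(\hat x^\ell_{i,k}, z^{k,\ell}_{i,t}, \xi_{i,k})]$ is a standard deterministic scheme, and the expectation in the statement is really just conditioning on $\mathcal{F}_k$ (which fixes $\hat x^\ell_{i,k}$, $\xi_{i,k}$, and the initial iterate). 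So first I would fix a realization and work with the deterministic recursion.

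The key step is the contraction estimate. Let $z^* \triangleq z_i(\hat x^\ell_{i,k},\xi_{i,k})$. Using nonexpansiveness of the Euclidean projection onto the convex set $\mathcal{Z}_i(\hat x^\ell_{i,k},\xi_{i,k})$ and the fact that $z^*$ is a fixed point of the same projected-gradient map (by the standard variational characterization, $z^* = \Pi_{\mathcal{Z}_i}[z^* - \hat\gamma F_i(\hat x^\ell_{i,k}, z^*, \xi_{i,k})]$), I would write
\begin{align*}
\|z^{k,\ell}_{i,t+1} - z^*\|^2 &\le \|z^{k,\ell}_{i,t} - z^* - \hat\gamma(F_i(\hat x^\ell_{i,k},z^{k,\ell}_{i,t},\xi_{i,k}) - F_i(\hat x^\ell_{i,k},z^*,\xi_{i,k}))\|^2\\
&= \|z^{k,\ell}_{i,t}-z^*\|^2 - 2\hat\gamma (z^{k,\ell}_{i,t}-z^*)^\top(F_i(z^{k,\ell}_{i,t}) - F_i(z^*)) + \hat\gamma^2\|F_i(z^{k,\ell}_{i,t})-F_i(z^*)\|^2,
\end{align*}
where I suppress the fixed arguments $\hat x^\ell_{i,k},\xi_{i,k}$ in $F_i$. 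Invoking $\mu_F$-strong monotonicity on the middle term and $L_F$-Lipschitz continuity on the last term of Assumption~\ref{assum:main2-2stage}, this gives $\|z^{k,\ell}_{i,t+1}-z^*\|^2 \le (1 - 2\hat\gamma\mu_F + \hat\gamma^2 L_F^2)\|z^{k,\ell}_{i,t}-z^*\|^2$. The constant-stepsize condition $\hat\gamma \le \mu_F/L_F^2$ implies $\hat\gamma^2 L_F^2 \le \hat\gamma\mu_F$, hence $1 - 2\hat\gamma\mu_F + \hat\gamma^2 L_F^2 \le 1 - \hat\gamma\mu_F = \tilde d \in [0,1)$. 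Iterating the contraction $t_k$ times yields $\|z^{k,\ell}_{i,t_k} - z^*\|^2 \le \tilde d^{t_k}\|z^{k,\ell}_{i,0}-z^*\|^2$, and since both $z^{k,\ell}_{i,0}$ and $z^*$ lie in $\mathcal{Z}_i(\hat x^\ell_{i,k},\xi_{i,k})$, Assumption~\ref{assum:main2-2stage} bounds $\|z^{k,\ell}_{i,0}-z^*\|^2 \le D_i$. Identifying $z_{i,\varepsilon_k}(\hat x^\ell_{i,k},\xi_{i,k}) \triangleq z^{k,\ell}_{i,t_k}$ gives the bound $\varepsilon_k = \tilde d^{t_k}D_i$; taking conditional expectation given $\mathcal{F}_k$ (under which the bound holds pointwise, as the whole iteration is $\mathcal{F}_k$-measurable) yields the stated inequality.

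I do not expect a serious obstacle here — this is the classical geometric-convergence argument for projected-gradient on strongly monotone VIs, and the deterministic nature of the lower level makes it clean. The only mild subtlety is bookkeeping the conditioning: I should be careful that $\hat x^\ell_{i,k} = x_{i,k}\pm\eta v_{i,k}$, $\xi_{i,k}$, and the random initializer $z^{k,\ell}_{i,0}$ are all measurable with respect to (or handled inside) the appropriate $\sigma$-field so that the per-realization bound $\tilde d^{t_k}D_i$ — being a deterministic constant independent of those random quantities — passes through the conditional expectation unchanged. Finally I would note that this gives the claimed linear rate: to reach $\varepsilon_k \le \epsilon$ one needs $t_k = \mathcal{O}(\ln(\epsilon)/\ln(1-\mu_F\hat\gamma))$ iterations, consistent with the $t_k := \lceil \tfrac{-a}{\ln(1-\mu_F\hat\gamma)}\ln(k+1)\rceil$ schedule prescribed in the algorithm, which then forces $\varepsilon_k = \mathcal{O}((k+1)^{-a})$.
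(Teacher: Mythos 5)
Your proposal is correct and follows essentially the same route as the paper's proof: nonexpansivity of the projection together with the fixed-point characterization of $z_i(\hat x^\ell_{i,k},\xi_{i,k})$, the contraction factor $1-2\hat\gamma\mu_F+\hat\gamma^2 L_F^2\le 1-\hat\gamma\mu_F$ under $\hat\gamma\le\mu_F/L_F^2$, unrolling over $t_k$ steps, and the diameter bound $D_i$ from Assumption~\ref{assum:main2-2stage}. The measurability bookkeeping you flag is handled the same way in the paper, so there is nothing to add.
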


\fy{Similar to the single-stage setting, we define $g_{i,-1}^{\eta,\varepsilon_{-1}}=g_{i,-1}^{\eta}=\nabla f_i^\eta(x_{i,-1})=\delta_{i,-1}^\eta={e}_{i,-1}^{\eta,\varepsilon_{-1}}=y_{i,0}=0_n$ for all $i \in [m]$. We define the error terms $\boldsymbol{\delta}_{k}^\eta\triangleq [\delta_{1,k}^\eta,\ldots,\delta_{m,k}^\eta]^\top$ and $\mathbf{e}_{k}^{\eta,\varepsilon_k}\triangleq[{e}_{1,k}^{\eta,\varepsilon_k},\ldots,{e}_{m,k}^{\eta,\varepsilon_k}]^\top$, where for $i \in [m]$ and $k\ge -1$, we define $\delta_{i,k}^\eta\triangleq g_{i,k}^\eta-\nabla f_i^\eta(x_{i,k})$ and ${e}_{i,k}^{\eta,\varepsilon_k}\triangleq{g}_{i,k}^{\eta,\varepsilon_k}-{g}_{i,k}^\eta.$  We define the averaged terms $\bar \delta_{k}^\eta=\tfrac{1}{m}\mathbf{1}^\top \boldsymbol{\delta}_{k}^\eta$ and $\bar{e}_{k}^{\eta,\varepsilon_k}=\tfrac{1}{m}\mathbf{1}^\top\mathbf{e}_{k}^{\eta,\varepsilon_k}.$ Then, for $k \geq 0$, {DiZS-GT$^{\text{2s}}$} can be compactly represented by \eqref{eqn:R1} and \eqref{eqn:R1}. In the following result, we provide a bound on the second moment of the inexact error $e_{i,k}^{\eta,\varepsilon_k}$ for each agent $i$. \fyy{The proof of this result is presented in the Appendix.}
\begin{lemma}\em\label{lemm:omega_vareps} 
Consider Algorithm~\ref{alg:lowerlevel-2stage} for an agent $i$. Let $ \|z_{i,\varepsilon_k}(x,\xi_i) -z_i(x,\xi_i)\|^2 \leq \varepsilon_k$ hold for any random variables $x \in \mathbb{R}^n$ and $\xi_i \in \mathbb{R}^d$ \us{a.s.} such that $\epsilon_k$ is a deterministic scalar (independent from $x$ and $\xi_i$). Then for all $i\in [m]$ and $k\geq 0$, $\mathbb{E}[\|e_{i,k}^{\eta,\varepsilon_k}\|^2 |\mathcal{F}_k]\le \left(\tfrac{\tilde L_0^2n^2\varepsilon_k}{\eta^2}\right)$ a.s. .  
\end{lemma}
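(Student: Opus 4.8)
## Proof Proposal for Lemma~\ref{lemm:omega_vareps}

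The plan is to follow the same route as the proof of Lemma~\ref{lemma:g_ik_eta_props}~(iv) from the single-stage analysis, but to take advantage of the fact that in the two-stage setting the inexactness bound $\|z_{i,\varepsilon_k}(x,\xi_i)-z_i(x,\xi_i)\|^2\le\varepsilon_k$ holds \emph{almost surely} rather than merely in conditional expectation; this removes the need for the nested tower-law manipulations required in the single-stage case. First I would start from the identity $e_{i,k}^{\eta,\varepsilon_k}=g_{i,k}^{\eta,\varepsilon_k}-g_{i,k}^{\eta}$ together with the explicit expressions in \eqref{eqn:g_eta_eps_two stage} and \eqref{eqn:g_eta_two stage}, subtract the two zeroth-order estimators term by term, and use the triangle inequality to split the result into the contribution evaluated at ${x}_{i,k}+\eta v_{i,k}$ and the one evaluated at ${x}_{i,k}-\eta v_{i,k}$. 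In each contribution only the second argument of $\tilde h_i$ differs --- the exact lower-level solution versus its $\varepsilon_k$-inexact surrogate --- so invoking the Lipschitz continuity of $\tilde h_i(x,\bullet,\xi_i)$ in Assumption~\ref{assum:main6}~(i) with modulus $\tilde L_0(\xi_{i,k})$ and using $\|v_{i,k}\|=1$ gives
\begin{align*}
\|e_{i,k}^{\eta,\varepsilon_k}\|\le \tfrac{n}{2\eta}\tilde L_0(\xi_{i,k})\Big(&\|z_{i,\varepsilon_k}({x}_{i,k}+\eta v_{i,k},\xi_{i,k})-z_i({x}_{i,k}+\eta v_{i,k},\xi_{i,k})\|\\
&+\|z_{i,\varepsilon_k}({x}_{i,k}-\eta v_{i,k},\xi_{i,k})-z_i({x}_{i,k}-\eta v_{i,k},\xi_{i,k})\|\Big).
\end{align*}

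Next I would square both sides, apply $(a+b)^2\le 2a^2+2b^2$, and invoke the almost-sure inexactness bound, which applies at both evaluation points and for every realization of $(v_{i,k},\xi_{i,k})$, to arrive at $\|e_{i,k}^{\eta,\varepsilon_k}\|^2\le \tfrac{n^2}{\eta^2}\tilde L_0(\xi_{i,k})^2\varepsilon_k$ almost surely. Taking the conditional expectation given $\mathcal{F}_k$ and recalling that $\varepsilon_k$ is deterministic, the only random factor left is $\tilde L_0(\xi_{i,k})^2$; since $\xi_{i,k}$ is drawn independently of the history $\mathcal{F}_k$, it follows that $\mathbb{E}[\tilde L_0(\xi_{i,k})^2\mid\mathcal{F}_k]=\mathbb{E}[\tilde L_0^2(\bxi_i)]\le\tilde L_0^2$ by the definition of $\tilde L_0$ in Assumption~\ref{assum:main6}~(i), which yields the asserted bound $\mathbb{E}[\|e_{i,k}^{\eta,\varepsilon_k}\|^2\mid\mathcal{F}_k]\le\tilde L_0^2n^2\varepsilon_k/\eta^2$.

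I do not anticipate a genuine obstacle here; the step that most needs care is the measurability bookkeeping --- namely, confirming that $z_i({x}_{i,k}\pm\eta v_{i,k},\xi_{i,k})$ and $z_{i,\varepsilon_k}({x}_{i,k}\pm\eta v_{i,k},\xi_{i,k})$ are bona fide random variables (this is where Assumption~\ref{assumption:measuribility assumption} and the solvability/compactness conditions of Assumption~\ref{assum:main2-2stage} enter) and that the pointwise inexactness bound may legitimately be applied after conditioning, given that ${x}_{i,k}$ is $\mathcal{F}_k$-measurable while $v_{i,k}$ and $\xi_{i,k}$ are the fresh samples of epoch $k$. Compared with the single-stage counterpart, the absence of lower-level stochasticity means there is no extra inner expectation over a lower-level sample path to dispose of, so this argument is strictly shorter than the proof of Lemma~\ref{lemma:g_ik_eta_props}~(iv).
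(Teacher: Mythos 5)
Your proposal is correct and follows essentially the same route as the paper's proof: triangle inequality on the difference of the two zeroth-order estimators, Lipschitz continuity of $\tilde h_i(x,\bullet,\xi_i)$ with modulus $\tilde L_0(\xi_{i,k})$, the inequality $(a+b)^2\le 2a^2+2b^2$ together with the almost-sure inexactness bound, and finally a conditional expectation that reduces to $\mathbb{E}[\tilde L_0^2(\bxi_{i,k})]\le\tilde L_0^2$ via the independence of $\xi_{i,k}$ from $\mathcal{F}_k$. Your remark that the almost-sure (rather than conditional-expectation) form of the inexactness bound makes this strictly shorter than the single-stage counterpart is exactly the point the paper relies on as well.
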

}
\fy{Similar to the single-stage case, we analyze \fyy{the} convergence  by considering \us{the following} three error metrics. (i) {$\mathbb{E}[\|{\nabla{f}^\eta}(\bar{x}_k)\|^2]$}, (ii) $\mathbb{E}[\|{ \mathbf{x}}_{k}-\mathbf{1}{\bar{ {x}}}_{k}\|^2]$, and (iii) $\mathbb{E}[\|{ \mathbf{y}}_{k}-\mathbf{1}{\bar{ {y}}}_{k}\|^2]$. Notably, the results of Lemmas~\ref{lemma:descent lemma inexact},~\ref{lemma: multiple inequalities for the metrics}, and~\ref{Lemma:main bound for the gradient tracker in terms of other main terms-inexact} from the single-stage analysis remain applicable. These results are utilized in the next theorem to establish convergence and provide rate statements for the two-stage setting.
\begin{theorem}[\mee{\textbf{Convergence guarantees for DiZS-GT$^{\text{2s}}$ - Inexact case}}]\em \label{Theorem:thm 2} Consider Algorithm~\ref{alg:DZGT-2stage}. Let Assumptions~\ref{assum:mixxx}, \ref{assump:opt_f_bounded_below}, \ref{assum:main6}, \ref{assum:main2-2stage} \mee{, and \ref{assumption:measuribility assumption}} hold.

 \noindent (a) [\mee{\textbf{Non-asymptotic error \Rme{bounds}}}] Suppose the stepsize is constant such that $\gamma \leq \min \Big\{
 \tfrac{ \sqrt{1-\lambda_{\mathbf{W}}^2}}{10\sqrt{3}\lambda_{\mathbf{W}}^2 }
 ,\tfrac{(1-\lambda_{\mathbf{W}}^2) }{20\lambda_{\mathbf{W}}^3 },\tfrac{ (1-\lambda_{\mathbf{W}}^2)^2}{20\lambda_{\mathbf{W}}^2  },\frac{1}{6},\frac{(1-\lambda_{\mathbf{W}}^2)}{9\lambda_{\mathbf{W}}}\Big\} \left(\tfrac{\eta}{{\sqrt{n}} L_0 }\right)$. Suppose $K^*$ is a discrete uniform random variable where $\mathbb{P}[K^*=\ell] = \tfrac{1}{K}$ for $\ell=0,\ldots,K-1$. Then, for any $K \geq 1$, the \Rme{bounds in~\eqref{eq:Non-asymptotic error bound in the single-stage case} and~\eqref{equation: the bound for the consensus error}} also hold in this setting.

 \noindent (b) [\mee{\textbf{Complexity bounds}}] Suppose at iteration $k$ in Algorithm~\ref{alg:DZGT-2stage}, in generating the inexact solutions, Algorithm~\ref{alg:lowerlevel-2stage} is terminated after $t_k:=\left\lceil \tfrac{-a}{\ln(1-\mu_F\hat{\gamma})}\ln\left(n^{1/2a}(k+1)\eta^{-2/3a}\right)\right\rceil$ iterations where $a>0.5$. Let $\epsilon>0$ be an arbitrary scalar such that $\mathbb{E}[\mbox{dist}^2(0,\partial_{\eta} f(\bar{{x}}_{K_\epsilon^*})) ] \leq \epsilon$. Then, the following results hold. 
 
 \noindent (b-i) [\mee{\textbf{Upper-level iteration/sample complexity}}] Suppose the stepsize in Algorithm~\ref{alg:DZGT-2stage} is given by $\gamma = \frac{\eta^{2/3}}{\sqrt{n^{3/2}\Rme{K_\epsilon}}L_0^{3/2}}$ such that   \Rme{$K_\epsilon \geq  \left\{1,\left(\max \Big\{
 \tfrac{10\sqrt{3}\lambda_{\mathbf{W}}^2 }{ \sqrt{1-\lambda_{\mathbf{W}}^2}}
 ,\tfrac{20\lambda_{\mathbf{W}}^3 }{(1-\lambda_{\mathbf{W}}^2) },\tfrac{20\lambda_{\mathbf{W}}^2  }{ (1-\lambda_{\mathbf{W}}^2)^2},6,\frac{9\lambda_{\mathbf{W}}}{(1-\lambda_{\mathbf{W}}^2)}\Big\}  \right)^2\frac{1}{\eta^{2/3}L_0n^{1/2}}\right\}$}. Then, we have 
 \begin{align}
K_\epsilon &=  
\mathcal{O}\left(\left(\tfrac{n^{3/2}L_0^3}{\eta^{4/3}} \right)\epsilon^{-2}+\left(\tfrac{L_0^3n^{3/2}}{\eta^{2/3}m^2}\right)\epsilon^{-2}+\left(\tfrac{L_0^2n\mathbb{E}\left[\tfrac{1}{m}\|\mathbf{x}_{0}- \mathbf{1} \bar{x}_{0}\|^2 \right]}{\eta^2(1-\lambda_{\mathbf{W}}^2)}\right)\epsilon^{-1}+ \left(\tfrac{\lambda_{\mathbf{W}}^2L_0^{1/2}n^{1/4}}{\eta^{1/3}(1-\lambda_{\mathbf{W}}^2)^{3/2}}\right)\epsilon^{-1/2}\right.\notag\\
&\left. + \left(\tfrac{\lambda_{\mathbf{W}}^2\tilde L_0n^{3/4}\sqrt{\varepsilon_0}}{L_0^{1/2}\eta^{4/3}(1-\lambda_{\mathbf{W}}^2)^{3/2}}\right)\epsilon^{-1/2}+\left(\tfrac{\lambda_{\mathbf{W}}^2L_0n^{1/2}( \lambda_{\mathbf{W}}^2+     \|\mathbf{W}\|^2)}{\eta^{2/3}(1-\lambda_{\mathbf{W}}^2)^3}\right)\epsilon^{-1}+\left({\tfrac{\lambda_{\mathbf{W}}^2(3+\lambda_{\mathbf{W}}^2)^{1/2}}{\eta^{2/3}\sqrt{m}(1-\lambda_{\mathbf{W}}^2)}}\right)\epsilon^{-1/2}\right)+\mee{J_\epsilon},\label{eqn:K_eps_2s}
\end{align}
where we have three cases for $J$ as follows:

\hspace{.2in} (1) If $a \in \left(\mee{\tfrac{1}{2}},1\right)$, then the following holds.
 \begin{align*}
  \mee{J_\epsilon}&=\mathcal{O}\left( \sqrt[1+a]{\left(\tfrac{ D_i\lambda_{\mathbf{W}}^2n}{L_0\eta^{2}(1-\lambda_{\mathbf{W}}^2)^3(1-a) }\right)\left( {\|\mathbf{W}\|^2\tilde L_0^2}{} +  \tfrac{\lambda_{\mathbf{W}}^4\tilde L_0^2}{ (1-\lambda_{\mathbf{W}}^2)}   \right)}\, \epsilon^{-1/(1+a)}+\left(\tfrac{ D_i\tilde L_0^2n^{3/2}}{\eta^{4/3}(1-a)}\right)^{1/a}\, \epsilon^{-1/a} \right.	\\
&+  \left. \sqrt[2+a]{\left(\tfrac{D_i\lambda_{\mathbf{W}}^6n^{1/2} \tilde L_0^2}{L_0^2\eta^{8/3}(1-\lambda_{\mathbf{W}}^2)^5(1-a) }\right)}\, \epsilon^{-1/(2+a)}        +\left(\tfrac{ D_i\tilde L_0^2n^{5/4}}{L_0^{1/2}\eta^{5/3}(1-a)}\right)^{1/\left(a+\mee{\tfrac{1}{2}}\right)}\, \epsilon^{-1/\left(a+\mee{\tfrac{1}{2}}\right)}\right).
\end{align*}

 \hspace{.2in} (2)  If $a =1$, then the following holds.
\begin{align*}
  \mee{J_\epsilon}&= \tilde{\mathcal{O}}\left(\sqrt{\left(\tfrac{ D_i\lambda_{\mathbf{W}}^2n}{L_0\eta^{2}(1-\lambda_{\mathbf{W}}^2)^3 }\right)\left( {\|\mathbf{W}\|^2\tilde L_0^2} +  \tfrac{\lambda_{\mathbf{W}}^4\tilde L_0^2}{ (1-\lambda_{\mathbf{W}}^2)}   \right)}\, \epsilon^{-1/2}+ \sqrt[3]{\left(\tfrac{ D_i\lambda_{\mathbf{W}}^6n^{1/2} \tilde L_0^2}{L_0^2\eta^{8/3}(1-\lambda_{\mathbf{W}}^2)^5 }\right)}\, \epsilon^{-1/3} \right.	\\
&+   \left.\left(\tfrac{D_i \tilde L_0^2n^{5/4}}{L_0^{1/2}\eta^{5/3}}\right)^{2/3}\, \epsilon^{-2/3}+\left(\tfrac{D_i \tilde L_0^2n^{3/2}}{\eta^{4/3}}\right)\, \epsilon^{-1}\right).
\end{align*}

 \hspace{.2in} (3)  If $a >1$, then the following holds.
  \begin{align*}
  \mee{J_\epsilon}&= \mathcal{O}\left(\sqrt{\left(\tfrac{aD_i \lambda_{\mathbf{W}}^2n}{L_0\eta^{2}(1-\lambda_{\mathbf{W}}^2)^3(a-1) }\right)\left( {\|\mathbf{W}\|^2\tilde L_0^2}{} +  \tfrac{\lambda_{\mathbf{W}}^4\tilde L_0^2}{ (1-\lambda_{\mathbf{W}}^2)}   \right)}\, \epsilon^{-1/2}+ \sqrt[3]{\left(\tfrac{ aD_i\lambda_{\mathbf{W}}^6n^{1/2}\tilde L_0^2}{L_0^2\eta^{8/3}(1-\lambda_{\mathbf{W}}^2)^5(a-1) }\right)}\, \epsilon^{-1/3}\right. 	\\
&+   \left.\left(\tfrac{ aD_i\tilde L_0^2n^{5/4}}{L_0^{1/2}\eta^{5/3}(a-1)}\right)^{2/3}\, \epsilon^{-2/3}+\left(\tfrac{ aD_i\tilde L_0^2n^{3/2}}{\eta^{4/3}(a-1)}\right)\, \epsilon^{-1}\right).
\end{align*}
\noindent (b-ii) [\mee{\textbf{Overall iteration complexity}}] To guarantee $\mathbb{E}[\mbox{dist}^2(0,\partial_{\eta} f(\bar{{x}}_{K^*})) ] \leq \epsilon$, the overall iteration complexity is  $\tfrac{- a}{\ln(1-\kappa_F^{-2})}{\tilde{\mathcal{O}}}\left(K_\epsilon\right),  $ where $K_\epsilon$ is given by \eqref{eqn:K_eps_2s}. .
\end{theorem}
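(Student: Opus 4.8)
\textbf{Proof proposal for Theorem~\ref{Theorem:thm 2}.}

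The plan is to mirror the single-stage analysis in Theorem~\ref{Theorem:thm 1}, exploiting the fact that the compact representation \eqref{eqn:R1}--\eqref{eqn:R2}, the descent inequality (Lemma~\ref{lemma:descent lemma inexact}), the consensus recursions (Lemma~\ref{lemma: multiple inequalities for the metrics}), the gradient-tracker recursion (Lemma~\ref{Lemma:main bound for the gradient tracker in terms of other main terms-inexact}), and the non-recursive bounds (Lemmas~\ref{lem:recursive_matrix_ineq} and~\ref{lemma:bound for x-bar x inexact}) are all stated in terms of the error quantities $\delta_{i,k}^\eta$, $e_{i,k}^{\eta,\varepsilon_k}$ and their moment bounds, and those moment bounds hold identically in the two-stage setting. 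Concretely: Lemma~\ref{lemma:g_ik_eta_props}~(i)--(iii) carry over verbatim since $g_{i,k}^\eta$ in \eqref{eqn:g_eta_two stage} is still an unbiased zeroth-order gradient of $f_i^\eta$ (by Lemma~\ref{lem:smoothing_props}~(i) and the measurability granted by Assumption~\ref{assumption:measuribility assumption}), with the same $16\sqrt{2\pi}L_0^2 n$ second-moment bound via the L\'evy concentration argument using the $L_0(\xi_i)$-Lipschitz constant from Assumption~\ref{assum:main6}~(ii); and Lemma~\ref{lemm:omega_vareps} supplies the $e_{i,k}^{\eta,\varepsilon_k}$ bound that plays exactly the role of Lemma~\ref{lemma:g_ik_eta_props}~(iv). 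Hence part~(a) of the theorem follows by invoking Lemma~\ref{lemma:descent lemma inexact}, summing over $k=0,\dots,K-1$, substituting Lemma~\ref{lemma:bound for x-bar x inexact}, using the stepsize condition to discard the nonpositive $\|\overline{\nabla f^\eta}(\mathbf{x}_k)\|^2$ coefficient, dividing by $K$, and applying Proposition~\ref{Prop:2eta}~(ii) — word-for-word the derivation of \eqref{eq:Non-asymptotic error bound in the single-stage case}.

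For part~(b-i), I would plug the stepsize $\gamma=\eta^{2/3}/(\sqrt{n^{3/2}K}\,L_0^{3/2})$ into the bound from part~(a) to obtain the analogue of \eqref{eq:main one after replacing}, then substitute the lower-level inexactness bound. Here is the one genuine point of departure from the single-stage proof: in place of Lemma~\ref{lem:Alg2_conv} (which gives $\varepsilon_k=\Theta(1/(t_k+\hat\Gamma))$ for stochastic SA), I invoke Lemma~\ref{lem:Alg2_conv_2stage}, which gives the \emph{geometric} decay $\varepsilon_k\le \tilde d^{\,t_k}D_i$ with $\tilde d=1-\hat\gamma\mu_F$. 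With the prescribed $t_k=\lceil \tfrac{-a}{\ln(1-\mu_F\hat\gamma)}\ln(n^{1/2a}(k+1)\eta^{-2/3a})\rceil$, one checks that $\tilde d^{\,t_k}\le \eta^{2/3}/(n^{1/2}(k+1)^a)$, so $\varepsilon_k=\mathcal{O}(\eta^{2/3} n^{-1/2}(k+1)^{-a})$ and the tail sums $\sum_{k=0}^{K-1}\varepsilon_k/K^{s}$ are controlled exactly by Lemma~\ref{lem:Harmonic series bounds} (cases $a\in(\tfrac12,1)$, $a=1$, $a>1$ giving the three sub-cases of $J_\epsilon$). The residual $D_i$-dependence (rather than $\kappa_F$) in the statement of $J_\epsilon$ simply reflects that $\varepsilon_k$ now scales with $D_i$ rather than $\kappa_F\nu_F^2$; otherwise the algebra bundling the $n$, $\eta$, $L_0$, $\tilde L_0$, $\lambda_{\mathbf{W}}$ factors is identical to Theorem~\ref{Theorem:thm 1}~(b-i).

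Part~(b-ii) is then immediate: at each of the $K_\epsilon$ upper-level iterations every agent runs Algorithm~\ref{alg:lowerlevel-2stage} twice for $t_k$ steps, so the overall iteration count is $\sum_{k=0}^{K_\epsilon} 2m\,t_k = \mathcal{O}\!\big(\tfrac{-a}{\ln(1-\mu_F\hat\gamma)} K_\epsilon \ln(n^{1/2a}K_\epsilon\eta^{-2/3a})\big)$; choosing the admissible $\hat\gamma=\mu_F/L_F^2$ makes $1-\mu_F\hat\gamma = 1-\kappa_F^{-2}$, and absorbing the logarithmic factor into $\tilde{\mathcal{O}}(\cdot)$ yields $\tfrac{-a}{\ln(1-\kappa_F^{-2})}\tilde{\mathcal{O}}(K_\epsilon)$, since no extra sampling is needed for the deterministic lower level (the sample complexity equals the number of upper-level iterations). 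The main obstacle — really the only nontrivial verification — is confirming that Lemmas~\ref{lemma:descent lemma inexact}--\ref{lemma:bound for x-bar x inexact} are genuinely ``setting-agnostic'', i.e. that nothing in their proofs used the stochastic structure of the single-stage lower level beyond the two moment bounds on $\delta_{i,k}^\eta$ and $e_{i,k}^{\eta,\varepsilon_k}$; once that is granted, together with Assumption~\ref{assumption:measuribility assumption} ensuring $\tilde f_i(\cdot,\xi_i)$ is measurable (so that the conditional-expectation manipulations and the unbiasedness in Lemma~\ref{lemma:g_ik_eta_props}~(i) are legitimate), the two-stage proof is a transcription of the single-stage one with Lemma~\ref{lem:Alg2_conv} swapped for Lemma~\ref{lem:Alg2_conv_2stage}.
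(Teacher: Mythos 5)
Your proposal is correct and follows essentially the same route as the paper: part (a) by observing that the compact recursions \eqref{eqn:R1}--\eqref{eqn:R2} and the moment bounds on $\delta_{i,k}^\eta$ and $e_{i,k}^{\eta,\varepsilon_k}$ (via Lemma~\ref{lemm:omega_vareps} in place of Lemma~\ref{lemma:g_ik_eta_props}~(iv)) are unchanged, so Lemmas~\ref{lemma:descent lemma inexact}--\ref{lemma:bound for x-bar x inexact} transfer verbatim; part (b-i) by swapping Lemma~\ref{lem:Alg2_conv} for the geometric bound of Lemma~\ref{lem:Alg2_conv_2stage}, verifying $\tilde d^{\,t_k}\le \eta^{2/3}/(n^{1/2}(k+1)^a)$, and applying Lemma~\ref{lem:Harmonic series bounds}; and part (b-ii) by summing $2m\,t_k$ and absorbing the logarithms. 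Your explicit choice $\hat\gamma=\mu_F/L_F^2$ to identify $1-\mu_F\hat\gamma$ with $1-\kappa_F^{-2}$ is in fact slightly more careful than the paper's passing remark, but the argument is otherwise a faithful transcription of the paper's proof.
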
}
\begin{proof}
 \noindent \fy{(a) \mee{According to Assumption~\ref{assumption:measuribility assumption}, the implicit stochastic local function, $\tilde{f}_i(x,\xi_i)$, is measurable for all $i \in [m]$ and every $x \in \mathbb{R}^n$.} Notably, although $z_{i,\varepsilon_k}(x,\xi_i)$ and $z_{i}(x,\xi_i)$ depend on both $x$ and $\xi_i$ in the two-stage setting, while $z_{i,\varepsilon_k}(x)$ and $z_{i}(x)$ are independent of $\xi_i$ in the single-stage case, the compact representations defined in \eqref{eqn:R1} and \eqref{eqn:R2} remain identical for both single-stage and two-stage settings. Further, while the definition of $\varepsilon_k$ differs between the two cases, given by $\mathbb{E}[\|z_{i,\varepsilon_k}(x) -z_i(x)\|^2 \mid x]\leq \varepsilon_k$ for the single-stage case and $ \|z_{i,\varepsilon_k}(x,\xi_i) -z_i(x,\xi_i)\|^2 \leq \varepsilon_k$ for the two-stage case, and also, the history of the method and the proofs of Lemma~\ref{lemma:g_ik_eta_props}~(iv) and Lemma~\ref{lemm:omega_vareps} differ, the results of these two lemmas are the same, i.e., in both lemmas we have $\mathbb{E}[\|e_{i,k}^{\eta,\varepsilon_k}\|^2 |\mathcal{F}_k]\le \left(\tfrac{\tilde L_0^2n^2\varepsilon_k}{\eta^2}\right)$. Consequently, the results of Lemmas~\ref{lemma:descent lemma inexact},~\ref{lemma: multiple inequalities for the metrics}, and~\ref{Lemma:main bound for the gradient tracker in terms of other main terms-inexact} from the single-stage analysis remain applicable in the two-stage setting. Therefore, we conclude that the error bound of part \noindent (a) in Theorem~\ref{Theorem:thm 1} holds true for the two-stage setting, in that $\mathcal{F}_k$ and $\varepsilon_k$ are distinctly defined for the two-stage setting.}

 \fy{
\noindent (b-i) First\mee{,} we show the result in (1). Consider the bound in \eqref{eq:Non-asymptotic error bound in the single-stage case}, which also holds for the two-stage setting, as discussed in (a). Noting that $\sum_{k=0}^{K-1}\varepsilon_{k+1}\le \sum_{k=0}^{K-1}\varepsilon_k$, for $\gamma = \frac{\eta^{2/3}}{\sqrt{n^{3/2}K}L_0^{3/2}}$, we obtain
 \begin{align}
\mathbb{E}[\mbox{dist}^2(0,\partial_{\eta} f(\bar{{x}}_{K^*})) ] & =  
\mathcal{O}\left(\left(\tfrac{{n^{3/4}}L_0^{3/2}}{\eta^{2/3}}\right)\tfrac{1}{\sqrt{K}}+\left(\tfrac{L_0^{3/2}n^{3/4}}{\eta^{1/3}m}\right)\tfrac{1}{\sqrt{K}}+\left(\tfrac{L_0^2n\mathbb{E}\left[\tfrac{1}{m}\|\mathbf{x}_{0}- \mathbf{1} \bar{x}_{0}\|^2 \right]}{\eta^2(1-\lambda_{\mathbf{W}}^2)}\right)\tfrac{1}{K} \right.\notag\\
&\left. + \left(\tfrac{\lambda_{\mathbf{W}}^4L_0n^{1/2}}{\eta^{2/3}(1-\lambda_{\mathbf{W}}^2)^3}\right)\tfrac{1}{K^2}+ \left(\tfrac{\lambda_{\mathbf{W}}^4\tilde L_0^2n^{3/2}\varepsilon_0}{L_0\eta^{8/3}(1-\lambda_{\mathbf{W}}^2)^3}\right)\tfrac{1}{K^2}+\left(\tfrac{\lambda_{\mathbf{W}}^2L_0n^{1/2}( \lambda_{\mathbf{W}}^2+     \|\mathbf{W}\|^2)}{\eta^{2/3}(1-\lambda_{\mathbf{W}}^2)^3}\right)\tfrac{1}{K} \right.\notag\\
&\left.+\left({\tfrac{\lambda_{\mathbf{W}}^4(3+\lambda_{\mathbf{W}}^2)}{\eta^{4/3}m(1-\lambda_{\mathbf{W}}^2)^2}}\right) \tfrac{1}{K^2}+ \left(\tfrac{ \lambda_{\mathbf{W}}^2n^{3/2}}{L_0\eta^{8/3}(1-\lambda_{\mathbf{W}}^2)^3 }\right)\left( {\|\mathbf{W}\|^2\tilde L_0^2} +  \tfrac{\lambda_{\mathbf{W}}^4\tilde L_0^2}{(1-\lambda_{\mathbf{W}}^2)} \right) \tfrac{\sum_{k=0}^{K-1}\varepsilon_k}{K^{2}}\right.\notag\\
&+  \left. \left(\tfrac{ \tilde L_0^2  n \lambda_{\mathbf{W}}^6}{L_0^2\eta^{10/3}(1-\lambda_{\mathbf{W}}^2) ^5 }\right)
 \tfrac{\sum_{k=0}^{K-1}\varepsilon_k}{K^{3}}+ \left(\tfrac{ \tilde L_0^2n^{2}}{\eta^2}\right) \tfrac{\sum_{k=0}^{K-1}\varepsilon_k}{K}+\left(\tfrac{ \tilde L_0^2n^{7/4}}{L_0^{1/2}\eta^{7/3}}\right) \tfrac{\sum_{k=0}^{K-1}\varepsilon_k}{K^{3/2}} \right).\label{eq:main one after replacing-2s}
\end{align}
Invoking Lemma~\ref{lem:Alg2_conv_2stage} with $t_k$ replaced by $\left\lceil \tfrac{-a}{\ln(1-\mu_F\hat{\gamma})}\ln\left(n^{1/2a}(k+1)\eta^{-2/3a}\right)\right\rceil$,  $\tilde d = 1-\mu_F\hat{\gamma}$, and that $(k+1)^a=e^{a\ln(k+1)}$, we obtain 
\begin{align*}
\left(\tilde d\right)^{t_k}\left(n^{1/2a}(k+1)\eta^{-2/3a}\right)^a&\le \left(\tilde d\right)^{\tfrac{-a}{\ln(1-\mu_F\hat{\gamma})}\ln\left(n^{1/2a}(k+1)\eta^{-2/3a}\right)}e^{a\ln\left(n^{1/2a}(k+1)\eta^{-2/3a}\right)}\\
&=\left(\tilde d^{\tfrac{-a}{\ln(\tilde d)}}e^{a}\right)^{\ln\left(n^{1/2a}(k+1)\eta^{-2/3a}\right)} =\left(e^{{\tfrac{-a}{\ln(\tilde d)}}\ln(\tilde d)}e^{a}\right)^{\ln\left(n^{1/2a}(k+1)\eta^{-2/3a}\right)}\\
&=1.
\end{align*}Therefore, invoking Lemma~\ref{lem:Alg2_conv_2stage}, we obtain $
\varepsilon_k\le \tfrac{\eta^{2/3}D_i}{n^{1/2}(k+1)^a}.$ By applying the preceding bound in~\eqref{eq:main one after replacing-2s}, we obtain
 \begin{align}
\mathbb{E}[\mbox{dist}^2(0,\partial_{\eta} f(\bar{{x}}_{K^*})) ] & =  
\mathcal{O}\left(\left(\tfrac{{n^{3/4}}L_0^{3/2}}{\eta^{2/3}}\right)\tfrac{1}{\sqrt{K}}+\left(\tfrac{L_0^{3/2}n^{3/4}}{\eta^{1/3}m}\right)\tfrac{1}{\sqrt{K}}+\left(\tfrac{L_0^2n\mathbb{E}\left[\tfrac{1}{m}\|\mathbf{x}_{0}- \mathbf{1} \bar{x}_{0}\|^2 \right]}{\eta^2(1-\lambda_{\mathbf{W}}^2)}\right)\tfrac{1}{K} \right.\notag\\
&\left. + \left(\tfrac{\lambda_{\mathbf{W}}^4L_0n^{1/2}}{\eta^{2/3}(1-\lambda_{\mathbf{W}}^2)^3}\right)\tfrac{1}{K^2}+ \left(\tfrac{\lambda_{\mathbf{W}}^4\tilde L_0^2n^{3/2}\varepsilon_0}{L_0\eta^{8/3}(1-\lambda_{\mathbf{W}}^2)^3}\right)\tfrac{1}{K^2}+\left(\tfrac{\lambda_{\mathbf{W}}^2L_0n^{1/2}( \lambda_{\mathbf{W}}^2+     \|\mathbf{W}\|^2)}{\eta^{2/3}(1-\lambda_{\mathbf{W}}^2)^3}\right)\tfrac{1}{K} \right.\notag\\
&\left.+\left({\tfrac{\lambda_{\mathbf{W}}^4(3+\lambda_{\mathbf{W}}^2)}{\eta^{4/3}m(1-\lambda_{\mathbf{W}}^2)^2}}\right) \tfrac{1}{K^2}+ \left(\tfrac{ \lambda_{\mathbf{W}}^2n}{L_0\eta^{2}(1-\lambda_{\mathbf{W}}^2)^3 }\right)\left( {\|\mathbf{W}\|^2\tilde L_0^2} +  \tfrac{\lambda_{\mathbf{W}}^4\tilde L_0^2}{(1-\lambda_{\mathbf{W}}^2)} \right) \tfrac{\sum_{k=0}^{K-1}\tfrac{D_i}{(k+\Gamma)^a}}{K^{2}}\right.\notag\\
& \left.  +\left(\tfrac{ \tilde L_0^2n^{3/2}}{\eta^{4/3}}\right) \tfrac{\sum_{k=0}^{K-1}\tfrac{D_i}{(k+\Gamma)^a}}{K}+\left(\tfrac{ \tilde L_0^2  n^{1/2} \lambda_{\mathbf{W}}^6}{L_0^2\eta^{8/3}(1-\lambda_{\mathbf{W}}^2) ^5 }\right)
 \tfrac{\sum_{k=0}^{K-1}\tfrac{D_i}{(k+\Gamma)^a}}{K^{3}}\right.\notag\\
&\left.+\left(\tfrac{ \tilde L_0^2n^{5/4}}{L_0^{1/2}\eta^{5/3}}\right) \tfrac{\sum_{k=0}^{K-1}\tfrac{D_i}{(k+\Gamma)^a}}{K^{3/2}} \right).\label{eq:proof of complexity bound-2s}
\end{align}
By invoking \Rme{Lemma~\ref{lem:Harmonic series bounds}\noindent(i)} for the case where $a\in\left (\mee{\tfrac{1}{2}},1\right)$, we obtain the result in cases (1). To show (2), letting $a=1$ in~\eqref{eq:proof of complexity bound-2s} and invoking \Rme{Lemma~\ref{lem:Harmonic series bounds}\noindent(ii)} for the case where $a=1$, we obtain the result. A similar approach can be applied to show the result in (3).}

  \noindent (b-ii) \fy{The overall iteration complexity per agent is equal to $2\sum_{k=0}^{K_\epsilon} t_k$ that is  $$\tfrac{- a}{\ln(1-\mu_F\hat{\gamma})}\mathcal{O}\left(\sum_{k=0}^{K_\epsilon} \ln\left(n^{1/2a}(k+1)\eta^{-2/3a}\right)\right) = \tfrac{- a}{\ln(1-\mu_F\hat{\gamma})}\mathcal{O}\left(\tfrac{1}{2a}\ln(n)K_\epsilon + K_\epsilon\ln(K_\epsilon) +\tfrac{2}{3a}\ln(1/\eta)K_\epsilon\right),$$ 
where we used  $
\int_1^{K_\epsilon+1} \ln(x) \, dx = (K_\epsilon+1) \ln(K_\epsilon+1) - (K_\epsilon+1) + 1.
$ Noting that $\hat{\gamma} \leq \frac{\mu_F}{L_F^2}$, $a > \mee{\tfrac{1}{2}}$. and that $\ln(K_\epsilon) \geq \ln(n)$ and $\ln(K_\epsilon) \geq -\ln(\eta)$ (in view of \eqref{eqn:K_eps_2s}), we obtain  
$2\sum_{k=0}^{K_\epsilon} t_k =\tfrac{- a}{\ln(1-\kappa_F^{-2})}{\tilde{\mathcal{O}}}\left(K_\epsilon\right),  $ where $K_\epsilon$ is given by \eqref{eqn:K_eps_2s}. 
  } 
\end{proof}
 \fy{
\begin{remark}\em In a similar approach discussed in Remark~\ref{rem:1s-a}, we may consider three cases for the parameter $a$ in Theorem~\ref{Theorem:thm 2}. Choosing $a=1$ yields the best choice for minimizing the overall iteration complexity.  From part \noindent (b-ii) of Theorem~\ref{Theorem:thm 2} we know that the overall iteration is $\tilde{\mathcal{O}}\left(K_\epsilon\right)$. Theorem~\ref{Theorem:thm 2} provides three different cases for $a$ with the only distinction being the term $\mee{J_\epsilon}$. Let us denote the corresponding term of $K_\epsilon$ as $K_{\epsilon,\mee{J_\epsilon}}$. The complexity $\tilde{\mathcal{O}}\left(K_{\epsilon,\mee{J_\epsilon}}\right)$ now varies across the different cases for $a$.  If $a \in \left(\mee{\tfrac{1}{2}},1\right)$, then $\tilde{\mathcal{O}}\left(K_{\epsilon,\mee{J_\epsilon}}\right)= \tilde{ \mathcal{O}}\left(\epsilon^{-1/(1+a)}+\epsilon^{-1/{(2+a)}}+\epsilon^{-1/{\left(\mee{\tfrac{1}{2}}+a\right)}}\right)$. If $a =1$, then $\tilde{\mathcal{O}}\left(K_{\epsilon,\mee{J_\epsilon}}\right)=\tilde{ \mathcal{O}}\left(\epsilon^{-1/2}+\epsilon^{{-1}/{3}}+\epsilon^{{-2}/{3}}\right)$. If $a >1$, then $\tilde{\mathcal{O}}\left(K_{\epsilon,\mee{J_\epsilon}}\right)= \tilde{\mathcal{O}}\left(\epsilon^{-1/2}+\epsilon^{{-1}/{3}}+\epsilon^{{-2}/{3}}\right)$. Among these three cases, the minimum value of $\tilde{\mathcal{O}}\left(K_\epsilon\right)$ occurs when $a=1$. \mee{$\hfill \Box$}
\end{remark}

\begin{remark}\em As in the single-stage case, when all agents initialize at the same point, the term $\mathbb{E}\left[\tfrac{1}{m}\|\mathbf{x}_{0}- \mathbf{1} \bar{x}_{0}\|^2 \right]$ in Theorem~\ref{Theorem:thm 2} evaluates to zero. The \mee{summarized} complexity results presented in Table~\ref{table:tbl2-smpec_complexity_2s} are derived under this assumption. \mee{$\hfill \Box$}
\end{remark}

\begin{table}[]
\mee{
\centering\footnotesize{
\caption{Complexity guarantees for solving distributed two-stage SMPECs~\eqref{eqn:prob-2stage}}
\label{table:tbl2-smpec_complexity_2s}
\begin{tabular}{@{}lc@{}}
\toprule
\multicolumn{1}{c}{Two-stage SMPEC}                                     &Iteration complexity bound          \\ \midrule
{\bf{\underline{Inexact setting}}}                                         & \multicolumn{1}{l}{} \\
\begin{tabular}[c]{@{}l@{}}Upper level\\ problem\end{tabular} &$\mathcal{O}\left(\tfrac{n^{1/6}\tilde L_0^{2/3}\epsilon^{-1/3}}{ L_0^{2/3}\eta^{8/9}}
+\tfrac{\tilde L_0^2n^{3/2}\epsilon^{-1}}{\eta^{4/3}}+\tfrac{L_0^3n^{3/2}\epsilon^{-2}}{\eta^{4/3}m^2}+\tfrac{  n^{3/4}\tilde L_0\epsilon^{\mee{-1/2}} }{\eta^{4/3}L_0^{1/2}  }+\tfrac{  L_0^{1/2}n^{1/4}\epsilon^{\mee{-1/2}} }{\eta^{1/3}}+\tfrac{  n^{5/6}\tilde L_0^{4/3}\epsilon^{-2/3} }{L_0^{1/3}\eta^{10/9}  }\right)$                    \\\cmidrule(l){2-2}
\begin{tabular}[c]{@{}l@{}}Lower level\\ problem\end{tabular} & $\mathcal{O}\left(\ln({\epsilon})/\ln(1- \kappa_F^{-2})\right)$                   \\\cmidrule(l){2-2}
\begin{tabular}[c]{@{}l@{}}Overall\\ complexity\end{tabular}     & ${\tfrac{1}{\ln(1- \kappa_F^{-2})}\,\tilde{\mathcal{O}}(K_\epsilon)},$ {such that $K_\epsilon$ is the upper level iteration complexity  }                  \\ \midrule
\begin{tabular}[c]{@{}l@{}}{\bf{{Exact setting  }}} \\ \end{tabular}         &\Rme{$ \mathcal{O}\left(\tfrac{L_0n^{1/2}\epsilon^{-1}}{\eta}+\tfrac{L_0^3n^{3/2}\epsilon^{-2}}{\eta m^2}+\tfrac{L_0^3n^{3/2}\epsilon^{-2}}{\eta}+\tfrac{  L_0^{1/2}n^{1/4}\epsilon^{\mee{\Rme{-1/2}}} }{\eta^{1/2} }\right)$ }                \\ \bottomrule
\end{tabular}}}
\end{table}

\begin{remark}\em Similar to the single-stage settings, the \Rme{exponents} of $\eta$, $n$, and $L_0$ in the definition of $\gamma$ in Theorem~\ref{Theorem:thm 2} are carefully selected to minimize the dependence of the complexity guarantees on these terms. \mee{$\hfill \Box$}
\end{remark}
\begin{remark}\em The choice of the number of iterations for the lower-level algorithm plays a crucial role in the overall performance. To ensure that the error bound in part (\noindent {a}) of Theorems~\ref{Theorem:thm 1} and~\ref{Theorem:thm 2} converges to zero, it is sufficient for the term $\tfrac{ \sum_{k=0}^{K-1}\varepsilon_k}{K}$ to approach zero. This implies that the lower-level algorithm must run for a sufficient number of iterations to achieve convergence. However, running the lower-level algorithm for too many iterations increases the overall iteration complexity. To minimize the total number of iterations required to solve the problem, we prescribe \fy{an efficient} termination rule for the lower-level algorithm. \Rme{Alternative choices for the number of lower-level iterations can be expressed in terms of the total number of upper-level iterations, i.e., $K$. For instance, one may set
$t_k=\left\lceil \frac{n^{1/2}(K+\Gamma)^a}{\eta^{2/3}}\right\rceil$ for the single-stage case and
$t_k=\left\lceil \tfrac{-a}{\ln(1-\mu_F\hat{\gamma})}\ln\left(n^{1/2a}(K+1)\eta^{-2/3a}\right)\right\rceil$ for the two-stage case.
However, since $K$ is typically determined  \Rme{in accordance with the required error tolerance $\epsilon$ by leveraging the rate guarantee at the upper level}, such choices are less practical and \Rme{may not necessarily} improve the asymptotic complexity guarantees. They may also lead to a larger total number of iterations when constant factors are taken into account.} Furthermore, although the upper bounds in part (\noindent {a}) of Theorems~\ref{Theorem:thm 1} and~\ref{Theorem:thm 2} are identical, \Rme{obtaining an inexact lower-level solution} requires significantly fewer iterations in the two-stage setting \Rme{than in} the single-stage setting, \Rme{since the lower-level problem is deterministic {and strongly monotone} in the two-stage case (\Rme{which in turn allows for leveraging geometrically convergent schemes}).} Consequently, the overall iteration complexity is \Rme{lower} in the two-stage setting. \mee{$\hfill \Box$}
\end{remark}}

\section{Numerical experiments}\label{sec:num}
In this section, we present preliminary experiments to empirically validate the convergence \us{claims for} the proposed schemes for distributed single-stage and two-stage SMPECs {and provide comparisons with \us{centralized counterparts} for} centralized SMPECs~\cite{cui2023complexity} in Sections~\ref{subsection:single-stage numerics} and~\ref{subsection:two-stage numerics}, respectively.  
\subsection{{Distributed single-stage SMPECs}}\label{subsection:single-stage numerics}
{In this subsection}, we compare the performance of the {DiZS-GT$^{\text{1s}}$ method presented in Algorithm~\ref{alg:DZGT} with ZSOL$_{\text{ncvx}}^{\text{1s}}$ (Algorithm 3 in \cite{cui2023complexity}), which can be viewed as a centralized counterpart of our scheme. We consider a bilevel optimization problem of the form
\begin{align*}
\min_x \ \tfrac{1}{m}\textstyle \sum_{i=1}^m\mathbb{E}[-x_1^2-3x_2-{\bxi_iy_{i,1}(x)}+\left({y_{i,2}}(x)\right)^2],
\end{align*}
where $y_i(x)\triangleq(y_{i,1},y_{i,2})$ is the unique solution to the following \us{parametrized} optimization problem. 
\begin{align*}
\min_{y \, \us{ \, \ge \, 0}}   \ \ &\mathbb{E}[\, 2x_1^2+{y_{i,1}}^2+{y^2_{i,2}}-{\boldsymbol\zeta}_i y_{i,2}\, ]\\
   \hbox{s.t.} \ \    &x_1^2-2x_1+x_2^2-2{y_{i,1}}+{y_{i,2}}\ge -3, \notag \\
  &x_2+3{y_{i,1}}-{y_{i,2}}\ge 4.
\end{align*}
Notably, the constraints of the lower-level problem depend on the upper-level decision $x$. 

\noindent \textbf{Problem and algorithm parameters.} We assume that ${\bxi}_i$ and ${\boldsymbol\zeta}_i$ are \us{uniform random variables} and iid for all the agents, given as ${\bxi}_i\sim\mathcal{U}(8,5)$ and ${\boldsymbol\zeta}_i \sim\mathcal{U}(3,6)$. We run both upper-level algorithms for $10^2$  iterations, and choose $\gamma=10^{-4}$ and $\eta=10^{-1}$. In addition, the lower-level algorithm (Algorithm~\ref{alg:lowerlevel-1stage} in our work and Algorithm 4 in~\cite{cui2023complexity}) at each iteration of the upper-level algorithm is terminated after $t_k=\left\lceil \sqrt{n}(k+1)/\eta^{2/3}\right\rceil$ iterations, where $n$ in this example is $2$ and $k$ denotes the iteration index of the upper-level algorithm (Algorithm~\ref{alg:DZGT} in our work and Algorithm 3 in~\cite{cui2023complexity}). Furthermore, we consider five different network settings including the ring graph, a sparse graph, a sparse tree graph, the Erd\H{o}s--R\'enyi graph, and the complete graph. 

\noindent \textbf{Evaluation of the global implicit objective function.} For each method and setting, we run the scheme five times and report the sample mean of the global implicit objective function. In each of the sample paths, we use a mini-batch of size $5$ for computing the stochastic gradient in both the upper- and lower-level algorithms. We consider five epochs for plotting the global implicit objective function. To evaluate the objective function at each epoch, we use an approximation of $y_1(x)$ and $y_2(x)$ by implementing the projected stochastic gradient method, i.e., Algorithm~\ref{alg:lowerlevel-1stage} for $150$ iterations for each of the $20$ agents.

\noindent \textbf{Insights.} The implementation results are presented in Figure~\ref{fig:comparison-single stage}. We observe the following. (i) {DiZS-GT$^{\text{1s}}$ appears to display robustness to the connectivity of the network among the agents. By increasing the connectivity of the network among the agents, our method performs better. This {becomes particularly} clear from the consensus error plots. (ii) The performance of our method is similar to that of ZSOL$_{\text{ncvx}}^{\text{1s}}$, especially for the complete graph.

\begin{figure}[!htbp]
\centering{
\setlength{\tabcolsep}{0pt}
\centering
 \begin{tabular}{c || c  c  c }
  {\footnotesize {Setting}\ \ }& {\footnotesize  Communication network} & {\footnotesize $\mathbb{E}\left[f(\bar{x}_{k})\right]$} & {\footnotesize $\mathbb{E}\left[{\|\mathbf{x}_{k}- \mathbf{1} \bar{x}_{k}\|^2 }\right]$ } \\
 \hline\\
\rotatebox[origin=c]{90}{{\footnotesize {complete graph}}}
&
\begin{minipage}{.30\textwidth}
\includegraphics[scale=.329, angle=0]{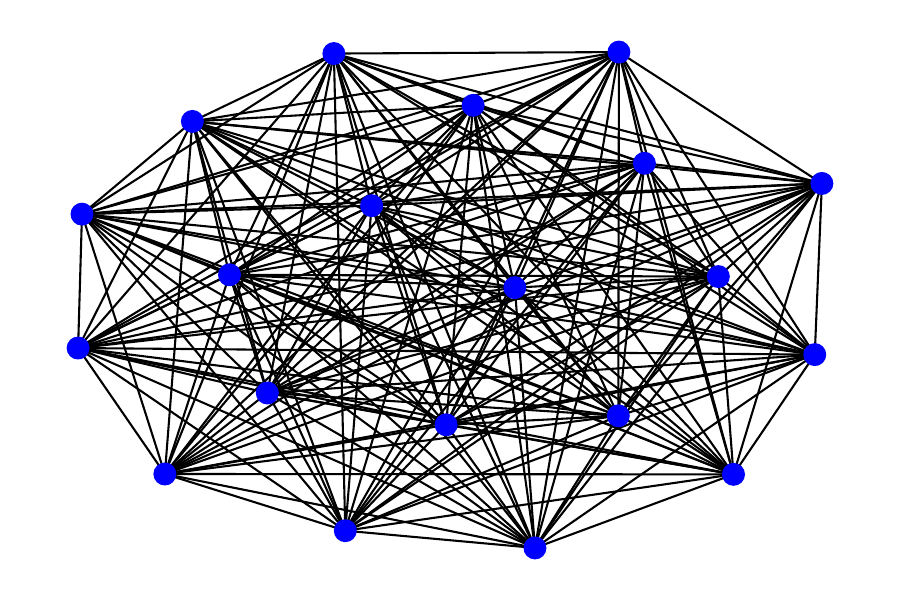}
\end{minipage}
&
\begin{minipage}{.30\textwidth}
\includegraphics[scale=.199, angle=0]{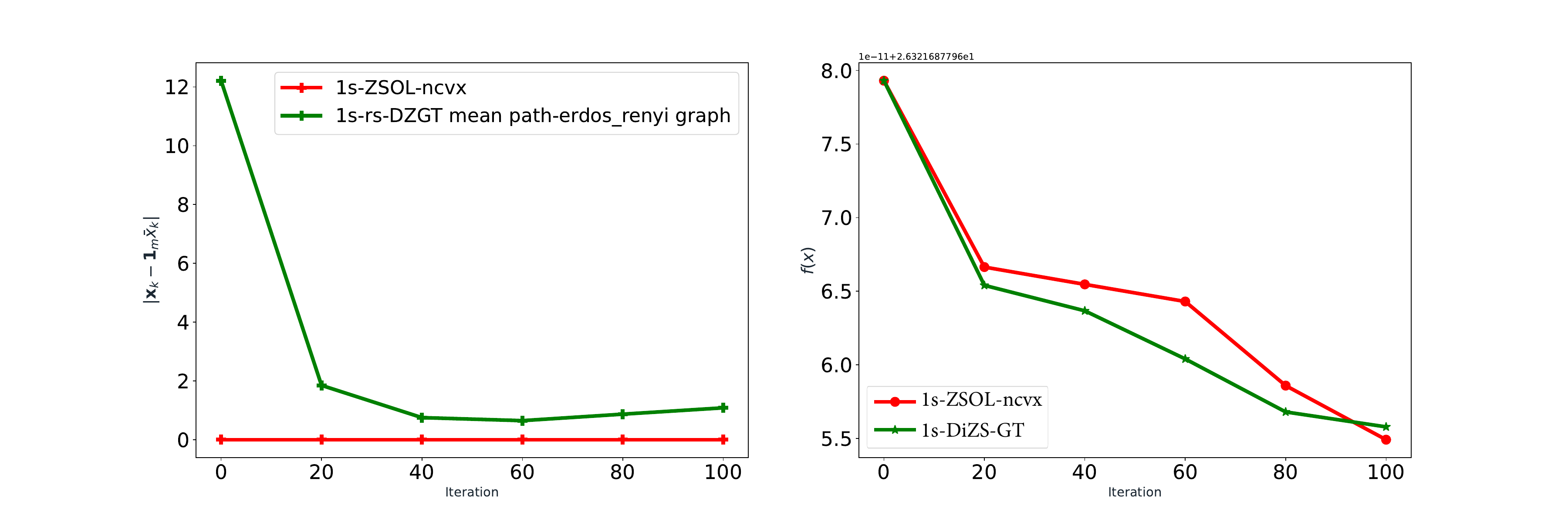}
\end{minipage}
	&
\begin{minipage}{.30\textwidth}
\includegraphics[scale=.199, angle=0]{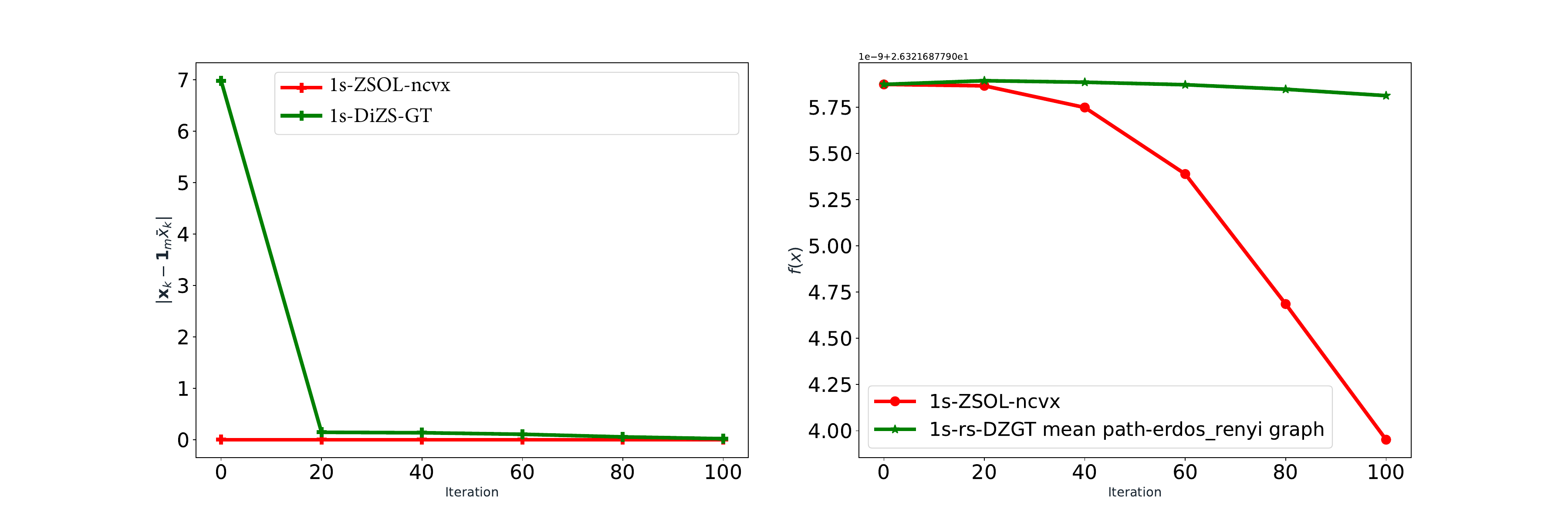}
\end{minipage}
\\ 
\hline\\
\rotatebox[origin=c]{90}{{\footnotesize {ring graph}}}
&
\begin{minipage}{.30\textwidth}
\includegraphics[scale=.329, angle=0]{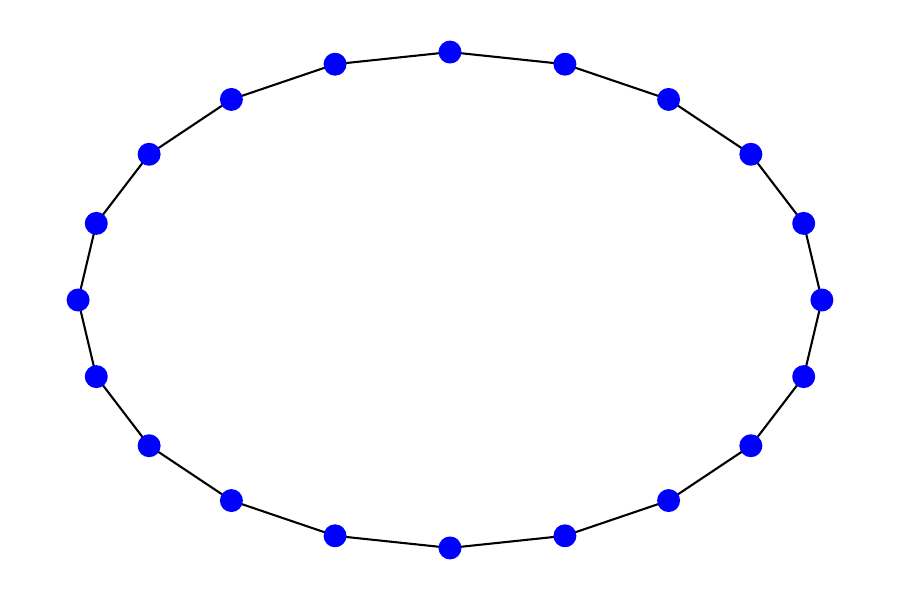}
\end{minipage}
&
\begin{minipage}{.30\textwidth}
\includegraphics[scale=.199, angle=0]{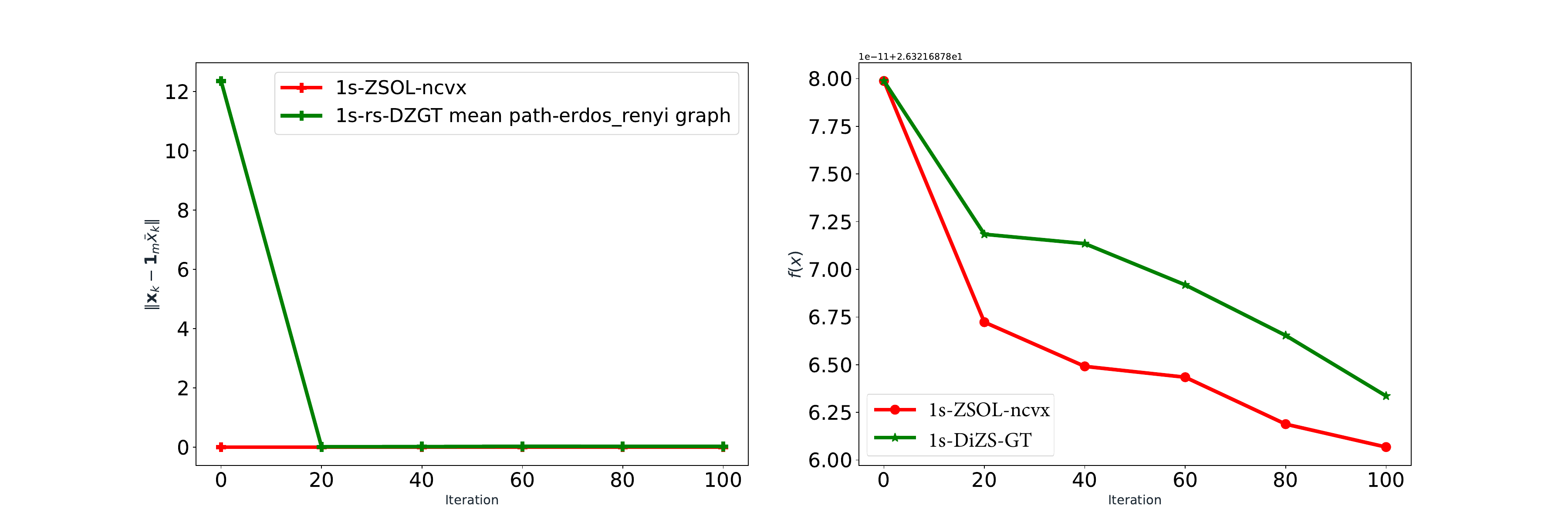}
\end{minipage}
	&
\begin{minipage}{.30\textwidth}
\includegraphics[scale=.199, angle=0]{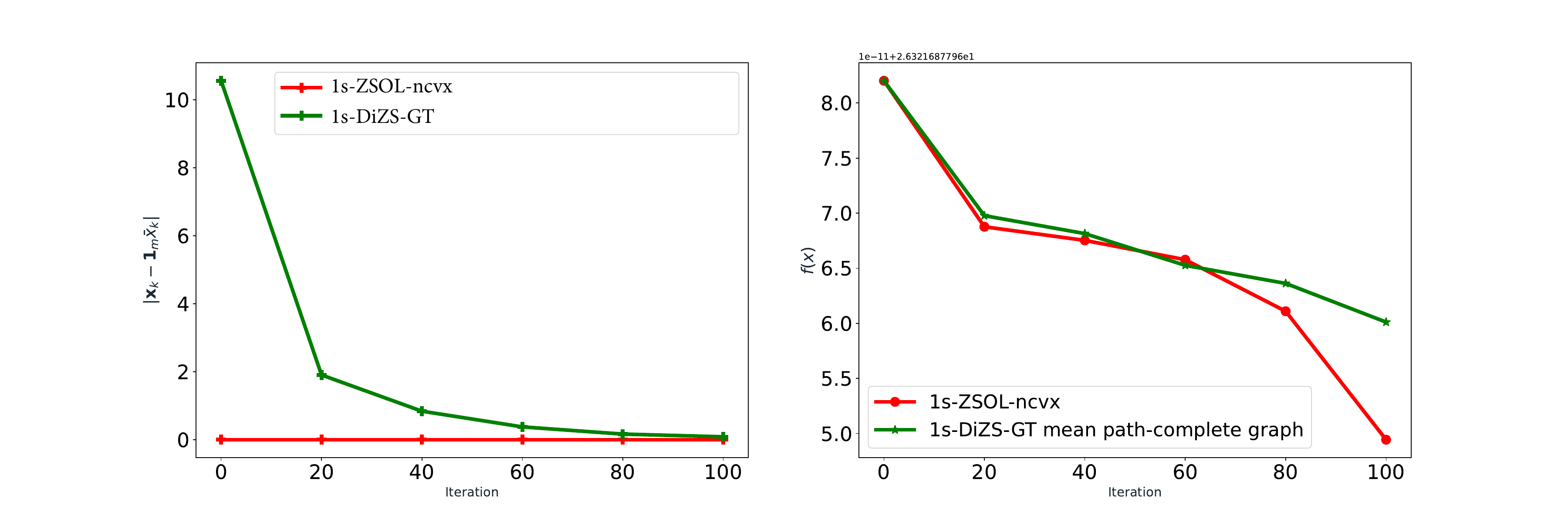}
\end{minipage}
\\ 
\hline\\
\rotatebox[origin=c]{90}{{\footnotesize {sparse graph}}}
&
\begin{minipage}{.30\textwidth}
\includegraphics[scale=.29, angle=0]{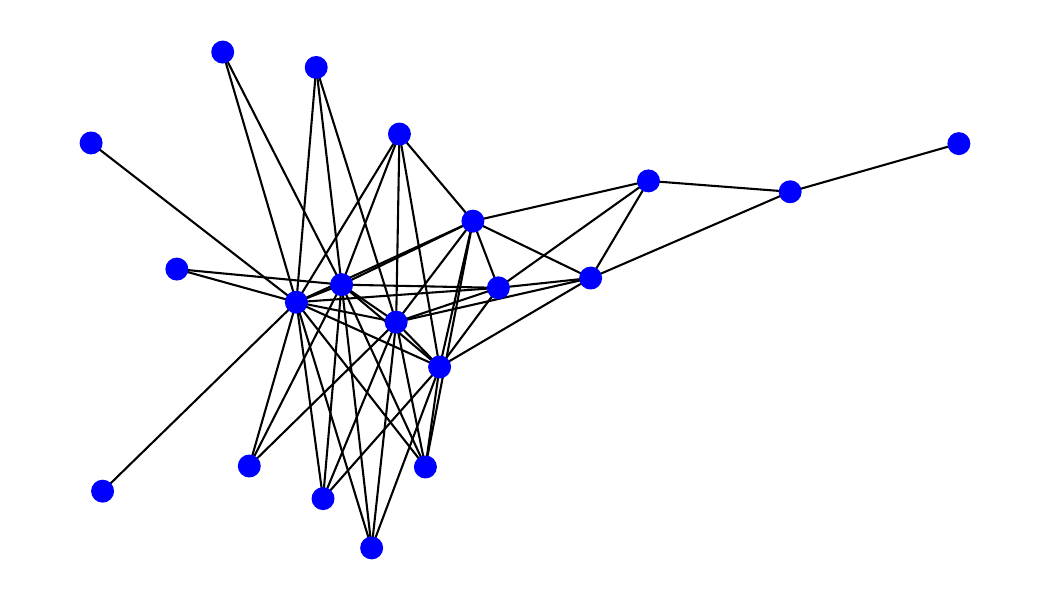}
\end{minipage}
&
\begin{minipage}{.30\textwidth}
\includegraphics[scale=.199, angle=0]{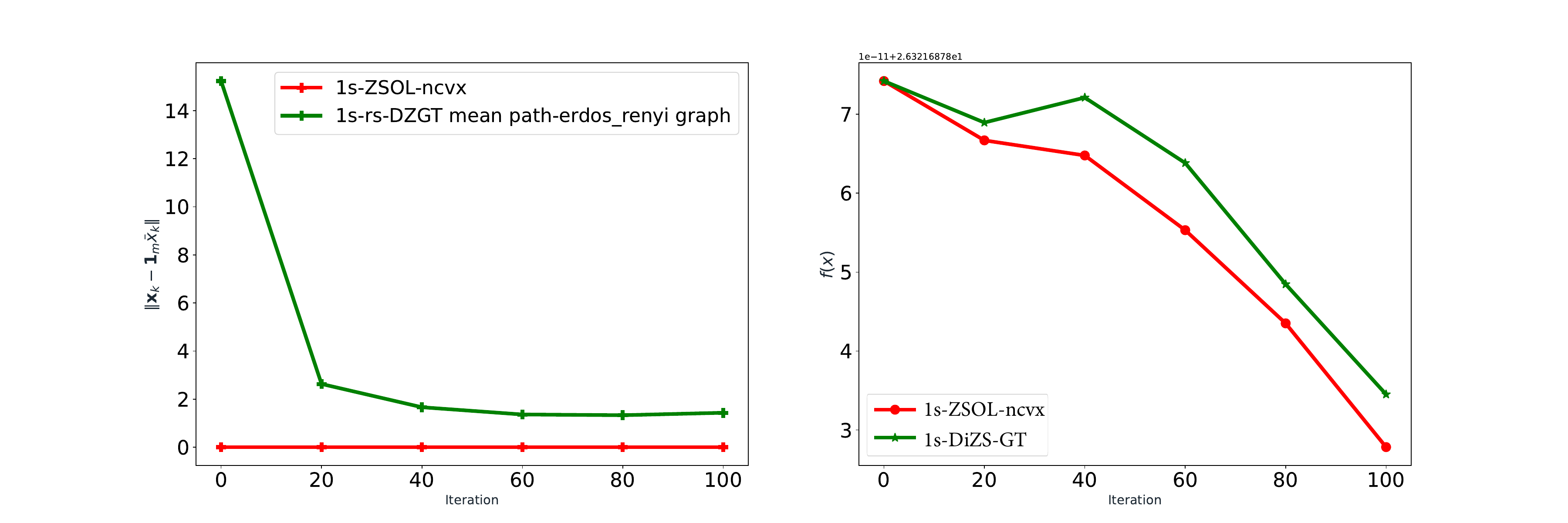}
\end{minipage}
	&
\begin{minipage}{.30\textwidth}
\includegraphics[scale=.199, angle=0]{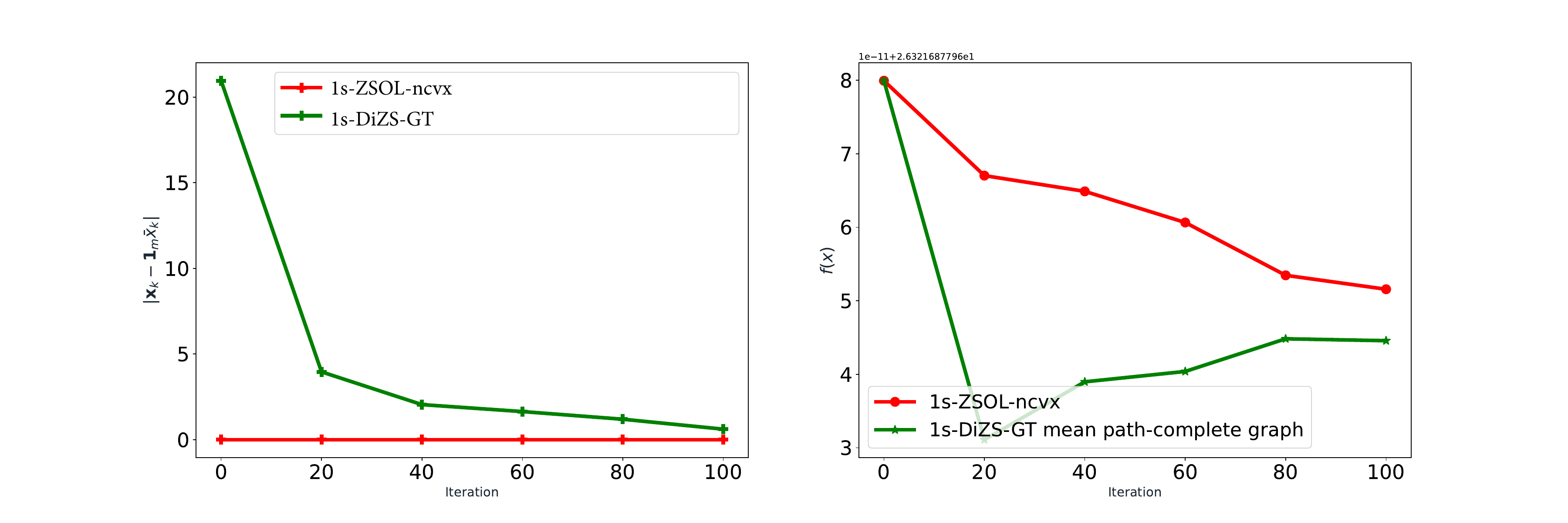}
\end{minipage}
\\ 
\hline\\
\rotatebox[origin=c]{90}{{\footnotesize {tree graph}}}
&
\begin{minipage}{.30\textwidth}
\includegraphics[scale=.329, angle=0]{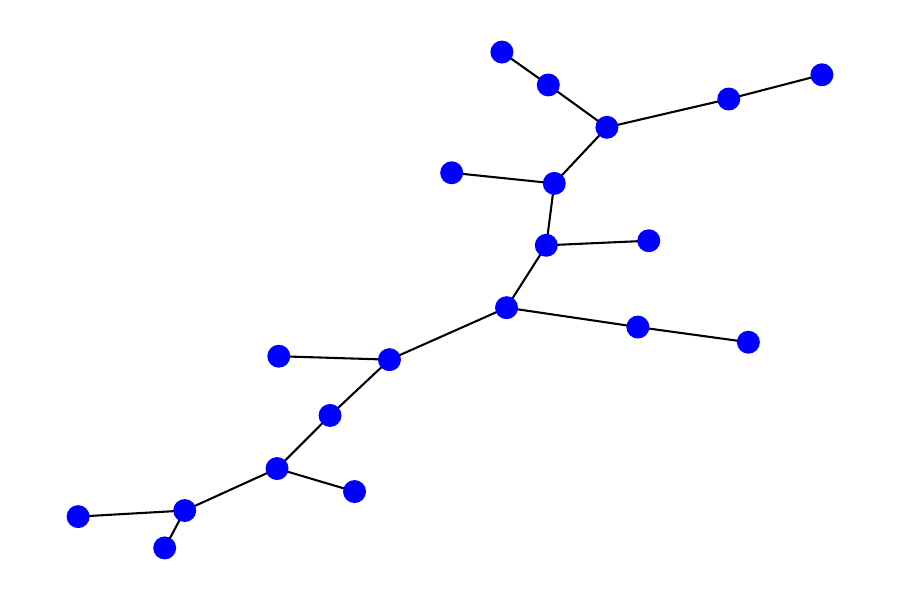}
\end{minipage}
&
\begin{minipage}{.30\textwidth}
\includegraphics[scale=.199, angle=0]{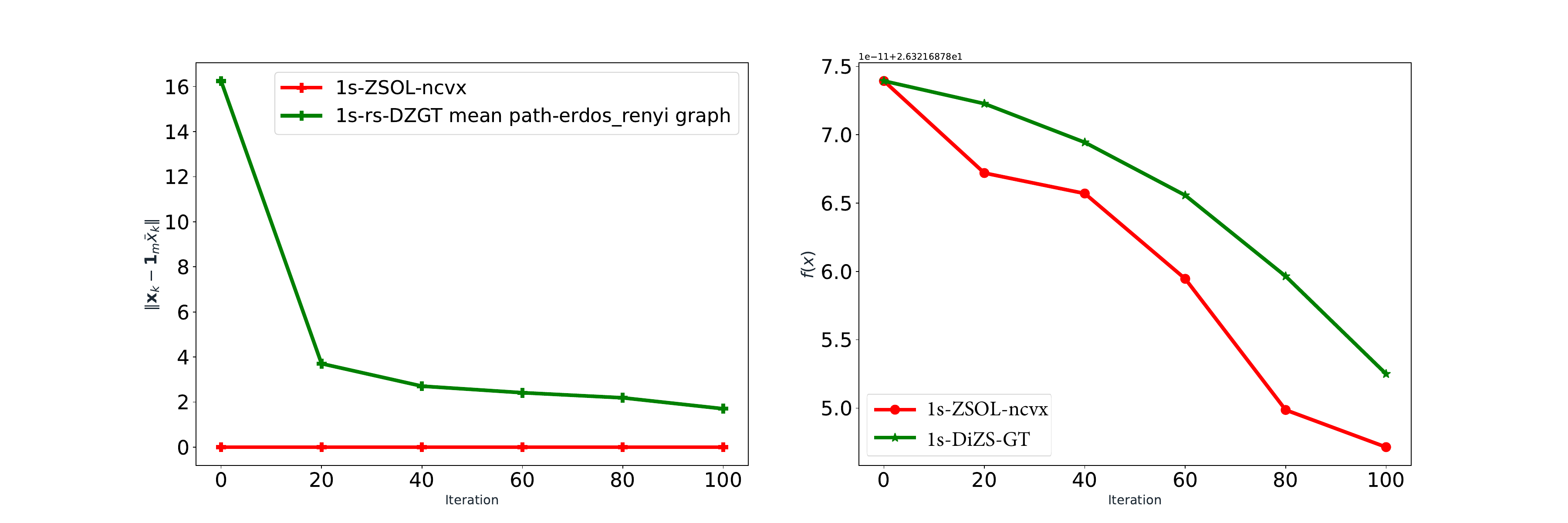}
\end{minipage}
	&
\begin{minipage}{.30\textwidth}
\includegraphics[scale=.199, angle=0]{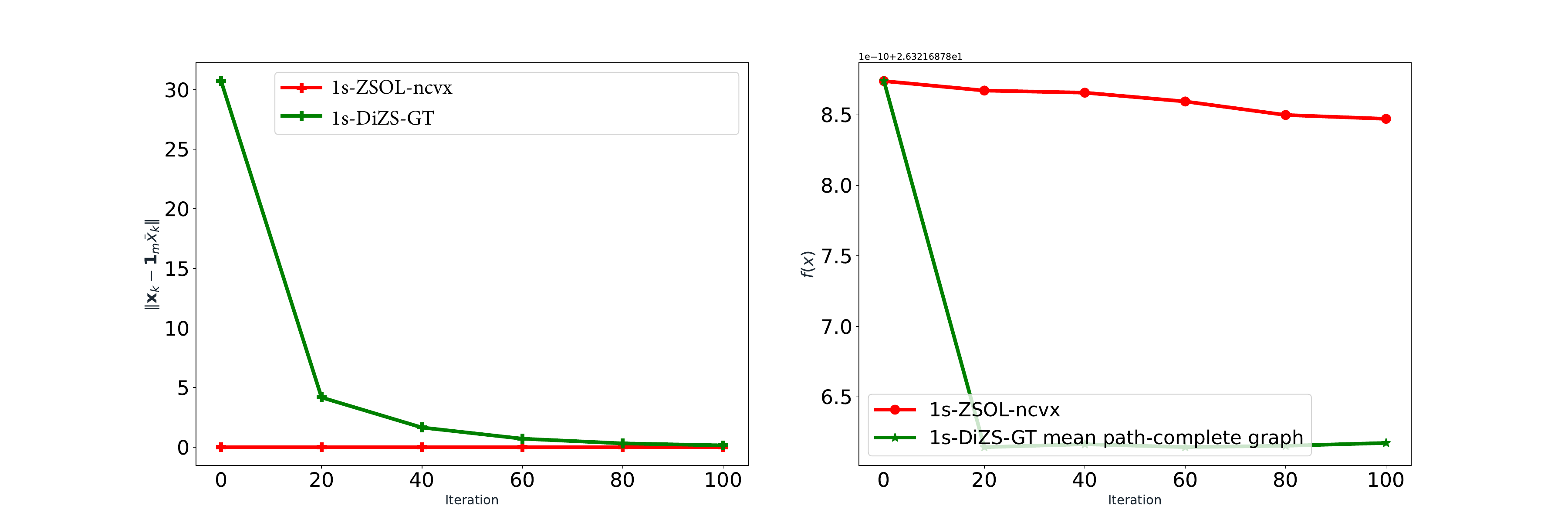}
\end{minipage}
\\ 
\hline\\
\rotatebox[origin=c]{90}{{\footnotesize {Erd\H{o}s--R\'enyi graph}}}
&
\begin{minipage}{.30\textwidth}
\includegraphics[scale=.329, angle=0]{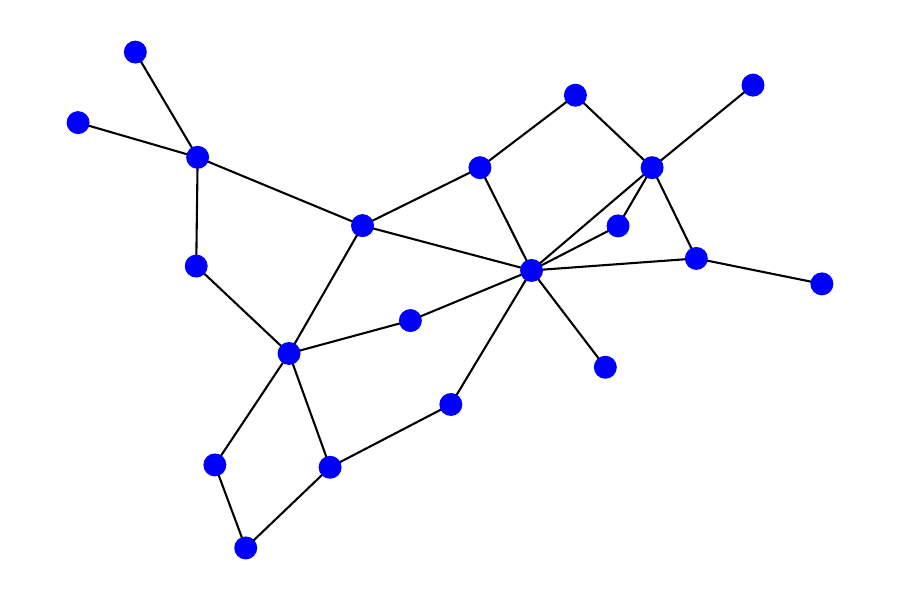}
\end{minipage}
&
\begin{minipage}{.30\textwidth}
\includegraphics[scale=.199, angle=0]{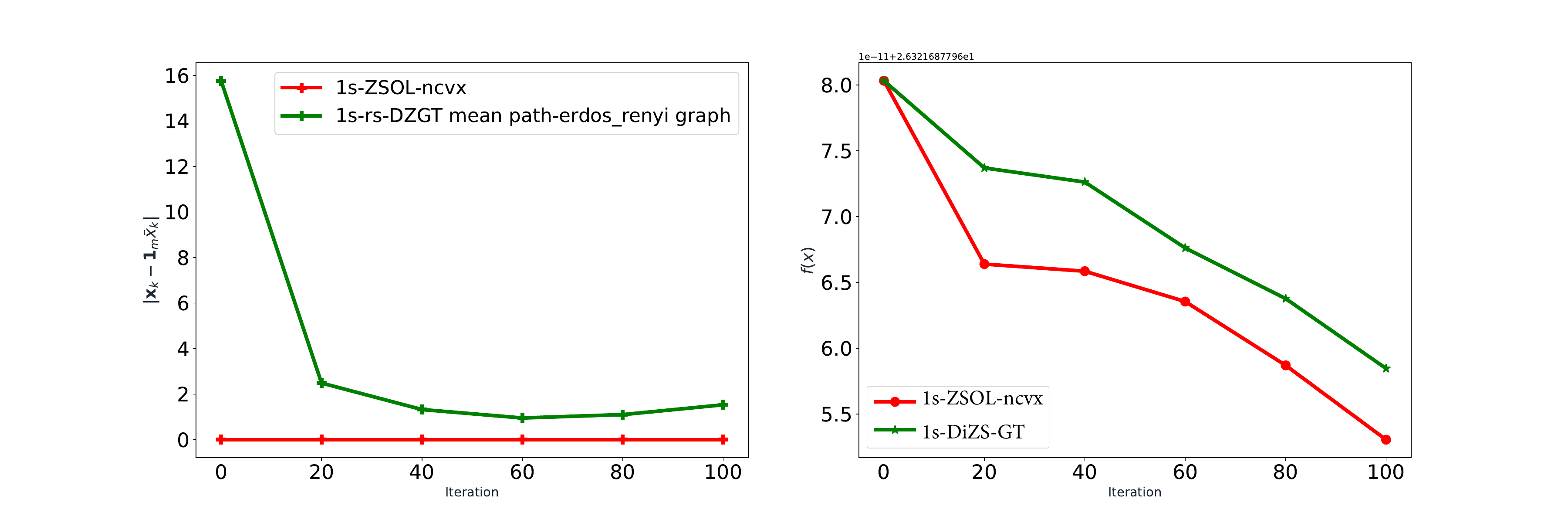}
\end{minipage}
	&
\begin{minipage}{.30\textwidth}
\includegraphics[scale=.199, angle=0]{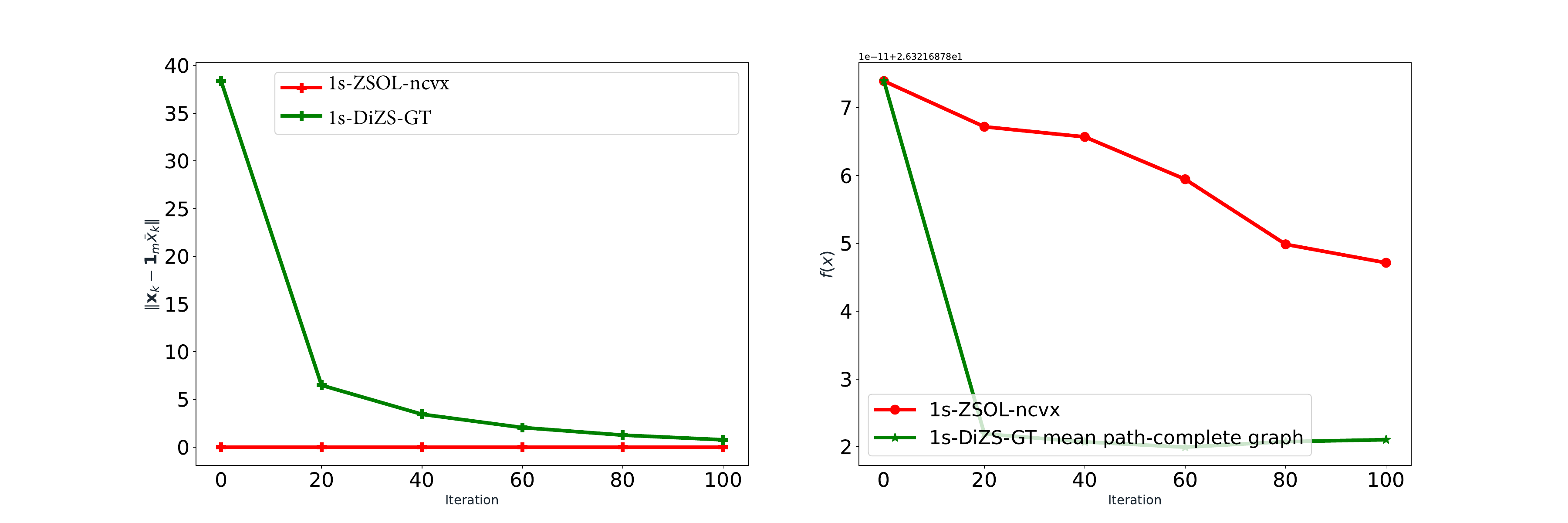}
\end{minipage}
\end{tabular}
\captionof{figure}{Comparison of Algorithm~\ref{alg:DZGT} with ZSOL$_{\text{ncvx}}^{\text{1s}}$ in terms of the sample averaged global objective function and consensus error five different networks, each with 20 nodes.}
\label{fig:comparison-single stage}}
\end{figure}

\subsection{{Distributed two-stage SMPECs}}\label{subsection:two-stage numerics}
In this section, we compare the performance of the {DiZS-GT$^{\text{2s}}$} method, presented in Algorithm~\ref{alg:DZGT-2stage}, with its centralized counterpart ZSOL$_{\text{ncvx}}^{\text{2s}}$ (Algorithm 5 in \cite{cui2023complexity}). The comparison is conducted to address the stochastic Stackelberg–Nash–Cournot equilibrium problem studied in~\cite{sherali1983stackelberg}, which can be captured by a two-stage SMPEC. Consider a commodity market with one leader, referred to as the Stackelberg firm, and $m$ followers, referred to as Cournot firms. The followers supply a product in a noncooperative manner.
We define the total cost of supplying $z_j$ units of the product for firm (follower) $j$ when the supplying units of other firms are fixed and defined as $z_{(-j)}$ as follows
$${h}_j\left(x,(z_j,z_{(-j)}),{\bxi}\right)\triangleq c_j(z_j)-z_jp(z_j+x+\textstyle\sum_{l\neq j}z_l(x,{\bxi}),{\bxi}),$$
where $z_j$ is the amount of products supplied by the $j$th firm (follower), $x$ is the production level by the leader, $c_j(z_j)$ denotes the cost of producing $z_j$ units, and $p(\bullet,{\bxi})$ represents the random inverse demand curve. The $j$th follower's optimization problem is given as
$$\min_{z_j\ge 0}  \quad {h}_j\left(x,(z_j,z_{(-j)}),{\bxi}\right) \triangleq c_j(z_j)-z_jp(z_j+x+\textstyle\sum_{l \neq j}z_j(x,{\bxi}),{\bxi}).$$
The leader's optimization problem is given as
$$\min_{0\le x \le x^u} \quad c(x)-\mathbb{E}[xp(x+\us{\bar{z}}(x,{\bxi}),{\bxi})],$$
where $c(x)$ denotes the production cost of the leader, $\us{\bar{z}}(x,{\bxi})\triangleq \sum_{j=1}^m z_j(x,{\bxi})$, and $x^u$ is the production capacity. The optimality conditions for the lower-level problem are as follows
$$0\le z_j(x,{\bxi}) \perp \nabla_{z_j}{h}_j\left(x,(z_j,z_{(-j)}),{\bxi}\right)\ge 0, \quad \text{for } j=1,2,\ldots,m.$$

The Cournot game among the followers can be captured by the equilibrium constraint  $\us{\mathcal{Z}}(x,{\bxi}) \in \mbox{SOL}\left({\mathbb{R}_+^m},F(x,\bullet,{\bxi})\right)$, where we define
\[
\mathcal{Z}(x,\mee{\bxi})=\begin{bmatrix}
z_1(x,{\bxi}) \\
z_2(x,{\bxi})  \\
\vdots \\
z_m(x,{\bxi}) \\
\end{bmatrix}\quad \hbox{and} \quad
F(x,z,{\bxi})=\begin{bmatrix}
\nabla_{z_1}{h}_1\left(x,(z_1,z_{(-1)}),{\bxi}\right)\\
\nabla_{z_2}{h}_2\left(x,(z_2,z_{(-2)}),{\bxi}\right)  \\
\vdots \\
\nabla_{z_m}{h}_m\left(x,(z_m,z_{(-m)}),{\bxi}\right)\\
\end{bmatrix}.\]
 The Stackelberg game is succinctly cast as  
 \begin{align*}
\min_{0\le x \le x^u}&\quad c(x) -   \mathbb{E}_{{\bxi}}[xp(x+{\us{\bar{z}}(x,{\bxi})},{\bxi})]\\
 \hbox{subject to} &\quad \us{\mathcal{Z}}(x,{\xi}) \in \mbox{SOL}\left({\mathbb{R}_+^m},F(x,\bullet,{\xi})\right) \mbox{ for every $\xi$.}
\end{align*} 
 We assume that $p(u,{\xi})=a({\xi})-bu$, for some function $a(\bullet)$ and scalar $b$. To compute an approximate solution to this problem, we consider
a distributed computational setting with $m$ clients, associated with iid random variables ${\bxi}_i$. This leads to the following distributed 2s-SMPEC problem.
 \begin{align*}
\min_{0\le x \le x^u}&\quad c(x)\ - \tfrac{1}{m}\textstyle\sum_{i=1}^m \mathbb{E}_{{\bxi}_i}[xp(x+{\us{\bar{z}}_i(x,{\bxi}_i)},{\bxi}_i)]\\
 \hbox{subject to} &\quad { \us{\mathcal{Z}}_i(x,\fyy{\xi}_i)} \in \mbox{SOL}\left({\mathbb{R}_+^m},F(x,\bullet,\fyy{\xi}_i)\right).
\end{align*} 
Let $p(u,{\bxi_i})=a({\bxi_i})-bu$. We assume that  $c_j(z)=\tfrac{1}{2}c_jz^2$ for $j=1,\ldots,m$, and $c(x)=\frac{1}{2}dx^2$, where $c_j>0$ for all $j$, and $d >0$. Therefore we have $
\nabla_{z_j}{h}_j\left(x,(z_j,z_{(-j)}),\mee{\bxi}_i\right)=(c_j+b)z_j-a(\mee{\bxi}_i)+b\textstyle\sum_{l=1}^m z_l.$

\noindent \textbf{Problem and algorithm parameters.} We assume that $\mee{\bxi}_i\sim \mathcal{U}(7.5,12.5)$, for all computing agents, and run both algorithms for $10^2$ iterations, setting the parameters as $\gamma=10^{-4} $, $\eta=10^{-1}$, and $b=0.1$. The values of $c_j$ are generated from a uniform distribution over the range $[0.05,0.5]$ for $j=1,...,m$. Additionally, at each iteration of the upper-level algorithm (Algorithm~\ref{alg:DZGT-2stage} in our work and Algorithm 5 in~\cite{cui2023complexity}), the lower-level algorithm (Algorithm~\ref{alg:lowerlevel-2stage} in our work and Algorithm 6 in~\cite{cui2023complexity}) is terminated after $\left\lceil \ln\left(n^{1/2}(k+1)\eta^{-2/3}\right)\right\rceil$ iterations, where $n=1$ in this example, and $k$ represents the iteration index of the upper-level algorithm. Similar to the previous experiment, we consider 20 computing agents with five different network settings. 

\begin{figure}[!htbp]
\centering{
\setlength{\tabcolsep}{0pt}
\centering
 \begin{tabular}{c || c  c  c }
  {\footnotesize {Setting}\ \ }& {\footnotesize  Communication network} & {\footnotesize $\mathbb{E}\left[f(\bar{x}_{k})\right]$} & {\footnotesize $\mathbb{E}\left[{\|\mathbf{x}_{k}- \mathbf{1} \bar{x}_{k}\|^2 }\right]$ } \\
 \hline\\
\rotatebox[origin=c]{90}{{\footnotesize {complete graph}}}
&
\begin{minipage}{.30\textwidth}
\includegraphics[scale=.329, angle=0]{complete}
\end{minipage}
&
\begin{minipage}{.30\textwidth}
\includegraphics[scale=.199, angle=0]{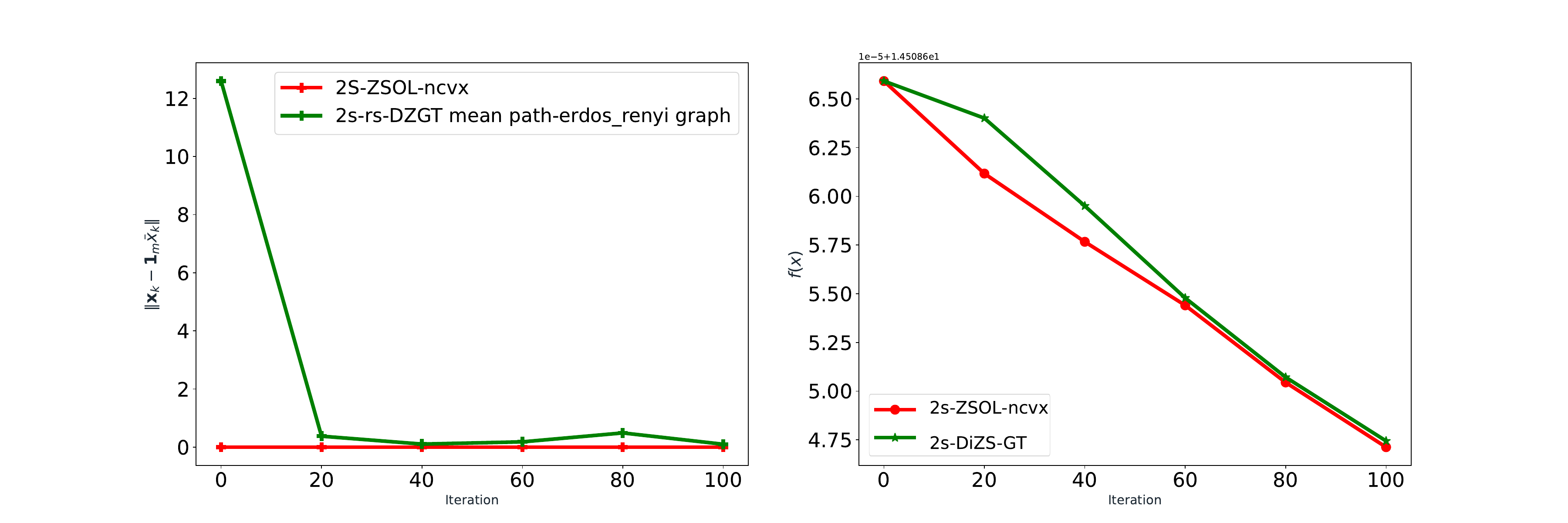}
\end{minipage}
	&
\begin{minipage}{.30\textwidth}
\includegraphics[scale=.199, angle=0]{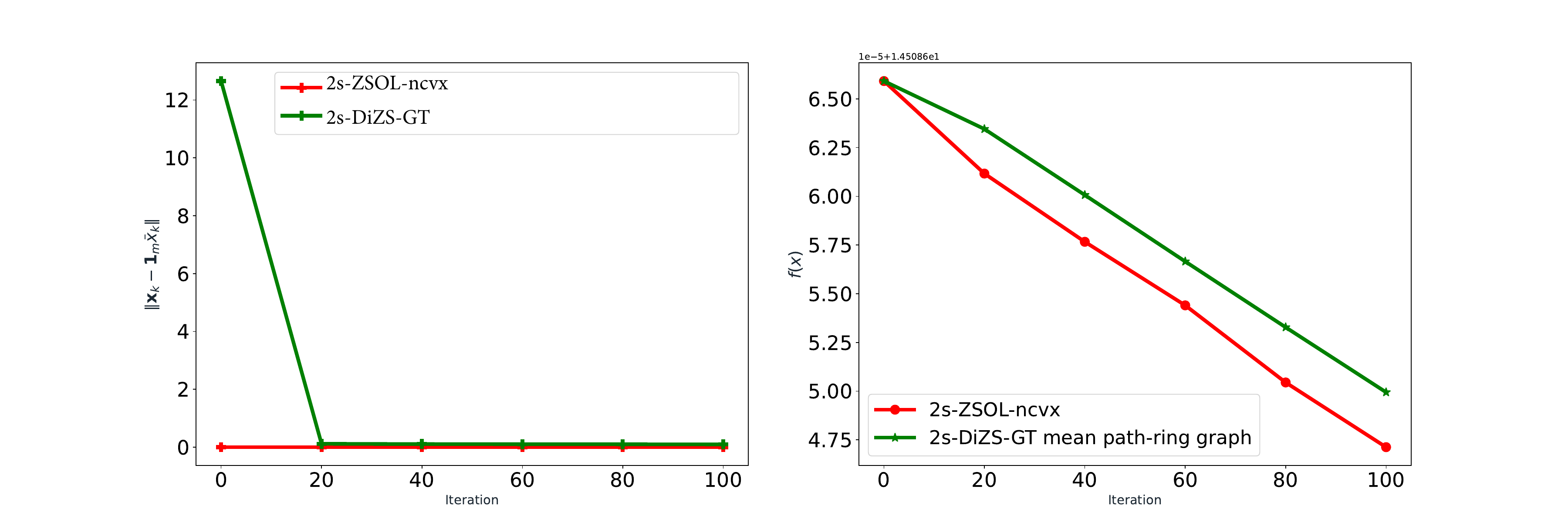}
\end{minipage}
\\ 
\hline\\
\rotatebox[origin=c]{90}{{\footnotesize {ring graph}}}
&
\begin{minipage}{.30\textwidth}
\includegraphics[scale=.329, angle=0]{ring}
\end{minipage}
&
\begin{minipage}{.30\textwidth}
\includegraphics[scale=.199, angle=0]{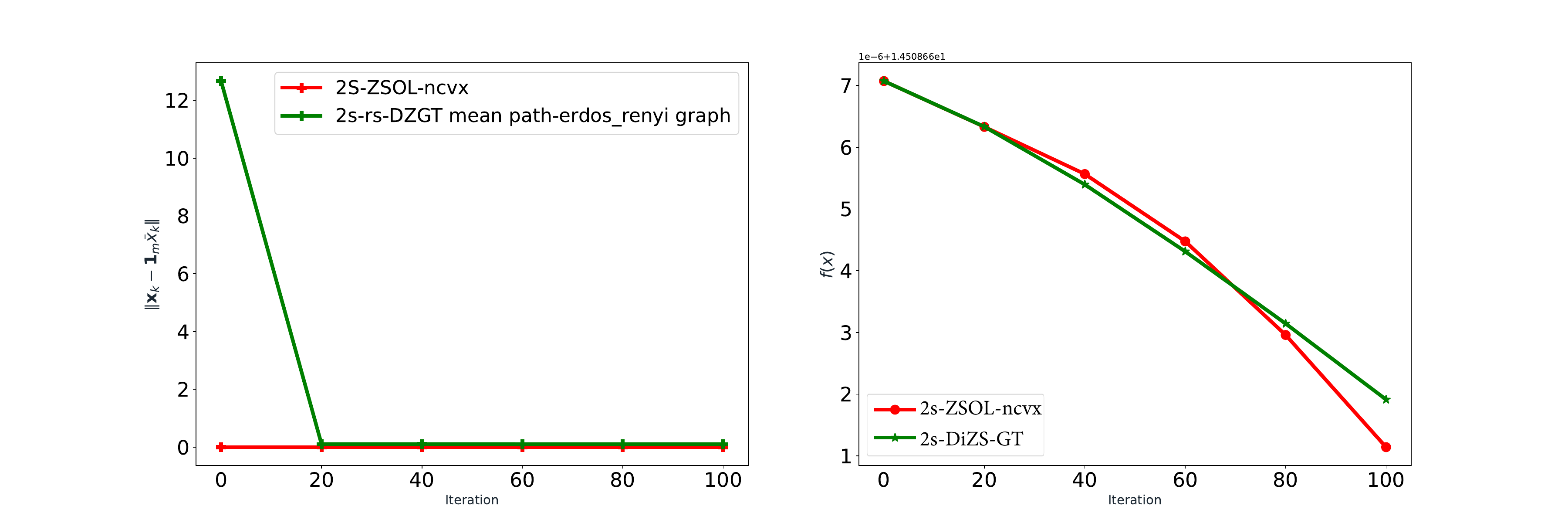}
\end{minipage}
	&
\begin{minipage}{.30\textwidth}
\includegraphics[scale=.199, angle=0]{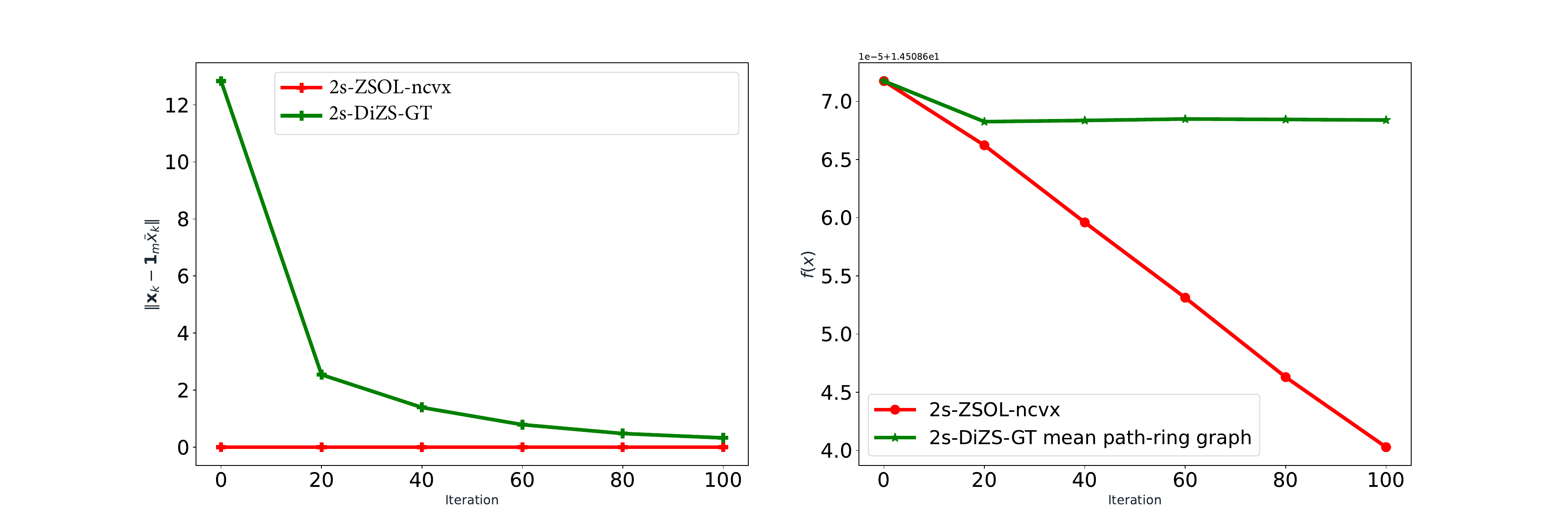}
\end{minipage}
\\ 
\hline\\
\rotatebox[origin=c]{90}{{\footnotesize {sparse graph}}}
&
\begin{minipage}{.30\textwidth}
\includegraphics[scale=.29, angle=0]{sparse}
\end{minipage}
&
\begin{minipage}{.30\textwidth}
\includegraphics[scale=.199, angle=0]{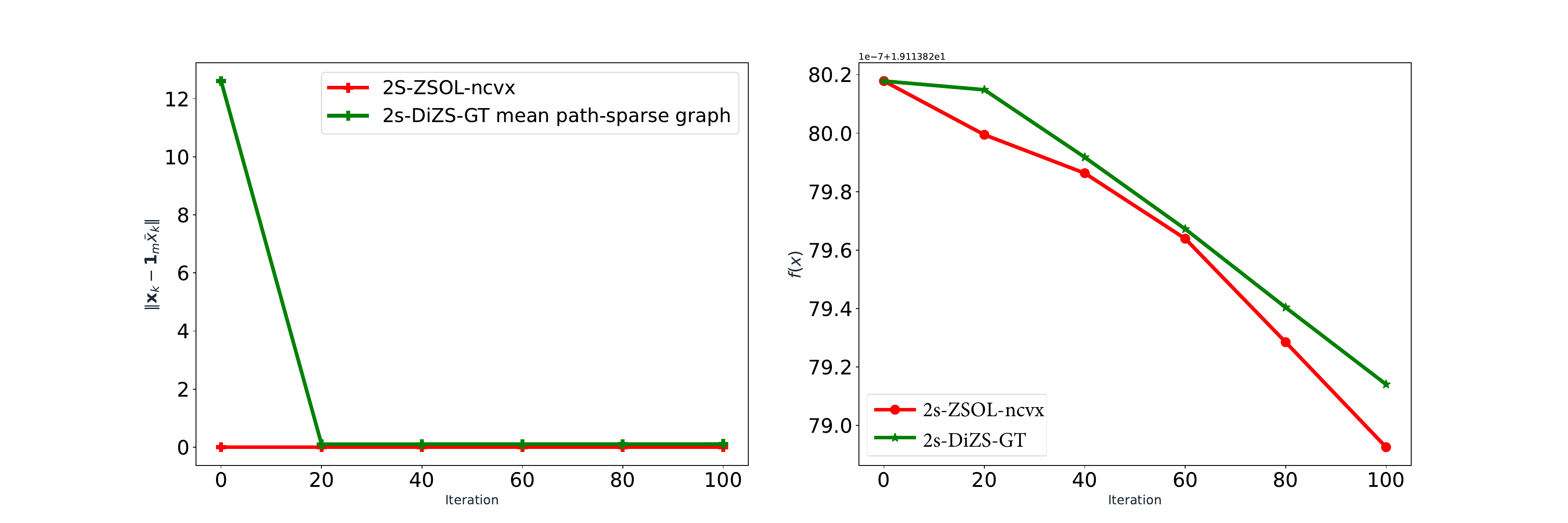}
\end{minipage}
	&
\begin{minipage}{.30\textwidth}
\includegraphics[scale=.199, angle=0]{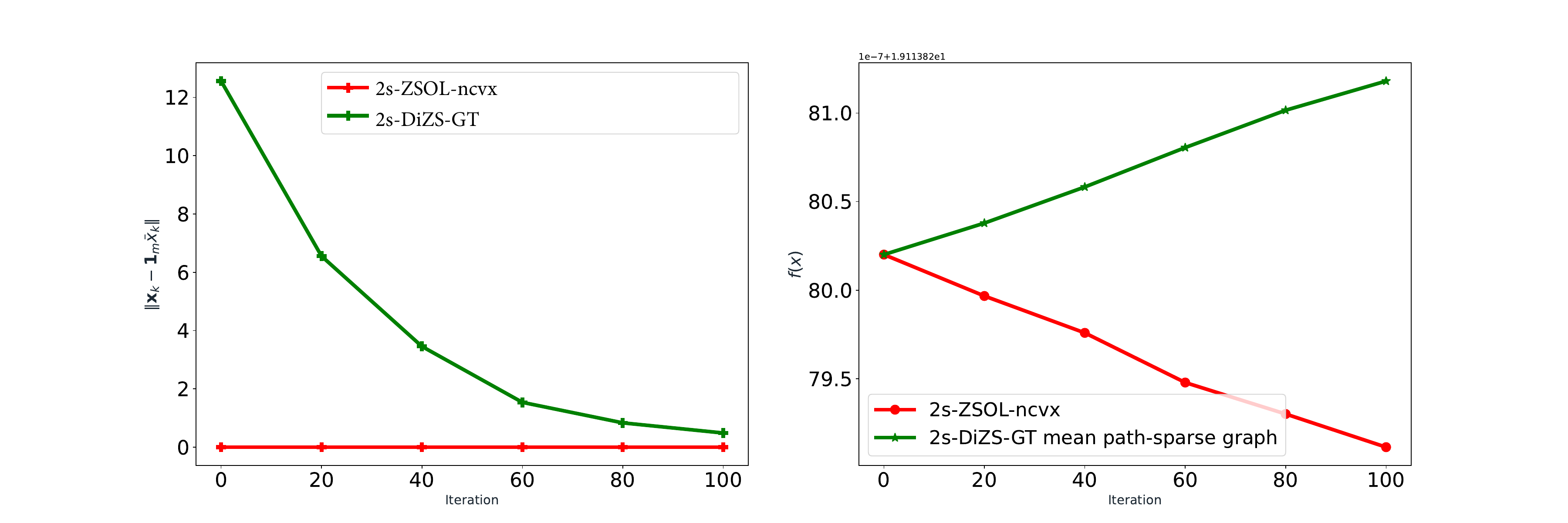}
\end{minipage}
\\ 
\hline\\
\rotatebox[origin=c]{90}{{\footnotesize {tree graph}}}
&
\begin{minipage}{.30\textwidth}
\includegraphics[scale=.329, angle=0]{tree}
\end{minipage}
&
\begin{minipage}{.30\textwidth}
\includegraphics[scale=.199, angle=0]{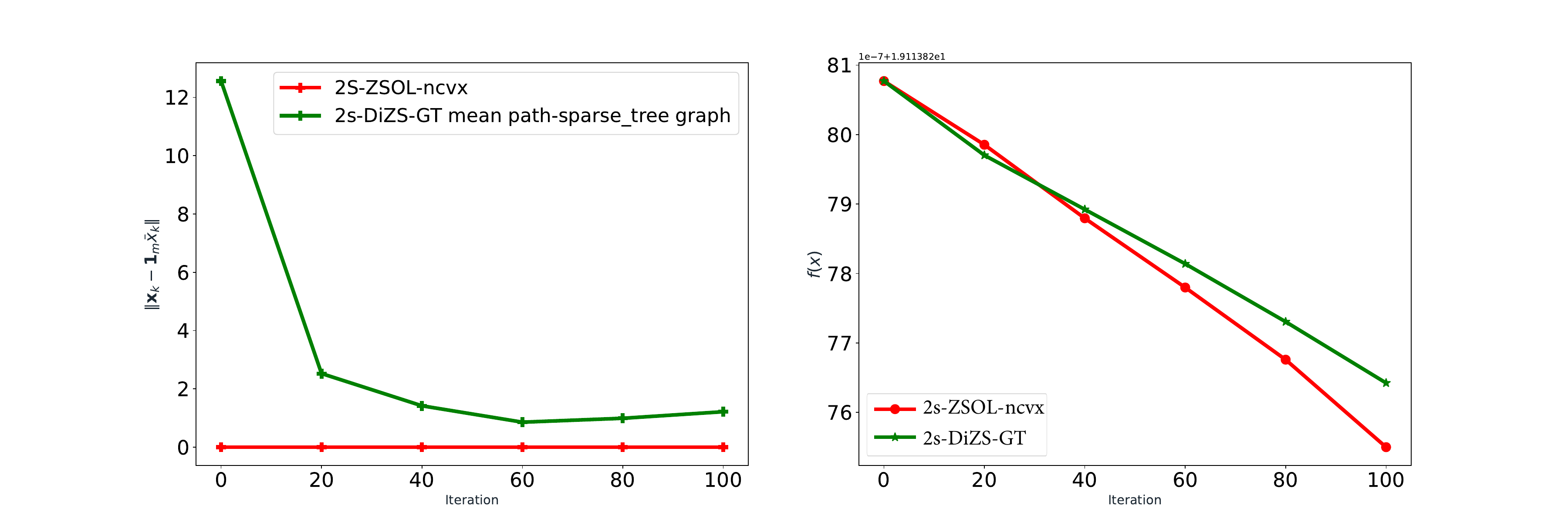}
\end{minipage}
	&
\begin{minipage}{.30\textwidth}
\includegraphics[scale=.199, angle=0]{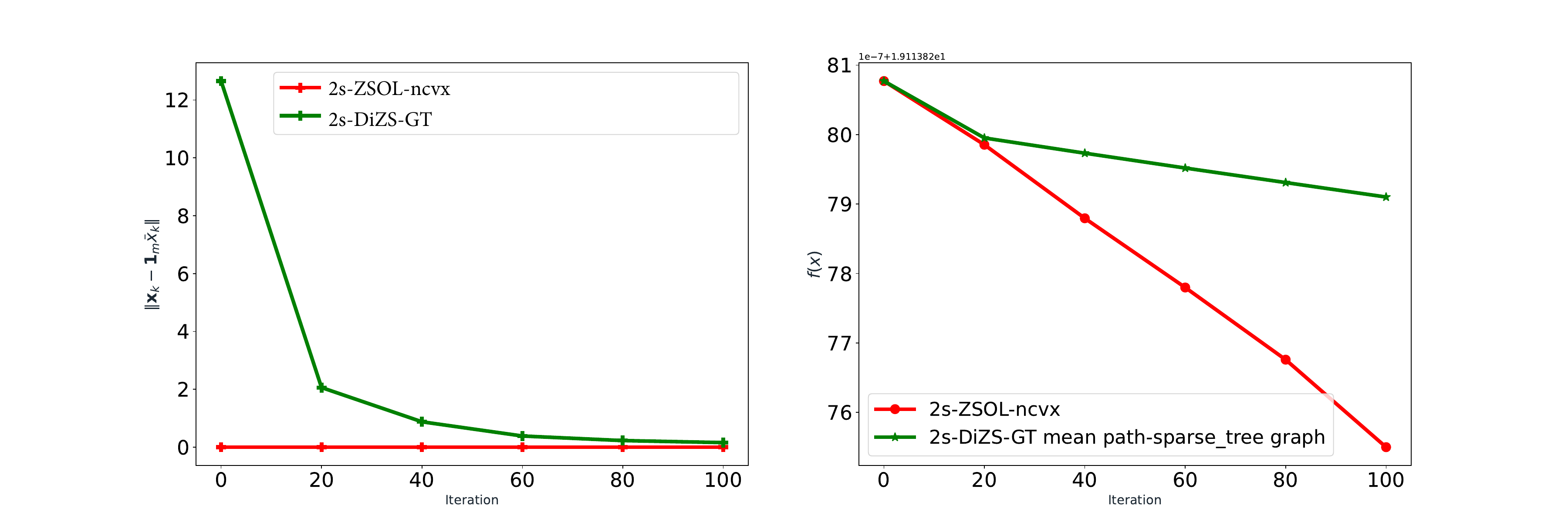}
\end{minipage}
\\ 
\hline\\
\rotatebox[origin=c]{90}{{\footnotesize {Erd\H{o}s--R\'enyi graph}}}
&
\begin{minipage}{.30\textwidth}
\includegraphics[scale=.329, angle=0]{erdos}
\end{minipage}
&
\begin{minipage}{.30\textwidth}
\includegraphics[scale=.199, angle=0]{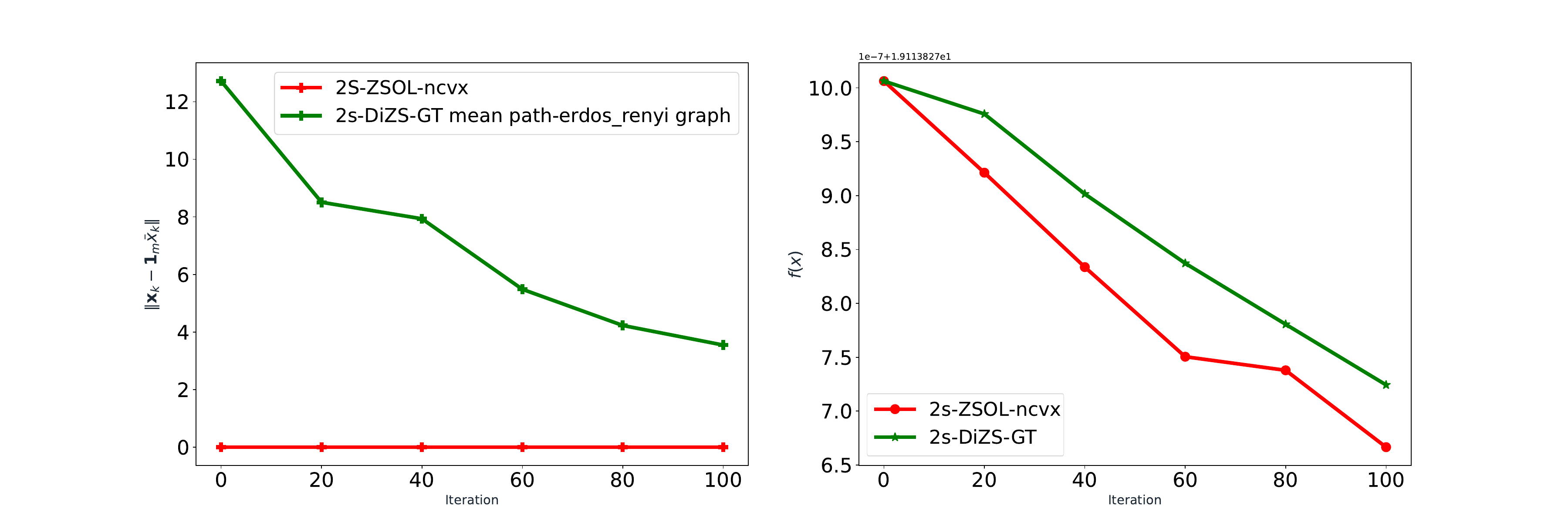}
\end{minipage}
	&
\begin{minipage}{.30\textwidth}
\includegraphics[scale=.199, angle=0]{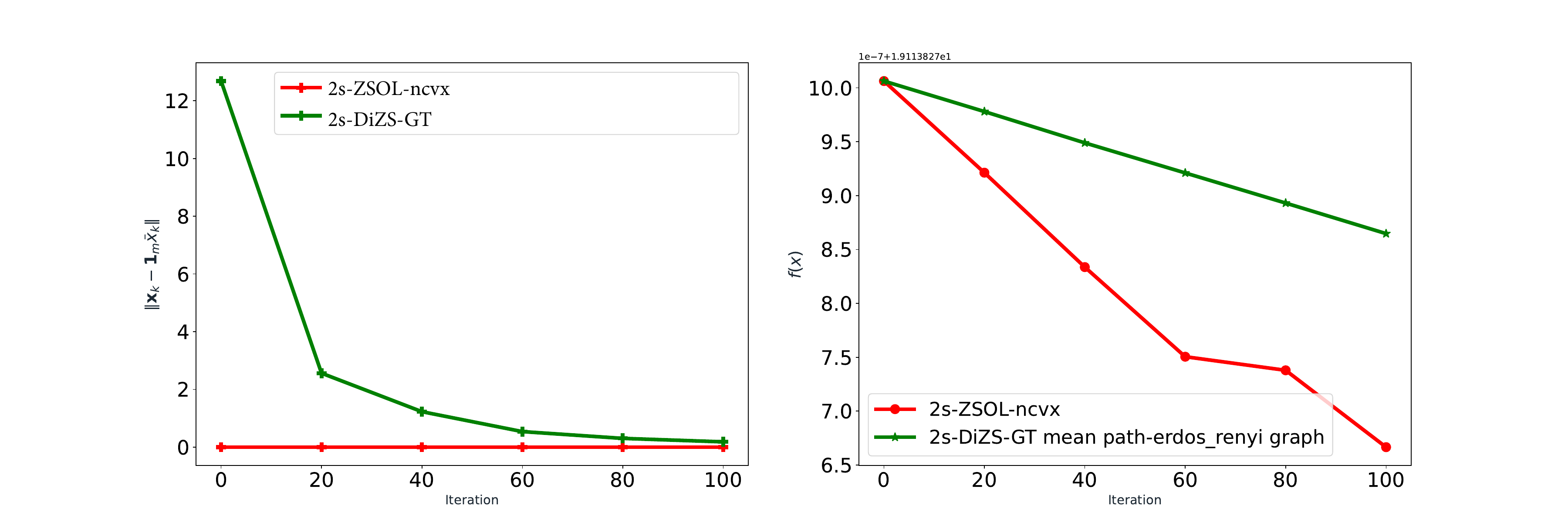}
\end{minipage}
\end{tabular}
\captionof{figure}{Comparison of Algorithm~\ref{alg:DZGT-2stage} with ZSOL$_{\text{ncvx}}^{\text{2s}}$ in terms of the sample averaged global objective function and consensus error five different networks, each with 20 nodes.}
\label{fig:comparison-2s}}
\end{figure}

\noindent \textbf{Evaluation of the implicit objective function.} In line with the single-stage experiment, here we run each scheme for each network setting over five different sample paths and report the sample mean of the global implicit objective function. We use a minibatch size of five. The global implicit objective function is plotted over $5$ epochs. At each epoch, we approximate $z_i(x,\mee{\bxi}_i)$ by applying the gradient method (Algorithm~\ref{alg:lowerlevel-2stage}) for $150$ iterations across all $20$ agents.
\begin{remark}\em Although the ZSOL$_{\text{ncvx}}^{\text{1s}}$ and ZSOL$_{\text{ncvx}}^{\text{2s}}$ methods incorporate a variance-reduction scheme, our implementation of these approach\Rme{es} does not utilize this technique. This is mainly because unlike in ZSOL, here the upper-level problem is assumed to be unconstrained. \mee{$\hfill \Box$}
\end{remark}
\noindent \textbf{Insights.} Figure~\ref{fig:comparison-2s} presents the implementation results in addressing the two-stage setting, illustrating the impact of network connectivity. Notably, {DiZS-GT$^{\text{2s}}$} demonstrates robustness to changes in network connectivity.  As the network connectivity increases, our method exhibits a clear improvement in performance. This trend is particularly evident in the consensus error plots, where higher connectivity leads to faster convergence. Notably, in fully connected networks, {DiZS-GT$^{\text{2s}}$} performs at a level close to {ZSOL$_{\text{ncvx}}^{\text{1s}}$}. These observations highlight the advantages of our method in the decentralized setting.

\section{Concluding remarks}\label{sec:conc}
The mathematical program with equilibrium constraints (MPEC) is \Rme{an expansive} framework that \Rme{can capture settings arising in the modeling of Stackelberg games and bilevel optimization, amongst others}. In this work, we focus on \Rme{single and two-stage} stochastic variants of \Rme{the} MPEC. Motivated by the absence of distributed computational methods to address \Rme{such a} challenging \Rme{class of} mathematical model, we propose distributed gradient tracking methods designed for \Rme{the distributed generalization of the two aforementioned} classes of problems over networks, \Rme{i.e.,} single-stage and two-stage distributed stochastic MPECs. By leveraging a randomized smoothing technique, we develop fully iterative distributed zeroth-order gradient tracking methods. We establish complexity guarantees for computing a stationary point to the implicit optimization problem in each setting. Notably, our methods for the exact single-stage and two-stage settings achieve the best-known complexity bound known for the centralized nonsmooth nonconvex stochastic optimization. Further, this is the first time that complexity guarantees for resolving the inexact distributed single-stage and two-stage stochastic MPECs over networks are obtained. We numerically compare the proposed methods with their centralized counterparts across networks of varying sizes and connectivity levels. \Rme{A key direction for future work is to weaken strong monotonicity at the lower level and establish guarantees for monotone problems, where equilibria may be nonunique and the response is set-valued. In such settings, iterative penalization or regularization can be used to obtain a well-defined equilibrium for the upper level. \cite{samadi2023improved,jalilzadeh2024stochastic,kaushik2021method,ebrahimi2025regularized}}.
 
\bibliographystyle{siam}
\bibliography{ref_imp_DSGT}

@book{outrata1998nonsmooth,
  title={Nonsmooth approach to optimization problems with equilibrium constraints: theory, applications and numerical results},
  author={Outrata, Ji{\v{r}}{\'\i} and Ko{\v{c}}vara, Michal and Zowe, Jochem},
  year={1998},
  publisher={Springer Science \& Business Media},
  address={Dordrecht},
  isbn={978-0-7923-5170-2}
}

@inproceedings{zhang2020complexity,
  title={Complexity of finding stationary points of nonconvex nonsmooth functions},
  author={Zhang, Jingzhao and Lin, Hongzhou and Jegelka, Stefanie and Sra, Suvrit and Jadbabaie, Ali},
  booktitle={International Conference on Machine Learning},
  pages={11173--11182},
  year={2020},
  organization={PMLR}
}

@article{kornowski2021oracle,
  title={Oracle complexity in nonsmooth nonconvex optimization},
  author={Kornowski, Guy and Shamir, Ohad},
  journal={Advances in Neural Information Processing Systems},
  volume={34},
  pages={324--334},
  year={2021}
}

@article{sherali1983stackelberg,
  title={Stackelberg-Nash-Cournot equilibria: characterizations and computations},
  author={Sherali, Hanif D and Soyster, Allen L and Murphy, Frederic H},
  journal={Operations Research},
  volume={31},
  number={2},
  pages={253--276},
  year={1983},
  publisher={INFORMS}
}

@article{fukushima1998some,
  title={Some feasibility issues in mathematical programs with equilibrium constraints},
  author={Fukushima, Masao and Pang, Jong-Shi},
  journal={SIAM Journal on Optimization},
  volume={8},
  number={3},
  pages={673--681},
  year={1998},
  publisher={SIAM}
}

@article{facchinei1999smoothing,
  title={A smoothing method for mathematical programs with equilibrium constraints},
  author={Facchinei, Francisco and Jiang, Houyuan and Qi, Liqun},
  journal={Mathematical programming},
  volume={85},
  number={1},
  pages={107},
  year={1999},
  publisher={Citeseer}
}

@article{scholtes1999exact,
  title={Exact penalization of mathematical programs with equilibrium constraints},
  author={Scholtes, Stefan and St{\"o}hr, Michael},
  journal={SIAM Journal on Control and Optimization},
  volume={37},
  number={2},
  pages={617--652},
  year={1999},
  publisher={SIAM}
}

@article{hobbs2000strategic,
  title={Strategic gaming analysis for electric power systems: An MPEC approach},
  author={Hobbs, Benjamin F and Metzler, Carolyn B and Pang, J-S},
  journal={IEEE transactions on power systems},
  volume={15},
  number={2},
  pages={638--645},
  year={2000},
  publisher={IEEE}
}

@article{lawphongpanich2004mpec,
  title={An MPEC approach to second-best toll pricing},
  author={Lawphongpanich, Siriphong and Hearn, Donald W},
  journal={Mathematical programming},
  volume={101},
  number={1},
  pages={33--55},
  year={2004},
  publisher={Springer}
}

@article{flegel2005abadie,
  title={Abadie-type constraint qualification for mathematical programs with equilibrium constraints},
  author={Flegel, Michael L and Kanzow, Christian},
  journal={Journal of Optimization Theory and Applications},
  volume={124},
  pages={595--614},
  year={2005},
  publisher={Springer}
}

@article{jane2005necessary,
  title={Necessary and sufficient optimality conditions for mathematical programs with equilibrium constraints},
  author={Jane, J Ye},
  journal={Journal of Mathematical Analysis and Applications},
  volume={307},
  number={1},
  pages={350--369},
  year={2005},
  publisher={Elsevier}
}

@article{kanzow2010mathematical,
  title={Mathematical programs with equilibrium constraints: enhanced Fritz John-conditions, new constraint qualifications, and improved exact penalty results},
  author={Kanzow, Christian and Schwartz, Alexandra},
  journal={SIAM Journal on Optimization},
  volume={20},
  number={5},
  pages={2730--2753},
  year={2010},
  publisher={SIAM}
}

@article{steffensen2010new,
  title={A new relaxation scheme for mathematical programs with equilibrium constraints},
  author={Steffensen, Sonja and Ulbrich, Michael},
  journal={SIAM Journal on Optimization},
  volume={20},
  number={5},
  pages={2504--2539},
  year={2010},
  publisher={SIAM}
}

@article{guo2015solving,
  title={Solving mathematical programs with equilibrium constraints},
  author={Guo, Lei and Lin, Gui-Hua and Ye, Jane J},
  journal={Journal of Optimization Theory and Applications},
  volume={166},
  pages={234--256},
  year={2015},
  publisher={Springer}
}

@article{lin2007new,
  title={New restricted NCP functions and their applications to stochastic NCP and stochastic MPEC},
  author={Lin, G-H and Chen, Xiaojun and Fukushima, Masao},
  journal={Optimization},
  volume={56},
  number={5-6},
  pages={641--653},
  year={2007},
  publisher={Taylor \& Francis}
}

@article{lin2010stochastic,
  title={Stochastic equilibrium problems and stochastic mathematical programs with equilibrium constraints: A survey},
  author={Lin, Gui-Hua and Fukushima, Masao},
  journal={Pacific Journal of Optimization},
  volume={6},
  number={3},
  pages={455--482},
  year={2010}
}

@article{lin2009solving,
  title={Solving stochastic mathematical programs with equilibrium constraints via approximation and smoothing implicit programming with penalization},
  author={Lin, Gui-Hua and Chen, Xiaojun and Fukushima, Masao},
  journal={Mathematical Programming},
  volume={116},
  number={1},
  pages={343--368},
  year={2009},
  publisher={Springer}
}

@article{xu2010necessary,
  title={Necessary optimality conditions for two-stage stochastic programs with equilibrium constraints},
  author={Xu, Huifu and Ye, Jane J},
  journal={SIAM Journal on Optimization},
  volume={20},
  number={4},
  pages={1685--1715},
  year={2010},
  publisher={SIAM}
}

@article{wogrin2011generation,
  title={Generation capacity expansion in liberalized electricity markets: A stochastic {MPEC} approach},
  author={Wogrin, Sonja and Centeno, Efraim and Barqu{\'\i}n, Juli{\'a}n},
  journal={IEEE Transactions on Power Systems},
  volume={26},
  number={4},
  pages={2526--2532},
  year={2011},
  publisher={IEEE}
}

@article{cui2023complexity,
  title={Complexity guarantees for an implicit smoothing-enabled method for stochastic {MPECs}},
  author={Cui, Shisheng and Shanbhag, Uday V and Yousefian, Farzad},
  journal={Mathematical Programming},
  volume={198},
  number={2},
  pages={1153--1225},
  year={2023},
  publisher={Springer}
}

@article{cui2023computation,
  title={On the computation of equilibria in monotone and potential stochastic hierarchical games},
  author={Cui, Shisheng and Shanbhag, Uday V},
  journal={Mathematical Programming},
  volume={198},
  number={2},
  pages={1227--1285},
  year={2023},
  publisher={Springer}
}

@article{askeland2023stochastic,
  title={A stochastic MPEC approach for grid tariff design with demand-side flexibility},
  author={Askeland, Magnus and Burandt, Thorsten and Gabriel, Steven A},
  journal={Energy Systems},
  volume={14},
  number={3},
  pages={707--729},
  year={2023},
  publisher={Springer}
}

@article{tang2020distributed,
  title={Distributed zero-order algorithms for nonconvex multiagent optimization},
  author={Tang, Yujie and Zhang, Junshan and Li, Na},
  journal={IEEE Transactions on Control of Network Systems},
  volume={8},
  number={1},
  pages={269--281},
  year={2020},
  publisher={IEEE}
}

@article{pu2021distributed,
  title={Distributed stochastic gradient tracking methods},
  author={Pu, Shi and Nedi{\'c}, Angelia},
  journal={Mathematical Programming},
  volume={187},
  pages={409--457},
  year={2021},
  publisher={Springer}
}

@article{kaushik2021method,
  title={A method with convergence rates for optimization problems with variational inequality constraints},
  author={Kaushik, Harshal D and Yousefian, Farzad},
  journal={SIAM Journal on Optimization},
  volume={31},
  number={3},
  pages={2171--2198},
  year={2021},
  publisher={SIAM}
}

@article{steklov1907expressions,
  title={Sur les expressions asymptotiques decertaines fonctions dfinies par les quations diffrentielles du second ordre et leers applications au problme du dvelopement d’une fonction arbitraire en sries procdant suivant les diverses fonctions},
  author={Steklov, VA},
  journal={Comm. Charkov Math. Soc},
  volume={2},
  number={10},
  pages={97--199},
  year={1907}
}

@article{lakshmanan2008decentralized,
  title={Decentralized resource allocation in dynamic networks of agents},
  author={Lakshmanan, Hariharan and De Farias, Daniela Pucci},
  journal={SIAM Journal on Optimization},
  volume={19},
  number={2},
  pages={911--940},
  year={2008},
  publisher={SIAM}
}

@article{yousefian2012stochastic,
  title={On stochastic gradient and subgradient methods with adaptive steplength sequences},
  author={Yousefian, Farzad and Nedi{\'c}, Angelia and Shanbhag, Uday V},
  journal={Automatica},
  volume={48},
  number={1},
  pages={56--67},
  year={2012},
  publisher={Elsevier}
}

@article{nesterov2017random,
  title={Random gradient-free minimization of convex functions},
  author={Nesterov, Yurii and Spokoiny, Vladimir},
  journal={Foundations of Computational Mathematics},
  volume={17},
  pages={527--566},
  year={2017},
  publisher={Springer}
}

@article{patriksson1999stochastic,
  title={Stochastic mathematical programs with equilibrium constraints},
  author={Patriksson, Michael and Wynter, Laura},
  journal={Operations research letters},
  volume={25},
  number={4},
  pages={159--167},
  year={1999},
  publisher={Elsevier}
}

@book{luo1996mathematical,
  title={Mathematical programs with equilibrium constraints},
  author={Luo, Zhi-Quan and Pang, Jong-Shi and Ralph, Daniel},
  year={1996},
  publisher={Cambridge University Press}
}

@article{outrata2000generalized,
  title={A generalized mathematical program with equilibrium constraints},
  author={Outrata, Jiri V},
  journal={SIAM Journal on Control and Optimization},
  volume={38},
  number={5},
  pages={1623--1638},
  year={2000},
  publisher={SIAM}
}

@techreport{dirkse2002mathematical,
  title={Mathematical programs with equilibrium constraints: Automatic reformulation and solution via constraint optimization},
  author={Dirkse, SP and Ferris, MC and Meeraus, A},
  year={2002},
  institution={Technical Report NA-02/11, Oxford University Computing Laboratory}
}

@article{fletcher2006local,
  title={Local convergence of SQP methods for mathematical programs with equilibrium constraints},
  author={Fletcher, Roger and Leyffer, Sven and Ralph, Danny and Scholtes, Stefan},
  journal={SIAM Journal on Optimization},
  volume={17},
  number={1},
  pages={259--286},
  year={2006},
  publisher={SIAM}
}

@article{shapiro2008stochastic,
  title={Stochastic mathematical programs with equilibrium constraints, modelling and sample average approximation},
  author={Shapiro, Alexander and Xu, Huifu},
  journal={Optimization},
  volume={57},
  number={3},
  pages={395--418},
  year={2008},
  publisher={Taylor \& Francis}
}

@article{sun2022distributed,
  title={Distributed optimization based on gradient tracking revisited: Enhancing convergence rate via surrogation},
  author={Sun, Ying and Scutari, Gesualdo and Daneshmand, Amir},
  journal={SIAM Journal on Optimization},
  volume={32},
  number={2},
  pages={354--385},
  year={2022},
  publisher={SIAM}
}

@book{clarke2008nonsmooth,
  title={Nonsmooth analysis and control theory},
  author={Clarke, Francis H and Ledyaev, Yuri S and Stern, Ronald J and Wolenski, Peter R},
  volume={178},
  year={2008},
  publisher={Springer Science \& Business Media}
}

@article{goldstein1977optimization,
  title={Optimization of {L}ipschitz continuous functions},
  author={Goldstein, AA},
  journal={Mathematical Programming},
  volume={13},
  pages={14--22},
  year={1977},
  publisher={Springer}
}

@article{saadatniaki2020decentralized,
  title={Decentralized optimization over time-varying directed graphs with row and column-stochastic matrices},
  author={Saadatniaki, Fakhteh and Xin, Ran and Khan, Usman A},
  journal={IEEE Transactions on Automatic Control},
  volume={65},
  number={11},
  pages={4769--4780},
  year={2020},
  publisher={IEEE}
}

@article{pu2020push,
  title={Push--pull gradient methods for distributed optimization in networks},
  author={Pu, Shi and Shi, Wei and Xu, Jinming and Nedi{\'c}, Angelia},
  journal={IEEE Transactions on Automatic Control},
  volume={66},
  number={1},
  pages={1--16},
  year={2020},
  publisher={IEEE}
}

@article{xin2018linear,
  title={A linear algorithm for optimization over directed graphs with geometric convergence},
  author={Xin, Ran and Khan, Usman A},
  journal={IEEE Control Systems Letters},
  volume={2},
  number={3},
  pages={315--320},
  year={2018},
  publisher={IEEE}
}

@article{qu2017harnessing,
  title={Harnessing smoothness to accelerate distributed optimization},
  author={Qu, Guannan and Li, Na},
  journal={IEEE Transactions on Control of Network Systems},
  volume={5},
  number={3},
  pages={1245--1260},
  year={2017},
  publisher={IEEE}
}

@article{liu2024distributed,
  title={Distributed constrained optimization algorithms with linear convergence rate over time-varying unbalanced graphs},
  author={Liu, Hongzhe and Yu, Wenwu and Zheng, Wei Xing and Nedi{\'c}, Angelia and Zhu, Yanan},
  journal={Automatica},
  volume={159},
  pages={111346},
  year={2024},
  publisher={Elsevier}
}

@inproceedings{mhanna2023single,
  title={Single point-based distributed zeroth-order optimization with a non-convex stochastic objective function},
  author={Mhanna, Elissa and Assaad, Mohamad},
  booktitle={International Conference on Machine Learning},
  pages={24701--24719},
  year={2023},
  organization={PMLR}
}

@INPROCEEDINGS{ebrahimi2023distributed,
  author={Ebrahimi, Mohammadjavad and Shanbhag, Uday V. and Yousefian, Farzad},
  booktitle={2024 American Control Conference (ACC)}, 
  title={Distributed Gradient Tracking Methods with Guarantees for Computing a Solution to Stochastic MPECs}, 
  year={2024},
  volume={},
  number={},
  pages={2182-2187},
  keywords={Smoothing methods;Stochastic processes;Games;Linear programming;Mathematical models;Complexity theory;Numerical models},
  doi={10.23919/ACC60939.2024.10644388}}

@article{hajinezhad2019zone,
  title={ZONE: Zeroth-order nonconvex multiagent optimization over networks},
  author={Hajinezhad, Davood and Hong, Mingyi and Garcia, Alfredo},
  journal={IEEE Transactions on Automatic Control},
  volume={64},
  number={10},
  pages={3995--4010},
  year={2019},
  publisher={IEEE}
}

@article{samadi2023improved,
  title={Improved guarantees for optimal {N}ash equilibrium seeking and bilevel variational inequalities},
  author={Samadi, Sepideh and Yousefian, Farzad},
  journal={SIAM Journal on Optimization},
  volume={35},
  number={1},
  pages={369--399},
  year={2025},
  publisher={SIAM}
}

@article{qiu2023j,
  title={Zeroth-Order Federated Methods for Stochastic MPECs and Nondifferentiable Nonconvex Hierarchical Optimization},
  author={Qiu, Yuyang and Shanbhag, Uday V and Yousefian, Farzad},
  journal={Mathematics of Operations Research (under review), arXiv preprint arXiv:2309.13024v3},
  year={2024}
}

@article{qiu2023zeroth,
  title={Zeroth-Order Methods for Nondifferentiable, Nonconvex, and Hierarchical Federated Optimization},
  author={Qiu, Yuyang and Shanbhag, Uday and Yousefian, Farzad},
  journal={Advances in Neural Information Processing Systems},
  volume={36},
  year={2023}
}

@phdthesis{rockafellar1963convex,
  title={Convex functions and dual extremum problems},
  author={Rockafellar, Ralph Tyrrell},
  year={1963},
  school={Harvard University}
}

@article{moreau1963proprietes,
  title={Propri{\'e}t{\'e}s des applications {\guillemotleft}prox{\guillemotright}},
  author={Moreau, Jean Jacques},
  journal={Comptes rendus hebdomadaires des s{\'e}ances de l'Acad{\'e}mie des sciences},
  volume={256},
  pages={1069--1071},
  year={1963}
}

@book{boyd2004convex,
  title={Convex optimization},
  author={Boyd, Stephen and Vandenberghe, Lieven},
  year={2004},
  publisher={Cambridge university press}
}

@inproceedings{bertsekas1972stochastic,
  title={Stochastic optimization problems with nondifferentiable cost functionals with an application in stochastic programming},
  author={Bertsekas, Dimitri P},
  booktitle={Proceedings of the 1972 IEEE Conference on Decision and Control and 11th Symposium on Adaptive Processes},
  pages={555--559},
  year={1972},
  organization={IEEE}
}

@article{lin2022gradient,
  title={Gradient-free methods for deterministic and stochastic nonsmooth nonconvex optimization},
  author={Lin, Tianyi and Zheng, Zeyu and Jordan, Michael},
  journal={Advances in Neural Information Processing Systems},
  volume={35},
  pages={26160--26175},
  year={2022}
}

@book{horn2012matrix,
  title={Matrix analysis},
  author={Horn, Roger A and Johnson, Charles R},
  year={2012},
  publisher={Cambridge university press}
}

@article{xin2021improved,
  title={An improved convergence analysis for decentralized online stochastic non-convex optimization},
  author={Xin, Ran and Khan, Usman A and Kar, Soummya},
  journal={IEEE Transactions on Signal Processing},
  volume={69},
  pages={1842--1858},
  year={2021},
  publisher={IEEE}
}

@book{wainwright2019high,
  title={High-dimensional statistics: A non-asymptotic viewpoint},
  author={Wainwright, Martin J},
  volume={48},
  year={2019},
  publisher={Cambridge university press}
}

@article{kornowski2024algorithm,
  title={An algorithm with optimal dimension-dependence for zero-order nonsmooth nonconvex stochastic optimization},
  author={Kornowski, Guy and Shamir, Ohad},
  journal={Journal of Machine Learning Research},
  volume={25},
  number={122},
  pages={1--14},
  year={2024}
}

@article{ebrahimi2025regularized,
  title={Regularized Federated Methods with Universal Guarantees for Simple Bilevel Optimization},
  author={Ebrahimi, Mohammadjavad and Qiu, Yuyang and Cui, Shisheng and Yousefian, Farzad},
  journal={arXiv preprint arXiv:2503.08634},
  year={2025}
}

@article{evgrafov2004existence,
  title={On the existence of solutions to stochastic mathematical programs with equilibrium constraints},
  author={Evgrafov, Anton and Patriksson, Michael},
  journal={Journal of Optimization Theory and Applications},
  volume={121},
  pages={65--76},
  year={2004},
  publisher={Springer}
}

@article{ghiasvand2025robust,
  title={Robust decentralized learning with local updates and gradient tracking},
  author={Ghiasvand, Sajjad and Reisizadeh, Amirhossein and Alizadeh, Mahnoosh and Pedarsani, Ramtin},
  journal={IEEE Transactions on Networking},
  year={2025},
  publisher={IEEE}
}

@inproceedings{ghiasvand2024communication,
  title={Communication-efficient and Decentralized Federated Minimax Optimization},
  author={Ghiasvand, Sajjad and Reisizadeh, Amirhossein and Alizadeh, Mahnoosh and Pedarsani, Ramtin},
  booktitle={2024 60th Annual Allerton Conference on Communication, Control, and Computing},
  pages={01--07},
  year={2024},
  organization={IEEE}
}

@inproceedings{lin2024aaai,
  title     = {Decentralized Gradient-Free Methods for Stochastic Non-smooth Non-convex Optimization},
  author    = {Lin, Zhenwei and Xia, Jingfan and Deng, Qi and Luo, Luo},
  booktitle = {Proceedings of the AAAI Conference on Artificial Intelligence},
  volume    = {38},
  number    = {16},
  pages     = {17477--17486},
  year      = {2024},
  month     = mar,
  doi       = {10.1609/aaai.v38i16.29697}
}

@article{jiang2008stochastic,
  title={Stochastic approximation approaches to the stochastic variational inequality problem},
  author={Jiang, Houyuan and Xu, Huifu},
  journal={IEEE Transactions on Automatic Control},
  volume={53},
  number={6},
  pages={1462--1475},
  year={2008},
  publisher={IEEE}
}

@book{shapiro2021lectures,
  title={Lectures on stochastic programming: modeling and theory},
  author={Shapiro, Alexander and Dentcheva, Darinka and Ruszczynski, Andrzej},
  year={2021},
  publisher={SIAM}
}

@article{santoso2005stochastic,
  title={A stochastic programming approach for supply chain network design under uncertainty},
  author={Santoso, Tjendera and Ahmed, Shabbir and Goetschalckx, Marc and Shapiro, Alexander},
  journal={European Journal of Operational Research},
  volume={167},
  number={1},
  pages={96--115},
  year={2005},
  publisher={Elsevier}
}

@book{birge1997introduction,
  title={Introduction to stochastic programming},
  author={Birge, John R and Louveaux, Francois},
  year={1997},
  publisher={Springer}
}

@article{jalilzadeh2024stochastic,
  title={Stochastic approximation for estimating the price of stability in stochastic nash games},
  author={Jalilzadeh, Afrooz and Yousefian, Farzad and Ebrahimi, Mohammadjavad},
  journal={ACM Transactions on Modeling and Computer Simulation},
  volume={34},
  number={2},
  pages={1--24},
  year={2024},
  publisher={ACM New York, NY}
}

@article{marrinan2026zeroth,
  title={Zeroth-order gradient and quasi-newton methods for nonsmooth nonconvex stochastic optimization},
  author={Marrinan, Luke and Shanbhag, Uday V and Yousefian, Farzad},
  journal={SIAM Journal on Optimization},
  volume={36},
  number={2},
  pages={564--596},
  year={2026},
  publisher={SIAM}
}

@inproceedings{sahinoglu2024online,
  title={An online optimization perspective on first-order and zero-order decentralized nonsmooth nonconvex stochastic optimization},
  author={Sahinoglu, Emre and Shahrampour, Shahin},
  booktitle={Proceedings of the 41st International Conference on Machine Learning},
  pages={43043--43059},
  year={2024}
}

@article{mehrotra2009implementation,
  title={On the implementation of interior point decomposition algorithms for two-stage stochastic conic programs},
  author={Mehrotra, Sanjay and {\"O}zevin, M G{\"o}khan},
  journal={SIAM Journal on Optimization},
  volume={19},
  number={4},
  pages={1846--1880},
  year={2009},
  publisher={SIAM}
}

@article{liu2020two,
  title={Two-stage stochastic programming with linearly bi-parameterized quadratic recourse},
  author={Liu, Junyi and Cui, Ying and Pang, Jong-Shi and Sen, Suvrajeet},
  journal={SIAM Journal on Optimization},
  volume={30},
  number={3},
  pages={2530--2558},
  year={2020},
  publisher={SIAM}
}

@article{t2020personalized,
  title={Personalized federated learning with moreau envelopes},
  author={T Dinh, Canh and Tran, Nguyen and Nguyen, Josh},
  journal={Advances in neural information processing systems},
  volume={33},
  pages={21394--21405},
  year={2020}
}

@inproceedings{li2021ditto,
  title={Ditto: Fair and robust federated learning through personalization},
  author={Li, Tian and Hu, Shengyuan and Beirami, Ahmad and Smith, Virginia},
  booktitle={International conference on machine learning},
  pages={6357--6368},
  year={2021},
  organization={PMLR}
}

@article{ghazanfariharandi2025value,
  title={Value-oriented forecast combinations for unit commitment},
  author={Ghazanfariharandi, Mehrnoush and Mieth, Robert},
  journal={IEEE Control Systems Letters},
  year={2025},
  publisher={IEEE}
}
\section{Appendix}\label{sec:app}

\noindent {\bf Proof of Lemma~\ref{lem:smoothing_props}.}
\noindent (i) The first equation can be shown in a similar vein to {\cite[Lemma 1]{cui2023complexity}}. To show the second equation, we have
\begin{align*}
\left(\tfrac{n}{2\eta}\right)\mathbb{E}_{{\bf v}\in \mathbb{S}}\left[\left(h(x+\eta {\bf v})-h(x-\eta {\bf v})\right){\bf v}\right]&=\left(\tfrac{n}{2\eta}\right)\mathbb{E}_{{\bf v}\in \mathbb{S}}\left[\left(h(x+\eta {\bf v})\right){\bf v}\right]+\left(\tfrac{n}{2\eta}\right)\mathbb{E}_{{\bf v}\in \mathbb{S}}\left[\left(h(x-\eta {\bf v})\right)(-{\bf v})\right].
\end{align*}
Due to the symmetric distribution of $v$ around the origin, we have $\mathbb{E}_{{\bf v}\in \mathbb{S}}\left[\left(h(x-\eta {\bf v})\right)(-{\bf v})\right]=\mathbb{E}_{{\bf v}\in \mathbb{S}}\left[\left(h(x+\eta {\bf v})\right){\bf v}\right]$. 
we obtain
$
\left(\tfrac{n}{2\eta}\right)\mathbb{E}_{{\bf v}\in \mathbb{S}}\left[\left(h(x+\eta {\bf v})-h(x-\eta {\bf v})\right){\bf v}\right]=\left(\tfrac{n}{\eta}\right)\mathbb{E}_{{\bf v}\in \mathbb{S}}\left[\left(h(x+\eta {\bf v})\right){\bf v}\right].
$
 \noindent (ii) The proof can be done in a similar vein to {\cite[Lemma 1]{cui2023complexity}}.

\noindent (iii) In view of {\cite[Prop. 2.2]{lin2022gradient}}, we have
$\|\nabla h^{\eta}(x)-\nabla h^{\eta}(y)\|\le \tfrac{2c_{n-1}}{c_{n}}\tfrac{L_0}{\eta}\|x-y\| \ \text{for any} \ x , y \in \mathbb{R}^n,$ where $\tfrac{2c_{n-1}}{c_{n}} = \tfrac{n!!}{(n-1)!!}$ if $n$ is odd, and  $\tfrac{2c_{n-1}}{c_{n}}=\tfrac{2}{\pi}\tfrac{n!!}{(n-1)!!}$ if $n$ is even. We use mathematical induction to show $\tfrac{2c_{n-1}}{c_{n}}\le\sqrt{n}$. Consider the case when $n$ is odd. For  $n = 1$  we have $\frac{n!!}{(n-1)!!} = \frac{1}{1} \leq \sqrt{1}$. Suppose the inequality holds for some odd $k$, i.e.,
$\frac{k!!}{(k-1)!!} \leq \sqrt{k}.$ For $k+2$, by the definition of the double factorial, we obtain
$\frac{(k+2)!!}{(k+1)!!} = \frac{k+2}{k+1} \cdot \frac{k!!}{(k-1)!!}.$ Invoking the inductive hypothesis, we obtain $\frac{(k+2)!!}{(k+1)!!} \leq \frac{k+2}{k+1} \cdot \sqrt{k}.$ We now need to prove that $\frac{k+2}{k+1} \cdot \sqrt{k} \leq \sqrt{k+2}$. Squaring both sides and rearranging the terms, we obtain $k^3 + 4k^2 + 4k \leq k^3 + 4k^2 + 5k + 2$, which is hold for all $k\ge1$. Therefore, the inequality holds for some odd $k+2$. Next, consider the case when $n$ is even. For $n = 2$, we have $\frac{2}{\pi} \cdot \frac{2!!}{(2-1)!!} =\frac{4}{\pi}\leq \sqrt{2}$. Suppose the inequality holds for some even $k \geq 2$, i.e., $\frac{2}{\pi} \cdot \frac{k!!}{(k-1)!!} \leq \sqrt{k}.$ For $k+2$, we have
$\frac{2}{\pi} \cdot \frac{(k+2)!!}{(k+1)!!} = \frac{2}{\pi} \cdot \frac{k+2}{k+1} \cdot \frac{k!!}{(k-1)!!}.$  Invoking the inductive hypothesis, we obtain $\frac{2}{\pi} \cdot \frac{(k+2)!!}{(k+1)!!} \leq \frac{k+2}{k+1} \cdot \sqrt{k}.$ Recall that we showed $\frac{k+2}{k+1} \cdot \sqrt{k} \leq \sqrt{k+2}$, completing the proof. 
\mee{$\hfill \Box$}

\medskip 
\noindent {\bf Proof of Lemma~\ref{Lemma:main bound for the gradient tracker in terms of other main terms-inexact}.} \fy{Invoking the gradient tracking update \eqref{eqn:R1} and \Rme{Lemma~\ref{lemma:multiple parts}(i)}, we have
\begin{align}
\|{ \mathbf{y}}_{k+2}-\mathbf{1}{\bar{ {y}}}_{k+2}\|^2&=\|\mathbf{W}({ \mathbf{y}}_{k+1}+\nabla \mathbf f^\eta(\mathbf{x}_{k+1})+\boldsymbol{\delta}_{k+1}^\eta+\mathbf{e}_{k+1}^{\eta,\varepsilon_{k+1}}-(\nabla \mathbf f^\eta(\mathbf{x}_{k})+\boldsymbol{\delta}_{k}^\eta+\mathbf{e}_k^{\eta,\varepsilon_k}))\notag\\
&-\mathbf{1}({ \bar{y}}_{k+1}+\overline{\nabla f^\eta}(\mathbf{x}_{k+1})+\bar{\delta}_{k+1}^\eta+\bar{\us{\bf e}}_{k+1}^{\eta,\varepsilon_{k+1}}-(\overline{\nabla f^\eta}(\mathbf{x}_k)+\bar{\delta}_{k}^\eta+\bar{e}_k^{\eta,\varepsilon_k}))\|^2\notag\\
&=\|(\mathbf{W}{ \mathbf{y}}_{k+1}-\mathbf{1}{ \bar{y}}_{k+1})\notag\\
&+(\mathbf{W}-\tfrac{1}{m}\mathbf{1}\mathbf{1}^\top)(\nabla \mathbf f^\eta(\mathbf{x}_{k+1})+\boldsymbol{\delta}_{k+1}^\eta+\mathbf{e}_{k+1}^{\eta,\varepsilon_{k+1}}-(\nabla \mathbf f^\eta(\mathbf{x}_{k})+\boldsymbol{\delta}_{k}^\eta+\mathbf{e}_k^{\eta,\varepsilon_k}))\|^2\notag.
\end{align}
Invoking Lemma~\ref{lem:lambda_w}, we obtain
\begin{align}
&\|{ \mathbf{y}}_{k+2}-\mathbf{1}{\bar{ {y}}}_{k+2}\|^2\le \lambda_{\mathbf{W}}^2\|{ \mathbf{y}}_{k+1}-\mathbf{1}{ \bar{y}}_{k+1}\|^2 \notag\\ 
&+\lambda_{\mathbf{W}}^2\|\nabla \mathbf f^\eta(\mathbf{x}_{k+1})+\boldsymbol{\delta}_{k+1}^\eta+\mathbf{e}_{k+1}^{\eta,\varepsilon_{k+1}}-(\nabla \mathbf f^\eta(\mathbf{x}_{k})+\boldsymbol{\delta}_{k}^\eta+\mathbf{e}_k^{\eta,\varepsilon_k}))\|^2\notag\\
&+ 2\langle (\mathbf{W}-\tfrac{1}{m}\mathbf{1}\mathbf{1}^\top) \mathbf{y}_{k+1},(\mathbf{W}-\tfrac{1}{m}\mathbf{1}\mathbf{1}^\top)(\nabla \mathbf f^\eta(\mathbf{x}_{k+1})+\boldsymbol{\delta}_{k+1}^\eta+\mathbf{e}_{k+1}^{\eta,\varepsilon_{k+1}}-(\nabla \mathbf f^\eta(\mathbf{x}_{k})+\boldsymbol{\delta}_{k}^\eta+\mathbf{e}_k^{\eta,\varepsilon_k})) \rangle \notag\\
&\le \lambda_{\mathbf{W}}^2\|{ \mathbf{y}}_{k+1}-\mathbf{1}{ \bar{y}}_{k+1}\|^2 \notag\\
&+5\lambda_{\mathbf{W}}^2(\underbrace{\|\nabla \mathbf f^\eta(\mathbf{x}_{k+1})-\nabla \mathbf f^\eta(\mathbf{x}_{k})\|^2}_{\us{\scriptsize \mbox{Term } T_1}} +\underbrace{\|\boldsymbol{\delta}_{k+1}^\eta\|^2+\|\boldsymbol{\delta}_{k}^\eta\|^2 +\|\mathbf{e}_{k+1}^{\eta,\varepsilon_{k+1}}\|^2+\|\mathbf{e}_k^{\eta,\varepsilon_k}\|^2)}_{\us{\scriptsize \mbox{Term } T_2}}\notag\\
&+ 2\underbrace{\langle (\mathbf{W}-\tfrac{1}{m}\mathbf{1}\mathbf{1}^\top) \mathbf{y}_{k+1},(\mathbf{W}-\tfrac{1}{m}\mathbf{1}\mathbf{1}^\top)(\nabla \mathbf f^\eta(\mathbf{x}_{k+1})-\nabla \mathbf f^\eta(\mathbf{x}_{k}))\rangle}_{\us{\scriptsize \mbox{Term } T_3}}
\notag\\
&+ 2\underbrace{\langle (\mathbf{W}-\tfrac{1}{m}\mathbf{1}\mathbf{1}^\top) \mathbf{y}_{k+1},(\mathbf{W}-\tfrac{1}{m}\mathbf{1}\mathbf{1}^\top)(\boldsymbol{\delta}_{k+1}^\eta-\boldsymbol{\delta}_{k}^\eta) \rangle}_{\us{\scriptsize \mbox{Term } T_4}}\notag\\
&+ 2\underbrace{\langle (\mathbf{W}-\tfrac{1}{m}\mathbf{1}\mathbf{1}^\top) \mathbf{y}_{k+1},(\mathbf{W}-\tfrac{1}{m}\mathbf{1}\mathbf{1}^\top)(\mathbf{e}_{k+1}^{\eta,\varepsilon_{k+1}}-\mathbf{e}_k^{\eta,\varepsilon_k}) \rangle}_{\us{\scriptsize \mbox{Term } T_5}}. \label{eq:bound for y-bar y-inexact}
\end{align}
To analyze the \us{expressions} on the right-hand side of the preceding inequality, {denoted by \us{Terms} $T_1,T_2,T_3,T_4$, and $T_5$}, in the following, we prove some intermediate inequalities labeled as {\it claims}. 
\smallskip 

\noindent {\bf Claim 1 {(\textit{Analysis of \us{Term} $T_1$})}. } We have $\|\nabla \mathbf f^\eta(\mathbf{x}_{k+1})-\nabla \mathbf f^\eta(\mathbf{x}_{k})\|^2 \leq \mj{\left(\tfrac{\sqrt{n}L_0}{\eta}\right)^2}\underbrace{\|\mathbf{x}_{k+1}-\mathbf{x}_{k}\|^2}_{\us{\scriptsize \mbox{Term } T_6}}$.

\noindent {\it Proof of Claim 1.} Recall from Remark~\ref{rem:smoothness_of_f_i} that that $f_i$ is \mj{$\tfrac{\sqrt{n}L_0}{\eta}$}-smooth. We have 
\begin{align*}
\|\nabla \mathbf f^\eta(\mathbf{x}_{k+1})-\nabla \mathbf f^\eta(\mathbf{x}_{k})\|^2& =\textstyle\sum_{i=1}^m \|\nabla f_i^\eta(x_{i,k+1})-\nabla f_i^\eta(x_{i,k})\|^2 \\
&\leq \mj{\left(\tfrac{\sqrt{n}L_0}{\eta}\right)^2}\textstyle\sum_{i=1}^m\|   x_{i,k+1}  -  x_{i,k}\|^2 =\mj{\left(\tfrac{\sqrt{n}L_0}{\eta}\right)^2}\|\mathbf{x}_{k+1}-\mathbf{x}_{k}\|^2. 
\end{align*}

\smallskip 

\noindent {\bf Claim 2 {(\textit{Analysis of \us{Term} $T_6$})}.} We have 
\begin{align*}
\mathbb{E}[\|\mathbf{x}_{k+1}-\mathbf{x}_{k}\|^2]&\le  6\gamma_k^2\lambda_{\mathbf{W}}^2\mathbb{E}[\|\mathbf{y}_{k+1}-\mathbf{1}\bar{y}_{k+1}\|^2]+9\mathbb{E}[\|\mathbf{x}_{k}-\mathbf{1}\bar{x}_{k}\|^2]\\
&+6m\gamma_k^2\mathbb{E}[\|\overline{\nabla{f}^\eta}(\mathbf{x}_k)\|^2]+\mj{96\sqrt{2\pi}\gamma_k^2nL_0^2}+\mj{\left(\tfrac{6m\tilde L_0^2n^2\varepsilon_k\gamma_k^2}{\eta^2}\right)}.
\end{align*}

\noindent {\it Proof of Claim 2.}  Invoking  \Rme{Lemma~\ref{lemma:multiple parts}(i)}, we have
\begin{align*}
 \|\mathbf{x}_{k+1}-\mathbf{x}_{k}\|^2&= \|\mathbf{x}_{k+1}-\mathbf{1}\bar{x}_{k+1}+\mathbf{1}\bar{x}_{k+1}-\mathbf{1}\bar{x}_{k}+\mathbf{1}\bar{x}_{k}-\mathbf{x}_{k}\|^2\\
&\le 3 \|\mathbf{x}_{k+1}-\mathbf{1}\bar{x}_{k+1}\|^2+3m\gamma_k^2 \|\overline{\nabla{f}^\eta}(\mathbf{x}_k)+\bar \delta_{k}^\eta+\bar{e}_k^{\eta,\varepsilon_k}\|^2+3\|\mathbf{x}_{k}-\mathbf{1}\bar{x}_{k}\|^2.
\end{align*}
Invoking \Rme{Lemma~\ref{lemma: multiple inequalities for the metrics}(i)} with $\theta=1$ to bound the first term on the right, we obtain
\begin{align*}
 \|\mathbf{x}_{k+1}-\mathbf{x}_{k}\|^2&\le 6\gamma_k^2\lambda_{\mathbf{W}}^2 \|\mathbf{y}_{k+1}-\mathbf{1}\bar{y}_{k+1}\|^2+9 \|\mathbf{x}_{k}-\mathbf{1}\bar{x}_{k}\|^2+6m\gamma_k^2 \|\overline{\nabla{f}^\eta}(\mathbf{x}_k)+\bar \delta_{k}^\eta\|^2+6m\gamma_k^2 \|\bar{e}_k^{\eta,\varepsilon_k}\|^2.
\end{align*}
Taking conditional expectations \us{over} the \us{prior} inequality and invoking Lemma~\ref{lemma:multiple parts}, we obtain
\begin{align*}
\mathbb{E}[\|\mathbf{x}_{k+1}-\mathbf{x}_{k}\|^2\mid \mathcal{F}_k]&\le 6\gamma_k^2\lambda_{\mathbf{W}}^2\mathbb{E}[\|\mathbf{y}_{k+1}-\mathbf{1}\bar{y}_{k+1}\|^2\mid \mathcal{F}_k]+9\|\mathbf{x}_{k}-\mathbf{1}\bar{x}_{k}\|^2\\
&+6m\gamma_k^2\left(\|\overline{\nabla{f}^\eta}(\mathbf{x}_k)\|^2+\mathbb{E}[\|\bar \delta_{k}^\eta\|^2\mid \mathcal{F}_k] + 2(\overline{\nabla{f}^\eta}(\mathbf{x}_k))^\top\mathbb{E}[\bar \delta_{k}^\eta\mid \mathcal{F}_k]\right)+ { \tfrac{  6m \tilde{L}_0^2n^2\varepsilon_k  \gamma_k^2}{\eta^2} }\\
& \leq 6\gamma_k^2\lambda_{\mathbf{W}}^2\mathbb{E}[\|\mathbf{y}_{k+1}-\mathbf{1}\bar{y}_{k+1}\|^2\mid \mathcal{F}_k]+9\|\mathbf{x}_{k}-\mathbf{1}\bar{x}_{k}\|^2\\
&+6m\gamma_k^2\|\overline{\nabla{f}^\eta}(\mathbf{x}_k)\|^2+\mj{96\sqrt{2\pi}\gamma_k^2nL_0^2}+{ \tfrac{ 6m \tilde{L}_0^2n^2\varepsilon_k  \gamma_k^2}{\eta^2} }.
\end{align*}
Taking expectations on both sides, we obtain the result.

\smallskip 
 
\noindent {\bf Claim 3 {(\textit{Analysis of \us{Term } $T_2$})}.} We have 
$$\mathbb{E}[\|\boldsymbol{\delta}_{k+1}^\eta\|^2+\|\boldsymbol{\delta}_{k}^\eta\|^2 +\|\mathbf{e}_{k+1}^{\eta,\varepsilon_{k+1}}\|^2+\|\mathbf{e}_k^{\eta,\varepsilon_k}\|^2\mj{\mid \mathcal{F}_k}] \leq \mj{32m\sqrt{2\pi}nL_0^2}+\mj{\left(\tfrac{m \tilde L_0^2n^2(\varepsilon_{k+1}+\varepsilon_k)}{\eta^2}\right)}.$$

\noindent {\it Proof of Claim 3.} This result follows immediately from Lemma~\ref{lemma:g_ik_eta_props}.\\

\medskip 

\noindent {\bf Claim 4 \us{(\textit{Analysis of \us{Term}  $T_3$})}.} For any arbitrary scalars $\beta_1,\beta_1>0$, we have 
\begin{align*}
&\mathbb{E}[\langle (\mathbf{W}-\tfrac{1}{m}\mathbf{1}\mathbf{1}^\top) \mathbf{y}_{k+1},(\mathbf{W}-\tfrac{1}{m}\mathbf{1}\mathbf{1}^\top)(\nabla \mathbf f^\eta(\mathbf{x}_{k+1})-\nabla \mathbf f^\eta(\mathbf{x}_{k}))\rangle]  \\
&\le \left(\tfrac{\lambda_{\mathbf{W}}\gamma_kL_0\mj{\sqrt{n}}}{\eta}+\mee{\tfrac{1}{2}}\beta_1+\beta_2\right)\lambda_{\mathbf{W}}^2\mathbb{E}[\|\mathbf{y}_{k+1}-\mathbf{1}\bar y_{k+1}\|^2] +\mj{\left(\tfrac{\lambda_{\mathbf{W}}^2L_0^2n}{\beta_2\eta^2}\right)}\mathbb{E}[\|\mathbf{x}_{k}-\mathbf{1}\bar x_{k}\|^2]\\
&+\left(\tfrac{\mee{3}\lambda_{\mathbf{W}}^2\gamma_k^2L_0^2\mj{n}m}{\mee{2}\beta_1\eta^2}\right)\mathbb{E}[\|\overline{\nabla{f}^\eta}(\mathbf{x}_k)\|^2]+\mj{\left(\tfrac{32\sqrt{2\pi}\lambda_{\mathbf{W}}^2\gamma_k^2L_0^4n^2}{\beta_1\eta^2}\right) }
+\mj{\left(\tfrac{\mee{3}\lambda_{\mathbf{W}}^2\gamma_k^2L_0^2\tilde L_0^2 \mj{n^3} m      
\varepsilon_k}{\mee{2}\beta_1\eta^4}\right)}   .
\end{align*}

\noindent {\it Proof of Claim 4.} From the column
stochasticity of $\mathbf{W}$ we have
$(\mathbf{W}-\tfrac{1}{m}\mathbf{1}\mathbf{1}^\top)\tfrac{1}{m}\mathbf{1}\mathbf{1}^\top=\mathbf{0}_{m\times
n}$. Using this identity, \us{recalling that  
$\lambda_{\mathbf{W}} = \| {\bf W} - \tfrac{1}{m} {\bf 1}{\bf 1}^\top \|$}, \us{and by invoking the} 
$\left(\frac{L_0\sqrt{n}}{\eta}\right)$-smoothness of
$\nabla {\bf f}^{\eta}$ and Claim 1, we have
\begin{align*}
&\langle (\mathbf{W}-\tfrac{1}{m}\mathbf{1}\mathbf{1}^\top) \mathbf{y}_{k+1},(\mathbf{W}-\tfrac{1}{m}\mathbf{1}\mathbf{1}^\top)(\nabla \mathbf f^\eta(\mathbf{x}_{k+1})-\nabla \mathbf f^\eta(\mathbf{x}_{k}))\rangle\\
&=\langle(\mathbf{W}-\tfrac{1}{m}\mathbf{1}\mathbf{1}^\top) (\mathbf{y}_{k+1}-\tfrac{1}{m}\mathbf{1}\mathbf{1}^\top\mathbf{y}_{k+1}) , (\mathbf{W}-\tfrac{1}{m}\mathbf{1}\mathbf{1}^\top)(\nabla \mathbf f^\eta(\mathbf{x}_{k+1})-\nabla \mathbf f^\eta(\mathbf{x}_{k}))\rangle \\
&\le \mj{\tfrac{\lambda_{\mathbf{W}}^2L_0\sqrt{n}}{\eta}}\|\mathbf{y}_{k+1}- \mathbf{1}\bar{y}_{k+1}\|\|\mathbf{x}_{k+1}-\mathbf{x}_{k}\|.
\end{align*}
Invoking Lemma~\ref{lemma: multiple inequalities for the metrics}, we have
\begin{align*}
&\|\mathbf{x}_{k+1}-\mathbf{x}_{k}\|=\|\mathbf{x}_{k+1}-\tfrac{1}{m}\mathbf{1}\mathbf{1}^\top\mathbf{x}_{k+1}+\tfrac{1}{m}\mathbf{1}\mathbf{1}^\top\mathbf{x}_{k+1}-\tfrac{1}{m}\mathbf{1}\mathbf{1}^\top\mathbf{x}_{k}+\tfrac{1}{m}\mathbf{1}\mathbf{1}^\top\mathbf{x}_{k}-\mathbf{x}_{k}\|\\
&\le \|\mathbf{x}_{k+1}- \mathbf{1} \bar{x}_{k+1}\|+\gamma_k\sqrt{m}\|\overline{\nabla{f}^\eta}(\mathbf{x}_k)+\bar \delta_{k}^\eta+\bar{e}_k^{\eta,\varepsilon_k}\|+\|\mathbf{x}_{k}-\tfrac{1}{m}\mathbf{1}\mathbf{1}^\top\mathbf{x}_{k}\|\\
&\le \us{\gamma_k \lambda_{\bf W} \|\mathbf{y}_{k+1}- \mathbf{1} \bar{y}_{k+1}\| + \| {\bf W} \mathbf{x}_k - {\bf 1} \bar{x}_k\|}+\gamma_k\sqrt{m}\|\overline{\nabla{f}^\eta}(\mathbf{x}_k)+\bar \delta_{k}^\eta+\bar{e}_k^{\eta,\varepsilon_k}\|+\|\mathbf{x}_{k}-\tfrac{1}{m}\mathbf{1}\mathbf{1}^\top\mathbf{x}_{k}\|\\
&\overset{\tiny \us{\mbox{Lemma}~\ref{lem:lambda_w}}}{\le} \us{\gamma_k \lambda_{\bf W} \|\mathbf{y}_{k+1}- \mathbf{1} \bar{y}_{k+1}\| + \lambda_{\bf W}\|  \mathbf{x}_k - {\bf 1} \bar{x}_k\|}+\gamma_k\sqrt{m}\|\overline{\nabla{f}^\eta}(\mathbf{x}_k)+\bar \delta_{k}^\eta+\bar{e}_k^{\eta,\varepsilon_k}\|+\|\mathbf{x}_{k}-\mathbf{1}\us{\bar{x}_{k}}\|\\
&\overset{\us{\lambda_{\bf W}\le 1}}{\le} \gamma_k\lambda_{\mathbf{W}}\|\mathbf{y}_{k+1}- \mathbf{1} \bar{y}_{k+1}\|+\gamma_k\sqrt{m}\|\overline{\nabla{f}^\eta}(\mathbf{x}_k)+\bar \delta_{k}^\eta+\bar{e}_k^{\eta,\varepsilon_k}\|+2\|\mathbf{x}_{k}-\mathbf{1}\bar{x}_{k}\|.
\end{align*}
From the two preceding inequalities, we obtain
\begin{align}
&\langle (\mathbf{W}-\tfrac{1}{m}\mathbf{1}\mathbf{1}^\top) \mathbf{y}_{k+1},(\mathbf{W}-\tfrac{1}{m}\mathbf{1}\mathbf{1}^\top)(\nabla \mathbf f^\eta(\mathbf{x}_{k+1})-\nabla \mathbf f^\eta(\mathbf{x}_{k}))\rangle\notag\\
&\le \tfrac{\gamma_k\lambda_{\mathbf{W}}^3L_0\mj{\sqrt{n}}}{\eta}\|\mathbf{y}_{k+1}- \mathbf{1} \bar{y}_{k+1}\|^\mj{2}+\left(\lambda_{\mathbf{W}}\|\mathbf{y}_{k+1}- \mathbf{1} \bar{y}_{k+1}\|\right)\left(\tfrac{\sqrt{m}\gamma_k\lambda_{\mathbf{W}} L_0\mj{\sqrt{n}}}{\eta}\|\overline{\nabla{f}^\eta}(\mathbf{x}_k)+\bar \delta_{k}^\eta+\bar{e}^{\eta,\varepsilon_k}_k\|\right)\notag\\
&+2\left(\lambda_{\mathbf{W}}\|\mathbf{y}_{k+1}- \mathbf{1} \bar{y}_{k+1}\|\right)\left(\tfrac{\lambda_{\mathbf{W}} L_0\mj{\sqrt{n}}}{\eta}\|\mathbf{x}_{k}- \mathbf{1} \bar{x}_{k}\|\right).\label{eq:bound for lemma 7}
\end{align}
{Via} Young’s inequality, {the second term on the right can be bounded as }
\begin{align*}
&\left(\lambda_{\mathbf{W}}\|\mathbf{y}_{k+1}-\mathbf{1}\bar{y}_{k+1}\|\right)\left(\tfrac{\sqrt{m}\gamma_k\lambda_{\mathbf{W}} L_0\mj{\sqrt{n}}}{\eta}\|\overline{\nabla{f}^\eta}(\mathbf{x}_k)+\bar \delta_{k}^\eta+\bar{e}_k^{\eta,\varepsilon_k}\|\right)\\
&\le \mee{\tfrac{1}{2}}\beta_1\lambda_{\mathbf{W}}^2\|\mathbf{y}_{k+1}-\mathbf{1} \bar{y}_{k+1}\|^2+\frac{ m\gamma_k^2\lambda_{\mathbf{W}}^2 L_0^2\mj{{n}}}{\mee{2}\beta_1\eta^2}\|\overline{\nabla{f}^\eta}(\mathbf{x}_k)+\bar \delta_{k}^\eta+\bar{e}_k^{\eta,\varepsilon_k}\|^2,
\end{align*}
and $
 2\left(\lambda_{\mathbf{W}}\|\mathbf{y}_{k+1}- \mathbf{1}\bar{y}_{k+1}\|\right)\left(\tfrac{\lambda_{\mathbf{W}} L_0\mj{\sqrt{n}}}{\eta}\|\mathbf{x}_{k}- \mathbf{1} \bar{x}_{k}\|\right) \le \beta_2\lambda_{\mathbf{W}}^2\|\mathbf{y}_{k+1}- \mathbf{1} \bar{y}_{k+1}\|^2+\tfrac{\lambda_{\mathbf{W}}^2 L_0^2\mj{{n}}}{\beta_2\eta^2}\|\mathbf{x}_{k}- \mathbf{1} \bar{x}_{k}\|^2.$
Using the \us{prior} two inequalities in \eqref{eq:bound for lemma 7}, taking expectations, and invoking Lemma~\ref{lemma:multiple parts}, the result follows.\\

\smallskip 

\noindent {\bf Claim 5 \us{(\textit{Analysis of \us{Term} $T_4$})}.} {For any $k\ge 0$,} \begin{align*}
\mathbb{E}[\langle (\mathbf{W}-\tfrac{1}{m}\mathbf{1}\mathbf{1}^\top) \mathbf{y}_{k+1},(\mathbf{W}-\tfrac{1}{m}\mathbf{1}\mathbf{1}^\top)(\boldsymbol{\delta}_{k+1}^\eta-\boldsymbol{\delta}_{k}^\eta) \rangle ] &\leq \mj{16\sqrt{2\pi}m\left(\|\mathbf{W}\|^2+\mee{\tfrac{1}{2}}\lambda_{\mathbf{W}}^2\right)\mj{nL_0^2}} +\tfrac{m\|\mathbf{W}\|^2\tilde L_0^2n^2\varepsilon_k}{\eta^2} .
\end{align*}

\noindent {\it Proof of Claim 5.} Recall from Lemma~\ref{lemma:g_ik_eta_props} that  $\mathbb{E}[\boldsymbol{\delta}_{k+1}^\eta\mid \mathcal{F}_{k+1}]=0$. Utilizing this and that $ \mathbf{y}_{k+1}$ is $\mathcal{F}_{k+1}$-measurable, we \us{may express $\mathbb{E}[\langle (\mathbf{W}-\tfrac{1}{m}\mathbf{1}\mathbf{1}^\top) \mathbf{y}_{k+1},(\mathbf{W}-\tfrac{1}{m}\mathbf{1}\mathbf{1}^\top)\boldsymbol{\delta}_{k+1}^\eta \rangle \mid \mathcal{F}_k]$ as} 
\begin{align*}
&\mathbb{E}[\langle (\mathbf{W}-\tfrac{1}{m}\mathbf{1}\mathbf{1}^\top) \mathbf{y}_{k+1},(\mathbf{W}-\tfrac{1}{m}\mathbf{1}\mathbf{1}^\top)\boldsymbol{\delta}_{k+1}^\eta \rangle \mid \mathcal{F}_k] \\
&=\mathbb{E}_{\boldsymbol{\xi}_k,\mathbf{v}_k}[\mathbb{E}[\langle (\mathbf{W}-\tfrac{1}{m}\mathbf{1}\mathbf{1}^\top) \mathbf{y}_{k+1},(\mathbf{W}-\tfrac{1}{m}\mathbf{1}\mathbf{1}^\top)\boldsymbol{\delta}_{k+1}^\eta \rangle\mid \mathcal{F}_{k+1}]\, \mid \mathcal{F}_k \, ] \\
&=\mathbb{E}_{\boldsymbol{\xi}_k,\mathbf{v}_k}[\langle (\mathbf{W}-\tfrac{1}{m}\mathbf{1}\mathbf{1}^\top) \mathbf{y}_{k+1},(\mathbf{W}-\tfrac{1}{m}\mathbf{1}\mathbf{1}^\top)\mathbb{E}[\boldsymbol{\delta}_{k+1}^\eta \mid \mathcal{F}_{k+1}]\rangle\, \mid \mathcal{F}_k \, ] =
0.
\end{align*}
Consider $\langle (\mathbf{W}-\tfrac{1}{m}\mathbf{1}\mathbf{1}^\top) \mathbf{y}_{k+1},(\mathbf{W}-\tfrac{1}{m}\mathbf{1}\mathbf{1}^\top) \boldsymbol{\delta}_{k}^\eta  \rangle$. Invoking $(\mathbf{W}-\tfrac{1}{m}\mathbf{1}\mathbf{1}^\top)\tfrac{1}{m}\mathbf{1}\mathbf{1}^\top=\mathbf{0}_{m\times n}$, we have
\begin{align*}
&\langle (\mathbf{W}-\tfrac{1}{m}\mathbf{1}\mathbf{1}^\top) \mathbf{y}_{k+1},(\mathbf{W}-\tfrac{1}{m}\mathbf{1}\mathbf{1}^\top) \boldsymbol{\delta}_{k}^\eta  \rangle = \mbox{Trace}\left(\mathbf{y}_{k+1}^\top(\mathbf{W}-\tfrac{1}{m}\mathbf{1}\mathbf{1}^\top)^\top(\mathbf{W}-\tfrac{1}{m}\mathbf{1}\mathbf{1}^\top) \boldsymbol{\delta}_{k}^\eta \right)\\
& = \mbox{Trace}\left(\mathbf{y}_{k+1}^\top \mathbf{W}^\top(\mathbf{W}-\tfrac{1}{m}\mathbf{1}\mathbf{1}^\top) \boldsymbol{\delta}_{k}^\eta \right) = \langle \mathbf{W} \mathbf{y}_{k+1},(\mathbf{W}-\tfrac{1}{m}\mathbf{1}\mathbf{1}^\top) \boldsymbol{\delta}_{k}^\eta  \rangle.
\end{align*}
Invoking \eqref{eqn:R1} and Lemma~\ref{lemma:g_ik_eta_props}, from the  preceding relations, we obtain 
\begin{align*}
&\mathbb{E}[\langle (\mathbf{W}-\tfrac{1}{m}\mathbf{1}\mathbf{1}^\top) \mathbf{y}_{k+1},(\mathbf{W}-\tfrac{1}{m}\mathbf{1}\mathbf{1}^\top) (\boldsymbol{\delta}_{k+1}^\eta-\boldsymbol{\delta}_{k}^\eta)   \rangle \mid \mathcal{F}_k] = -\mathbb{E}[\langle \mathbf{W} \mathbf{y}_{k+1},(\mathbf{W}-\tfrac{1}{m}\mathbf{1}\mathbf{1}^\top) \boldsymbol{\delta}_{k}^\eta  \rangle\mid \mathcal{F}_k]\\
&=-\mathbb{E}[\langle \mathbf{W}^2(\mathbf{y}_{k}+ \nabla \mathbf{f}^\eta(\mathbf{x}_k) - \nabla \mathbf{f}^\eta(\mathbf{x}_{k-1})+\boldsymbol{\delta}_{k}^\eta-\boldsymbol{\delta}_{k-1}^\eta+\mathbf{e}_{k}^{\eta,\varepsilon_k}-\mathbf{e}_{k-1}^{\eta,\varepsilon_{k-1}}),(\mathbf{W}-\tfrac{1}{m}\mathbf{1}\mathbf{1}^\top) \boldsymbol{\delta}_{k}^\eta  \rangle\mid \mathcal{F}_k].
\end{align*}
\mee{Recall from Lemma~\ref{lemma:g_ik_eta_props} that  $\mathbb{E}[\boldsymbol{\delta}_{k}^\eta\mid \mathcal{F}_{k}]=0$. \fyy{Further, invoking} the fact that the terms $\mathbf{y}_{k}, \nabla \mathbf{f}^\eta(\mathbf{x}_k) , \nabla \mathbf{f}^\eta(\mathbf{x}_{k-1}),\boldsymbol{\delta}_{k-1}^\eta$ and $\mathbf{e}_{k-1}^{\eta,\varepsilon_{k-1}}$ are $\mathcal{F}_{k}$-measurable, we have $$\mathbb{E}[\langle \mathbf{W}^2(\mathbf{y}_{k}+ \nabla \mathbf{f}^\eta(\mathbf{x}_k) - \nabla \mathbf{f}^\eta(\mathbf{x}_{k-1})-\boldsymbol{\delta}_{k-1}^\eta-\mathbf{e}_{k-1}^{\eta,\varepsilon_{k-1}}),(\mathbf{W}-\tfrac{1}{m}\mathbf{1}\mathbf{1}^\top) \boldsymbol{\delta}_{k}^\eta  \rangle\mid \mathcal{F}_k]=0.$$ Therefore, \fyy{from the two preceding 
 relations} we obtain}
\begin{align*}
&\mathbb{E}[\langle (\mathbf{W}-\tfrac{1}{m}\mathbf{1}\mathbf{1}^\top) \mathbf{y}_{k+1},(\mathbf{W}-\tfrac{1}{m}\mathbf{1}\mathbf{1}^\top) (\boldsymbol{\delta}_{k+1}^\eta-\boldsymbol{\delta}_{k}^\eta)   \rangle \mid \mathcal{F}_k]\\
&=-\mathbb{E}[\langle \mathbf{W}^2(\boldsymbol{\delta}_{k}^\eta+\mathbf{e}_{k}^{\eta,\varepsilon_k}),(\mathbf{W}-\tfrac{1}{m}\mathbf{1}\mathbf{1}^\top) \boldsymbol{\delta}_{k}^\eta  \rangle\mid \mathcal{F}_k].
\end{align*}
\mee{
Invoking the identity $-\mathbb{E}[\langle a ,b\rangle]\le \tfrac{1}{2}\left(\mathbb{E}[\|a\|^2]+ \mathbb{E}[\|b\|^2]\right)$, and applying Lemma~\ref{lem:lambda_w} where we define $\lambda_{\mathbf{W}}\triangleq \|\mathbf{W}-\tfrac{1}{m}\mathbf{1}\mathbf{1}^{\top}\|$, we obtain
\begin{align*}
&\mathbb{E}[\langle (\mathbf{W}-\tfrac{1}{m}\mathbf{1}\mathbf{1}^\top) \mathbf{y}_{k+1},(\mathbf{W}-\tfrac{1}{m}\mathbf{1}\mathbf{1}^\top) (\boldsymbol{\delta}_{k+1}^\eta-\boldsymbol{\delta}_{k}^\eta)   \rangle \mid \mathcal{F}_k]\\
&\leq \mee{\tfrac{1}{2}}\left(\|\mathbf{W}\|^2\mathbb{E}[\|\boldsymbol{\delta}_{k}^\eta+\mathbf{e}_{k}^{\eta,\varepsilon_k}\|^2\mid \mathcal{F}_k] + \lambda_{\mathbf{W}}^2\mathbb{E}[\|\boldsymbol{\delta}_{k}^\eta\|^2\mid \mathcal{F}_k]\right)\\
&\leq  \left(\|\mathbf{W}\|^2+{\tfrac{1}{2}}\lambda_{\mathbf{W}}^2\right)\mathbb{E}[\|\boldsymbol{\delta}_{k}^\eta\|^2\mid \mathcal{F}_k] + \|\mathbf{W}\|^2\mathbb{E}[\|\mathbf{e}_{k}^{\eta,\us{\varepsilon_k}}\|^2\mid \mathcal{F}_k].
\end{align*}}The result follows by invoking Lemma~\ref{lemma:g_ik_eta_props} and taking the expectation on both sides.

\smallskip 

\noindent {\bf Claim 6 {(\textit{Analysis of \us{Term} $T_5$})}.} For any arbitrary $\beta_3>0$ and for any $k\ge 0$, we have  \begin{align*}
\mathbb{E}[\langle (\mathbf{W}-\tfrac{1}{m}\mathbf{1}\mathbf{1}^\top) \mathbf{y}_{k+1},(\mathbf{W}-\tfrac{1}{m}\mathbf{1}\mathbf{1}^\top)(\mathbf{e}_{k+1}^{\eta,\varepsilon_{k+1}}-\mathbf{e}_k^{\eta,\varepsilon_k})\rangle ] &\leq  \mee{\tfrac{1}{2}}\lambda_{\mathbf{W}}^2\beta_3 \mathbb{E}[\|\mathbf{y}_{k+1}-\mathbf{1}\bar{y}_{k+1}\|^2] \\
&+  \tfrac{m\lambda_{\mathbf{W}}^2\tilde L_0^2n^2(\varepsilon_k+\varepsilon_{k+1})}{\eta^2\beta_3} .
\end{align*}

\noindent {\it Proof of Claim 6.} By recalling $\bar{y}_{k+1} = \tfrac{1}{m}\mathbf{1}^{\top} \mathbf{y}_{k+1}$ and invoking the identity $a^\top b\le \tfrac{\beta_3}{2}\|a\|^2+ \tfrac{2}{\beta_3}\|b\|^2$, 
\begin{align*}
 &\langle (\mathbf{W}-\tfrac{1}{m}\mathbf{1}\mathbf{1}^\top) \mathbf{y}_{k+1},(\mathbf{W}-\tfrac{1}{m}\mathbf{1}\mathbf{1}^\top)(\mathbf{e}_{k+1}^{\eta,\varepsilon_{k+1}}-\mathbf{e}_k^{\eta,\varepsilon_k})\rangle  & \\
 & = \langle (\mathbf{W}-\tfrac{1}{m}\mathbf{1}\mathbf{1}^\top) \us{(\mathbf{y}_{k+1} - {\bf 1} \bar{y}_{k+1})},(\mathbf{W}-\tfrac{1}{m}\mathbf{1}\mathbf{1}^\top)(\mathbf{e}_{k+1}^{\eta,\varepsilon_{k+1}}-\mathbf{e}_k^{\eta,\varepsilon_k})\rangle  & \\
 &\leq {\tfrac{1}{2}}\lambda_{\mathbf{W}}^2\beta_3 \|\mathbf{y}_{k+1}-\mathbf{1}\bar{y}_{k+1}\|^2 +{\tfrac{1}{2}}\lambda_{\mathbf{W}}^2\beta_3^{-1}\|\mathbf{e}_{k+1}^{\eta,\varepsilon_{k+1}}-\mathbf{e}_k^{\eta,\varepsilon_k}\|^2\\
 & \leq  {\tfrac{1}{2}}\lambda_{\mathbf{W}}^2\beta_3 \|\mathbf{y}_{k+1}-\mathbf{1}\bar{y}_{k+1}\|^2 + \lambda_{\mathbf{W}}^2\beta_3^{-1}\left(\|\mathbf{e}_{k+1}^{\eta,\varepsilon_{k+1}}\|^2+\|\mathbf{e}_k^{\eta,\varepsilon_k}\|^2\right).
\end{align*}
Taking expectation\mee{s} on both sides of the preceding inequality and invoking Lemma~\ref{lemma:g_ik_eta_props}, we obtain the result in Claim 6. 

\medskip 

We are now ready to complete the proof of the main result. Consider \eqref{eq:bound for y-bar y-inexact}. Taking expectation\mee{s} on both sides and invoking the bounds in Claims 1 to 6\mj{,} we obtain
\begin{align*}
\mathbb{E}[\|{ \mathbf{y}}_{k+2}-\mathbf{1}{\bar{ {y}}}_{k+2}\|^2]
&\le \lambda_{\mathbf{W}}^2\mathbb{E}[\|{ \mathbf{y}}_{k+1}-\mathbf{1}{ \bar{y}}_{k+1}\|^2]  \\
&+5\lambda_{\mathbf{W}}^2
\mj{\left(\tfrac{\sqrt{n}L_0}{\eta}\right)^2}
\left(6\gamma_k^2\lambda_{\mathbf{W}}^2\mathbb{E}[\|\mathbf{y}_{k+1}-\mathbf{1}\bar{y}_{k+1}\|^2]+9\mathbb{E}[\|\mathbf{x}_{k}-\mathbf{1}\bar{x}_{k}\|^2]\right.\\
&\left.+6m\gamma_k^2\mathbb{E}[\|\overline{\nabla{f}^\eta}(\mathbf{x}_k)\|^2]+\mj{96\sqrt{2\pi}\gamma_k^2nL_0^2}+\mj{\left(\tfrac{6m\tilde L_0^2n^2\varepsilon_k\gamma_k^2}{\eta^2}\right)}\right)\\
 & +5\lambda_{\mathbf{W}}^2\left(\mj{32m\sqrt{2\pi}nL_0^2}+\mj{\left(\tfrac{m \tilde L_0^2n^2(\varepsilon_{k+1}+\varepsilon_k)}{\eta^2}\right)}\right)\notag\\
&+  2\left(\tfrac{\lambda_{\mathbf{W}}\gamma_kL_0\mj{\sqrt{n}}}{\eta}+\mee{\tfrac{1}{2}}\beta_1+\beta_2\right)\lambda_{\mathbf{W}}^2\mathbb{E}[\|\mathbf{y}_{k+1}-\mathbf{1}\bar y_{k+1}\|^2] \notag\\
&+\left(\tfrac{2\lambda_{\mathbf{W}}^2L_0^2\mj{n}}{\beta_2\eta^2}\right)\mathbb{E}[\|\mathbf{x}_{k}-\mathbf{1}\bar x_{k}\|^2]+\left(\tfrac{3\lambda_{\mathbf{W}}^2\gamma_k^2L_0^2\mj{n}m}{\beta_1\eta^2}\right)\mathbb{E}[\|\overline{\nabla{f}^\eta}(\mathbf{x}_k)\|^2]+\mj{ \tfrac{64\sqrt{2\pi}\lambda_{\mathbf{W}}^2\gamma_k^2L_0^4n^2}{\beta_1\eta^2}  }
\notag\\
&+\mj{ \tfrac{3\lambda_{\mathbf{W}}^2\gamma_k^2L_0^2\tilde L_0^2 \mj{n^3} m      
\varepsilon_k}{\beta_1\eta^4}  }+  \mj{32\sqrt{2\pi}m\left(\|\mathbf{W}\|^2+\mee{\tfrac{1}{2}}\lambda_{\mathbf{W}}^2\right)\mj{nL_0^2}} + \mj{2m\|\mathbf{W}\|^2\left(\tfrac{\tilde L_0^2n^2\varepsilon_k}{\eta^2}\right)}\notag\\
&+\lambda_{\mathbf{W}}^2\beta_3 \mathbb{E}[\|\mathbf{y}_{k+1}-\mathbf{1}\bar{y}_{k+1}\|^2] +  \mj{ \tfrac{2m\lambda_{\mathbf{W}}^2\tilde L_0^2n^2(\varepsilon_k+\varepsilon_{k+1})}{\eta^2\beta_3} }.
\end{align*}
Rearranging the terms, we obtain 
\begin{align*}
\mathbb{E}[\|{ \mathbf{y}}_{k+2}-\mathbf{1}{\bar{ {y}}}_{k+2}\|^2]
&\le \lambda_{\mathbf{W}}^2\left(1+30\lambda_{\mathbf{W}}^2
\mj{\left(\tfrac{\sqrt{n}L_0}{\eta}\right)^2}\gamma_k^2 \right. \\
 &\left. +2\left(\tfrac{\lambda_{\mathbf{W}}\gamma_kL_0\mj{\sqrt{n}}}{\eta}+\mee{\tfrac{1}{2}}\beta_1+\beta_2\right) + \beta_3 \right)\mathbb{E}[\|{ \mathbf{y}}_{k+1}-\mathbf{1}{ \bar{y}}_{k+1}\|^2]  \\
&  +\lambda_{\mathbf{W}}^2
\left(\tfrac{\mj{\sqrt{n}}L_0}{\eta}\right)^2\left( 2 \beta_2^{-1}+45\right)\mathbb{E}[\|\mathbf{x}_{k}-\mathbf{1}\bar x_{k}\|^2]+\mj{32m\sqrt{2\pi}\left(\|\mathbf{W}\|^2+\mee{\tfrac{11}{2}}\lambda_{\mathbf{W}}^2\right)\mj{nL_0^2}  } \\
&+3m\gamma_k^2\lambda_{\mathbf{W}}^2
\left(\tfrac{\mj{\sqrt{n}}L_0}{\eta}\right)^2\left( \beta_1^{-1} +10 \right)\mathbb{E}[\|\overline{\nabla{f}^\eta}(\mathbf{x}_k)\|^2]\\
&+\mj{32\sqrt{2\pi}nL_0^2\gamma_k^2\lambda_{\mathbf{W}}^2
\left(\tfrac{\mj{\sqrt{n}}L_0}{\eta}\right)^2
(15+2\beta_1^{-1})}+\mj{\Rme{\left(\tfrac{3\lambda_{\mathbf{W}}^2\gamma_k^2L_0^2\tilde L_0^2 \mj{\Rme{n^3}} m      
\varepsilon_k}{\eta^4}\right)} (10+\beta_1^{-1})}\\
&+\mj{2m\|\mathbf{W}\|^2\left(\tfrac{\tilde L_0^2n^2\varepsilon_k}{\eta^2}\right)  } +\mj{m\lambda_{\mathbf{W}}^2 \left(\tfrac{n^2\tilde L_0^2}{\eta^2}\right)\left(5 + 2\beta_3^{-1}\right)(\varepsilon_k+\varepsilon_{k+1})}.
\end{align*}
Choosing $ \beta_1 =2\beta_2=\beta_3=\tfrac{1-\lambda_{\mathbf{W}}^2}{10\lambda_{\mathbf{W}}^2}$ and $\gamma_k \leq \min \Big\{
 \tfrac{ \sqrt{1-\lambda_{\mathbf{W}}^2}}{10\sqrt{3}\lambda_{\mathbf{W}}^2 }
 ,\tfrac{(1-\lambda_{\mathbf{W}}^2) }{20\lambda_{\mathbf{W}}^3 }\Big\} \left(\tfrac{\eta}{\mj{\sqrt{n}} L_0 }\right)$, we have that 
 \begin{align*}
 \lambda_{\mathbf{W}}^2\left(1+30\lambda_{\mathbf{W}}^2
\left(\tfrac{\mj{\sqrt{n}}L_0}{\eta}\right)^2\gamma_k^2 +2\left(\tfrac{\lambda_{\mathbf{W}}\gamma_kL_0\mj{\sqrt{n}}}{\eta}+\mee{\tfrac{1}{2}}\beta_1+\beta_2\right) + \beta_3 \right) \leq \tfrac{1+\lambda_{\mathbf{W}}^2}{2}.
 \end{align*}
From the preceding inequalities and choices of $\beta_1, \beta_2$, and $\beta_3$, we obtain the result. \mee{$\hfill \Box$}
}

\medskip

\noindent {\bf Proof of Lemma~\ref{lem:Alg2_conv}.} 
Consider the update rule in Algorithm~\ref{alg:lowerlevel-1stage}. Let us define $\delta_{i,t}^{k,\ell} = \tilde{F}_i(\hat{x}_{i,k}^{\ell},z_{i,t}^{k,\ell},\xi_{i,t}^{k,\ell}) -  {F}_i(\hat{x}_{i,k}^{\ell},z_{i,t}^{k,\ell}) $. Invoking the nonexpansivity of the Euclidean projection, for any $\ell \in \{1,2\}$ and $t \geq 0$, we obtain 
\begin{align*}
& \|z_{i,t+1}^{k,\ell} - z_i(\hat{x}_{i,k}^{\ell}) \|^2   = \left\| \Pi_{\mathcal{Z}_i(\hat x_{i,k}^{\ell})}\left[z_{i,t}^{k,\ell}-\hat{\gamma}_t\tilde{F}_i(\hat{x}_{i,k}^{\ell},z_{i,t}^{k,\ell},\xi_{i,t}^{k,\ell})\right]  - z_i(\hat{x}_{i,k}^{\ell})\right\|^2 \\
 & = \left\| \Pi_{\mathcal{Z}_i(\hat x_{i,k}^{\ell})}\left[z_{i,t}^{k,\ell}-\hat{\gamma}_t\left({F}_i(\hat{x}_{i,k}^{\ell},z_{i,t}^{k,\ell})+\delta_{i,t}^{k,\ell}\right)\right]  - \Pi_{\mathcal{Z}_i(\hat x_{i,k}^{\ell})}\left[z_i(\hat{x}_{i,k}^{\ell}) -\hat{\gamma}_t {F}_i(\hat{x}_{i,k}^{\ell},z_i(\hat{x}_{i,k}^{\ell}) )\right]  \right\|^2\\
 & \leq \left\|  z_{i,t}^{k,\ell}-\hat{\gamma}_t\left({F}_i(\hat{x}_{i,k}^{\ell},z_{i,t}^{k,\ell})+\delta_{i,t}^{k,\ell}\right)   -  z_i(\hat{x}_{i,k}^{\ell}) +\hat{\gamma}_t {F}_i(\hat{x}_{i,k}^{\ell},z_i(\hat{x}_{i,k}^{\ell}) )  \right\|^2.
\end{align*}
 Note that from Assumption~\ref{assum:alg2}, $\mathbb{E}[\delta_{i,t}^{k,\ell} \mid \tilde{\mathcal{F}}_{i,t}^{k,\ell}] =0  $ and $\mathbb{E}[\|\delta_{i,t}^{k,\ell}\|^2 \mid \tilde{\mathcal{F}}_{i,t}^{k,\ell}] \leq \nu_F^2$. Taking the conditional expectation on both sides of the preceding inequality, we obtain 
\begin{align*}
\mathbb{E}[\|z_{i,t+1}^{k,\ell} - z_i(\hat{x}_{i,k}^{\ell}) \|^2 \mid \tilde{\mathcal{F}}_{i,t}^{k,\ell}]  
& \leq \|z_{i,t}^{k,\ell} -  z_i(\hat{x}_{i,k}^{\ell})\|^2 + \hat{\gamma}^2_t\|{F}_i(\hat{x}_{i,k}^{\ell},z_{i,t}^{k,\ell})-{F}_i(\hat{x}_{i,k}^{\ell},z_i(\hat{x}_{i,k}^{\ell}) ) \|^2\\
& -2\hat{\gamma}_t(z_{i,t}^{k,\ell} -  z_i(\hat{x}_{i,k}^{\ell}))^\top(F_i(\hat{x}_{i,k}^{\ell},z_{i,t}^{k,\ell})-{F}_i(\hat{x}_{i,k}^{\ell},z_i(\hat{x}_{i,k}^{\ell}) )) + \hat{\gamma}_t^2\nu_F^2.
\end{align*}
Invoking the strong monotonicity and Lipschitzian property of $F_i$, we obtain
\begin{align*}
\mathbb{E}[\|z_{i,t+1}^{k,\ell} - z_i(\hat{x}_{i,k}^{\ell}) \|^2 \mid \tilde{\mathcal{F}}_{i,t}^{k,\ell}]  
& \leq (1-2\mu_F\hat\gamma_t +\Rme{\hat\gamma_t^2}L_F^2)\|z_{i,t}^{k,\ell} -  z_i(\hat{x}_{i,k}^{\ell})\|^2  + \hat{\gamma}^2_t\nu_F^2.
\end{align*} 
Note that from $\hat{\Gamma} >\frac{\hat{\gamma}_tL_F^2}{\mu_F}$, we have $\hat \gamma_t \leq \frac{\mu_F}{L_F^2}$, implying that 
\begin{align*}
\mathbb{E}[\|z_{i,t+1}^{k,\ell} - z_i(\hat{x}_{i,k}^{\ell}) \|^2 \mid \tilde{\mathcal{F}}_{i,t}^{k,\ell}]  
& \leq (1- \mu_F\hat\gamma_t)\|z_{i,t}^{k,\ell} -  z_i(\hat{x}_{i,k}^{\ell})\|^2  + \hat{\gamma}^2_t\nu_F^2.
\end{align*}
Invoking \cite[Lemma 8]{cui2023complexity} \us{and choosing} $\hat{\gamma} > \tfrac{1}{\mu_F}$,  the result \us{follows,} 
where $D_i$ is given by \Rme{Assumption~\ref{assum:main2}}.
\begin{align*}
\mathbb{E}[\|z_{i,\varepsilon_k}(\hat{x}_{i,k}^{\ell}) - z_i(\hat{x}_{i,k}^{\ell})\|^2  \mid \tilde{\mathcal{F}}_{i,0}^{k,\ell}] \leq \max\{\nu_F^2\Rme{\hat{\gamma}^2}(\mu_F\hat{\gamma}-1)^{-1}, \hat{\Gamma} D_i\}\tfrac{1}{t_k+\hat{\Gamma}}. 
\end{align*}\mee{$\hfill \Box$}

\medskip

\noindent{\bf Proof of Lemma~\ref{lem:Harmonic series bounds}.} 
 First, assume that $\Gamma \geq 1$. We have 
\begin{align}\label{eqn:harmonic_ineq_1}
\textstyle\sum_{k=0}^{K-1}\tfrac{1}{(k+\Gamma)^a} \leq \textstyle\sum_{k=0}^{K-1}\tfrac{1}{(k+1)^a} = 1+\textstyle\sum_{k=2}^{K}\tfrac{1}{k^a} \leq 1+ \int_{1}^{K}x^{-a}dx,
\end{align}
 where the last inequality in implied by noting that $x^{-a}$ is a nonincreasing function in $x$, for $a \geq 0$

\noindent (i) From \eqref{eqn:harmonic_ineq_1} and that $a \in [0,1)$, we obtain  
\begin{align*} 
\textstyle\sum_{k=0}^{K-1}\tfrac{1}{(k+\Gamma)^a} \leq  1+ \int_{1}^{K}x^{-a}dx = 1+\tfrac{K^{(1-a)}-1}{1-a} \leq 	\int_{1}^{K}x^{-a}dx =  \tfrac{K^{(1-a)}}{1-a}.
\end{align*}

\noindent (ii) From \eqref{eqn:harmonic_ineq_1} and that $a =1$, we obtain $
\textstyle\sum_{k=0}^{K-1}\tfrac{1}{(k+\Gamma)^a} \leq  1+ \int_{1}^{K}x^{-1}dx =  1+\ln(K).$

\noindent (iii) From \eqref{eqn:harmonic_ineq_1} and that $a >1$, we obtain  
\begin{align*} 
\textstyle\sum_{k=0}^{K-1}\tfrac{1}{(k+\Gamma)^a} \leq  1+ \int_{1}^{K}x^{-a}dx = 1+\tfrac{K^{(1-a)}-1}{1-a} = 1+\tfrac{1-1/k^{a-1}}{a-1} \leq 1+\tfrac{1}{a-1} = \tfrac{a}{a-1}. 
\end{align*}
Next, we assume that  $\Gamma \geq 2$. We have 
\begin{align}\label{eqn:harmonic_ineq_2}
\textstyle\sum_{k=0}^{K-1}\tfrac{1}{(k+\Gamma)^a} =\textstyle\sum_{k=\Gamma}^{K+\Gamma-1}\tfrac{1}{k^a} \leq   \int_{\Gamma-1}^{K+\Gamma-1}x^{-a}dx.
\end{align}

\noindent (iv) From \eqref{eqn:harmonic_ineq_2} and that $a \in [0,1)$, we obtain $ 
\textstyle\sum_{k=0}^{K-1}\tfrac{1}{(k+\Gamma)^a} \leq  \int_{\Gamma-1}^{K+\Gamma-1}x^{-a}dx \leq \tfrac{(K+\Gamma -1)^{1-a}-(\Gamma-1)^{1-a}}{1-a}.$

\noindent (v) From \eqref{eqn:harmonic_ineq_2} and that $a =1$, we obtain $
\textstyle\sum_{k=0}^{K-1}\tfrac{1}{(k+\Gamma)^a} \leq  \int_{\Gamma-1}^{K+\Gamma-1}x^{-1}dx= \ln(\tfrac{K+\Gamma-1}{\Gamma-1}).$

\noindent (vi) From \eqref{eqn:harmonic_ineq_2} and that $a >1$, we obtain  
\begin{align*} 
\textstyle\sum_{k=0}^{K-1}\tfrac{1}{(k+\Gamma)^a} \leq  \int_{\Gamma-1}^{K+\Gamma-1}x^{-a}dx =  \tfrac{(K+\Gamma -1)^{1-a}-(\Gamma-1)^{1-a}}{1-a} =  \tfrac{(\Gamma-1)^{1-a}-(K+\Gamma -1)^{1-a}}{a-1} \leq \tfrac{1}{(a-1)\Gamma^{a-1}}.
\end{align*}\mee{$\hfill \Box$}


\medskip

\noindent{\bf Proof of Lemma~\ref{lem:Alg2_conv_2stage}.} Consider the update rule in Algorithm~\ref{alg:lowerlevel-2stage}. Invoking the nonexpansivity of the Euclidean projection, for any $\ell \in \{1,2\}$ and $t \geq 0$, we obtain 
\begin{align*}
 \|z_{i,t+1}^{k,\ell} - z_i(\hat{x}_{i,k}^{\ell},{\xi_{i,k}}) \|^2 &  = \left\| \Pi_{\mathcal{Z}_i(\hat x_{i,k}^{\ell},{\xi_{i,k}})}\left[z_{i,t}^{k,\ell}-\hat{\gamma}\tilde{F}_i(\hat{x}_{i,k}^{\ell},z_{i,t}^{k,\ell},{\xi_{i,k}})\right]  - z_i(\hat{x}_{i,k}^{\ell},{\xi_{i,k}})\right\|^2 \\
 & = \left\| \Pi_{\mathcal{Z}_i(\hat x_{i,k}^{\ell},{\xi_{i,k}})}\left[z_{i,t}^{k,\ell}-\hat{\gamma}\tilde{F}_i(\hat{x}_{i,k}^{\ell},z_{i,t}^{k,\ell},{\xi_{i,k}})\right]\right. \\
& \left.- \Pi_{\mathcal{Z}_i(\hat x_{i,k}^{\ell},{\xi_{i,k}})}\left[z_i(\hat{x}_{i,k}^{\ell},{\xi_{i,k}}) -\hat{\gamma}{F}_i(\hat{x}_{i,k}^{\ell},z_i(\hat{x}_{i,k}^{\ell},{\xi_{i,k}}),{\xi_{i,k}} )\right]  \right\|^2\\
 & \leq \left\|  z_{i,t}^{k,\ell}-\hat{\gamma}\tilde{F}_i(\hat{x}_{i,k}^{\ell},z_{i,t}^{k,\ell},{\xi_{i,k}})  -  z_i(\hat{x}_{i,k}^{\ell},{\xi_{i,k}}) +\hat{\gamma} {F}_i(\hat{x}_{i,k}^{\ell},z_i(\hat{x}_{i,k}^{\ell},{\xi_{i,k}}),{\xi_{i,k}} )  \right\|^2\\
&=\|  z_{i,t}^{k,\ell} -  z_i(\hat{x}_{i,k}^{\ell},{\xi_{i,k}})  \|^2+\left\|  \hat{\gamma}\tilde{F}_i(\hat{x}_{i,k}^{\ell},z_{i,t}^{k,\ell},{\xi_{i,k}})  -\hat{\gamma} {F}_i(\hat{x}_{i,k}^{\ell},z_i(\hat{x}_{i,k}^{\ell},{\xi_{i,k}}),{\xi_{i,k}} )  \right\|^2\\
&-2\hat{\gamma}\left( z_{i,t}^{k,\ell} -  z_i(\hat{x}_{i,k}^{\ell},{\xi_{i,k}}) \right)^\top\left(\tilde{F}_i(\hat{x}_{i,k}^{\ell},z_{i,t}^{k,\ell},{\xi_{i,k}})  - {F}_i(\hat{x}_{i,k}^{\ell},z_i(\hat{x}_{i,k}^{\ell},{\xi_{i,k}}),{\xi_{i,k}} )\right).
\end{align*}
Invoking Assumption~\ref{assum:main2-2stage} and for any $0\le t\le t_k-1$, we obtain
\begin{align*}
 \|z_{i,t+1}^{k,\ell} - z_i(\hat{x}_{i,k}^{\ell},\xi_{i,k}) \|^2 &  \le \|  z_{i,t}^{k,\ell} -  z_i(\hat{x}_{i,k}^{\ell},\xi_{i,k})  \|^2+ \hat{\gamma}^2L_F^2\|  z_{i,t}^{k,\ell} -  z_i(\hat{x}_{i,k}^{\ell},{\xi_{i,k}})  \|^2\\
 &-2\hat{\gamma}\mu_F\|  z_{i,t}^{k,\ell} -  z_i(\hat{x}_{i,k}^{\ell},{\xi_{i,k}})  \|^2
=(1+ \hat{\gamma}^2L_F^2-2\hat{\gamma}\mu_F)\|  z_{i,t}^{k,\ell} -  z_i(\hat{x}_{i,k}^{\ell},{\xi_{i,k}})  \|^2.
\end{align*}Unrolling the preceding recursive inequality, we obtain
 $\|z_{i,{t_k}}^{k,\ell} - z_i(\hat{x}_{i,k}^{\ell},{\xi_{i,k}}) \|^2  \le (1+ \hat{\gamma}^2L_F^2-2\hat{\gamma}\mu_F)^{t_k}\|  z_{i,0}^{k,\ell} -  z_i(\hat{x}_{i,k}^{\ell},{\xi_{i,k}})  \|^2.$
Note that $\hat{\gamma}\le\tfrac{\mu_F}{L_F^2}$ implies that $1+ \hat{\gamma}^2L_F^2-2\hat{\gamma}\mu_F\le 1-\hat{\gamma}\mu_F\triangleq \tilde d$. Based on the definition of $\tilde d$ and invoking Assumption~\ref{assum:main2-2stage}, where $D_i$ is such that $\sup_{x\in\mathbb{R}^n}\sup_{z_1,z_2 \in \mathcal{Z}_i(x,\xi_i)}\|z_1-z_2\|^2 \leq D_i$ for all $i \in [m]$, and for all $\xi_i\in \mathbb{R}^d$, $
 \|z_{i,{t_k}}^{k,\ell} - z_i(\hat{x}_{i,k}^{\ell},{\xi_{i,k}}) \|^2    \le{\tilde d}^{t_k} D_i .$ Taking conditional expectations on both sides of the preceding inequality, we obtain the result.\mee{$\hfill \Box$}

\medskip

\noindent{\bf Proof of Lemma~\ref{lemm:omega_vareps}.} 
(i) Using the definition of $e_{i,k}^{\eta,\varepsilon_k} $,  equations~\eqref{eqn:g_eta_eps_two stage} and \eqref{eqn:g_eta_two stage}, and the triangle inequality, we have
\begin{align*}
\|e_{i,k}^{\eta,\varepsilon_k}\| &= \|g_{i,k}^{\eta,\varepsilon_k}-g_{i,k}^{\eta}\|\\
& \leq (\tfrac{n}{2\eta})|\tilde h_i(x_{i,k}+\eta v_{i,k},z_{{i,\varepsilon}_k}({x}_{i,k}+\eta v_{i,k},\xi_{i,{k}}),\xi_{i,k})-\tilde h_i(x_{i,k}+\eta v_{i,k},z_i({x}_{i,k}+\eta v_{i,k},\xi_{i,{k}}),\xi_{i,k})| \\
&+(\tfrac{n}{2\eta})|\tilde h_i(x_{i,k}-\eta v_{i,k},z_{i,\varepsilon_k}({x}_{i,k}-\eta v_{i,k},\xi_{i,{k}}),\xi_{i,k})-\tilde h_i(x_{i,k}-\eta v_{i,k},z_i({x}_{i,k}-\eta v_{i,k},\xi_{i,{k}}),\xi_{i,k})|.
\end{align*}
Invoking the Lipschitz continuity of $\tilde h_i(x,\bullet,\xi_i)$ in \Rme{Assumption~\ref{assum:main6}(i)}, we obtain  
\begin{align*}
\|e_{i,k}^{\eta,\varepsilon_k}\|  &\le  (\tfrac{n}{2\eta} )\tilde{L}_0(\xi_{i,k})\left\|z_{{i,\varepsilon_k}}({x}_{i,k}+\eta v_{i,k},\xi_{i,k})-z_i({x}_{i,k}+\eta v_{i,k},\xi_{i,k})\right\|\\
&+  (\tfrac{n}{2\eta} )\tilde{L}_0(\xi_{i,k})\left\|z_{{i,\varepsilon_k}}({x}_{i,k}-\eta v_{i,k},\xi_{i,k})-z_i({x}_{i,k}-\eta v_{i,k},\xi_{i,k})\right\|.
\end{align*}
From the preceding inequality, we obtain
\begin{align*}
\|e_{i,k}^{\eta,\varepsilon_k}\|^2  &\le  2(\tfrac{n}{2\eta} )^2\tilde{L}_0(\xi_{i,k})^2\left\|z_{{i,\varepsilon_k}}({x}_{i,k}+\eta v_{i,k},\xi_{i,k})-z_i({x}_{i,k}+\eta v_{i,k},\xi_{i,k})\right\|^2\\
&+ 2 (\tfrac{n}{2\eta} )^2\tilde{L}_0(\xi_{i,k})^2\left\|z_{{i,\varepsilon_k}}({x}_{i,k}-\eta v_{i,k},\xi_{i,k})-z_i({x}_{i,k}-\eta v_{i,k},\xi_{i,k})\right\|^2.
\end{align*}
By invoking the inexactness bound and subsequently taking the conditional expectation on both sides of the preceding inequality, while leveraging the independence of $\xi_{i,k}$ and $v_{i,k}$, we obtain 
\begin{align*}
&\mathbb{E}[\|e_{i,k}^{\eta,\varepsilon_k}\| ^2 \mid\mathcal{F}_k] \le 2(\tfrac{n}{2\eta} )^2\mathbb{E}[\tilde{L}_0(\bxi_{i,k})^2]\varepsilon_k+ 2(\tfrac{n}{2\eta} )^2\mathbb{E}[\tilde{L}_0(\bxi_{i,k})^2]\varepsilon_k.
\end{align*}
Invoking the definition of $\tilde{L}_0$ in \Rme{Assumption~\ref{assum:main6}(i)}, we obtain the result. 
\mee{$\hfill \Box$}

\end{document}